\newtheorem{theorem}{Theorem}[section]
\newtheorem{lemma}[theorem]{Lemma}
\newtheorem{proposition}[theorem]{Proposition}
\newtheorem{corollary}[theorem]{Corollary}
\theoremstyle{definition}
\newtheorem{definition}[theorem]{Definition}
\newtheorem{example}[theorem]{Example}
\newtheorem{remark}[theorem]{Remark}
\def\car#1,#2,#3,#4{
$
   \CD
   #1           @>{}>>          #2        \\
   @V{{}}VV                  @VV{{}}V  \\
   #3        @>{{}}>>   #4
   \endCD
   $}
\def\Car#1,#2,#3,#4,#5,#6,#7,#8{
$$
   \CD
   #1           @>#2>>          #3        \\
   @V{#4}VV                  @VV{#5}V  \\
   #6        @>{#7}>>   #8
   \endCD
   $$}
\begin{document}

\title[AROUND  PR\"UFER EXRENSIONS ]{AROUND  PR\"UFER EXTENSIONS OF RINGS}

\author[G. Picavet and M. Picavet]{Gabriel Picavet and Martine Picavet-L'Hermitte}
\address{Math\'ematiques \\
8 Rue du Forez, 63670 - Le Cendre\\
 France}
\email{picavet.mathu (at) orange.fr}

\begin{abstract} The paper intends to apply the properties of Pr\"ufer extensions, investigated in the Knebusch-Zhang book,  to ring extensions $R\subseteq  S$.
The integral closure $\overline R$ of $R$ in $S$ is shown to be the intersection of all $T\in [R,S]$, such that $T\subseteq S$ is Pr\"ufer. We are then able to establish an avoidance lemma for integrally closed subextensions. Rings of sections of the affine scheme defined by $R$ provide results on $S$-regular ideals. Some results on pullbacks characterizations of Pr\"ufer extensions are given. We introduce locally strong divisors, examining the properties of strong divisors of a local ring and their links with Pr\"ufer extensions. The locally strong divisors allow us to give characterizations of QR-extensions. We apply our results to Nagata extensions of rings. We also look at the Pr\"ufer hull of a  Nagata extension. We define quasi-Pr\"uferian rings that may differ from quasi-Pr\"ufer integral domains. We then derive some results on minimal and FCP extensions. Finally, we study the set of all primitive elements in an extension.
\end{abstract} 

\subjclass[2010]{Primary: 13B02, 13B22, 13B40; Secondary: 13B30}

\keywords{integral closure, Pr\"ufer extension, pullback, QR-extension, quasi-Pr\"uferian ring, Nagata extension, strong divisor, ring of sections}

\maketitle

\section{Introduction and Notation}

We consider the category of commutative and unital rings, whose   flat epimorphisms will be  strongly  involved, like localizations with respect to a multiplicatively closed  subset.  

If $R\subseteq S$ is a (ring) extension, we denote by $[R,S]$ the set of all $R$-subalgebras of $S$ and by $[R,S]_{fg}$ the set of all $T\in[R,S]$, such that $T$ is of finite type over $R$.
 Any undefined material is explained in the next subsection and in the following  sections.

\subsection{An overview of the paper}
We present some properties of Pr\"ufer extensions of rings and derive from them new results, using the properties and definitions of Knebusch and Zhang \cite{KZ}. It is well known that Pr\"ufer extensions are nothing but normal pairs. Pr\"ufer extensions are defined by flat epimorphims, while normal pairs are defined by the integrally closed property. We will deal with the Pr\"ufer aspect, except in  Section 6. 

In Section 2 we give some recalls about Pr\"ufer extensions. We also give rules on flat epimorphisms and direct limits, in order to make easier our proofs.

If $R\subset S$ is a ring extension, an ideal $I$ of $R$ is called $S$-regular by \cite{KZ} if $IS=S$. Such ideals are a useful concept in the next sections. Moreover, these ideals $I$ permit to factorize the extension through the ring of sections defined by the open subset associated to $I$. This is developed in Section 3, with some applications to Pr\"ufer extensions. By the way, we give rules  allowing  to calculate rings of sections.

In Section 4, we show that the integral closure $\overline R$ of a ring extension $R\subset S$ is the set intersection of all $T\in [R,S]$ such that $T\subseteq S$ is Pr\"ufer. This statement generalizes a classical result on integral closures.

As an application, we show that an avoidance lemma with respect to finitely many integrally closed subrings holds. The proof is not easy and uses Manis valuations. We also show an avoidance lemma with respect to finitely many flat epimorphisms. This is explained in Section 5.

Section 6 deals with pullbacks results. Olivier proved that integrally closed extensions are characterized by pullbacks in which some morphism is of the form $V \to K$, where $V$ is a semi-hereditary ring with total quotient ring $K$.  We adapt this result to the Pr\"ufer case and evidently reuse the normal pairs. Another result concerns a more classical situation.

In Section 7, we deal with extensions over local rings and introduce the strong divisors considered by \cite{KZ}. A strong divisor is a regular element $t$ of a ring  $R$, such that $Rt$ is comparable to each ideal of $R$. The maximal Pr\"ufer extension of a local ring $R$ is the localization of $R$ with respect to the multiplicatively closed subset of all strong divisors of $R$. We develop a theory of strong divisors. The most striking results are that a regular element $x$ of a local ring is a strong divisor if and only if $R\to R_x$ is Pr\"ufer, and that  an extension of finite type of $R$ is Pr\"ufer if and only if it is of the form $R\to R_x$, where $x$ is a strong divisor.

QR-extensions $R\subset S$ are studied in Section 8: they are extensions such that each $T\in [R,S]$ is (isomorphic to) a localization. They are evidently Pr\"ufer. We also look at the Bezout extensions of \cite{KZ} and examine the Bezout and Pr\"ufer hull of an extension. Over a local ring or a Nagata ring $R(X)$, the Pr\"ufer, Bezout and and QR properties are equivalent. To go further we have introduced locally strong divisors. As locally strong divisors appear each time we are dealing with Pr\"ufer extensions, we see that a ring $R$ admits non trivial Pr\"ufer extensions if $R$ has locally strong divisors that are non-units. This concept is more stable than that of strong divisors. An interesting result is that QR-extensions are characterized by using locally strong divisors. Another one is that a QR-extension $R\subset S$ verifies that for each $s\in S$ there is a locally strong divisor $\rho$, such that $\rho s \in R$. The section ends on extensions whose supports are finite.

Section 9 is concerned with Nagata extensions $R(X) \subset S(X)$. We show that such an extension is Pr\"ufer if and only if $R\subset S$ is Pr\"ufer. We already said that over a Nagata ring the Pr\"ufer and QR-concepts coincide. 
The Pr\"ufer hull $\widetilde R$ of an extension gives $\widetilde R(X)$ for its Nagata extension in a lot of cases. It may be that the result holds for any extension but we do not know the answer.   When $R$ is a local ring, we show that the strong divisors of $R(X)$ are in some sense the strong divisors  of $R$.

We define  in Section 10 quasi-Pr\"uferian rings as rings $R$ such that  $R\to R(X)$ is an $i$-extension. But our definition is not equivalent to $R\to \mathrm{Tot}(R)$ is quasi-Pr\"ufer in the sense of \cite{Pic 5}, contrary to the integral domains context, where we recover the classical notion of quasi-Pr\"ufer rings. We get some results about these rings, largely inspired by the integral domain context.
A sequence of statements in a theorem shows that a ring is quasi-Pr\"ufer if it is quasi-Pr\"uferian. The converse holds if the total quotient ring of the ring is zero-dimensional.

Section 11 is devoted to minimal or FCP extensions of a local ring that are either Pr\"ufer or have the QR-property. A special attention is paid to $\mathcal  B$-extensions (extensions that are locally determined in some sense).

The paper ends by  considering the set of all primitive  elements in an extension, a study initiated  by Dobbs and Houston. There is a link with quasi-Pr\"ufer extensions. 

\subsection{ Basics concepts}
    
As usual, Spec$(R)$ and Max$(R)$ are the set of prime and maximal ideals of a ring $R$ and $\mathrm U(R)$ is the set of all its units.
 
We now give some notation for a ring morphism $f: R\to S$. We denote by ${}^af$ the spectral map $\mathrm{Spec}(S) \to \mathrm{Spec}(R)$.
Then $\mathcal X_R (S)$ (or $\mathcal X (S)$) is the image of the map ${}^af$  and we say that $f$ is an {\it i-morphism} if ${}^af$ is injective. If $Q$ is a prime ideal of $S$ lying over $P$ in $R$, the ring morphism $R_P\to S_Q$ is called the local morphism  at $Q$ of the morphism. 
 
Then $(R:S)$ is the conductor of an extension $R\subseteq S$. The integral closure of $R$ in $S$ is denoted by $\overline R^S$ (or by $\overline R$ if no confusion can occur).
 
A {\it local} ring is here what is called elsewhere a quasi-local ring. For an extension $R\subseteq S$ and an ideal $I$ of $R$, we write $\mathrm{V}_S(I):=\{P\in\mathrm{Spec}(S)\mid I\subseteq P\}$ and $\mathrm D_ S(I)$ for its complement. If $R$ is a ring; then $\mathrm Z (R)$ denotes the set of all its zero-divisors.
  The support of an $R$-module $E$ is $\mathrm{Supp}_R(E):=\{P\in\mathrm{Spec}(R)\mid E_P\neq 0\}$, and $\mathrm{MSupp}_R(E):=\mathrm{Supp}_R(E)\cap\mathrm{Max}(R)$. When $R\subseteq S$ is an extension, we will set $\mathrm{Supp}(T/R):=\mathrm{Supp}_R(T/R)$ and $\mathrm{Supp}(S/T):=\mathrm{Supp}_R(S/T)$  for each $T\in [R,S]$, unless otherwise specified.
   
If $R\subseteq S$ is a ring extension and $\Sigma$ a {\em mcs} of $R$ ({\it i.e.} a multiplicatively closed subset of $R$), then $S_\Sigma$ is both the localization $S_{\Sigma}$ as a ring and the localization at $\Sigma$ of the $R$-module $S$; that is, $S\otimes_R R_\Sigma$.  
 
Let $\Sigma_1$ and $\Sigma_2$ be two mcs of a ring $R$. We denote by $\Sigma_2/1$ the image of $\Sigma_2$ in $R_{\Sigma_1}$. We recall that $R_{\Sigma_1 \Sigma_2}=(R_{\Sigma_1})_{\Sigma_2/1}$. It follows that if $x\in R$ and $\Sigma$ is a mcs of $R$, then $(R_x)_\Sigma = (R_\Sigma)_{x/1}$. 
 
Flat epimorphisms  and their properties are the main tool in this paper. We use the theory that was developed by D. Lazard \cite[Chapter IV]{L}. The reader may also use the scholium of our paper \cite{Pic 5}.

When $R\to S$ and $R\to T$ are ring morphisms, we will write $S\cong_R T $, (or $S\cong T$) if there is an isomorphism of $R$-algebras $S \to T$. It may happens that $\cong$ is  replaced with $=$.

Let  $R\subseteq S$ be an extension. A {\it chain} of $R$-subalgebras of $S$ is a set of elements of $[R,S]$ that are pairwise comparable with respect to inclusion.   
 We say that $R\subseteq S$ is {\it chained} if $[R,S]$ is a chain. We also say that the extension has FCP (or is an FCP extension) if each chain in $[R,S]$ is finite, or equivalently, the poset $[R,S]$ is Artinian and Noetherian. 
An extension is called FIP if $[R,S]$ has finitely many elements. An extension $R\subset S$ is called minimal if $[R,S] =\{R,S\}$.
According to \cite[Th\'eor\`eme 2.2]{FO}, a minimal extension is either integral or a flat epimorphism. 
Finally, $|X|$ is the cardinality of a set $X$, $\subset$ denotes proper inclusion  (contrary to \cite{KZ} where $\subset$ denotes the large inclusion). A compact topological space does not need to be separated. For a positive integer $n$, we set $\mathbb{N}_n:=\{1,\ldots,n\}$. 

 \section{Some definitions, notation and useful results} 
An extension $R\subseteq S$ is called {\it Pr\"ufer} if $R\subseteq T$ is a flat epimorphism for each $T\in[R,S]$ (or equivalently, if $R\subseteq S$ is a normal pair) \cite[Theorem 5.2, p. 47]{KZ}. A Pr\"ufer integral extension is trivial.
  
We denote by $Q(R)$ the complete ring of quotients (Utumi-Lambeck) of a ring $R$.
  
  \begin{definition}\label{A0.7} \cite{KZ} A ring extension $R\subseteq S$ has:
\begin{enumerate}
\item  a greatest flat epimorphic subextension  $R \subseteq \widehat R^S$, called the {\bf Morita hull} of $R$ in  $S$. 
\item  a  greatest Pr\"ufer subextension $R \subseteq \widetilde R^S$, called the {\bf Pr\"ufer hull}  of $R$ in $S$. 

\end{enumerate}
 We set  $\widehat{R} := \widehat R^S$ and $\widetilde{R} := \widetilde R^S$, if no confusion can occur.
 \end{definition}
 
 A ring $R$ has: 
 
(1) \cite{L} a maximal flat epimorphic extension $R\subseteq\mathbb M(R):=  {\widehat R}^{\mathrm{Q}(R)}$ (also termed the maximal flat epimorphic extension by some authors, like \cite{L}).
 
(2) \cite{KZ} a maximal Pr\"ufer extension $R\subseteq\mathbb P(R):=  {\widetilde R}^{\mathrm{Q}(R)}$.
 
 Note that $\widetilde R^S$ is denoted by $\mathrm P(R,S)$ in \cite{KZ} and $\widehat R^S$ coincide with the weakly surjective hull $\mathrm{M}(R,S)$ of \cite{KZ}. Our terminology is justified because Morita's construction is earlier \cite[Corollary 3.4]{M}.  The Morita hull can be computed by using a (transfinite) induction \cite{M}. Let $S'$ be the set of   all $s\in S$,  such that there is some ideal $I$ of $R$, such that $IS= S$ and $Is \subseteq R$. 
Then $R\subseteq S'$ is a subextension of $R\subseteq S$. We set $S_1:= S'$ and  $S_{i+1}:= (S_i)' \subseteq S_i$. By \cite[p.36]{M}, if $R \subset S$ is an FCP extension, then $\widehat R = S_n$ for some integer $n$. 

We also note the  following known consequence:

\begin{proposition}\label{CAREPIPLAT} An extension $R\subseteq S$ is a flat epimorphism if and only if for each $s\in S$ there is some ideal $I$ of $R$ such that $IS=S$  and $Is \subseteq  R$ (or equivalently ($R:_Rs)S =S$).
\end{proposition}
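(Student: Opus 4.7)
The plan is to prove the two implications separately, both in the denominator-ideal framework of Lazard.

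For the forward direction, the strategy is a short tensor-product computation. Assuming $R\subseteq S$ is a flat epimorphism, $S$ is $R$-flat and the multiplication map $S\otimes_R S\to S$ is an isomorphism. I would tensor the short exact sequence $0\to R\to S\to S/R\to 0$ with $S$ over $R$; flatness of $S$ preserves exactness, and the epimorphism property collapses the first map to an isomorphism, forcing $S\otimes_R(S/R)=0$. For any $s\in S$, the cyclic submodule $R\bar s\subseteq S/R$ (with $\bar s=s+R$) is $R$-isomorphic to $R/(R:_R s)$ via $r\mapsto r\bar s$; tensoring the inclusion with the flat module $S$ yields an embedding $S/(R:_R s)S\hookrightarrow S\otimes_R(S/R)=0$, forcing $(R:_R s)S=S$.

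For the converse, I would split the conclusion into ``epimorphism'' and ``flat''. The epimorphism step is elementary: given $f,g:S\to T$ agreeing on $R$ and $s\in S$, put $I=(R:_R s)$; for each $r\in I$ the identity $rs\in R$ gives $f(r)(f(s)-g(s))=0$, so $f(I)$ annihilates $f(s)-g(s)$ in $T$; the hypothesis $IS=S$ gives $1\in f(I)T$, forcing $f(s)=g(s)$. For flatness I would show that for each $Q\in\mathrm{Spec}(S)$ with contraction $P=Q\cap R$, the local morphism $R_P\to S_Q$ is an isomorphism; then the standard stalkwise criterion yields $R$-flatness of $S$ (each $S_Q\cong R_P$ is in particular flat over $R_{Q\cap R}$). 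Surjectivity uses that $(R:_R s)S=S$ forbids $(R:_R s)\subseteq P$, producing $u\in(R:_R s)\setminus P$ with $us\in R$, so $s/1=(us)/u$ lies in the image of $R_P$. Injectivity follows by applying the same denominator trick to an element of the kernel: given $a/u$ mapping to zero, a relation $va=0$ in $S$ with $v\notin Q$ can be multiplied by some $w\in (R:_R v)\setminus P$ to produce $b=wv\in R\setminus P$ with $ba=0$ in $R$.

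The main obstacle is the flatness half of the converse: one must translate the global denominator hypothesis into the local isomorphism $R_P\xrightarrow{\sim} S_Q$, in particular, extracting a witness in $R$ for injectivity requires invoking the hypothesis twice (once on $s$, once on a denominator element $v\in S$). The tensor argument for necessity is clean once one remembers that $S\otimes_R S=S$, and the epimorphism part of sufficiency is a brief elementwise manipulation.
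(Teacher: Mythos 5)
Your proof is correct, but note that the paper itself gives no proof of this proposition: it is recorded as a ``known consequence'' of the surrounding theory, since the condition $(R:_Rs)S=S$ for all $s\in S$ is precisely Knebusch--Zhang's notion of a weakly surjective extension, whose equivalence with flat epimorphisms belongs to the Lazard--Morita circle of results the paper builds on. So your argument replaces a citation with a self-contained proof, and both halves check out. In the forward direction, tensoring $0\to R\to S\to S/R\to 0$ with $S$ and using $S\otimes_RS\cong S$ does give $S\otimes_R(S/R)=0$ (in fact right-exactness alone suffices for that step); flatness is genuinely needed only afterwards, to keep the inclusion $R\bar s\cong R/(R:_Rs)\hookrightarrow S/R$ injective after tensoring, which is exactly what yields $S/(R:_Rs)S=0$. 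In the converse, the elementwise epimorphism computation is standard and complete, and the local analysis is sound: the hypothesis applied to $s$ rules out $(R:_Rs)\subseteq P$ (otherwise $(R:_Rs)S\subseteq Q\neq S$), giving surjectivity of $R_P\to S_Q$, while applied to the denominator $v\in S\setminus Q$ with $va=0$ it yields $w\in(R:_Rv)\setminus P$; one should make explicit, as you implicitly do, that primeness of $Q$ together with $v,w\notin Q$ gives $b:=wv\in R\setminus P$, and $ba=0$ gives injectivity. The stalkwise criterion (flatness of $S_Q$ over $R_{Q\cap R}$ for every $Q\in\mathrm{Spec}(S)$ implies flatness of $S$ over $R$) then finishes, since isomorphisms are flat. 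What your route buys is independence from the literature, at the cost of length; what the paper's route buys is economy, since the same quoted notion of weak surjectivity also powers the Morita-hull construction described immediately before the proposition. Incidentally, your converse essentially reproves, in this special case, the nontrivial half of Lazard's characterization of flat epimorphisms by local isomorphisms, which is the background result the paper has in mind.
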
 

\begin{corollary} An extension $R\subseteq S$ is Pr\"ufer if and only if $R[s] =(R:_Rs)R[s]$     for each $s\in S$.
\end{corollary}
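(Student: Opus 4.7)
The plan is to derive both implications from Proposition \ref{CAREPIPLAT}, exploiting the fact that a Pr\"ufer extension is precisely one where every intermediate algebra, in particular every one-generator subalgebra $R[s]$, is a flat epimorphism over $R$.

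For the forward direction, assume $R\subseteq S$ is Pr\"ufer. Then $R\subseteq R[s]$ is a flat epimorphism, so Proposition \ref{CAREPIPLAT} applied to the element $s\in R[s]$ furnishes an ideal $I$ of $R$ with $IR[s]=R[s]$ and $Is\subseteq R$. The second inclusion means $I\subseteq (R:_R s)$, and then $(R:_R s)R[s]\supseteq IR[s]=R[s]$ forces $R[s]=(R:_R s)R[s]$.

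For the converse, assume $R[s]=(R:_R s)R[s]$ for every $s\in S$. To show $R\subseteq S$ is Pr\"ufer, I need $R\subseteq T$ to be a flat epimorphism for every $T\in[R,S]$. Fix such a $T$ and $t\in T$. The key step is to promote the $R[t]$-regularity of $(R:_R t)$ to $T$-regularity: since $T$ is a module over $R[t]$, one has
\[
(R:_R t)\,T \;=\; \bigl((R:_R t)R[t]\bigr)\,T \;=\; R[t]\cdot T \;=\; T,
\]
using the hypothesis in the middle. Combined with the trivial inclusion $(R:_R t)t\subseteq R$, this exhibits $I:=(R:_R t)$ as an ideal satisfying the conditions of Proposition \ref{CAREPIPLAT} for $R\subseteq T$ at the element $t$. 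Since $t\in T$ was arbitrary, Proposition \ref{CAREPIPLAT} yields that $R\subseteq T$ is a flat epimorphism, and since $T\in[R,S]$ was arbitrary, $R\subseteq S$ is Pr\"ufer.

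The main (indeed only) subtlety is the passage from the hypothesis—stated only for one-generator subalgebras $R[s]$—to the apparently stronger conclusion about all $T\in[R,S]$. This is handled entirely by the one-line computation $(R:_R t)T=R[t]T=T$, which works because $R[t]\subseteq T$ and $(R:_R t)R[t]=R[t]$. Once that is noticed, the rest is a direct invocation of Proposition \ref{CAREPIPLAT}.
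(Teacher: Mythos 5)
Your proof is correct and follows essentially the route the paper intends: the paper's entire proof is the one-liner ``Use the definition of Pr\"ufer extensions by flat epimorphisms,'' i.e., combine Proposition~\ref{CAREPIPLAT} with the definition of Pr\"ufer, which is precisely what you do in both directions. The one nontrivial detail the paper leaves implicit---passing from the hypothesis on the one-generator subalgebras to an arbitrary $T\in[R,S]$---is supplied correctly by your computation $(R:_Rt)T=\bigl((R:_Rt)R[t]\bigr)T=R[t]\,T=T$, which is exactly the right bridge.
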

\begin{proof} Use the definition of Pr\"ufer extensions by flat epimorphisms.
\end{proof}

If an extension $R\subseteq S$ is Pr\"ufer and $\Sigma$ is a mcs of $R$, then $R_\Sigma \subseteq S_\Sigma$ is Pr\"ufer.  We have a converse.

\begin{proposition}\label{LOC} \cite[Proposition 1.1]{Pic 5} An extension $R\subset S$ is Pr\"ufer if and only if $R_M\subseteq S_M$ is Pr\"ufer for each $M \in \mathrm{Max}(R)$ (resp.; for each $M\in \mathrm{Spec}(R)$).
\end{proposition}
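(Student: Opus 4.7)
The natural approach is to reduce to the single-element criterion of the preceding Corollary: $R\subseteq S$ is Pr\"ufer if and only if $R[s] = (R:_R s)R[s]$ for every $s\in S$. The forward implication of the proposition is then the remark stated immediately before it---flat epimorphisms are stable under base change, so localising a Pr\"ufer extension at any multiplicative subset of $R$ again yields a Pr\"ufer extension.

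For the converse, assume $R_M\subseteq S_M$ is Pr\"ufer for every $M\in\mathrm{Max}(R)$, fix $s\in S$, and consider the $R$-module $N := R[s]/(R:_R s)R[s]$. By the Corollary it suffices to show $N=0$ for every $s$, and this in turn can be checked after localisation at each $M\in\mathrm{Max}(R)$. The key identification is
$$N_M \;\cong\; R_M[s/1]\big/(R_M:_{R_M} s/1)\,R_M[s/1],$$
which rests on two facts: first, $R[s]_M = R_M[s/1]$ as $R_M$-algebras, immediate since the powers of $s$ generate $R[s]$ over $R$; and second, the colon-ideal identity $(R:_R s)_M = (R_M :_{R_M} s/1)$, which is the standard localisation formula for $(R:_R Rs)$ with $Rs$ finitely generated and may also be checked directly by clearing denominators (if $(r/u)(s/1)\in R_M$, one picks $v\notin M$ with $vrs\in R$, so that $vr\in(R:_R s)$). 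Applying the Corollary inside the Pr\"ufer extension $R_M\subseteq S_M$ to the element $s/1\in S_M$ then yields $R_M[s/1] = (R_M:_{R_M} s/1)R_M[s/1]$, hence $N_M = 0$.

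The `resp.' variant with $\mathrm{Spec}(R)$ in place of $\mathrm{Max}(R)$ follows at once: one direction uses the inclusion $\mathrm{Max}(R)\subseteq\mathrm{Spec}(R)$, and the other uses the base-change stability recalled above applied to the mcs $R\setminus P$. The only delicate point is the colon-ideal identification $(R:_R s)_M = (R_M:_{R_M} s/1)$, since the flat-epimorphism characterisation singles out this specific ideal and one wants the equality on the nose; once that is in hand, the rest of the argument is essentially formal.
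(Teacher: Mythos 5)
Your proof is correct, but there is nothing in the paper to compare it against: the proposition is stated as a quotation of \cite[Proposition 1.1]{Pic 5} and no proof is given in this paper (in the cited literature the result ultimately rests on the Knebusch--Zhang theory of Manis valuations and normal pairs). Your argument is a genuinely self-contained alternative that uses only material the paper has already stated. The forward implication is precisely the remark preceding the proposition (Pr\"ufer extensions localize, since flat epimorphisms are stable under the base change $R\to R_\Sigma$), and for the converse you reduce, via the Corollary following Proposition~\ref{CAREPIPLAT}, to the vanishing of the $R$-module $N=R[s]/(R:_Rs)R[s]$, which can be tested locally at maximal ideals. The two identifications you isolate are exactly the points needing care, and both are sound: $R[s]_M=R_M[s/1]$ inside $S_M$ because localization is exact and $R[s]$ is generated by the powers of $s$; and $(R:_Rs)_M=(R_M:_{R_M}s/1)$ because $(R:_Rs)$ is the annihilator of the cyclic module $(R+Rs)/R$ and annihilators of finitely generated modules commute with localization---your denominator-clearing computation proves exactly this. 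What your route buys is independence from the valuation-theoretic background; what it leans on is the Corollary itself, whose nontrivial direction (that $R[s]=(R:_Rs)R[s]$ for all $s\in S$ already forces $R\subseteq S$ to be Pr\"ufer) is a genuine theorem of \cite{KZ} that the paper states with only a one-line proof. Since that Corollary is part of the paper's declared toolkit, relying on it is legitimate, and your proof is complete modulo results the paper takes as known.
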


\begin{proposition} \label{FAITHPRUF} \cite[Corollary 3.15]{Pic 5} Pr\"ufer extensions are descended by faithfully flat morphisms.
\end{proposition}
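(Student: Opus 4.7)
\medskip
\noindent\textbf{Proof plan.} The data is a faithfully flat morphism $\varphi : R \to R'$ and an extension $R \subseteq S$ such that the base change $R' \to S' := S \otimes_R R'$ is Pr\"ufer; the goal is to show $R \subseteq S$ is Pr\"ufer. My strategy is to test the Pr\"ufer condition element-by-element on $S$ and then apply descent of flat epimorphisms along the faithfully flat morphism $R \to R'$. The preliminary reduction is that $R \subseteq S$ is Pr\"ufer iff $R \to R[s]$ is a flat epimorphism for every $s \in S$: by Proposition \ref{CAREPIPLAT}, a flat epimorphism is detected on single elements, and every $T \in [R,S]$ is the filtered union of its finitely generated $R$-subalgebras $R[s_1,\ldots,s_n] = R[s_1][s_2]\cdots[s_n]$, so flat-epi status propagates through finite towers and filtered colimits.

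Fix $s \in S$ and set $s' := s \otimes 1 \in S'$. Because $R'$ is flat over $R$, the inclusion $R[s] \hookrightarrow S$ tensors up to an inclusion $R[s] \otimes_R R' \hookrightarrow S'$ whose image is exactly the $R'$-subalgebra $R'[s']$; hence $R[s] \otimes_R R' \cong_{R'} R'[s']$. The hypothesis that $R' \subseteq S'$ is Pr\"ufer gives that $R' \to R'[s']$ is a flat epimorphism.

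It remains to descend this flat epimorphism to $R \to R[s]$. Flatness descends because the faithfully flat base change $R[s] \otimes_R R' = R'[s']$ is flat over $R'$, so $R[s]$ is flat over $R$. For the epimorphism condition, one checks that the multiplication map $\mu : R[s] \otimes_R R[s] \to R[s]$ is an isomorphism by base-changing with $R'$: after $- \otimes_R R'$, $\mu$ becomes the multiplication map $R'[s'] \otimes_{R'} R'[s'] \to R'[s']$, which is an isomorphism since $R' \to R'[s']$ is an epimorphism. As $R \to R'$ is faithfully flat, $\mu$ is an isomorphism. Thus $R \to R[s]$ is a flat epimorphism for every $s \in S$, which concludes the argument.

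The main obstacle is the descent step for the epimorphism condition, which requires the correct identification $R[s] \otimes_R R' \cong R'[s']$; this in turn uses flatness of $R \to R'$ to preserve the embedding into $S'$. The rest is routine book-keeping combining the characterization in Proposition \ref{CAREPIPLAT} with standard faithfully flat descent.
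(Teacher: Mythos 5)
Your proof is sound, and it is necessarily a different route from the paper's, because the paper offers no argument at all for this statement: it simply cites \cite[Corollary 3.15]{Pic 5}. So you have produced a self-contained descent proof where the paper outsources the work. Your three core steps are all correct: the identification $R[s]\otimes_R R'\cong R'[s']$ (flatness of $R\to R'$ preserves the embedding of $R[s]$ into $S'$, and the image is the $R'$-subalgebra generated by $s'$); descent of flatness of $R[s]$ along the faithfully flat $R\to R'$; and descent of the epimorphism property by testing the multiplication map $R[s]\otimes_R R[s]\to R[s]$ after base change, which works because faithfully flat base change reflects isomorphisms. The one place where your write-up is looser than it should be is the preliminary reduction. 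The statement you need --- that $R\subseteq S$ is Pr\"ufer as soon as $R\to R[s]$ is a flat epimorphism for every $s\in S$ --- is true, but your justification via ``flat-epi status propagates through finite towers'' is not immediate: your hypothesis concerns the extensions $R\subseteq R[s]$ for single elements $s$, not the relative steps $R[s_1,\ldots,s_i]\subseteq R[s_1,\ldots,s_{i+1}]$ of a tower, and converting the former into the latter requires a further argument (base change plus an identification of the compositum with a tensor product, e.g.\ via Storrer's theorem). The clean fix bypasses towers entirely using Proposition \ref{CAREPIPLAT}: if $(R:_Rt)R[t]=R[t]$ for every $t\in S$, then for any $T\in[R,S]$ and any $t\in T$ one has $(R:_Rt)T\supseteq (R:_Rt)R[t]\, T=R[t]\, T=T$, so $R\subseteq T$ is a flat epimorphism. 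This is exactly the content of the unlabelled Corollary following Proposition \ref{CAREPIPLAT} in the paper, which you could simply invoke instead of the tower/colimit sketch. With that repair, your argument is complete, and it has the merit of making the descent statement verifiable element by element rather than resting on an external reference.
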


\begin{proposition} The Pr\"ufer property of extensions $R\subseteq S$ is local on the spectrum; that is if $\mathrm{Spec}(R)=\mathrm D(r_1)\cup\cdots\cup
\mathrm D(r_n)$ for some elements $r_1,\ldots,r_n\in R$ and $R_{r_i} \subseteq S_{r_i}$ is Pr\"ufer for each $i=1,\ldots,n$, then $R\subseteq S$ is Pr\"ufer.
\end{proposition}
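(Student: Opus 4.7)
The plan is to reduce the global Pr\"ufer property to the already-proved maximal-ideal-local characterization (Proposition~\ref{LOC}). Given a finite open cover $\mathrm{Spec}(R)=\mathrm D(r_1)\cup\cdots\cup\mathrm D(r_n)$, every maximal ideal $M$ of $R$ must avoid at least one $r_i$, so I would begin by fixing $M\in\mathrm{Max}(R)$ and selecting an index $i$ with $M\in\mathrm D(r_i)$.

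Next I would use the compatibility of iterated localizations recalled in the notation section: since $r_i\notin M$, the element $r_i/1$ is a unit in $R_M$, so $R_M=(R_{r_i})_{M/1}$ and similarly $S_M=(S_{r_i})_{M/1}$, where $M/1$ denotes the image of $M$ in $R_{r_i}$ (a prime ideal of $R_{r_i}$, not necessarily maximal, but that is irrelevant here). By hypothesis $R_{r_i}\subseteq S_{r_i}$ is Pr\"ufer, and the text explicitly notes that if $R\subseteq S$ is Pr\"ufer and $\Sigma$ is any mcs of $R$ then $R_\Sigma\subseteq S_\Sigma$ is Pr\"ufer. Applying this to the mcs of $R_{r_i}$ consisting of elements outside $M/1$ yields that $R_M\subseteq S_M$ is Pr\"ufer.

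Since $M\in\mathrm{Max}(R)$ was arbitrary, Proposition~\ref{LOC} gives that $R\subseteq S$ is Pr\"ufer, finishing the proof. There is essentially no genuine obstacle here; the only point that requires a moment's care is confirming that the maximal ideal $M$ of $R$, once $r_i\notin M$, gives rise to the \emph{same} localized ring whether one localizes $R$ directly at $R\setminus M$ or localizes $R_{r_i}$ at the image of $R\setminus M$, which is precisely the iterated-localization identity $R_{\Sigma_1\Sigma_2}=(R_{\Sigma_1})_{\Sigma_2/1}$ recorded earlier.

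An alternative, slightly slicker route would be to observe that $R\to\prod_{i=1}^n R_{r_i}$ is faithfully flat because the $\mathrm D(r_i)$ cover $\mathrm{Spec}(R)$, that the induced extension $\prod R_{r_i}\subseteq\prod S_{r_i}$ is Pr\"ufer as a finite product of Pr\"ufer extensions, and then to invoke Proposition~\ref{FAITHPRUF} (descent of the Pr\"ufer property along faithfully flat maps). I would mention this route only as a remark, since the maximal-ideal argument above is shorter and uses only Proposition~\ref{LOC}.
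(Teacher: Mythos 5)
Your proposal is correct, but your primary argument takes a genuinely different route from the paper's. The paper proves the statement in two lines: the product extension $R_{r_1}\times\cdots\times R_{r_n}\subseteq S_{r_1}\times\cdots\times S_{r_n}$ is Pr\"ufer by a result of Knebusch--Zhang on finite products of Pr\"ufer extensions, and since $R\to R_{r_1}\times\cdots\times R_{r_n}$ is faithfully flat, Proposition~\ref{FAITHPRUF} (descent of the Pr\"ufer property along faithfully flat morphisms) concludes. This is exactly the ``alternative, slightly slicker route'' you relegate to a remark at the end. Your main argument instead fixes $M\in\mathrm{Max}(R)$, picks $i$ with $r_i\notin M$, identifies $R_M\subseteq S_M$ with a further localization of $R_{r_i}\subseteq S_{r_i}$ via the iterated-localization identity $R_{\Sigma_1\Sigma_2}=(R_{\Sigma_1})_{\Sigma_2/1}$, uses the (easy) stability of Pr\"ufer extensions under localization, and then invokes Proposition~\ref{LOC}. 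Both arguments are sound; the steps you rely on (stability under localization, the iterated-localization identity, Proposition~\ref{LOC}) are all explicitly available in the paper. What your route buys is elementarity: it avoids both the product theorem from \cite{KZ} and faithfully flat descent, needing only localization formalism plus the maximal-ideal criterion. What the paper's route buys is brevity and conceptual uniformity: granting the two cited results, the proof is immediate, and it exhibits the statement as a pure descent phenomenon, a pattern reused elsewhere in the paper (e.g.\ in the proofs of Theorem~\ref{NAGstrong} and Proposition~\ref{MONTE}).
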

\begin{proof} The extension $R_{r_1}\times\cdots\times R_{r_n}\subseteq S_{r_1}\times\cdots\times S_{r_n}$ is Pr\"ufer \cite[Proposition 5.20, p.56]{KZ}. To conclude use Proposition~\ref{FAITHPRUF} since $R\to R_{r_1}\times \cdots \times R_{r_n} $ is faithfully flat.
\end{proof}

In \cite{Pic 5}, a minimal flat epimorphism is called a {\it Pr\"ufer minimal} extension. An FCP Pr\"ufer extension has FIP and is a tower of finitely many Pr\"ufer minimal extensions \cite[Proposition 1.3]{Pic 5}.

In \cite{Pic 5}, we defined an extension $R\subseteq S$ to be {\it quasi-Pr\"ufer} if it can be factored $R\subseteq R'\subseteq S$, where $R\subseteq R'$ is integral and $R'\subseteq S$ is Pr\"ufer. In this case $R'$ is necessarily $\overline R$. An FCP extension is quasi-Pr\"ufer \cite[Corollary 3.4]{Pic 5}. 

An extension $R\subseteq S$ is called {\it almost-Pr\"ufer} if it can be factored $R\subseteq S'\subseteq S$, where the first extension is Pr\"ufer and the second is integral. In this case $S'$ is necessarily $\widetilde R$. An almost-Pr\"ufer extension is quasi-Pr\"ufer \cite{Pic 5}.

We now give some rules on flat epimorphisms. The following result of Lazard is a key result. Let $R$ be a ring. We denote by $\mathcal {FE}$ the collection of classes up to an isomorphism of flat epimorphisms whose domain is $R$ and by $\mathcal X$ the set of subsets of $\mathrm{Spec}(R)$ that  are affine schemes,  when endowed with the  induced sheave. The elements of $\mathcal X$ are  compact and stable under generization. 

\begin{proposition}\label{LAZZEPI} \cite[Proposition 2.5, p.112]{L} The map $\mathcal{FE}\to\mathrm{Spec}(R)$, defined by $T\mapsto \mathcal X (T)$  is a bijection onto $\mathcal X$. The inverse map is as follows: an affine scheme $X$ of $\mathrm{Spec}(R)$ gives $R\to\Gamma(X)$, the ring of sections over~$X$.
\end{proposition}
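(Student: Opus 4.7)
The plan is to reduce everything to Lazard's local characterization of flat epimorphisms: a morphism $R \to T$ is a flat epimorphism if and only if, for every $Q \in \mathrm{Spec}(T)$ with contraction $P = {}^af(Q)$, the local morphism $R_P \to T_Q$ is an isomorphism. From this I would extract two immediate consequences that pin down the set-theoretic side of the bijection. First, ${}^af$ is injective, because a prime $Q$ of $T$ is recovered as the unique prime of $T_Q \cong R_P$ corresponding to $PR_P$, so $T$ is determined up to $R$-isomorphism by its spectral image. Second, $\mathcal{X}(T) = \{P \in \mathrm{Spec}(R) \mid PT \neq T\}$. This makes the assignment $T \mapsto \mathcal{X}(T)$ well defined on isomorphism classes.

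Next I would show that $\mathcal{X}(T)$ actually lies in $\mathcal{X}$. Compactness is automatic as the continuous image of the compact space $\mathrm{Spec}(T)$. Stability under generization follows from the local characterization: if $P \in \mathcal{X}(T)$ with unique preimage $Q$ and $P' \subseteq P$, then $P'R_P$ corresponds under the isomorphism $R_P \cong T_Q$ to a prime of $T_Q$, hence to a prime of $T$ whose contraction to $R$ is exactly $P'$. For the affineness of the induced ringed space, I would use the stalk identification $R_P \cong T_Q$ on $\mathcal{X}(T)$, exhibit a basis of principal opens on which the sections of the two sheaves coincide, and conclude that $(\mathcal{X}(T), \mathcal O_R|_{\mathcal{X}(T)})$ is isomorphic as a locally ringed space to $\mathrm{Spec}(T)$; in particular the ring of sections $\Gamma(\mathcal{X}(T))$ is canonically $T$.

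For the inverse, given $X \in \mathcal{X}$ I set $T := \Gamma(X)$ and must show that the canonical restriction morphism $R \to T$ is a flat epimorphism with $\mathcal{X}(T) = X$. Flatness is stalkwise, since each stalk of $\mathcal O_R|_X$ at $P \in X$ is $R_P$. The epimorphism property follows by gluing the local identifications $R_P \cong (\mathcal O_X)_P$ using the compactness of $X$ to reduce any relation on a pair of morphisms out of $T$ to a finite set of localizations. The image equality $\mathcal{X}(T) = X$ then follows formally. Composing the two constructions produces the identity on each side: $\mathcal{X}(\Gamma(X)) = X$ by construction, and $\Gamma(\mathcal{X}(T)) \cong_R T$ by the previous paragraph, which gives the bijection.

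The step I expect to fight with is the scheme-theoretic upgrade inside the forward direction, namely passing from pointwise stalk isomorphisms $R_P \cong T_Q$ to a genuine isomorphism of sheaves on $\mathcal{X}(T)$. Concretely, one needs to express each element of $T$ locally as an $R$-fraction, using a finite cover coming from compactness of $\mathcal{X}(T)$ and the fact that $T$ is a flat epimorphic $R$-algebra; the delicate point is extracting global denominators compatible with the chosen cover. Once this sheaf-theoretic identification is in hand, the rest of the argument is essentially formal.
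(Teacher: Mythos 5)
First, a point of reference: the paper does not prove this proposition at all --- it is quoted from \cite[Proposition 2.5, p.112]{L} --- so your attempt must stand on its own as a proof of Lazard's theorem. As written it cannot, because the statement you take as your foundation is false. You claim that $R\to T$ is a flat epimorphism if and only if every local morphism $R_P\to T_Q$ (for $Q\in\mathrm{Spec}(T)$, $P={}^af(Q)$) is an isomorphism. The ``if'' direction fails: for a field $k$, the morphism $k\to k\times k$ is flat and both of its local morphisms are identities $k\to k$, yet it is not an epimorphism (the two projections $k\times k\to k$ agree on $k$ and differ on $k\times k$), and its spectral map is not injective. The correct criterion --- the one the paper itself invokes via \cite[Scholium A (1)]{Pic 5} in the proof of Proposition~\ref{EPINAG}, or equivalently \cite[Proposition 2.4, p.112]{L} --- requires in addition that ${}^af$ be injective. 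This matters doubly for you. Your first ``immediate consequence'' (injectivity of ${}^af$) is precisely the hypothesis your criterion is missing, and your argument for it is circular: if $Q_1\neq Q_2$ both lie over $P$, each $T_{Q_i}$ can be abstractly isomorphic to $R_P$ with no contradiction, since $Q_i$ is recovered from the localization \emph{map} $T\to T_{Q_i}$, not from the abstract ring $T_{Q_i}$. (For a genuine epimorphism, injectivity must be deduced from the epimorphism property itself: $T\otimes_RT\cong T$ makes ${}^af$ a monomorphism of schemes.) The same defect reappears at the other end of your proof: verifying that $R\to\Gamma(X)$ is an epimorphism by ``gluing'' and reducing ``any relation on a pair of morphisms out of $T$'' to finitely many localizations cannot work, because the example $k\to k\times k$ shows that stalkwise data alone never detects epimorphy; what is needed there is exactly the corrected criterion (flat, $i$-morphism, local isomorphisms $\Rightarrow$ flat epimorphism).

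Second, the step you yourself flag as troublesome is not a deferrable technicality but the actual content of the theorem: that ${}^af$ is a homeomorphism of $\mathrm{Spec}(T)$ onto $\mathcal X(T)$ (this is \cite[Corollaire 2.2, p.111]{L}, quoted by the paper in the proof of Proposition~\ref{IM}), and that under this homeomorphism the induced sheaf on $\mathcal X(T)$ has ring of sections canonically $T$. Note also that members of $\mathcal X$ need not be open subsets (e.g. $\mathcal X(R_P)=\{Q\in\mathrm{Spec}(R)\mid Q\subseteq P\}$), so the ``induced sheaf'' is an inverse-image sheaf whose sections involve colimits over open neighbourhoods; your plan of comparing sections ``on a basis of principal opens'' glosses over this. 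Without this identification you have neither that $T\mapsto\mathcal X(T)$ lands in $\mathcal X$, nor the conclusion $\Gamma(\mathcal X(T))\cong_R T$ on which the injectivity of the forward map, and hence the whole bijection, rests. If you want to complete this step, the ``global denominators'' you are missing come from Proposition~\ref{CAREPIPLAT} (stated there for extensions; its analogue holds for any flat epimorphism): for each $t\in T$ the ideal $(R:_Rt)$ is $T$-regular, so for each $P\in\mathcal X(T)$ there is some $r\in(R:_Rt)\setminus P$ exhibiting $t$ as an $R$-fraction over the trace of $\mathrm D(r)$ on $\mathcal X(T)$; assembling these presentations over a finite cover obtained by compactness is the genuine work of Lazard's proof. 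In summary: your overall architecture (two mutually inverse constructions) is the right one, but one of its two load-bearing lemmas is false and the other is missing.
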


The next result, proved in \cite[Proposition 3.4.10, p.242]{EGA}, will be useful in the sequel.

\begin{proposition}\label{LIM} $(\mathcal L$)-rule Let $R\to E$ be a ring morphism and $E = \varinjlim E_i$ where each $E_i$ is an $R$-algebra,  then $\mathcal X (E) = \cap \mathcal X (E_i)$.
\end{proposition}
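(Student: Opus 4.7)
The plan is to interpret $\mathcal X(E)$ fibrewise: a prime $P\in\mathrm{Spec}(R)$ lies in $\mathcal X(E)$ if and only if the fibre ring $E\otimes_R\kappa(P)$ is nonzero, where $\kappa(P)=R_P/PR_P$ is the residue field at $P$. This is the standard description of the image of a spectral map. With the same criterion applied to each $E_i$, the assertion becomes equivalent to: $E\otimes_R\kappa(P)\neq 0$ if and only if $E_i\otimes_R\kappa(P)\neq 0$ for every $i$.

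The inclusion $\mathcal X(E)\subseteq\bigcap_i\mathcal X(E_i)$ is formal. Each structural morphism $E_i\to E$ of the direct system fits into a commutative triangle $R\to E_i\to E$, hence yields commutative spectral triangles over $\mathrm{Spec}(R)$. So the image of $\mathrm{Spec}(E)\to\mathrm{Spec}(R)$ is contained in the image of $\mathrm{Spec}(E_i)\to\mathrm{Spec}(R)$ for every $i$, i.e.\ in their intersection.

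For the reverse inclusion I would use that the tensor product commutes with filtered direct limits and hence
$E\otimes_R\kappa(P)=\varinjlim\bigl(E_i\otimes_R\kappa(P)\bigr)$, where the colimit is taken in the category of $\kappa(P)$-algebras. The problem then reduces to the lemma that a filtered colimit of nonzero rings is nonzero. This follows from the zig-zag description of equality in a filtered colimit: if $1=0$ held in $E\otimes_R\kappa(P)$, some transition map $E_i\otimes_R\kappa(P)\to E_j\otimes_R\kappa(P)$ would send $1_{E_i\otimes\kappa(P)}$ to $0$; but a ring homomorphism preserves the identity, which would force $E_j\otimes_R\kappa(P)=0$, against the hypothesis that $P\in\mathcal X(E_j)$.

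The delicate point — the only place where anything beyond formal manipulation enters — is precisely this last step. In the category of $R$-modules a filtered colimit of nonzero objects can perfectly well be zero, so the argument genuinely uses that we are working with rings and unital ring homomorphisms, and that the functor $-\otimes_R\kappa(P)$ preserves both the ring structure and the filtered colimit. Once this categorical subtlety is handled, no actual computation is required.
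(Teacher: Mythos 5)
Your proof is correct and is essentially the argument behind the result the paper invokes: the paper gives no proof of its own, citing EGA I, Proposition 3.4.10, and that proposition's proof is exactly your fibre criterion ($P\in\mathcal X(E)$ if and only if $E\otimes_R\kappa(P)\neq 0$) combined with the commutation of $-\otimes_R\kappa(P)$ with filtered colimits and the observation that a filtered colimit of nonzero unital rings is nonzero. The only point worth flagging is that your argument (and the statement itself) requires the system to be filtered --- which is the intended reading of $\varinjlim$ here, as in EGA and in the $(\mathcal{MCS})$-rule corollary that follows; for non-filtered colimits (e.g.\ coequalizers) the equality $\mathcal X(E)=\cap\,\mathcal X(E_i)$ can fail outright.
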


We will use Proposition~\ref{LAZZEPI} under the following form.

\begin{proposition}\label{EPIPLAT} $(\mathcal X$)-rule  Let $R\to E$  be a flat epimorphism and $R \to F$ a ring morphism.
\begin{enumerate}

\item There is a factorization $R\to E \to F$ if and only if $\mathcal X(F) \subseteq \mathcal X(E)$.

\item  If $R\to F$ is a flat epimorphism,  then  $E \cong F$ if and only if $\mathcal X(F) =\mathcal X(E)$.
\end{enumerate} 
\end{proposition}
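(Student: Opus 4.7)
The plan is to leverage the bijection between flat epimorphisms and affine subschemes of $\mathrm{Spec}(R)$ supplied by Proposition~\ref{LAZZEPI}. The forward direction of (1) is immediate: any factorization $R\to E\to F$ makes the spectral map ${}^a(R\to F)$ factor through ${}^a(R\to E)$, so its image $\mathcal X(F)$ is contained in $\mathcal X(E)$.

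For the non-trivial direction of (1), I would base change the flat epimorphism $R\to E$ along $R\to F$ to obtain $F\to E\otimes_R F$, still a flat epimorphism (flatness is preserved by base change, and so are epimorphisms in the category of rings). Its spectral image consists of the $Q\in\mathrm{Spec}(F)$ whose fibre $E\otimes_R k(Q)$ is nonzero. Because $R\to E$ is a flat epimorphism, the local description from \cite{L} gives $E_P\cong R_P$ for $P\in\mathcal X(E)$ and $E_P=0$ otherwise; hence, setting $P=Q\cap R$, the fibre $E\otimes_R k(Q)=E_P\otimes_{R_P} k(P)\otimes_{k(P)}k(Q)$ is nonzero exactly when $P\in\mathcal X(E)$. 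Under the hypothesis $\mathcal X(F)\subseteq\mathcal X(E)$ this accounts for every $Q\in\mathrm{Spec}(F)$, so $F\to E\otimes_R F$ is a flat epimorphism that is surjective on spectra, i.e.\ faithfully flat. Since a faithfully flat epimorphism is an isomorphism, we obtain $F\cong E\otimes_R F$, and the composite $R\to E\to E\otimes_R F\cong F$ is the desired factorization (its equality with the original $R\to F$ comes from the commutativity of the pushout square).

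Part (2) is then formal: assuming $\mathcal X(E)=\mathcal X(F)$ with both arrows flat epimorphisms, applying (1) in both directions produces $R$-algebra maps $E\to F$ and $F\to E$, and the composites must be the identities because $R\to E$ and $R\to F$ are epimorphisms; equivalently, this is the injectivity clause of the bijection in Proposition~\ref{LAZZEPI}. The delicate step of the whole argument is recognising the spectrally surjective flat epimorphism $F\to E\otimes_R F$ as an isomorphism, which hinges on two standard but non-trivial inputs from Lazard's theory: the pointwise description $E_P=R_P$ of flat epimorphisms on their spectral image, and the fact that a faithfully flat epimorphism of rings is an isomorphism.
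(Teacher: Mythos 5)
Your proof is correct and follows essentially the same route as the paper: both arguments base-change the flat epimorphism along $R\to F$, show that the resulting flat epimorphism $F\to E\otimes_RF$ is spectrally surjective (you via a fibre computation, the paper via the EGA surjection of $\mathrm{Spec}(E\otimes_RF)$ onto the fibre product of spectra), and then invoke Lazard's result that a faithfully flat epimorphism is an isomorphism, with part (2) obtained formally from part (1) or from Proposition~\ref{LAZZEPI} in both cases. One small slip: for $P\notin\mathcal X(E)$ Lazard's dichotomy gives $E_P=PE_P$ (i.e.\ the fibre $E\otimes_Rk(P)$ vanishes), not $E_P=0$; since the vanishing of the fibre is exactly what your computation uses, the argument is unaffected.
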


\begin{proof} (1) The ring morphism $\alpha:F\to F\otimes_RE$ is a flat epimorphism. If $\mathcal X(F)\subseteq\mathcal X(E)$, then the spectral morphism of $\alpha$ is surjective, because there is a surjective map $ \mathrm{Spec}(F\otimes_R E )\to \mathrm{Spec}(E)\times_{\mathrm{Spec}(R)} \mathrm{Spec}(F)$ \cite[Corollaire 3.2.7.1, p.235]{EGA}.
  It follows that $\alpha$ is a faithfully flat epimorphism, whence an isomorphism by \cite[Lemme 1.2, p.109]{L} and an implication is proved. Its converse is obvious. Now (2) can be proved by using (1). But it  is  also a consequence of Proposition~\ref{LAZZEPI}. 
\end{proof}

\begin{corollary}$(\mathcal{MCS}$)-rule Let $R\to E$ be a ring morphism. 
\begin{enumerate}
\item If $E = R_\Sigma$ where $\Sigma$ is a  mcs of $R$, then $\mathcal X (E) = \cap [\mathrm D(s) | s\in \Sigma]$.

\item If $E=\varinjlim R_{s_i}$, where $\{s_i\}$ is family of elements of $R$, then $E =R_\Sigma$, where $\Sigma$ is the mcs of $R$ generated by the family.
\end{enumerate}
\end{corollary}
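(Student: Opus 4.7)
The plan is to deduce both parts from the ($\mathcal L$)-rule (Proposition~\ref{LIM}) and the ($\mathcal X$)-rule (Proposition~\ref{EPIPLAT}), together with the elementary identity $\mathcal X(R_s) = \mathrm D(s)$ for a single element $s\in R$. Throughout, I will use that each $R\to R_s$ is a flat epimorphism and that $\Sigma$ may be taken closed under finite products without changing the localization.

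For (1), the key observation is to present $R_\Sigma$ as a directed colimit of principal localizations. Order $\Sigma$ by divisibility: for $s,t\in\Sigma$ set $s\le t$ iff $s\mid t$ in $R$, so that a canonical morphism $R_s\to R_t$ exists exactly in that case; since $\Sigma$ is closed under products, this order is directed. The universal property of localization identifies $R_\Sigma=\varinjlim_{s\in\Sigma} R_s$. Applying the ($\mathcal L$)-rule yields
$$\mathcal X(R_\Sigma)=\bigcap_{s\in\Sigma}\mathcal X(R_s)=\bigcap_{s\in\Sigma}\mathrm D(s),$$
which is (1).

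For (2), let $\Sigma$ be the mcs of $R$ generated by the family $\{s_i\}$, so that the elements of $\Sigma$ are exactly the finite products $s_{i_1}\cdots s_{i_n}$. The direct system of the $R_{s_i}$ is directed after passing to finite subfamilies and taking products, and its colimit $E$ is a flat epimorphism, being a filtered colimit of flat epimorphisms. By the ($\mathcal L$)-rule, $\mathcal X(E)=\bigcap_i\mathrm D(s_i)$. On the other hand, by (1), $\mathcal X(R_\Sigma)=\bigcap_{s\in\Sigma}\mathrm D(s)$; and for a prime $P\in\mathrm{Spec}(R)$ the condition $s_i\notin P$ for every $i$ is equivalent to the condition $s\notin P$ for every finite product $s\in\Sigma$, so the two intersections coincide. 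Since both $E$ and $R_\Sigma$ are flat epimorphisms with the same spectral image, Proposition~\ref{EPIPLAT}(2) delivers the isomorphism $E\cong R_\Sigma$.

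The main obstacle, such as it is, lies in (2): one must be confident that the direct limit $E$ is itself a flat epimorphism so that the ($\mathcal X$)-rule applies. This is the standard stability of flat epimorphisms of commutative rings under filtered colimits, and is implicit in Lazard's treatment cited in the paper; no further work is required.
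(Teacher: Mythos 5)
Your proof is correct and takes essentially the same route as the paper: the paper's own proof is the one-line remark that the corollary ``is a consequence of the above rules,'' i.e., exactly the combination of the ($\mathcal L$)-rule and the ($\mathcal X$)-rule, together with the standard facts $\mathcal X(R_s)=\mathrm D(s)$, the presentation of $R_\Sigma$ as a filtered colimit of principal localizations, and the stability of flat epimorphisms under filtered colimits, which you spell out. The only quibble is your parenthetical claim that a canonical map $R_s\to R_t$ exists ``exactly'' when $s\mid t$ (it exists more generally, whenever $s$ becomes invertible in $R_t$), but your argument only uses the trivial direction, so nothing is affected.
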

\begin{proof} The proof is a consequence of the above rules.
\end{proof} 

\section{S-regular ideals and rings of sections}

If $I$ is an ideal of a ring $R$, then $\Gamma (\mathrm{D}(I),R)$ (or $\Gamma (\mathrm{D}(I))$) denotes the ring of sections of the scheme $\mathrm{Spec}(R)$ over the open subset $\mathrm D (I)$. All that we need to know is that $\Gamma(\mathrm D(R))=R$, $\Gamma(\emptyset)= 0$ and if $f:R\to S$ is a ring morphism, there is a commutative diagram, because ${}^af^{-1}(\mathrm{D}(I))= \mathrm{D}(IS)$: 

\centerline{\car R, S, {\Gamma (\mathrm{D}(I))}, {\Gamma (\mathrm{D}(IS))}}

We denote by $\mathrm{Ass}(R)$ the set of all (Bourbaki) prime ideals $P$ associated to the $R$-module $R$; that is, $P\in \mathrm{Min}(\mathrm{V}(0:r))$ for some $r\in R$. 
Recall that a ring morphism $f:R \to S$ is called {\it schematically dominant} if for each open subset $U$ of $\mathrm{Spec}(R)$, the map $\Gamma (U,R) \to \Gamma ({}^af^{-1}(U),S)$ is injective \cite[Proposition I.5.4.1]{EGA}. The first author proved that a flat ring morphism $f:R \to S$ is schematically dominant if and only if $\mathrm{Ass}(R)\subseteq\mathcal X (S)$ \cite[Proposition 52]{PS}.
  Clearly if $\mathrm{Min}(R)=\mathrm{Ass}(R)$ (for example, if $R$ is an integral domain) and $f$ is injective and flat, then $f$ is schematically dominant.
 
\begin{lemma}\label{1.10.5} A flat extension $R\subseteq S$ is schematically dominant.
 \end{lemma}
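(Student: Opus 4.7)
The plan is to apply the characterization cited just before the lemma, namely \cite[Proposition 52]{PS}, which reduces schematic dominance of a flat morphism $f:R\to S$ to the inclusion $\mathrm{Ass}(R)\subseteq\mathcal X(S)$. So I only need to show that for a flat extension $R\subseteq S$, every Bourbaki associated prime of $R$ is contracted from some prime of $S$.

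Fix $P\in\mathrm{Ass}(R)$, so $P$ is minimal in $\mathrm{V}((0:r))$ for some $r\in R$. First I would observe that $r/1\neq 0$ in $R_P$: indeed $(0:r)\subseteq P$, so no element of $R\setminus P$ kills $r$. Since $R\subseteq S$ is flat and injective, localizing preserves injectivity and gives a flat injection $R_P\hookrightarrow S_P$; in particular $r/1\neq 0$ in $S_P$. Next I would exploit the locality of $R_P$: the hypothesis that $P$ is minimal over $(0:r)$ means that inside the local ring $R_P$, the maximal ideal $PR_P$ is the only prime containing $(0:r)R_P$, so $\sqrt{(0:r)R_P}=PR_P$. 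Consequently, for each $p\in P$ some power $p^{n_p}\cdot r$ vanishes in $R_P$, and hence in $S_P$.

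It remains to show $PS_P\neq S_P$, since then any prime of $S_P$ contracts to a prime of $R_P$ contained in $PR_P$, yielding (by maximality of $PR_P$) a prime of $S$ lying over $P$. Suppose, for contradiction, that $PS_P=S_P$. Then there is a finite relation $1=\sum_{i=1}^k p_is_i$ in $S_P$ with $p_i\in P$ and $s_i\in S_P$. Choose $n$ with $p_i^n r=0$ in $S_P$ for every $i$. Raising the relation to the power $N=kn$, every multinomial term in the expansion of $\bigl(\sum_i p_is_i\bigr)^{N}$ has total degree $N$ in $k$ variables, so by pigeonhole at least one variable occurs with exponent $\geq n$; each term is therefore divisible by some $p_i^n$. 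Multiplying by $r$, every term kills $r$, giving $r=r\cdot 1^N=0$ in $S_P$, which contradicts $r/1\neq 0$ in $S_P$.

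The main obstacle is bridging from the abstract ``minimal over an annihilator'' condition in Bourbaki's sense to something computationally usable; the key trick is the localization step, which converts minimality into the radical identity $\sqrt{(0:r)R_P}=PR_P$, after which the multinomial expansion delivers the contradiction. The rest is bookkeeping with flatness and the invoked characterization from \cite{PS}.
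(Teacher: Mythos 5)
Your proof is correct, and it shares the paper's starting point: both arguments invoke \cite[Proposition 52]{PS} to reduce the lemma to showing $\mathrm{Ass}(R)\subseteq\mathcal X(S)$ (flatness of the extension being used only to make that criterion applicable). Where you differ is in how an associated prime $P\in\mathrm{Min}(\mathrm V(0:r))$ is lifted to $\mathrm{Spec}(S)$. The paper argues softly: injectivity gives $(0:_Sr)\cap R=(0:r)$, hence an injection $R/(0:r)\hookrightarrow S/(0:_Sr)$, and $P/(0:r)$, being a minimal prime of the source, is contracted from a minimal prime $Q/(0:_Sr)$ of the target; this two-line argument even yields the stronger conclusion that the lift $Q$ lies in $\mathrm{Ass}(S)$. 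You instead localize at $P$ and show by hand that the fiber ring $S_P/PS_P$ is nonzero: minimality of $P$ over $(0:r)$ gives $\sqrt{(0:r)R_P}=PR_P$, so each $p_i$ in a hypothetical relation $1=\sum p_is_i$ satisfies $p_i^{n}r=0$ in $S_P$, and your multinomial expansion of $\bigl(\sum p_is_i\bigr)^{N}$ turns that relation into $r=0$ in $S_P$, contradicting injectivity of $R_P\hookrightarrow S_P$. In effect you have inlined, in the ``relative to $r$'' setting, the standard proof that an injective ring map hits every minimal prime of its source, which is exactly the fact the paper's quotient trick allows it to quote rather than reprove; your version is more self-contained and elementary, the paper's is shorter and gives $Q\in\mathrm{Ass}(S)$ as a bonus. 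One phrasing to tighten: once $PS_P\neq S_P$ is known, you should take a prime of $S_P$ \emph{containing} $PS_P$ (not an arbitrary prime of $S_P$); its contraction contains $PR_P$ and, $R_P$ being local, equals $PR_P$, which is the standard fiber criterion the paper itself records at the start of Section 3.
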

 \begin{proof} 
If $ P\in\mathrm{Ass}(R)$, there is some $a\in R$, such that $P\in \mathrm{Min}(\mathrm{V}(0:a))$. From $(0:_Sa)\cap R=0:a$, we deduce that $R/(0:a)\to S/(0:_Sa)$ is injective and then $P/(0:a)$ can be lifted up to a minimal prime ideal $Q/(0:_Sa)$. Hence $Q\in \mathrm{Ass}(S)$ is above $P$.
  \end{proof}
  
  Let $R \subseteq S$ be an extension and  an ideal   $I$  of $R$.
  Then $I$ is called $S$-{\it regular} if $IS=S$ \cite{KZ}. Note that $S$-regular ideals play a prominent role in \cite{KZ}. They are involved in certain questions. For example, if $f:R\to S$ is a ring morphism, the fiber at a prime ideal $P$ of $R$ is ${}^af^{-1}(P)$. This fiber is homeomorphic to the spectrum of the ring $S_P/PS_P$. Therefore, the fiber is empty if and only if $S_P=PS_P$, which means that $PR_P$ is $S_P$-regular. If $f$ is a flat epimorphism, the fiber at $P$ is empty if and only if $S = PS$ 
  \cite[Proposition 2.4, p.111]{L}.

\begin{remark}\label{RS} Let $f: R \hookrightarrow S$ be an extension. 

(1) An ideal $I$ of $R$ is $S$-regular if and only if $\mathcal X(S)\subseteq \mathrm D(I)$ \cite[Lemma 2.3]{PG}. Such an ideal $I$ is dense; that is, $0:I= 0$. 

(1(a)) $I$ is $S$-regular if and only if $\sqrt I$ is $S$-regular, because $\mathrm D(\sqrt IS)={}^af^{-1}(\mathrm D(\sqrt I))={}^af^{-1}(\mathrm D(I))= \mathrm D(IS)$. 

(1(b)) $I$ is $S$-regular if and only if  $I_P$ is $S_P$-regular for each $P\in \mathrm{Spec}(R)$. We need only to show that if the local condition holds, then $I$ is $S$-regular. Suppose that $IS \subset S$, then there is some prime ideal  $Q$ of $S$, such that $IS \subseteq Q$. If $P= Q\cap R$, then $Q_P$ is a prime ideal of $S_P$, such that $I_PS_P \subseteq Q_P$, a contradiction.

(2) If $I$ is $S$-regular, we have $\mathrm{Spec}(S)=\mathrm D(IS)={}^af^{-1}(\mathrm D(I))$, so that there is a factorization $R\to\Gamma(\mathrm D(I))\to S$. 
  If, in addition, $f$ is flat, then $f$ is schematically dominant (Lemma~\ref{1.10.5}); so that, we can consider that there is a tower of extensions $ R\subseteq\Gamma(\mathrm D(I))\subseteq S$. Moreover, $\mathrm D(I)$ is an open subset which is (topologically) dense in $\mathrm{Spec}(R)$, because a schematically dominant morphism is dominant \cite[Proposition I.5.4.3]{EGA},
  {\it i.e.} its spectral image is dense. The density follows from $\mathcal X(S) \subseteq \mathrm{D}(I)$.
 
This result holds if the extension is Pr\"ufer and then $R\to\Gamma(\mathrm D(I))$ is Pr\"ufer.
 
(3) We will use the following consequence of Proposition~\ref{LAZZEPI}.
If $I$ is an ideal of $R$, then $R\to\Gamma(\mathrm D(I))$ is a flat  epimorphism and $\mathcal X(\Gamma(\mathrm D(I)))=\mathrm D(I)$ if and only if $\mathrm D(I)$ is an affine open subset of $\mathrm{Spec}(R)$ (for example if $I$ is principal), in which case $\mathrm D(I) = \mathrm D(J)$ where $J$ is a finitely generated ideal. 
\end{remark}

We can say more after looking at the following recall adapted to ring morphisms (the reader is referred to \cite[Definition I.4.2.1, p.260]{EGA} for the definition of an open immersion of schemes). We will say that a ring morphism is an {\it open immersion} if the morphism of schemes associated is an open immersion.

\begin{proposition}\label{IM}  Let $f: R \to S$ be a ring morphism.

\begin{enumerate}
\item \cite[I.4.2.2]{EGA} $f$ is an open immersion if and only if $\mathrm{Spec}(S)\to\mathcal X (S)$ is a homeomorphism, $\mathcal X(S)$ is an open subset and the local morphisms of $f$ are isomorphisms. 

\item A flat epimorphism $R\to S$, such that $\mathcal X(S)$ is Zariski open is an open immersion.

\item \cite[Th\'eor\`eme 17.9.1, p.79]{EGAIV} $f$ is an open immersion if and only if $f$ is a flat epimorphism of finite presentation.

\item \cite[Theorem 1.1]{CR}  An injective flat epimorphism of finite type is of finite presentation, whence is an open immersion.
\end{enumerate} 
\end{proposition}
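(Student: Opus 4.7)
The plan is that three of the four items are directly imported from the cited references and need no further argument: (1) is \cite[I.4.2.2]{EGA}, (3) is \cite[Th\'eor\`eme 17.9.1]{EGAIV}, and (4) is \cite[Theorem 1.1]{CR}. So the only piece that actually requires work is (2), which I would derive from (1) using the standard theory of flat epimorphisms.

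For (2), assume $f:R\to S$ is a flat epimorphism with $\mathcal X(S)$ Zariski open. To apply (1), I need to verify three things: that $\mathcal X(S)$ is open (given by hypothesis), that every local morphism $R_P\to S_Q$ of $f$ is an isomorphism, and that the continuous bijection ${}^af:\mathrm{Spec}(S)\to\mathcal X(S)$ is a homeomorphism. The first of the three items is hypothesis. The second is a classical property of flat epimorphisms: if $Q\in\mathrm{Spec}(S)$ lies over $P\in\mathrm{Spec}(R)$, then $S_Q\cong R_P$, because $R\to S$ being a flat epimorphism implies $R_P\to S_Q$ is a flat epimorphism between local rings and such a morphism is forced to be an isomorphism (see \cite[Lemme 1.2, p.109]{L}).

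The third item is the step that needs a small argument. Since $R\to S$ is an epimorphism, the multiplication map $S\otimes_R S\to S$ is an isomorphism, which by taking spectra means the diagonal $\mathrm{Spec}(S)\to\mathrm{Spec}(S)\times_{\mathrm{Spec}(R)}\mathrm{Spec}(S)$ is a homeomorphism; equivalently, ${}^af$ is injective (whence the bijection onto $\mathcal X(S)$) and identifies $\mathrm{Spec}(S)$ with the subspace $\mathcal X(S)\subseteq\mathrm{Spec}(R)$. Alternatively, and more in the spirit of the paper, one can invoke Proposition~\ref{LAZZEPI}: the ring of sections $\Gamma(\mathcal X(S),R)$ is an $R$-flat epimorphism with spectral image $\mathcal X(S)$, and by uniqueness it must coincide with $S$, so $\mathrm{Spec}(S)$ is already presented as $\mathcal X(S)$ with its induced affine scheme structure. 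Combining these three facts with (1) gives that $f$ is an open immersion.

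The only potential obstacle is the homeomorphism step, and the cleanest route is the diagonal argument above (or equivalently, citing \cite[Corollaire 3.2.7.1]{EGA} as already done in the proof of Proposition~\ref{EPIPLAT}). Everything else is either hypothesis, a cited theorem, or a standard property of flat epimorphisms already used elsewhere in the paper.
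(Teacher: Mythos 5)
Your overall strategy coincides with the paper's: treat (1), (3), (4) as citations and deduce (2) from (1) by checking the three conditions, namely openness of $\mathcal X(S)$ (hypothesis), that the local morphisms are isomorphisms (a flat epimorphism localizes to a faithfully flat epimorphism between local rings, which is an isomorphism by \cite[Lemme 1.2, p.109]{L}), and that ${}^af$ is a homeomorphism onto its image. The paper does exactly this, but for the third point it cites \cite[Corollaire 2.2, p.111]{L} directly: the spectral map of a flat epimorphism is a homeomorphism onto its image.

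Your preferred justification of that third point, the diagonal argument, has a genuine gap. The isomorphism $S\otimes_RS\cong S$ (i.e.\ epimorphy) gives injectivity of ${}^af$, but it does not ``identify $\mathrm{Spec}(S)$ with the subspace $\mathcal X(S)$'': a continuous injection need not be a homeomorphism onto its image, and epimorphy alone does not force the topology of $\mathrm{Spec}(S)$ to be the one induced from $\mathrm{Spec}(R)$. Concretely, if $R$ is a discrete valuation ring with maximal ideal $m$ and fraction field $K$, then $R\to R/m\times K$ is a ring epimorphism (one checks $(R/m\times K)\otimes_R(R/m\times K)\cong R/m\times K$) whose spectral map is a continuous bijection from a discrete two-point space onto $\mathrm{Spec}(R)$, which carries the Sierpinski topology; it is therefore not a homeomorphism. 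This epimorphism is not flat, and that is precisely the point: the homeomorphism property is where flatness enters, via Lazard's theorem, and it is not a formal consequence of the diagonal. (The parenthetical appeal to \cite[Corollaire 3.2.7.1, p.235]{EGA}, which concerns surjectivity of $\mathrm{Spec}(F\otimes_RE)$ onto the fibre product, does not repair this either.) By contrast, your fallback argument is correct and is essentially the paper's: Proposition~\ref{LAZZEPI} says that $\mathcal X(S)$, with the induced structure, is an affine scheme whose ring of sections is $R$-isomorphic to $S$, so $\mathrm{Spec}(S)\to\mathcal X(S)$ is an isomorphism of schemes, in particular a homeomorphism. So your proposal becomes a correct proof, matching the paper's, once the diagonal argument is deleted and the appeal to Lazard (either Proposition~\ref{LAZZEPI} or \cite[Corollaire 2.2, p.111]{L}) is promoted from fallback to main argument.
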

\begin{proof} We need only to prove (2) and using  (1).
 Since $f$ is a flat epimorphism, its spectral map is an homeomorphism  onto its image by \cite[Corollaire 2.2, p.111]{L} which is an open subset of the form $\mathrm D(I)$.
   Moreover, the local morphisms of the map are isomorphisms.
\end{proof}

\begin{proposition}\label{AFFINJ}  Let $R\subset S$ be an injective flat epimorphism of finite type. Then $\mathcal X (S)$  is an open affine subset  $\mathrm D (I)$, where  $I$ is a $S$-regular ideal   and  there is  an $R$-isomorphism   $\Gamma (\mathrm{D}(I))\cong S$,  $R\subset S$ is of finite presentation and $I$ is a dense ideal.
   
Conversely, if $\mathrm{D}(I)$ is an open affine subset, where $I $ is a finitely generated dense ideal, then $R\to \Gamma (\mathrm{D}(I))$ is an injective flat epimorphism, of finite type (presentation), such that $\mathcal X (\Gamma (\mathrm{D}(I)))= \mathrm{D}(I)$. 
   \end{proposition}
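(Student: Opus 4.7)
The plan for the direct implication is to apply Proposition~\ref{IM}(4) to upgrade the hypotheses: since $R\subset S$ is an injective flat epimorphism of finite type, it is already of finite presentation, hence an open immersion by Proposition~\ref{IM}(3). Proposition~\ref{IM}(1) then says $\mathcal X(S)$ is open in $\mathrm{Spec}(R)$ and $\mathrm{Spec}(S)\to\mathcal X(S)$ is a homeomorphism, so $\mathcal X(S)$ is quasi-compact. A quasi-compact open of $\mathrm{Spec}(R)$ is a finite union of principal opens, hence of the form $\mathrm D(I)$ for some finitely generated ideal $I$. Proposition~\ref{LAZZEPI} produces the flat epimorphism $R\to\Gamma(\mathrm D(I))$ whose spectral image is $\mathrm D(I)=\mathcal X(S)$; the bijectivity clause of Proposition~\ref{LAZZEPI} then yields the $R$-isomorphism $S\cong_R\Gamma(\mathrm D(I))$. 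Finally, Remark~\ref{RS}(1) gives both the $S$-regularity of $I$ (because $\mathcal X(S)\subseteq\mathrm D(I)$) and its density.

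For the converse, given an affine open $\mathrm D(I)$ with $I$ finitely generated and dense, Remark~\ref{RS}(3) ensures that $R\to\Gamma(\mathrm D(I))$ is a flat epimorphism with $\mathcal X(\Gamma(\mathrm D(I)))=\mathrm D(I)$. Proposition~\ref{IM}(2) promotes it to an open immersion, and Proposition~\ref{IM}(3) then gives finite presentation (so, a fortiori, finite type). Injectivity is the one point that does not follow directly from the already-cited machinery: writing $I=(f_1,\ldots,f_n)$, an element $r\in R$ going to zero in $\Gamma(\mathrm D(I))$ is killed in each localization $R_{f_i}$, so $f_i^{m_i}r=0$ for some exponents; taking a sufficiently large common power $N$ gives $I^N r=0$. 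An induction on $k$ shows that $0:I=0$ forces $0:I^k=0$ for every $k$: if $xI^k=0$ then $(xI)I^{k-1}=0$, so by the inductive hypothesis $xI\subseteq 0:I^{k-1}=0$, whence $xI=0$ and then $x=0$. Applied at $k=N$, this forces $r=0$.

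The main obstacle is the direct direction: one must convert the algebraic hypothesis \emph{finite type} into a topological statement about the image of the spectral map, then upgrade it to the representation $\mathcal X(S)=\mathrm D(I)$ with $I$ finitely generated, and finally identify the ring $S$ itself with $\Gamma(\mathrm D(I))$. The key leverage is the combined use of the cascade in Proposition~\ref{IM} (the automatic finite presentation in (4), the equivalence with open immersion in (3), the homeomorphism in (1)) together with the bijectivity in Proposition~\ref{LAZZEPI}. The remaining claims, including the injectivity in the converse, reduce to direct applications of Remark~\ref{RS} and a short commutative-algebra argument using density of $I$.
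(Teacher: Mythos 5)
Your proof is correct, and its skeleton matches the paper's: Proposition~\ref{IM}(4) upgrades the hypothesis to finite presentation, the image $\mathcal X(S)$ is identified as $\mathrm D(I)$ with $I$ finitely generated, $S$-regularity and density follow, and the bijection of Proposition~\ref{LAZZEPI} (equivalently Proposition~\ref{EPIPLAT}(2)) identifies $S$ with $\Gamma(\mathrm D(I))$; the first half of your converse (Remark~\ref{RS}(3), then Proposition~\ref{IM}(2)--(3)) is also exactly the paper's. You diverge in two steps. First, to see that $\mathcal X(S)$ is quasi-compact open, the paper invokes the Chevalley theorem, while you extract it from Proposition~\ref{IM}(1): $\mathcal X(S)$ is open and is the homeomorphic image of the quasi-compact space $\mathrm{Spec}(S)$ --- a purely topological shortcut that avoids constructibility arguments. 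Second, and this is the genuine difference, for injectivity of $R\to\Gamma(\mathrm D(I))$ in the converse the paper cites Lazard twice ($0:I=0$ is equivalent to $\mathrm{Ass}(R)\subseteq\mathrm D(I)$ by \cite[Corollaire 1.14, p.93]{L}, and injectivity then follows from \cite[Proposition 3.3, p.96]{L}), whereas you give a self-contained argument: an $r$ dying in $\Gamma(\mathrm D(I))$ dies in every $R_{f_i}$, hence $I^Nr=0$ for $N$ large, and your induction shows density of $I$ propagates to all powers $I^k$, forcing $r=0$. Your route is more elementary and stays within basic commutative algebra (it also makes visible where finite generation of $I$ is used), while the paper's citations are shorter and place the density condition inside Lazard's general theory of associated primes and flat epimorphisms; both are valid.
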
 
   \begin{proof}    
To apply Proposition~\ref{LAZZEPI}, we need only to look at injective flat epimorphisms of finite type $R \to S$.  
     We know that such a ring morphism $f: R\subset S$ is of finite presentation  according to Proposition~\ref{IM}(4). 
  By the Chevalley Theorem, $\mathcal X(S)$ is a Zariski quasi-compact open subset of $\mathrm{Spec}(R)$, therefore of the form $\mathrm D(I)$, where $I$ is an ideal of $R$, of finite type. We have ${}^af^{-1}(\mathrm D(I))=\mathrm D(IS)=\mathrm{Spec}(S)$ because $\mathcal X(S)=\mathrm D(I)$, so that $IS= S$ and then $I$ is dense because it is  $S$-regular. 
   Moreover, $\Gamma(\mathrm{D}(I))\cong S$ by Proposition \ref{EPIPLAT}(2) because $\mathcal X(S)=\mathrm D(I)=\mathcal X (\Gamma (\mathrm{D}(I)))$.  
   
Assume that the hypotheses of the converse hold. Since the morphism $R \to \Gamma(\mathrm{D}(I))$ is an open immersion by Proposition~\ref{IM}, we get that $R\to\Gamma(\mathrm{D}(I))$ is of finite presentation. Moreover, $0:I=0$ (which is equivalent to $\mathrm{Ass}(R)\subseteq\mathrm{D}(I)$ \cite[Corollaire 1.14, p.93]{L}), so that $R\to \Gamma(\mathrm{D}(I))$ is injective,  by  \cite[Proposition 3.3, p.96]{L}.
  \end{proof}
 
  We note the following result:
\begin{proposition} \cite[Theorem 2.8, p.101, Theorem 2.6, p.100]{KZ} Let $R\subseteq S$ be an extension which is a flat epimorphism. Then the extension is Pr\"ufer if and only if for every finitely generated $S$-regular ideal $I$ of $R$,  the ring $R/I$ is arithmetical (resp.; $I$ is locally principal).
   \end{proposition}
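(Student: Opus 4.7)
The plan is to reduce everything to the local case and then exploit the structural theory of Prüfer extensions over a local base. By Proposition~\ref{LOC}, being Prüfer is a local property; by Remark~\ref{RS}(1)(b), $S$-regularity of ideals is local; and both ``arithmetical'' and ``locally principal'' are local conditions on $\mathrm{Spec}(R)$. Hence it suffices to prove the statement with $R$ local. In that setting, ``locally principal'' simply means ``principal'' and ``$R/I$ arithmetical'' means ``every finitely generated ideal of $R/I$ is principal''.

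For the forward implication, fix a finitely generated $S$-regular ideal $I = (a_1, \ldots, a_n)$ and write $1 = \sum a_i s_i$ in $S$. The goal is to use the Prüfer condition $(R:_R s_i)S = S$ (given by Proposition~\ref{CAREPIPLAT}) to force pairwise divisibility among the $a_i$. The cleanest route is to invoke the fact—developed in Section~7 and in \cite{KZ}—that a local Prüfer extension behaves like a Manis valuation pair, so that for any regular $a_i, a_j$ one of $a_i/a_j, a_j/a_i$ lies in $S$ and, by chasing the flat-epimorphism relations back into $R$, in $R$ itself. This reduces $I$ to a principal ideal. The arithmetical statement is then obtained by applying the same reasoning to each finitely generated ideal of $R$ that contains a given $S$-regular $I$.

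For the converse, assume the right-hand condition and fix $T \in [R,S]$; by Proposition~\ref{CAREPIPLAT}, it suffices to show $(R:_R t)T = T$ for every $t \in T$. Since $R\subseteq S$ is a flat epimorphism, $(R:_R t)S = S$, so $(R:_R t)$ contains a finitely generated $S$-regular subideal $J$. By hypothesis, $J$ is locally principal, and, combined with the finer information that $R/J$ is arithmetical, one can upgrade the identity $1\in JS$ to an identity $1\in JT$: localizing makes $J$ generated by a single $a \in (R:_R t)$, and the arithmetic/distributive structure of $R/J$ (applied to the ideal $(a,at)/J$) yields the required element of $T$ solving $au = 1$. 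Thus every $R\subseteq T$ is a flat epimorphism, so $R\subseteq S$ is Prüfer.

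The main obstacle is the forward direction: extracting principality of $I$ from the local flat-epimorphism data is not formal and genuinely relies on the Manis/strong-divisor picture of local Prüfer extensions. A secondary subtlety is that ``$R/I$ arithmetical'' and ``$I$ locally principal'' are not the same statement about $I$ in general—their equivalence as characterizations of the Prüfer property only emerges once one packages them with the $S$-regularity of $I$ and the ambient flat-epimorphism hypothesis.
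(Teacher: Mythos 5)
The paper gives no proof of this proposition (it is quoted directly from Knebusch--Zhang), so the only question is whether your argument stands on its own; it does not, because both directions contain genuine gaps. In the converse, your hypothesis applies only to ideals $I$ with $IS=S$ \emph{globally}, and this breaks the argument in two places. First, the opening reduction ``it suffices to prove the statement with $R$ local'' is not available in this direction: a finitely generated $S_M$-regular ideal of $R_M$ need not be the localization of any $S$-regular ideal of $R$, so after localizing you have no hypothesis left to use. Second, the ideal you actually apply the hypothesis to, namely $(a,at)$ where $a$ is a \emph{local} generator of $J$ at $M$, is not covered by it: $a$ is invertible in $S_M$ but in general not in $S$, so $(a,at)$ need not be $S$-regular (and ``$(a,at)/J$'' does not even parse, since $J\not\subseteq(a,at)$). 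The repair is to feed the hypothesis the ideal $J':=J+Jt$, which is finitely generated, contained in $R$, and $S$-regular because $J'S\supseteq JS=S$, and which satisfies $J'_M=(a,at)R_M$. Writing $(a,at)R_M=bR_M$ with $b=xa+yat$, $a=ub$, $at=vb$, and cancelling $b$ (invertible in $S_M$) gives $xu+yv=1$ in $R_M$; as $R_M$ is local, either $u$ is a unit, which forces $t/1\in R_M$, or $v$ is a unit, which forces $t$ to be invertible in $S_M$ with $t^{-1}\in R_M$. Applying this to every $t\in S$ with $t/1\notin R_M$, Proposition~\ref{1.7}(2) shows that $R_M\subseteq S_M$ is Pr\"ufer, and Proposition~\ref{LOC} concludes. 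Finally, you use ``$J$ locally principal'' and ``$R/J$ arithmetical'' simultaneously, but the proposition asserts that each condition \emph{separately} characterizes Pr\"ufer; you need two arguments, or a cycle such as Pr\"ufer $\Rightarrow$ locally principal $\Rightarrow$ arithmetical $\Rightarrow$ locally principal $\Rightarrow$ Pr\"ufer, where the third arrow comes from working in the chain ring $R_M/I_M^2$ to get $I_M=a_1R_M+I_M^2$ and finishing with Nakayama.

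The forward direction also rests on a claim that is not correct as stated. The generators $a_i$ of a finitely generated $S$-regular ideal need not be regular, let alone invertible in $S$, so ``$a_i/a_j\in S$'' is meaningless for them, and their principal ideals need not be pairwise comparable. Concretely, let $D$ be a valuation domain with fraction field $k$, let $N$ be a two-dimensional $k$-vector space, set $S:=k\oplus N$ with $N^2=0$ and $R:=D+N$; then $R\subset S$ is Pr\"ufer by Proposition~\ref{VALU} (take $P=N$), and for $0\neq d\in D$ and a basis $x,y$ of $N$ the ideal $(d,x,y)$ is $S$-regular, yet $x,y$ are zero-divisors and $Rx$, $Ry$ are incomparable. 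Principality of such an $I$ does not come from pairwise divisibility of its generators but from the structure in Proposition~\ref{VALU}: writing $S=R_P$ with $P=PS$ divided and $R/P$ a valuation domain, $IS=S$ forces some generator to lie outside $P$; dividedness gives $P\subseteq Ra_i$ for every generator $a_i\notin P$, which absorbs the generators lying in $P$, and comparability of the images in the valuation domain $R/P$ then yields $I=Ra_{i_0}$ (in the example, $(d,x,y)=Rd$). Alternatively you may simply invoke Proposition~\ref{INVERT}, which is exactly this local statement. With these two repairs your outline does close into a correct proof.
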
 
  \begin{proposition} 
  Let $R\subseteq  S$ be a flat epimorphism. Then,   
 $R\subseteq S$ is Pr\"ufer if and only if for each $P\in \mathrm{Spec}(R)$, the   set of  $S_P$-regular ideals of $R_P$ is a  chain.
  \end{proposition}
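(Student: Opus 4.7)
The plan is to reduce both directions to the case $R$ local via Proposition~\ref{LOC}, and then apply the preceding proposition, which characterizes Pr\"ufer flat epimorphisms among flat epimorphisms by the condition that every finitely generated $S$-regular ideal $I$ of $R$ renders $R/I$ arithmetical (equivalently, $I$ is locally principal). Two elementary observations will recur: (i) if $I$ is $S$-regular and $J\supseteq I$, then $J$ is $S$-regular, since $JS\supseteq IS=S$; (ii) every element $x$ of an $S$-regular ideal $I$ lies in a finitely generated $S$-regular subideal of $I$, namely $J=(x,a_1,\ldots,a_n)$ extracted from a relation $1=\sum_i a_is_i$ with $a_i\in I$ witnessing $IS=S$.

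For $(\Leftarrow)$, Proposition~\ref{LOC} allows me to check Pr\"ufer at each prime, so I may assume $R$ is local and the $S$-regular ideals of $R$ form a chain. For any finitely generated $S$-regular $I$, the ideals of $R/I$ correspond to the ideals of $R$ containing $I$; by (i) all of these are $S$-regular, hence form a sub-chain of the hypothesised chain. Thus $R/I$ is a chained local ring, in particular arithmetical, and the preceding proposition yields that $R\subseteq S$ is Pr\"ufer.

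For $(\Rightarrow)$, localizing at an arbitrary prime $P$ and using that both the Pr\"ufer and the flat-epi property are preserved by localization, I may assume $R$ is local with $R\subseteq S$ a Pr\"ufer flat epimorphism; the preceding proposition then makes every finitely generated $S$-regular ideal locally principal, hence principal because $R$ is local. Take $S$-regular ideals $I_1,I_2$ and suppose neither contains the other; pick $x\in I_1\setminus I_2$ and $y\in I_2\setminus I_1$. By (ii), enclose $x$ in a finitely generated $S$-regular $J\subseteq I_1$, so $J=(a)$ with $x\in(a)$. The ideal $(a,y)\supseteq J$ is finitely generated and $S$-regular, hence principal, say $(a,y)=(c)$. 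Writing $a=cu$, $y=cv$, $c=\alpha a+\beta y$, substitution gives $c(1-\alpha u-\beta v)=0$; since $(c)$ is $S$-regular the element $c$ is invertible in $S$, hence a nonzerodivisor in $R$, and so $\alpha u+\beta v=1$ in $R$. As $R$ is local, one of $u,v$ is a unit: if $u$ is a unit then $y\in(c)=(a)\subseteq I_1$, contradicting $y\notin I_1$; if $v$ is a unit then $a\in(c)=(y)$, whence $x\in(a)\subseteq(y)\subseteq I_2$, contradicting $x\notin I_2$.

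The main obstacle is the $(\Rightarrow)$ direction, because $I_1$ and $I_2$ are not assumed finitely generated and so the principal property furnished by the preceding proposition does not apply to them directly. The device that breaks the deadlock is to trap a witness $x\in I_1\setminus I_2$ inside a principal $S$-regular subideal $(a)$ via (ii), pair it with an arbitrary $y\in I_2\setminus I_1$, and exploit the $S$-invertibility of a generator of a principal $S$-regular ideal to obtain the identity $\alpha u+\beta v=1$ in the local ring $R$, from which the local dichotomy \emph{unit or nonunit} delivers the desired comparability.
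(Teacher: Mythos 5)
Your proof is correct, but its heart is genuinely different from the paper's. Both arguments share the same outer reduction to the local case: you do it via Proposition~\ref{LOC} together with stability of the Pr\"ufer and flat-epimorphism properties under localization, while the paper does it by quoting that an extension is Pr\"ufer if and only if $R_P\subseteq S_P$ is Manis, equivalently Pr\"ufer--Manis, for every prime $P$ (\cite[Proposition 1.1(2)]{Pic 5}). The difference is the local step. The paper's proof is a pure citation: \cite[Theorem 3.5, p.190]{KZ} states outright that a flat epimorphism is Pr\"ufer--Manis if and only if the set of \emph{all} $S$-regular ideals is a chain, so the entire local content is outsourced to Manis valuation theory. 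You instead derive that local equivalence from the proposition stated immediately before (the criterion via finitely generated $S$-regular ideals having arithmetical quotient, equivalently being locally principal): for $(\Leftarrow)$ you note that any ideal containing an $S$-regular ideal is $S$-regular, so the chain hypothesis forces each $R_P/I$ to be a chained local ring, hence arithmetical; for $(\Rightarrow)$ you handle ideals that need not be finitely generated by trapping a witness $x\in I_1\setminus I_2$ inside a finitely generated, hence principal, $S$-regular subideal, and then use that a generator of a principal $S$-regular ideal is invertible in $S$ (so regular in $R$) together with the unit-or-nonunit dichotomy in a local ring to force comparability. Your route buys self-containedness --- it never touches Pr\"ufer--Manis theory and rests only on the immediately preceding proposition plus elementary ideal manipulation --- at the cost of length; the paper's route buys a two-line proof by citing a theorem of Knebusch--Zhang whose content is essentially the local case of the statement being proved.
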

\begin{proof} According to \cite[Proposition 1.1(2)]{Pic 5}, the extension is Pr\"ufer if and only if $R_P \subseteq S_P$ is Manis for each $P\in \mathrm{Spec}(R)$ and equivalently $R_P \subseteq S_P$ is Pr\"ufer-Manis. The result follows from \cite[Theorem 3.5, p.190]{KZ}.
  \end{proof} 
  
We recall that the dominion of a ring morphism $f:R\to S$ is the subring $\mathrm{Dom}(f) = \{x\in S\mid x\otimes1=1\otimes x \, \, \mathrm{in} \, \, S\otimes_RS \}$ of $S$, which contains the subring $f(R)$. Actually,
  $\mathrm{Dom}(f)$ is the kernel of the morphism of $R$-modules $i_1-i_2:S \to S\otimes_RS$ where $i_1, i_2$ are the natural ring morphisms $S \to S\otimes_RS$.  
       
\begin{proposition}\label{DOM} If $f:R\to S$ is a flat morphism and $I$ an ideal of $R$, such that $\mathcal X_R(S) =\mathrm{D}(I)$, then 
 \begin{enumerate}
 
 \item$\Gamma (\mathrm{D}(I)) = \mathrm{Dom}(f)$ and $\Gamma (\mathrm{D}(I)) \to S$ is an injective flat morphism.
 
\item If in addition $f$ is a ring extension, $\widetilde R\subseteq\widehat R \subseteq\Gamma(\mathrm{D}(I))$, each of the extensions in $S$ being flat. In particular, if $\mathrm D(I)$ is affine, then $\widehat R=\Gamma(\mathrm{D}(I))$. 
 
\item If $g:R\to B$ is a flat morphism, setting $C:=S\otimes_RB$, then $\mathcal X_B(C)=\mathrm{D}(IB)$ and
 $\Gamma (\mathrm{D}(I))\otimes_RB \cong \Gamma (\mathrm{D}(IB))$.
 
\item If $P$ is a prime ideal of $R$, then $\Gamma(\mathrm{D}(I_P))= (\Gamma (\mathrm{D}(I)))_P$. In particular if $P\in \mathrm D(I)$, then $(\Gamma (\mathrm{D}(I)))_P = R_P$.
 
 \item  $\mathrm D(I )\subseteq \mathcal X(\Gamma (\mathrm D(I)))$.
 
\item If $I\Gamma(\mathrm D(I))=\Gamma(\mathrm D(I))$, then $\mathrm D(I )=\mathcal X(\Gamma(\mathrm D(I)))$, so that $\mathrm D(I)$ is an open affine subset if in addition $R\to\Gamma  (\mathrm D(I))$ is a flat epimorphism.
 \end{enumerate}
    \end{proposition}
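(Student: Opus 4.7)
The plan is to reduce every assertion to Lazard's correspondence (Proposition~\ref{LAZZEPI}) and the $(\mathcal{X})$-rule (Proposition~\ref{EPIPLAT}). The unifying thread is that both the ring of sections $\Gamma(\mathrm{D}(I))$ and the dominion $\mathrm{Dom}(f)$ capture the ``largest part of $S$ visible at $\mathrm{D}(I) = \mathcal{X}_R(S)$'', and once their agreement (Part~(1)) is established, every remaining part follows by formal manipulations of spectral images together with base change.

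For Part~(1) I would prove $\mathrm{Dom}(f) = \Gamma(\mathrm{D}(I))$ by two inclusions. For $\mathrm{Dom}(f)\subseteq\Gamma(\mathrm{D}(I))$, an element of the dominion localizes compatibly at each $R_a$ with $a\in I$ and thus assembles into a section on the cover $\{\mathrm{D}(a):a\in I\}$ of $\mathrm{D}(I)$. For $\Gamma(\mathrm{D}(I))\subseteq\mathrm{Dom}(f)$, a section $s$ corresponds locally to an element of $S_a$, and after inverting $a$ the identity $s\otimes 1 = 1\otimes s$ holds in $(S\otimes_R S)_a = S_a\otimes_{R_a}S_a$ because $R\to R_a$ is an epimorphism; flatness of $f$ permits the glueing of these local equalities to a genuine equation in $S\otimes_R S$. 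Injectivity of $\Gamma(\mathrm{D}(I))\hookrightarrow S$ is built into the identification, and flatness of the second leg comes from the standard fact that whenever $R\to T$ is a flat epimorphism factoring a flat $R\to S$, the map $T\to S$ is flat (because $-\otimes_R T$ acts as the identity on $T$-modules, so exact sequences of $T$-modules remain exact after $S\otimes_T -$).

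For Part~(2), $\widehat R$ is an epimorphic subextension of $R\subseteq S$, hence sits inside the dominion $\mathrm{Dom}(f) = \Gamma(\mathrm{D}(I))$ by Part~(1), while $\widetilde R\subseteq\widehat R$ is immediate because the Pr\"ufer hull is flat epimorphic. When $\mathrm{D}(I)$ is affine, Proposition~\ref{LAZZEPI} turns $R\to\Gamma(\mathrm{D}(I))$ itself into a flat epimorphism, giving $\Gamma(\mathrm{D}(I))\subseteq\widehat R$ and hence equality. For Part~(3), the spectral image of $B\to S\otimes_R B$ is the pullback of $\mathrm{D}(I)$ along ${}^ag$, namely $\mathrm{D}(IB)$. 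Applying Part~(1) to the flat morphism $B\to S\otimes_R B$ identifies its dominion with $\Gamma(\mathrm{D}(IB))$; meanwhile, base change of the factorization from Part~(1) yields $\Gamma(\mathrm{D}(I))\otimes_R B$ as a subring of $S\otimes_R B$, which by the universal property of the dominion lies inside $\Gamma(\mathrm{D}(IB))$, the reverse comparison being handled by matching spectral images through the $(\mathcal{X})$-rule.

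Parts~(4)--(6) are short consequences. Part~(4) specializes (3) to $B = R_P$; when $P\in\mathrm{D}(I)$ one has $I_P = R_P$, so $\mathrm{D}(I_P) = \mathrm{Spec}(R_P)$ and $\Gamma(\mathrm{Spec}(R_P),R_P) = R_P$. Part~(5) follows: for $P\in\mathrm{D}(I)$ the stalk $(\Gamma(\mathrm{D}(I)))_P = R_P \neq 0$, whence some prime of $\Gamma(\mathrm{D}(I))$ lies over $P$. For Part~(6), the hypothesis $I\,\Gamma(\mathrm{D}(I)) = \Gamma(\mathrm{D}(I))$ makes $I$ a $\Gamma(\mathrm{D}(I))$-regular ideal, so Remark~\ref{RS}(1) gives $\mathcal{X}(\Gamma(\mathrm{D}(I)))\subseteq\mathrm{D}(I)$; combined with (5) this yields equality, and then Proposition~\ref{LAZZEPI} identifies $\mathrm{D}(I)$ as an affine open whenever $R\to\Gamma(\mathrm{D}(I))$ is a flat epimorphism. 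The main obstacle is Part~(1): glueing local agreements in $S\otimes_R S$ into a genuine element of the dominion uses flatness of $f$ in an essential way, and is where Lazard's deeper structure theorems are typically invoked; after that, the remaining parts are a routine bookkeeping exercise with the $(\mathcal{X})$-rule and base change.
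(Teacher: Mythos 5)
Your parts (2), (4), (5), (6) are fine and essentially reproduce the paper's own arguments, but there are two genuine gaps, in (1) and in (3). The serious one is your proof that $\Gamma(\mathrm{D}(I))\to S$ is flat: you apply the ``flat epimorphism factoring a flat morphism'' fact with $T=\Gamma(\mathrm{D}(I))$, which presupposes that $R\to\Gamma(\mathrm{D}(I))$ is a flat epimorphism. Nothing in the hypotheses gives this: by Remark~\ref{RS}(3), $R\to\Gamma(\mathrm{D}(I))$ being a flat epimorphism with spectral image $\mathrm{D}(I)$ is \emph{equivalent} to $\mathrm{D}(I)$ being an affine open subset, which is precisely the special case isolated at the end of item (2) --- and indeed in your own part (2) you correctly require affineness before drawing that conclusion, so part (1) of your proposal contradicts part (2). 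Concretely, take $R=k[x,y,z]/(xz,yz)$, $I=(x,y)R$, $S=R_x\times R_y$: then $f$ is flat with $\mathcal X_R(S)=\mathrm{D}(x)\cup\mathrm{D}(y)=\mathrm{D}(I)$, while $\Gamma(\mathrm{D}(I))\cong k[x,y]\cong R/zR$ is a surjective, hence epimorphic, but non-flat $R$-algebra (flatness would force $zR$ to be generated by an idempotent, impossible since $\mathrm{Spec}(R)$ is connected and $zR$ is a nonzero proper ideal). The fact you quote does survive with ``flat epimorphism'' weakened to ``epimorphism'', but you have not shown $R\to\Gamma(\mathrm{D}(I))$ is an epimorphism either; the paper sidesteps all of this by importing (1), including the flatness of $\Gamma(\mathrm{D}(I))\to S$, from \cite[Theorem 2.7]{PG} and Lazard. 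Your two inclusions for $\mathrm{Dom}(f)=\Gamma(\mathrm{D}(I))$ are also only sketched where the real content sits: $\mathrm{Dom}(f)\subseteq\Gamma(\mathrm{D}(I))$ needs that $R_a\to S_a$ is \emph{faithfully} flat for $a\in I$ (flatness plus $\mathrm{D}(a)\subseteq\mathcal X_R(S)$), so that descent gives $\mathrm{Dom}(R_a\to S_a)=R_a$; and the glueing in the reverse inclusion is not a consequence of flatness of $f$ but of the fact that $z:=x\otimes 1-1\otimes x$ lies in an $S$-module while $IS=S$, so vanishing of $z$ at every prime of $\mathrm{D}(I)$ forces $\mathrm{Ann}_S(z)=S$, i.e.\ $z=0$.

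In (3), the inclusion of $\Gamma(\mathrm{D}(I))\otimes_RB$ into $\mathrm{Dom}(B\to C)$ is correct, but the reverse inclusion cannot be obtained by ``matching spectral images through the $(\mathcal X)$-rule'': Proposition~\ref{EPIPLAT} compares flat \emph{epimorphisms}, and neither $B\to\Gamma(\mathrm{D}(IB))$ nor $B\to\Gamma(\mathrm{D}(I))\otimes_RB$ is known to be one (the same obstruction as above); moreover two subrings of $C$ with equal spectral images over $B$ need not coincide. The paper's argument closes this in one stroke and is the one you should use: $\mathrm{Dom}(f)$ is the kernel of the $R$-linear map $i_1-i_2\colon S\to S\otimes_RS$, and tensoring with the flat $R$-algebra $B$ carries this kernel to the kernel of $i_1-i_2\colon C\to C\otimes_BC$, that is to $\mathrm{Dom}(B\to C)$, which equals $\Gamma(\mathrm{D}(IB))$ by (1) applied to the flat morphism $B\to C$; this yields $\Gamma(\mathrm{D}(I))\otimes_RB\cong\Gamma(\mathrm{D}(IB))$ with both inclusions at once.
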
 
  \begin{proof}  (1) is a translation of \cite[Theorem 2.7]{PG}.  The flatness of  $ \Gamma (\mathrm{D}(I)) \to S$ follows from \cite[Proposition 3.1 (2), p.112]{L}. 
  
(2) If $f$ is a ring extension, observe that $\widehat R\subseteq
\mathrm{Dom}(f)$, because $R\to\widehat R$ is an epimorphism and then $y\otimes 1 = 1\otimes y$ for each $y \in \widehat R$ \cite[Lemme 1.0, p.108]{L}. The flatness of the extensions $\widetilde R,\widehat R\subseteq S$ result from \cite[Proposition 3.1(2), p.112]{L}. 
   At last, if $\mathrm D(I)$ is affine, then $R\to\Gamma (\mathrm{D}(I))$ is a flat epimorphism by Remark \ref{RS}(2), so that $\widehat R=\Gamma (\mathrm{D}(I))$ . 

(3) Because $\mathrm{Spec}(C)\to\mathrm{Spec}(B)\times_{\mathrm{Spec}(R)} \mathrm{Spec}(S)$ is a surjective map \cite[Corollaire 3.2.7.1, p.235]{EGA}, we have $\mathcal X_B(C)={}^ag^{-1}(\mathrm D(I))=\mathrm D(IB)$. To conclude use (1) and the fact that a kernel tensorised  by $B$, which is flat over $R$,  is the kernel of the tensorised map.

 (4) is gotten by taking $B=R_P$ in  (3).

(5) According to (4), an element $P$ of $\mathrm D(I))$ is such that $(\Gamma (\mathrm{D}(I)))_P = R_P$. It follows that there is a prime ideal  $Q$ of $\Gamma (D(I))$  lying over $P$.

 (6) holds because $I$ is $\Gamma(\mathrm D(I))$-regular.
 \end{proof}      
We can apply the above result in the following three contexts, when $I= (r_1,\ldots, r_n)$ is an ideal of finite type of $R$ (the hypothesis of this result entails that $\sqrt I=\sqrt J$, where $J$ is an ideal of finite type). We can suppose that $\mathrm D(I)\neq\emptyset$ and that the set $\{\mathrm D(r_1),\ldots,\mathrm D(r_n)\}$ is an antichain; so that, the $r_i~'s$ cannot be nilpotent. The first sample is certainly the most interesting, because when $I= Rr$, we recover that $\Gamma(\mathrm D(r))= R_r$.
  
(1) We can  consider the flat ring morphism $\varphi:R\to R_{r_1}\times\dots \times R_{r_n} := S_I$, which is such that $\mathcal X (S_I) = \mathrm D(I)$. Actually, $\varphi$ is of finite presentation \cite[Proposition 6.3.11, p.306]{EGA} and its local morphisms are isomorphisms. But $\varphi$ may not be a flat epimorphism, when it is not an $i$-morphism. 
  
In case $\{\mathrm D(r_i)\}$ defines a partition on $\mathrm D(I)$, $\varphi$ is an $i$-morphism, whence a flat epimorphism. In this case, $\Gamma (\mathrm{D}(I)) =\prod R_{r_i}$.
  
(2) Let $F_I:=R[X_1,\ldots,X_n]/(r_1X_1+\cdots+r_nX_n-1)$ be the forcing $R$-algebra with structural morphism $f_I$, associated to a finitely generated ideal $I= (r_1,\ldots ,r_n)$ (it would be more correct to write: associated to the sequence $\{r_1,\ldots ,r_n\}$). This ring is not zero, for otherwise 
   $1=(r_1X_1+\cdots+r_nX_n-1)P(X_1,\ldots,X_n)$, for some $P(X_1,\ldots,X_n)\in R[X_1,\ldots,X_n]$, implies that $1-(r_1X_1+\cdots+ r_nX_n)\in\mathrm U(R[X_1,\ldots,X_n])$, so that $r_1X_1+\cdots+r_nX_n$ would be nilpotent and then also the $r_i$'s. Then $I$ is $F_I$-regular and  for every ring morphism $R \to S$ for which $I$ is $S$-regular, there is a factorization $R \to F_I \to S$. But $F_I \to S$ does not  need to be unique.
       According to \cite[Theorem 2.7 and Remark 2.8(1)]{PG}, the ring morphism $f_I$ is flat, $\mathcal X(F_I)=\mathrm{D}(I)$ and $\Gamma(\mathrm{D}(I))= \mathrm{Dom}(f_I)$. Moreover, $\Gamma(\mathrm{D}(I))\to F_I$ is an injective flat ring morphism.
 
(3) The first author introduced in \cite{PicC} the following construction that we adapt to the present context. Let $R$ be a ring and $I$ an ideal of $R$. Denoting by $C(p(X))$ the content of a polynomial $p(X)\in R[X]$, we consider the mcs $\Sigma:=\{p(X)\in R[X]\mid\mathrm D(I)\subseteq\mathrm D(C(p(X)))\}$. Setting $R(\mathrm D(I))=:R[X]_\Sigma$, we get a flat morphism $R \to R(\mathrm D(I))$, such that $\mathcal X(R(\mathrm D(I))) =\mathrm D(I)$.  

 \section{Integral closures  as  intersections} 

We start by giving some results that do not seem to have been observed. They are consequences of a paper by P. Samuel \cite{SAM}.  
 Let $v$ be a valuation on a ring $R$. Following  \cite{KZ}, we denote by  $A_v$ the valuation ring of $v$.

\begin{lemma}\label{1.1} \cite[Th\'eor\`eme 1(d)]{SAM} An extension $R\subset S$, such that $S\setminus R$ is multiplicatively closed, is integrally closed. For example, $R\subseteq S$ is integrally closed if there is some valuation $v$ on $S$ such that $R= A_v$,  the valuation ring of $v$.  
\end{lemma}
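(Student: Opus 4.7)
The plan is to argue by descent on the degree of an integral relation. Assume $x \in S$ is integral over $R$ and let
\[
p(X) = X^n + a_{n-1}X^{n-1} + \cdots + a_1 X + a_0 \in R[X]
\]
be a monic polynomial of minimal degree $n$ with $p(x)=0$. The case $n=1$ is immediate since then $x = -a_0 \in R$, so I may assume $n \geq 2$.

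The key factorization is
\[
x \cdot \bigl(x^{n-1} + a_{n-1}x^{n-2} + \cdots + a_1\bigr) = -a_0 \in R.
\]
Suppose for contradiction that $x \in S \setminus R$. If the second factor $q(x) := x^{n-1} + a_{n-1}x^{n-2} + \cdots + a_1$ also lay in $S \setminus R$, then the hypothesis that $S \setminus R$ is multiplicatively closed would force $x\cdot q(x) \in S \setminus R$, contradicting that $-a_0 \in R$. Hence $q(x) = b$ for some $b \in R$, which yields the monic relation
\[
x^{n-1} + a_{n-1}x^{n-2} + \cdots + a_2 x + (a_1 - b) = 0
\]
of degree $n-1$, contradicting the minimality of $n$. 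Therefore $x \in R$, and $R$ is integrally closed in $S$.

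For the valuation example it suffices to check the multiplicative-closedness hypothesis. If $v$ is a valuation on $S$ with value group $\Gamma$ (extended by $\infty$) and $R = A_v = \{x \in S : v(x) \geq 0\}$, then $S \setminus R = \{x \in S : v(x) < 0\}$. For $x,y \in S\setminus R$ we have $v(xy) = v(x) + v(y) < 0$ in the totally ordered group $\Gamma$, so $xy \in S \setminus R$, and the first part applies.

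The argument is not really difficult; the only point to be careful with is the minimality of $n$, which turns the dichotomy ``$x \in R$ or $q(x) \in R$'' into an actual proof rather than a reformulation. No extra machinery from the paper is needed here beyond the definition of integral dependence and (for the second assertion) the definition of the valuation ring of a valuation on a ring as used in \cite{KZ}.
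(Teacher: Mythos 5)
Your proof is correct. The paper itself gives no argument for this lemma; it simply cites Samuel's Th\'eor\`eme 1(d), and your minimal-degree descent (split $-a_0 = x\cdot q(x)$, use multiplicative closedness of $S\setminus R$ to force $q(x)\in R$, then drop the degree) is precisely the classical argument behind that citation, together with the routine verification that $\{x\in S : v(x)<0\}$ is multiplicatively closed for the valuation case. Nothing is missing: the dichotomy $q(x)\in R$ or $q(x)\in S\setminus R$ is legitimate since $S$ is the disjoint union of the two sets, and the new monic relation has degree $n-1\geq 1$ because you reduced to $n\geq 2$.
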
 

We will use the next result.

\begin{lemma}\label{MAX} \cite[Theorem 4]{SAM} Let $R\subseteq S$ be an extension and $P$ a prime ideal of $R$. Due to Zorn Lemma, there is a maximal pair $(R',P')$ dominating $(R,P)$ and $S\setminus R'$ is a mcs.
\end{lemma}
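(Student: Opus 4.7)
My plan has two parts: a routine Zorn argument for existence, and then a polynomial/integrality argument for the mcs property that uses maximality twice. For existence I would consider the poset of all pairs $(R'', P'')$ with $R \subseteq R'' \subseteq S$ and $P'' \in \mathrm{Spec}(R'')$ lying over $P$, ordered by $(R_1, P_1) \leq (R_2, P_2)$ iff $R_1 \subseteq R_2$ and $P_2 \cap R_1 = P_1$. The poset is nonempty (it contains $(R, P)$), and for any chain one verifies that $(\bigcup R_\alpha, \bigcup P_\alpha)$ is an upper bound: the union of primes is an ideal (using the compatibility $P_\beta \cap R_\alpha = P_\alpha$ along the chain), is prime (find a single term where both factors and the product live), and contracts to $P$ on $R$. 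Zorn then produces a maximal pair $(R', P')$.

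Before attacking the mcs property I would first observe that maximality forces $R'$ to be integrally closed in $S$: if $z \in S$ is integral over $R'$, then $R' \subseteq R'[z]$ is an integral extension, lying-over produces a prime $Q$ of $R'[z]$ with $Q \cap R' = P'$, and maximality forces $R'[z] = R'$. Similarly, for any $x \in S \setminus R'$ the strict inclusion $R' \subsetneq R'[x]$ together with maximality prohibits any prime of $R'[x]$ from contracting to $P'$. Emptiness of this fiber is equivalent to the existence of $u \in R' \setminus P'$, an integer $n \geq 1$, and $a_1, \dots, a_n \in P'$ with
\[
u = a_1 x + a_2 x^2 + \cdots + a_n x^n.
\]

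The crucial step will be to show that one may always take $n = 1$. Choose $n$ minimal and suppose $n \geq 2$. Multiplying the displayed equation by $a_n^{n-1}$ and setting $z := a_n x$ yields the monic relation
\[
z^n + \sum_{i=1}^{n-1} a_n^{n-1-i}\, a_i\, z^i - a_n^{n-1} u = 0
\]
over $R'$, so $z$ is integral over $R'$ and hence lies in $R'$ by the integral closedness established above. Since $n - 1 \geq 1$ forces $a_n^{n-1} \in P'$, every non-leading coefficient of this monic relation lies in $P'$; thus $z^n \in P'$ and by primality $z \in P'$. Substituting $a_n x^n = z \cdot x^{n-1}$ back into the original equation yields an expression for $u$ of the same form but of degree $\leq n - 1$, contradicting minimality. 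Hence $n = 1$: for every $x \in S \setminus R'$ there exist $a \in P'$ and $u \in R' \setminus P'$ with $ax = u$.

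The mcs property is then immediate: if $x, y \in S \setminus R'$ with $xy \in R'$, pick representations $ax = u$ and $by = v$ as above; then $uv = ab \cdot xy \in P' \cdot R' \subseteq P'$, contradicting $uv \notin P'$ (which holds since $u, v \notin P'$ and $P'$ is prime). The main obstacle is the reduction to $n = 1$; without the substitution $z = a_n x$ and the consequence $a_n x \in P'$, one is left with a polynomial relation of arbitrary degree, and the naive attempt to multiply the $x$- and $y$-relations together produces mixed $x^i y^j$ terms that cannot be cleanly separated using only $xy \in R'$.
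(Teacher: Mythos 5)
Your proof is correct, and the first thing to say is that the paper itself contains no argument for this lemma: it is quoted directly from Samuel (\cite[Th\'eor\`eme 4]{SAM}), so the only possible comparison is with Samuel's original treatment rather than with anything in the text. Your reconstruction follows the same classical line (Zorn for existence, a polynomial manipulation for the multiplicative closure), and the delicate points all check out: the union of a chain of compatibly dominated pairs is again a pair in the poset; maximality forces $R'$ to be integrally closed in $S$ via lying-over for the integral extension $R'\subseteq R'[z]$; the emptiness of the fiber of $\mathrm{Spec}(R'[x])\to\mathrm{Spec}(R')$ over $P'$ is correctly translated, after absorbing the constant term, into a relation $u=a_1x+\cdots+a_nx^n$ with $u\in R'\setminus P'$ and $a_i\in P'$; and the degree-reduction step is sound, since multiplying by $a_n^{n-1}$ makes $z:=a_nx$ integral over $R'$, hence an element of $R'$, the monic relation has all non-leading coefficients in $P'$ (here $n\geq 2$ is used for $a_n^{n-1}\in P'$), so $z^n\in P'$ and $z\in P'$ by primality, and substituting $a_nx^n=zx^{n-1}$ lowers the degree, contradicting minimality. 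What you actually establish is stronger than the statement: the reduction to $n=1$ says that a maximal pair $(R',P')$ is a valuation pair of $S$ in the sense of Manis (for every $x\in S\setminus R'$ there is $a\in P'$ with $ax\in R'\setminus P'$), from which the multiplicative closure of $S\setminus R'$ is, as you note, immediate; this valuation-pair property is precisely the mechanism behind Samuel's theorem and Manis's later characterization of maximal pairs, and it is also the property the paper exploits elsewhere when it invokes Manis valuations (e.g., in Lemma~\ref{prelim} and Theorem~\ref{COV 1}). So your proposal is a faithful, self-contained replacement for the citation rather than a genuinely different route.
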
 
 
 A Manis valuation $v$  on a ring $S$ is a valuation such that $v:S \to \Gamma_v \cup {\infty}$ is surjective, where $\Gamma_v$ is the value group of $v$.
 
A ring extension $R\subseteq S$ is called Manis if $R = A_v$ for some Manis valuation $v$ on $S$. Pr\"ufer-Manis extensions are defined as Pr\"ufer extensions $R\subset S$ such that there is some Manis valuation $v$ on $S$ such that $A_v= R$ \cite[Definition  1, p. 58]{KZ}.

By \cite[Theorem 3.5, p.190]{KZ}, a flat epimorphism $R\subseteq S$ is Pr\"ufer-Manis if and only if the set of all $S$-regular ideals of $R$ is a chain. 

\begin{lemma} \cite[Theorem 3.3, p.187, Theorem 3.1, p.187, Proposition 5.1(iii), p. 46-47]{KZ}. The following statements are equivalent for an extension $R\subseteq S$.
\begin{enumerate}

\item  $R\subseteq S$ is Pr\"ufer-Manis. 

\item  $S\setminus T$ is a mcs for each $T\in [R,S[$. 

\item $R\subseteq S$ is integrally closed and chained.

\item $R\subseteq S$ is Pr\"ufer and $S\setminus R$ is a mcs.
\end{enumerate}
\end{lemma}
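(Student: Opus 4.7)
My plan is to establish $(1)\Leftrightarrow(4)$ together with the cycle $(1)\Rightarrow(2)\Rightarrow(3)\Rightarrow(1)$; the substantial implication is $(3)\Rightarrow(1)$, while the other steps follow routinely from Samuel's Lemmas~\ref{1.1} and~\ref{MAX} and the three cited results of \cite{KZ}. The equivalence $(1)\Leftrightarrow(4)$ is direct: if $v$ is a Manis valuation with $A_v=R$, then $S\setminus R=\{s\in S\mid v(s)<0\}$ is closed under multiplication because $v$ is additive into a totally ordered group, and the Pr\"ufer property is built into the definition of Pr\"ufer-Manis; conversely, the production of a Manis valuation from Pr\"ufer together with $S\setminus R$ a mcs is the content of \cite[Proposition 5.1(iii), p.~46--47]{KZ}.

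For $(1)\Rightarrow(2)$, fix $T\in[R,S[$. The structure result \cite[Theorem~3.3, p.~187]{KZ} identifies $T$ as the valuation ring $A_w$ of a coarsening Manis valuation $w$ of $v$ on $S$; then $S\setminus T=\{s\in S\mid w(s)<0\}$ is a mcs by the same argument used in $(1)\Rightarrow(4)$.

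For $(2)\Rightarrow(3)$, integral closedness of $R\subseteq S$ follows immediately from Samuel's Lemma~\ref{1.1} applied with $T=R$. For chainedness, suppose toward contradiction that $T_1,T_2\in[R,S]$ are incomparable, and choose $x\in T_1\setminus T_2$ and $y\in T_2\setminus T_1$, so that $x\in S\setminus T_2$ and $y\in S\setminus T_1$. By Lemma~\ref{MAX}, enlarge $T_1\cap T_2$ to a maximal pair $(T',P')$ with $S\setminus T'$ a mcs; the mcs structure of each $S\setminus T_i$ combined with the maximality of $T'$ forces one of the inclusions $T_1\subseteq T_2$ or $T_2\subseteq T_1$, contradicting incomparability.

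The main obstacle is $(3)\Rightarrow(1)$, which is the content of \cite[Theorem~3.1, p.~187]{KZ}. Starting from a chained integrally closed extension, one introduces the preorder $s\preccurlyeq t\iff Rs\supseteq Rt$ on $S$; the chainedness of $[R,S]$ refines this to a total preorder, and passing to associate equivalence classes yields a totally ordered value monoid. Integral closedness upgrades the monoid to a group and secures $A_v=R$, producing the required Manis valuation $v\colon S\to\Gamma\cup\{\infty\}$. The Pr\"ufer property is automatic, since every $T\in[R,S]$ inherits chained and integrally closed structure (via the resulting normal pair), hence is Manis over $R$ by the same construction, and then $R\subseteq T$ is a flat epimorphism by \cite[Theorem~5.2, p.~47]{KZ}.
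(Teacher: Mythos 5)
The paper itself offers no proof of this lemma: it is quoted wholesale from Knebusch--Zhang, with $(1)\Leftrightarrow(2)\Leftrightarrow(3)$ covered by \cite[Theorems 3.1 and 3.3, p.~187]{KZ} and $(1)\Leftrightarrow(4)$ by \cite[Proposition 5.1(iii)]{KZ}. So the question is whether your independent arguments are sound wherever you do not simply fall back on those citations. The parts that do lean on them are fine: $(1)\Rightarrow(4)$ and $(1)\Rightarrow(2)$ follow from exact multiplicativity of a Manis valuation together with the fact that every $T\in[R,S]$ is again Pr\"ufer-Manis in $S$ (this is \cite[Corollary 3.2, p.~187]{KZ}, which the paper invokes right after the lemma; your ``coarsening'' formulation is the same fact); $(4)\Rightarrow(1)$ is exactly Proposition 5.1(iii); and for $(3)\Rightarrow(1)$ you explicitly defer to Theorem 3.1, as the paper does, so your appended sketch is not load-bearing --- just as well, since chainedness of $[R,S]$ as a poset of \emph{rings} does not obviously totally preorder the principal $R$-\emph{submodules} $Rs$ of $S$; bridging that is precisely the hard content of Theorem 3.1.

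The genuine gap is the chainedness half of $(2)\Rightarrow(3)$. Integral closedness does follow from Lemma~\ref{1.1}. But your chainedness argument is an assertion, not a proof: Lemma~\ref{MAX} produces a maximal pair $(T',P')$ dominating $(T_1\cap T_2,P)$ with $S\setminus T'$ a mcs, and you then claim that ``maximality forces $T_1\subseteq T_2$ or $T_2\subseteq T_1$'' without exhibiting any mechanism. Nothing relates $T'$ to $T_1$ or $T_2$ individually: $T'$ contains $T_1\cap T_2$ but need contain neither $T_i$, and maximality here is maximality in Samuel's domination order on pairs (a ring together with a prime), which says nothing about comparability of two other intermediate rings. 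Concretely, from $x\in T_1\setminus T_2$ and $y\in T_2\setminus T_1$ the mcs hypotheses only yield facts such as $xy\notin T_1\cap T_2$ and $y^n\notin T_1$, all of which are mutually consistent and produce no contradiction. The implication $(2)\Rightarrow(3)$ is true, but it is part of the content of \cite[Theorem 3.3, p.~187]{KZ}; to keep your cycle honest you must either cite that theorem for this step as well (which collapses your proof to the paper's pure citation) or supply a real argument --- for instance, first establishing that $T_1\cap T_2\subseteq S$ is Pr\"ufer-Manis and then using the chain property of $[T_1\cap T_2,S]$ from Theorem 3.1 --- neither of which the two-line appeal to Lemma~\ref{MAX} accomplishes.
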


If the above condition (3) holds for an FCP extension, then $R\subseteq S$ has FIP.

By \cite{KZ}, we know that for a Pr\"ufer extension $R\subset S$ and $P\in  \mathrm{Supp}(S/R)$, the subset $S_P\setminus R_P$ is multiplicatively closed. Also \cite[Proposition 5.1(iii), p. 46-47]{KZ} shows that if $U\in [R,S]$ and $S\setminus U$ is a mcs, then $U\subset S$ is Pr\"ufer-Manis.

 \begin{corollary} \label{PM} A minimal extension $R\subset S$ is a flat epimorphism if and only if it is Pr\"ufer and if and only if it is Pr\"ufer-Manis.
 \end{corollary}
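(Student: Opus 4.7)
The plan is to close a three-way cycle of implications: flat epimorphism $\Rightarrow$ Pr\"ufer $\Rightarrow$ Pr\"ufer-Manis $\Rightarrow$ flat epimorphism, exploiting minimality at exactly one nontrivial step.

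Two implications are essentially tautological. First, Pr\"ufer-Manis implies Pr\"ufer by the very definition of Pr\"ufer-Manis (a Pr\"ufer extension equipped with a Manis valuation on $S$ whose valuation ring is $R$). Second, flat epimorphism implies Pr\"ufer in the minimal case: by definition Pr\"ufer means $R\subseteq T$ is a flat epimorphism for every $T\in[R,S]$, but minimality gives $[R,S]=\{R,S\}$, so one only has to check the trivial extension $R\subseteq R$ and the given extension $R\subseteq S$. Conversely, Pr\"ufer always implies that $R\subseteq S$ itself is a flat epimorphism (take $T=S$), so the flat epimorphism and Pr\"ufer conditions are immediately interchangeable under minimality.

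The substantive step is Pr\"ufer $\Rightarrow$ Pr\"ufer-Manis, and my plan is to invoke characterization (3) of the Lemma preceding the corollary: an extension is Pr\"ufer-Manis if and only if it is integrally closed and chained. Minimality gives $[R,S]=\{R,S\}$, which is trivially a chain, so the chained condition is free. For the integrally closed condition, a Pr\"ufer extension is a normal pair, so in particular $R$ is integrally closed in $S$. Applying the Lemma then yields Pr\"ufer-Manis.

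The only potential obstacle is cosmetic: one should note that the three properties are indeed compatible with $R\subset S$ being a proper extension (the reader might wonder whether Pr\"ufer forces triviality in some edge case). This is dispelled by recalling that since $R\subset S$ is a flat epimorphism it is not integral (the Ferrand--Olivier dichotomy \cite[Th\'eor\`eme 2.2]{FO} for minimal extensions is consistent, but unnecessary here), and by the remark that a Pr\"ufer integral extension is trivial, the two branches of that dichotomy genuinely separate. Hence the cycle closes cleanly and the three conditions coincide.
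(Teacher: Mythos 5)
Your proof is correct, and it closes the cycle by a slightly different route than the paper. Both arguments ultimately rest on the characterization Lemma for Pr\"ufer-Manis extensions recalled just before the Corollary, but through different items of it. You invoke item (3): integrally closed and chained, where chainedness is free from minimality ($[R,S]=\{R,S\}$ is trivially a chain) and integral closedness follows from the normal-pair reading of Pr\"ufer; the equivalence of flat epimorphism and Pr\"ufer you get for free from minimality as well. The paper instead argues that for a minimal extension, flat epimorphism is equivalent to integrally closed (this is the Ferrand--Olivier dichotomy: a minimal extension is either integral or a flat epimorphism), and then cites Ferrand--Olivier's Proposition 3.1, which says that $S\setminus R$ is a multiplicatively closed set when $R\subset S$ is a flat epimorphism; since $[R,S[\,=\{R\}$ for a minimal extension, this verifies item (2) (equivalently item (4)) of the Lemma and yields Pr\"ufer-Manis. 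What each approach buys: yours is more self-contained, exploiting minimality twice and never needing the external mcs result; the paper's makes the Manis structure concrete via the multiplicative set $S\setminus R$, which fits the thread running through Section 4 (Samuel's results and Theorem~\ref{Intclos}, where rings $U$ with $S\setminus U$ multiplicatively closed are the central objects). Your closing paragraph about the consistency of the three properties with properness is harmless but dispensable.
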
 
\begin{proof} The extension is a flat epimorphism if and only if it is integrally closed. To complete the proof it is enough to use \cite[Proposition 3.1]{FO} which states that $S\setminus R$ is a mcs when $R\subset S$ is a flat epimorphism.
 \end{proof}
 
\begin{corollary}\label{PMFCP} An FCP Pr\"ufer extension has FIP and is a tower of finitely many Pr\"ufer-Manis minimal extensions. 
  \end{corollary}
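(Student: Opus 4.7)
The plan is to reduce the statement to the conjunction of two results that have already appeared in the excerpt. The paper has just recalled, immediately before Corollary \ref{PM}, that an FCP Pr\"ufer extension $R\subseteq S$ has FIP and decomposes as a finite tower
\[
R=R_0\subset R_1\subset\cdots\subset R_n=S
\]
of Pr\"ufer minimal extensions \cite[Proposition 1.3]{Pic 5}. So FIP and the tower structure are already in hand; the only new content to establish is that each minimal step is Pr\"ufer\textendash Manis rather than merely Pr\"ufer.

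For this upgrade I would invoke Corollary \ref{PM}: a minimal extension is a flat epimorphism if and only if it is Pr\"ufer if and only if it is Pr\"ufer\textendash Manis. Each step $R_i\subset R_{i+1}$ in the tower is a Pr\"ufer minimal extension, hence in particular a minimal flat epimorphism, and Corollary \ref{PM} then promotes it to Pr\"ufer\textendash Manis. Assembling the promoted steps gives the desired decomposition of $R\subseteq S$ into finitely many Pr\"ufer\textendash Manis minimal extensions.

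There is no real obstacle in the argument: the substantive content sits entirely in the two cited results, namely the FIP/tower result from \cite{Pic 5} and the equivalence ``minimal flat epimorphism $\Leftrightarrow$ Pr\"ufer\textendash Manis'' furnished by Corollary \ref{PM} together with \cite[Proposition 3.1]{FO}. The corollary is thus a one\textendash line consequence once those are in place.
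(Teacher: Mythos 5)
Your proposal is correct and matches the paper's intended argument: the paper states this corollary without proof, treating it as an immediate consequence of exactly the two facts you cite, namely the recall from \cite[Proposition 1.3]{Pic 5} (given in Section 2, not immediately before Corollary \ref{PM}, but that is immaterial) that an FCP Pr\"ufer extension has FIP and is a finite tower of Pr\"ufer minimal extensions, together with Corollary \ref{PM} upgrading each minimal step to Pr\"ufer-Manis.
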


We will need the two following results. They generalize known results about the integral closure of an integral domain, which is the intersection of valuation rings. 

\begin{lemma}\label{prelim} Let $R\subset S$ be an extension and $x\in S\setminus\overline R$. Then there is some $T\in [R,S]$, such that $T\subset S$ is Pr\"ufer (respectively, Pr\"ufer-Manis) and $x$ is not integral over $T$, and then $x\notin T$.
\end{lemma}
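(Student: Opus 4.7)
The plan is to construct $T$ by Zorn's lemma and then show $T\subset S$ is Pr\"ufer-Manis by verifying that the complement $S\setminus T$ is multiplicatively closed, invoking the implication from \cite[Proposition~5.1(iii)]{KZ} quoted just before the lemma.

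Set $\mathcal F:=\{T'\in[R,S]:x\notin\overline{T'}^{S}\}$. It contains $R$ by hypothesis and is inductive under inclusion: a monic integrality relation for $x$ over the union of a chain uses only finitely many coefficients, which must all lie in some member of the chain, contradicting its membership in $\mathcal F$. Zorn's lemma produces a maximal element $T\in\mathcal F$. For any $y\in\overline{T}^{S}$, the extension $T\subseteq T[y]$ is integral, so $\overline{T[y]}^{S}=\overline{T}^{S}\not\ni x$ and $T[y]\in\mathcal F$; maximality forces $T[y]=T$, i.e.\ $y\in T$. Hence $T$ is integrally closed in $S$, and in particular $x\notin T$, giving the last clause of the lemma.

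It remains to prove $S\setminus T$ is a mcs. Suppose toward a contradiction that $a,b\in S\setminus T$ with $c:=ab\in T$. Then $T\subsetneq T[a]$ and $T\subsetneq T[b]$, so by maximality $x$ is integral over both, say
\[
x^{m}+\sum_{i<m}\alpha_{i}(a)\,x^{i}=0,\qquad x^{n}+\sum_{j<n}\beta_{j}(b)\,x^{j}=0,
\]
with $\alpha_{i},\beta_{j}\in T[Y]$. Since $c\in T$, the identities $a^{p}b^{q}=c^{q}a^{p-q}$ for $p\ge q$ and $a^{p}b^{q}=c^{p}b^{q-p}$ for $q\ge p$ allow every mixed monomial in $a,b$ with $T$-coefficients to be rewritten as a $T$-multiple of a pure power of $a$ or of $b$. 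Following Samuel's argument in the proof of \cite[Th\'eor\`eme~4]{SAM} (the source of Lemma~\ref{MAX}), one combines the two displayed relations to produce a monic relation for $x$ whose coefficients all lie in $T$, contradicting $x\notin\overline{T}^{S}$.

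The main obstacle is this final combinatorial manipulation; everything else is routine. Once $S\setminus T$ is shown to be a mcs, the quoted implication gives $T\subset S$ Pr\"ufer-Manis, and the weaker Pr\"ufer conclusion follows since Pr\"ufer-Manis extensions are Pr\"ufer by definition.
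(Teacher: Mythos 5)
Your Zorn construction of $T$ and the observation that $T$ is integrally closed in $S$ coincide with the paper's setup, and the multiplicative-complement computation you sketch for $S\setminus T$ is indeed Samuel's combination argument (it is the second paragraph of the proof of \cite[Th\'eor\`eme 8]{SAM}, which is what the paper's proof invokes, rather than Th\'eor\`eme 4). The genuine gap is in your final step. The sentence you rely on --- ``if $U\in[R,S]$ and $S\setminus U$ is a mcs, then $U\subset S$ is Pr\"ufer-Manis'' --- is stated in the paper under the standing hypothesis of the sentence immediately preceding it, namely that the ambient extension $R\subset S$ is Pr\"ufer; \cite[Proposition 5.1(iii), p. 46-47]{KZ} is precisely the implication recorded as condition (4) of the four-condition lemma quoted before Corollary~\ref{PM}: \emph{Pr\"ufer} together with a multiplicatively closed complement implies Pr\"ufer-Manis. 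Applied to your maximal $T$, it presupposes that $T\subseteq S$ is Pr\"ufer, which is exactly what Lemma~\ref{prelim} is meant to produce, so your use of it is circular. Unconditionally, a multiplicatively closed complement yields only integral closedness (Lemma~\ref{1.1}, which is Samuel's Th\'eor\`eme 1(d), with $R=A_v$ for an arbitrary valuation $v$ as the typical example), and the paper recalls in Section 6 that integrally closed extensions need not be Pr\"ufer; the paper's own four-condition lemma keeps ``Pr\"ufer'' as a separate hypothesis in condition (4) precisely because the mcs property of the complement alone does not suffice. Thus, at the end of your argument you have established ``integrally closed with mcs complement,'' which is strictly weaker than the Pr\"ufer-Manis (or even Pr\"ufer) conclusion, and nothing you cite bridges the difference.

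The repair is the route the paper takes: instead of treating only $V=T$, show that $S\setminus V$ is multiplicatively closed for \emph{every} $V\in[T,S]$ with $V\neq S$. That is condition (2) of the four-condition lemma, and it characterizes Pr\"ufer-Manis with no Pr\"ufer hypothesis at all. The maximality of $T$ still supplies the needed input for such a $V$: if $a,b\in S\setminus V$ then $a,b\notin T$, so $x$ is integral over $T[a]\subseteq V[a]$ and over $T[b]\subseteq V[b]$, and one runs Samuel's second paragraph with $A$ replaced by $V$, as the paper indicates. Note also that you yourself flag the combination of the two integral equations as ``the main obstacle'' and do not carry it out; that step can legitimately be delegated to \cite[Th\'eor\`eme 8]{SAM}, but the passage from the mcs property of $S\setminus T$ alone to the Pr\"ufer-Manis conclusion cannot be delegated to anything quoted in the paper.
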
 

\begin{proof} It is enough to mimic the first part of the proof of \cite[Th\'eor\`eme 8]{SAM}. More precisely, let $T$ be a maximal element of the $\cup$-inductive set $\{U\in[R,S]\mid x\notin\overline U^S\} $. We intend to show that $T\subset S$ is Pr\"ufer-Manis. 
In view of the above results, we need only to show that any $V \in [T,S]$ with $V\neq S$ is such that $S\setminus V$ is multiplicatively closed and then such that $T\subset S$ is integrally closed. Now replace $A$ with $V$ in the second paragraph of the proof of \cite[Th\'eor\`eme 8]{SAM}, and the result follows.
 \end{proof}
 
\begin{theorem}\label{Intclos} Let $R\subset S$ be an extension, then $\overline R^S$ is the intersection of all $T\in[R,S]$ such that $T\subset S$ is Pr\"ufer (resp. Pr\"ufer-Manis) and also the intersection of all $U\in [R,S]$, such that $S\setminus U$ is a mcs.
 \end{theorem}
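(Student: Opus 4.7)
The plan is to prove the three intersections coincide with $\overline R^S$ by a short chain of inclusions, leveraging the machinery already built up (particularly Lemma~\ref{prelim}, the equivalence-lemma preceding Corollary~\ref{PM}, and the remark attributed to \cite[Proposition 5.1(iii)]{KZ}).

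Denote $\mathcal{P}:=\{T\in[R,S]\mid T\subset S\text{ is Pr\"ufer}\}$, $\mathcal{PM}:=\{T\in[R,S]\mid T\subset S\text{ is Pr\"ufer-Manis}\}$, and $\mathcal{M}:=\{U\in[R,S]\mid S\setminus U\text{ is a mcs}\}$. Since every Pr\"ufer-Manis extension is Pr\"ufer, $\mathcal{PM}\subseteq\mathcal{P}$, hence $\bigcap_{T\in\mathcal{P}}T\subseteq\bigcap_{T\in\mathcal{PM}}T$. Moreover, by the equivalence-lemma, $T\in\mathcal{PM}$ implies $S\setminus T$ is a mcs (take $T$ itself in condition (2)), so $\mathcal{PM}\subseteq\mathcal{M}$; conversely, the remark after that lemma (from \cite[Proposition 5.1(iii)]{KZ}) asserts that any $U\in\mathcal{M}$ with $U\subsetneq S$ satisfies $U\subset S$ Pr\"ufer-Manis, so $\mathcal{M}\subseteq\mathcal{PM}$ up to the trivial case $U=S$ (which does not affect the intersection). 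Thus $\bigcap_{U\in\mathcal{M}}U=\bigcap_{T\in\mathcal{PM}}T$.

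The inclusion $\overline R^S\subseteq\bigcap_{T\in\mathcal{P}}T$ is immediate: each Pr\"ufer extension $T\subset S$ is, in particular, integrally closed (Pr\"ufer is equivalent to being a normal pair, and the case $T\subseteq S$ of a normal pair says $T$ is integrally closed in $S$). Hence $\overline R^S\subseteq\overline T^S=T$ for every $T\in\mathcal{P}$. Combining with the display of the previous paragraph,
\[
\overline R^S\;\subseteq\;\bigcap_{T\in\mathcal{P}}T\;\subseteq\;\bigcap_{T\in\mathcal{PM}}T\;=\;\bigcap_{U\in\mathcal{M}}U.
\]

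For the reverse inclusion, I would invoke Lemma~\ref{prelim}: for any $x\in S\setminus\overline R^S$ there exists $T\in[R,S]$ with $T\subset S$ Pr\"ufer-Manis and $x\notin T$. Hence $x\notin\bigcap_{T\in\mathcal{PM}}T$, proving $\bigcap_{T\in\mathcal{PM}}T\subseteq\overline R^S$. Chaining this with the inclusions above forces all three intersections to equal $\overline R^S$, completing the proof. The only substantive step is Lemma~\ref{prelim}, but that has already been established; the rest is bookkeeping with the three classes $\mathcal{P},\mathcal{PM},\mathcal{M}$.
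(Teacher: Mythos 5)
Your treatment of the Pr\"ufer and Pr\"ufer--Manis intersections is correct: Lemma~\ref{prelim} gives $\bigcap_{T\in\mathcal{PM}}T\subseteq\overline R^S$, and your use of the normal-pair characterization (each Pr\"ufer $T\subset S$ is integrally closed in $S$, so $\overline R^S\subseteq\overline T^S=T$) is a legitimate, slightly more direct variant of the paper's argument, which instead notes that $T\subseteq T[x]$ is simultaneously integral and a flat epimorphism, hence an equality by Lazard's lemma.

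However, there is a genuine gap in your handling of the third intersection: the claimed inclusion $\mathcal{M}\subseteq\mathcal{PM}$ is not justified and is false in general. The remark you invoke must be read in its context, namely under the standing hypothesis of the preceding sentence that $R\subset S$ is \emph{Pr\"ufer}: then $U\subset S$ inherits the Pr\"ufer property, and \cite[Proposition 5.1(iii)]{KZ} (whose hypotheses are that $S\setminus U$ is a mcs \emph{and} that $U\subset S$ is weakly surjective, e.g.\ Pr\"ufer) upgrades $U\subset S$ to Pr\"ufer--Manis. For an arbitrary extension, as in Theorem~\ref{Intclos}, the condition ``$S\setminus U$ is a mcs'' only yields that $U$ is integrally closed in $S$ (Lemma~\ref{1.1}); it does not yield Pr\"ufer--Manis. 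Indeed, a Manis extension always has multiplicatively closed complement, yet Manis extensions need not be Pr\"ufer --- this is precisely why Knebusch--Zhang single out the PM class, and also why the four-condition lemma needs either all intermediate rings to have mcs complements (condition (2)) or the extra Pr\"ufer hypothesis (condition (4)). The paper avoids this issue by quoting Samuel's Th\'eor\`eme 8 \cite{SAM} for the mcs intersection. Your argument is easily repaired without Samuel's theorem: keep your (correct) inclusion $\mathcal{PM}\subseteq\mathcal{M}$, which gives $\bigcap_{U\in\mathcal{M}}U\subseteq\bigcap_{T\in\mathcal{PM}}T=\overline R^S$, and obtain the reverse inclusion $\overline R^S\subseteq\bigcap_{U\in\mathcal{M}}U$ directly from Lemma~\ref{1.1}, since each $U\in\mathcal{M}$ contains $R$ and is integrally closed in $S$.
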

 \begin{proof} The second result is \cite[Th\'eor\`eme 8]{SAM}. Now Lemma~\ref{prelim} shows that $\overline R^S$ contains the intersection of all $T\in[R,S]$ such that $T\subset S$ is Pr\"ufer  (resp. Pr\"ufer-Manis).
 
For the reverse inclusion, consider an element $x\in S$, integral over $R$. Then $T\subseteq T[x]$ is integral and a flat epimorphism for any $T\in[R,S]$ such that $T\subset S$ is Pr\"ufer. We deduce from \cite[Lemme 1.2, p. 109]{L}, that $T= T[x]$ and $x$ belongs to $T$.
 \end{proof}
 
 \begin{remark} As a consequence of the above Theorem, we get that an extension $R\subseteq S$ is quasi-Pr\"ufer if and only if the set of all $T\in [R,S]$, such that $T\subseteq S$ is Pr\"ufer, has a smallest element.
 \end{remark}
 
 \section{Avoidance lemmata}
 Some of the following  results are known in the context of  integral domains and valuation domains. We will use the frame of their proofs  but  shorter different argumentations. 
 Kostra proved the next Theorem \cite[Lemma 2 and Theorem 2]{KOS}, in case $S$ is a field. To prove it in our context, we follow the steps of his difficult proof by using Theorem~\ref{Intclos}. 
 
If $V\subseteq S$ is Pr\"ufer-Manis, $S$ is endowed with a valuation $v: S \to \Gamma_v\cup \{\infty\}$, which is surjective. 
 There is no need to consider invertible elements that may not exist but elements $x\in V$, such that $v(x)=0$, that is $x\notin P_v$, the center of $v$. Moreover, if $v(x)>0$, then $v(1+x)=0$.
 
 \begin{lemma}\label{COV} Let  $R,  T, V, V_1,\ldots, V_n $  be subrings of a ring $S$, where $n$ is a positive integer and such that $V\subseteq S$ and $V_i\subseteq S$ are Pr\"ufer-Manis for each $i\in\mathbb N_n$. Let $v_i$ be the valuation associated to $V_i$. Assume that there is some $b\in [T\cap (\cap_{i\in\mathbb N_n}V_i)]\setminus V$.  Then, there exists $c\in[T\cap (\cap_{i\in\mathbb N_n}V_i)]\setminus V$ such that $v_i(c)=0$ for any $i\in\mathbb N_n$. 

Moreover, for any $W\in[R,S]$ such that $W\subseteq S$ is Pr\"ufer-Manis with $b\not\in W$, then $c\not\in W$.
\end{lemma}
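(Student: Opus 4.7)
The plan is to choose $c$ to be an integer polynomial in $b$, specifically
\[c := 1 + b + b^2 + \cdots + b^N\]
for a suitably chosen integer $N \geq 1$. Since each of $T, V_1, \ldots, V_n$ is a unital subring containing $b$, membership $c \in T \cap \bigcap_{i\in\mathbb{N}_n} V_i$ is automatic. For the negations $c \notin V$ and (in the ``moreover'' clause) $c \notin W$, I will argue uniformly: if $u$ denotes the Manis valuation on $S$ with $A_u$ equal to $V$ or to $W$, the hypothesis $b \notin A_u$ gives $u(b) < 0$, so the valuations $u(b^k) = k\, u(b)$ strictly decrease for $k = 0, 1, \ldots, N$. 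Iterating the identity $u(x+y) = \min(u(x), u(y))$ that holds whenever $u(x) \neq u(y)$, we obtain $u(c) = N\, u(b) < 0$ and hence $c \notin A_u$. In particular, the same element $c$ works for every admissible $W$ uniformly, settling the ``moreover'' clause.

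The delicate point is to arrange $v_i(c) = 0$ for every $i \in \mathbb{N}_n$. Writing $\bar b_i$ and $\bar c$ for the images in the residue ring $V_i / P_{v_i}$, which is a domain because $v_i$ is Manis, this amounts to $\bar c \neq 0$. When $v_i(b) > 0$ we have $\bar b_i = 0$, so $\bar c = 1 \neq 0$ regardless of $N$. When $v_i(b) = 0$ we have $\bar b_i \neq 0$, and the telescoping identity $(1 - \bar b_i)\bar c = 1 - \bar b_i^{N+1}$ combined with the domain property shows that $\bar c = 0$ can occur only in one of two scenarios: either $\bar b_i = 1$ with $\mathrm{char}(V_i / P_{v_i})$ dividing $N+1$, or else $\bar b_i$ is a root of unity of some finite order $m_i \geq 2$ with $m_i \mid N+1$.

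Either scenario confines the forbidden values of $N+1$ to a proper subgroup of $\mathbb{Z}$ of the form $m_i\mathbb{Z}$ with $m_i \geq 2$. The main obstacle is therefore to exhibit a single $N \geq 1$ whose $N+1$ avoids the finite union of these subgroups. This is elementary: any prime $p$ strictly greater than every $m_i$ satisfies $p \notin m_i\mathbb{Z}$ for all $i$, so setting $N+1 = p$ for any such prime produces the desired $N$ and completes the construction of $c$.
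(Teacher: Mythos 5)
Your proof is correct, and it takes a genuinely different route from the paper's. The paper follows Kostra's field-case argument \cite{KOS}: it builds the recursive sequence $b_0:=b$, $b_k:=1+\prod_{j=0}^{k-1}b_j$ for $k\in\mathbb N_n$, and argues by pigeonhole that among these $n+1$ elements either two coincide (in which case the relation $b_k(1-\prod_{j\neq l}b_j)=1$ forces $v_i(b_k)=0$ for every $i$), or all are distinct and each valuation $v_i$ can be strictly positive on at most one $b_k$, so some $b_k$ has $v_i(b_k)=0$ for all $i$; non-membership in $V$ and $W$ is then propagated along the sequence using only the fact that $S\setminus V$ and $S\setminus W$ are multiplicatively closed, with no reference to the valuations of $V$ or $W$. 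You instead take the geometric sum $c=1+b+\cdots+b^N$ with $N+1$ a prime exceeding the finitely many relevant orders, transfer the condition $v_i(c)=0$ to the residue domains $A_{v_i}/P_{v_i}$ (where it becomes avoidance of roots of unity and of the residue characteristic), and compute $u(c)=Nu(b)<0$ directly for any Manis valuation $u$ with $u(b)<0$, which settles $c\notin V$ and the ``moreover'' clause for every admissible $W$ at once. Your approach buys an explicit closed-form element and a uniform case analysis, at the price of invoking multiplicativity of ring valuations, primeness of the center, and a little arithmetic of orders and characteristics; the paper's approach is purely combinatorial, never looks at residue rings, and leans only on the two facts about Pr\"ufer--Manis subrings that it has already recorded (the valuation rules and the mcs property of complements). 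One cosmetic remark: the residue ring $A_{v_i}/P_{v_i}$ is a domain because the center of \emph{any} valuation is a prime ideal, not because $v_i$ is Manis; surjectivity onto the value group plays no role at that point. Both proofs produce a $c$ depending only on $b$ and the $V_i$, which is exactly what makes the ``moreover'' statement come for free.
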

\begin{proof}  We build by induction the sequence $\mathcal S:=\{b_i\}_{i=0}^n$ in the following way: set $b_0:=b$ and $b_k:=1+\prod_{i=0}^{k-1}b_i$ for any $k\in\mathbb N_n$. Then, $b_k\in T\cap(\cap_{i\in\mathbb N_n}V_i)$ for any $k\in\{0,\ldots,n\}$, so that $v_i(b_k)\geq 0$ for any $i\in\mathbb N_n$ and any $k\in\{0,\ldots,n\}$.

If $b_l=b_k$ for some $k\neq l$, assume that $k>l$. Then, $b_k=1+b_k\prod_{j=0,j\neq l}^{k-1}b_j$, giving that $b_k(1-\prod_{j=0,j\neq l}^{k-1}b_j)=1$, 
 so that $v_i(b_k)+v_i(1-\prod_{j=0,j\neq l}^{k-1}b_j)=v_i(1)\ (*)$, with $b_k$ and $1-\prod_{j=0,j\neq l}^{k-1}b_j$ both in $V_i$ for any $i\in\mathbb N_n$. It follows that $v_i(b_k)\geq 0$ and $v_i(1-\prod_{j=0,j\neq l}^{k-1}b_j)\geq 0$. As $v_i(1)=0$, $(*)$ leads to $v_i(b_k)=0$ for any $i\in\mathbb N_n$ and the proof of the Lemma is gotten for $b_k$.

Assume now that $b_j\neq b_k$ for any $k,j\in\{0,\ldots,n\},\ k\neq j$, so that $|\mathcal S|=n+1$.

We claim that for any $i\in\mathbb N_n$, there is at most one $b_{i_j}\in \mathcal S$ such that $v_i(b_{i_j})>0\ (**)$. 

If $v_i(b_k)=0$ for any $k\in\{0,\ldots,n\}$, then $(**)$ holds. Otherwise, let $j_0$ be the least integer of $\{0,\ldots,n\}$ such that $v_i(b_{j_0})\neq 0$, that is $v_i(b_{j_0})>0$. It follows that for any $k\geq j_0$, we have $v_i(\prod_{j=0}^k b_j)>0$, so that $v_i(b_{k+1})=v_i(1+\prod_{j=0}^kb_j)= v_i(1)=0$. Since $v_i(b_k)=0$ for any $k<j_0$, we get that $v_i(b_k)\neq 0$ if and only if $k=j_0$. Then $(**)$ holds. Hence, $|\{b_j\in\mathcal S\mid\exists i\in\mathbb N_n$ such that $v_i(b_j)\neq 0\}|\leq n<|\mathcal S|$. It follows that there exists some $c:=b_k\in[T\cap (\cap_{i\in\mathbb N_n}V_i)]$ such that $v_i(c)=0$ for any $i\in\mathbb N_n$.

It remains to show that $c\not\in V$. We prove by induction on $j\in\{0,\ldots,k\}$ that $b_j\not\in V$ for any $j\in\{0,\ldots,k\}$. This is satisfied for $j=0$ since $b_0=b$. Assume that $b_j\not\in V$ for any $j\in\{0,\ldots,l\}$ where $l<k$. But $S\setminus V$ is a mcs, so that $\prod_{j=0}^lb_j\not\in V$, which implies that $b_{l+1}\not\in V$ and then $c=b_k=1+\prod_{j=0}^{k-1}b_j\not\in V$.

Now, let $W\in[R,S]$ be such that $W\subseteq S$ is Pr\"ufer-Manis with $b\not\in W$. We follow the proof of \cite[Remark, page 173]{KOS}. We consider the previous sequence $\{b_j\}$ with $b_0:=b$ and  $b_j=1+\prod_{i=0,}^{j-1}b_i$. We still have $c\in T\cap(\cap_{i\in\mathbb N_n}V_i)$. Obviously, since $b\not\in W$, so is any $b_i$, and then $b_k=c\not\in W$ because $S\setminus W$ is an mcs.
\end{proof}

\begin{theorem}\label{COV 1} Let $R,B_1,\ldots,B_n$  be subrings of a ring $S$, where $n$ is a positive integer, $n>1$. If the $B_i$s are integrally closed in $S$, except at most two of them, and $R\subseteq B_1\cup \cdots\cup B_n$, then $R$ is contained in some of the subrings $B_i$.
\end{theorem}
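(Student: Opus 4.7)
The plan is to argue by induction on $n$, using Lemma~\ref{COV} and Theorem~\ref{Intclos} to reduce to the Pr\"ufer-Manis case, which has the key multiplicative structure $S\setminus V$ is a mcs.

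\emph{Base case $n=2$.} This needs no integral-closedness hypothesis. If $R\not\subseteq B_1$ and $R\not\subseteq B_2$, pick $r_i\in R\setminus B_{3-i}$; the covering forces $r_i\in B_i$. Since $r_1+r_2\in R$, it lies in $B_1$ or $B_2$. If $r_1+r_2\in B_1$, then from $r_2\in B_1$ we get $r_1=(r_1+r_2)-r_2\in B_1$, contradicting $r_1\notin B_1$; the other case is symmetric.

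\emph{Inductive step $n\geq 3$.} WLOG $B_3,\ldots,B_n$ are integrally closed in $S$. Suppose for contradiction that $R\not\subseteq B_i$ for every $i$. If the cover is redundant (some $R\cap B_i\subseteq\bigcup_{j\neq i}B_j$), then $R\subseteq\bigcup_{j\neq i}B_j$, and the ``at most two non-integrally-closed'' hypothesis is preserved, so we conclude by the inductive hypothesis. Hence assume the cover is irredundant and pick $r_i\in R\cap B_i$ with $r_i\notin B_j$ for all $j\neq i$. By Theorem~\ref{Intclos}, each $B_i$ with $i\geq 3$ is the intersection of the Pr\"ufer-Manis subrings $V\in[B_i,S]$; since $R\not\subseteq B_i$, at least one such $V_i$ satisfies $R\not\subseteq V_i$. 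Replacing each $B_i$ ($i\geq 3$) by the Pr\"ufer-Manis ring $V_i\supseteq B_i$ only enlarges the union, so $R\subseteq B_1\cup B_2\cup V_3\cup\cdots\cup V_n$ still holds. Thus it suffices to prove the theorem when $B_3,\ldots,B_n$ are Pr\"ufer-Manis, with associated Manis valuations $v_3,\ldots,v_n$ on $S$.

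\emph{The Pr\"ufer-Manis case.} Now we follow the strategy of Kostra's proof. Using Lemma~\ref{COV} iteratively, we replace each chosen $r_i$ by an element $c_i\in R\cap B_i$ that still avoids every $B_j$ for $j\neq i$ and, in addition, satisfies $v_k(c_i)=0$ for every $k\geq 3$ with $k\neq i$. This is possible because Lemma~\ref{COV}, applied with $T=R$, the $V_k$'s ($k\neq i$, $k\geq 3$) as the Pr\"ufer-Manis rings whose valuations we wish to control, and the remaining $B_j$'s playing the role of the obstruction ring $V$ (through the ``moreover'' clause), produces exactly such a modified element. Once the $c_i$ are in hand, form a suitable combination, e.g.\ $\sigma=c_1+c_2\cdots c_n$ (or the analogous product/sum adapted to the avoidance pattern): the assumption $\sigma\in B_j$ is ruled out in each case by using that $S\setminus B_j$ is a mcs for $j\geq 3$ (so products of elements outside $B_j$ remain outside), combined with the base-case argument to handle $B_1$ and $B_2$. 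This yields $\sigma\in R\setminus\bigcup_i B_i$, contradicting the cover.

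The main obstacle is the bookkeeping in the iterated application of Lemma~\ref{COV}: we must simultaneously achieve $c_i\notin B_j$ for every $j\neq i$ while forcing $v_k(c_i)=0$ for every relevant $k$, and then choose a combining expression $\sigma$ whose membership in any single $B_j$ leads back to the already-excluded membership $r_i\in B_j$ via the multiplicative-closedness of $S\setminus B_j$. Once these choices are orchestrated, the contradiction is immediate.
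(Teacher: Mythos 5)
Your overall strategy (reduce to the Pr\"ufer-Manis case via Theorem~\ref{Intclos}, then run a Kostra-style argument with Lemma~\ref{COV}) is indeed the paper's strategy, but two steps of your reduction break down. First, replacing each $B_i$ ($i\geq 3$) by a \emph{single} Pr\"ufer-Manis overring $V_i$ with $R\not\subseteq V_i$ destroys exactly the structure you need afterwards: the elements $r_j$ were chosen with $r_j\notin B_i$ for $j\neq i$, but nothing guarantees $r_j\notin V_i$, since $V_i\supseteq B_i$ and the only thing you control about $V_i$ is that it misses some unspecified element of $R$. No single member of a family $\{V_{k,i}\}$ whose intersection is $B_i$ need exclude all the $r_j$'s simultaneously. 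This is precisely why the paper does not make this reduction: for every pair $(j,i)$ it chooses a separate ring $V_{j,i}\in[B_i,S]$ with $a_j\notin V_{j,i}$, i.e.\ the Pr\"ufer-Manis overring is tailored to the particular element it must exclude, and Steps 1--3 of the paper's proof are the bookkeeping required to handle the whole family $\mathcal M=\{V_{j,i}\}$ rather than one overring per $B_i$.

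Second, the property you ask of the modified elements is self-contradictory. In your reduced setting $B_k$ ($k\geq 3$) is the valuation ring of $v_k$, so $c_i\notin B_k$ means $v_k(c_i)<0$; you cannot simultaneously have $v_k(c_i)=0$. Lemma~\ref{COV} does not (and cannot) produce such elements: it yields $c$ with $v(c)=0$ only for those valuation rings that already \emph{contain} the starting element $b$, while keeping $c$ outside the Pr\"ufer-Manis rings that excluded $b$. Accordingly, in the paper $c_k$ has zero value with respect to the rings of $\mathcal M^{(k)}$ (those containing $a_k$) and stays outside those of $\mathcal M\setminus\mathcal M^{(k)}$ --- never both for the same ring. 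Finally, your concluding ``suitable combination'' is where the real work lies and cannot be waved away: the paper takes $d_0=\prod_k c_k$, shows $d_0$ lies outside every ring of $\mathcal M$, raises it to a power $t_0$ chosen so that $v(d_0^{t_0})\neq v(a_l)$ for the relevant valuations (this uses integral closedness of the $V_{j,i}$ to see that distinct powers of $d_0$ have distinct values), and then splits into the three cases $d\in B_1\cap B_2$, $d\in B_1\setminus B_2$, $d\in B_2\setminus B_1$, using the ultrametric equality $v(a_l+d)=\min(v(a_l),v(d))$ (valid because the values are unequal) to exhibit $x=a_l+d\in R$ outside every $B_i$. As written, your proposal has the right skeleton, but the two gaps above are fatal to the argument as stated.
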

\begin{proof} First, we may remark that $R\subseteq B_1\cup B_2$ implies that $R$ is contained in one of the subrings $B_1,B_2$ by an obvious property of additive subgroups. So, we may assume that $n\geq 3$ with $B_i$ integrally closed in $S$ for any $i\geq 3$. There is no harm to assume that $n$ is the least integer such that $R\subseteq B_1\cup \cdots\cup B_n$, that is $R\not\subseteq\cup_{i\in\mathbb N_n, i\neq j}B_i$ for each  $j\in\mathbb N_n\ (*)$. 

To prove the Theorem, it is enough to show that if $R$ is not contained in any of the subrings $B_i$, we get a contradiction, that is $R\not\subseteq B_1\cup\cdots\cup B_n$, or equivalently, there exists some $x\in R\setminus( B_1\cup \cdots\cup B_n)$.  This $x$ is gotten after  five steps. 

{\bf Step 1.} Assume that $R\subseteq B_1\cup\cdots\cup B_n$ with $R$ not contained in any of the subrings $B_i$. According to $(*)$, for any $j\in\mathbb N_n$, there exists $a_j\in(R\cap B_j)\setminus(\cup_{i\in\mathbb N_n, i\neq j}B_i)$.

Fix some $i\in\mathbb N_n,\ i\neq j,\ i>2$. Since $B_i\subseteq S$ is integrally closed, by Theorem \ref{Intclos}, there exists a family $\{V_{k,i}\}\subseteq [B_i,S]$ such that $V_{k,i}\subseteq S$ is Pr\"ufer-Manis, with $B_i=\cap V_{k,i}$. Let $v_{k,i}$ be the Manis valuation associated to $V_{k,i}$. As $a_j\not\in B_i$, there exists some $V_{j,i}$ such that $a_j\not\in V_{j,i}$.  Moreover, $a_j\in V_{k,j}$ for any $k$ if $j\geq 2$.

Set $\mathcal M:=\{V_{j,i}\mid i>2,i\neq j\}$. Then, $B_3\cup\cdots\cup B_n\subseteq\cup_{i>2,i\neq j}V_{j,i}=\cup_{V_{j,i}\in\mathcal M}V_{j,i}$. For each $a_k$, set $\mathcal M^{(k)}:=\{V_{j,i}\in\mathcal M\mid a_k\in V_{j,i}\}$ so that $V_{k,j}\in\mathcal M^{(j)}$ for any $k$ if $j\geq 2$.
 
{\bf Step 2. }
If $\mathcal M^{(k)}\neq\emptyset$, then $a_k\in[R\cap(\cap_{V_{j,i}\in\mathcal M^{(k)}}V_{j,i})]\setminus V_{k,i}$. It follows from Lemma \ref{COV} that there exists $c_k\in R$ such that $v_{j,i}(c_k)=0$ for any $V_{j,i}\in\mathcal M^{(k)}$ and $c_k\not\in V_{k,i}$. In particular, $c_k\in V_{j,i}$ for any $V_{j,i}\in\mathcal M^{(k)}$.

If $\mathcal M^{(k)}=\emptyset$, set $c_k:=a_k\in R$. Since $a_k\not\in V_{j,i}$ for any $V_{j,i}\in\mathcal M$, it follows that $c_k\not\in V_{j,i}$ for any $V_{j,i}\in\mathcal M$.

{\bf Step 3.}
Set $d_0:=\prod_{k=1}^nc_k$. Then, $d_0\in R$. We claim that $d_0\not\in V$, for any $V\in \mathcal M$. Let $V\in\mathcal M$. Then, there exist $i_0,j_0,\ i_0>2,i_0\neq j_0$ such that $V=V_{j_0,i_0}$, so that  $a_{j_0}\not\in V$. Whatever is $\mathcal M^{(j_0)}$, we have that $c_{j_0}\not\in V$. It is obvious if $\mathcal M^{(j_0)}\neq\emptyset$. If $\mathcal M^{(j_0)}=\emptyset$, then, $c_{j_0}\not\in V_{j,i}$ for any $V_{j,i}\in\mathcal M$. In particular, $c_{j_0}\not\in V$. It follows that $v_{j_0,i_0}(c_{j_0})<0$. 

Consider $c_k$ for some $k\neq j_0$. If $c_k\not\in V$, then $v_{j_0,i_0}(c_k)<0$. If $c_k\in V$, we cannot have $\mathcal M^{(k)}=\emptyset$, so that $\mathcal M^{(k)}\neq\emptyset$. If $V\in\mathcal M^{(k)}$, then, $v_{j_0,i_0}(c_k)=0$ and $a_k\in V_{j_0,i_0}$. If $V\not\in\mathcal M^{(k)}$, then $a_k\not\in V_{j_0,i_0}$ and $c_k\not\in V_{j_0,i_0}$ by Lemma \ref{COV}, which leads to $v_{j_0,i_0}(c_k)<0$. In any case $v_{j_0,i_0}(c_k)\leq 0$.

To conclude $v_{j_0,i_0}(d_0)=\sum_{k=1}^nv_{j_0,i_0}(c_k)\leq v_{j_0,i_0}(c_{j_0})<0$. This implies that $d_0\not\in V$ for any $V\in\mathcal M$, and then $d_0\not\in B_3\cup\cdots\cup B_n$. 

Set $\mathcal M_0:=\{V_{1,i}\mid i>2\}\cup\{V_{2,i}\mid i>2\}\cup\{V_{3,i}\mid i>3\}$, with $\{V_{3,i}\mid i>3\}=\emptyset$ if $n=3$. Obviously, $\mathcal M_0\subseteq\mathcal M$, so that $d_0\not\in V_{j,i}$ for any 
 $V_{j,i}\in \mathcal M_0$. 
 
Let $t_1,t_2\in\mathbb N,\ t_1\neq t_2$. We claim that $v_{j,i}(d_0^{t_1})\neq v_{j,i}(d_0^{t_2})$ for any $V_{j,i}\in\mathcal M_0$. Assume that $t_1>t_2$ and set $t:=t_1-t_2$, that is $t_1=t+t_2$. It follows that $d_0^{t_1}=d_0^{t_2}d_0^t$, so that $v_{j,i}(d_0^{t_1})= v_{j,i}(d_0^{t_2})+v_{j,i}(d_0^t)$. Now, $v_{j,i}(d_0^{t_1})=v_{j,i}(d_0^{t_2})$ implies $v_{j,i}(d_0^t)=0$, that is $d_0^t\in V_{j,i}$. But $V_{j,i}\subseteq S$ is Pr\"ufer-Manis, and then integrally closed, so that $d_0\in V_{j,i}$, a contradiction. Then, $v_{j,i}(d_0^{t_1})\neq v_{j,i}(d_0^{t_2})$. 
 
Let $l\in\{1,2,3\}$ and consider the corresponding $a_l$ defined at the beginning of the proof. Then, there exists at most one $t_{j,i,l}\in\mathbb N$ such that $v_{j,i}(a_l)=v_{j,i}(d_0^{t_{j,i,l}})$. If there does not exist such $t_{j,i,l}$, we have $v_{j,i}(a_l)\neq v_{j,i}(d_0)$. In this case, set $t_{j,i,l}=1$. It follows that in any case and for any $t>t_{j,i,l}$, we have $v_{j,i}(a_l)\neq v_{j,i}(d_0^t)$. Let 
 
\centerline{$t_0:=\sup\{1+t_{j,i,l} \mid j,l\in\{1,2,3\}, \ i \in \{3,\ldots,n\}, i>j\}.$} 
  
 Then, $v(d_0^{t_0})\neq v(a_l)$ for any $V\in\mathcal M_0$ and any $l\in\{1,2,3\}$. 
 
 {\bf Step 4.}
 Set $d:=d_0^{t_0}$. Then, $v(d)\neq v(a_l)$ for any $V\in \mathcal M_0\ (**)$. Moreover, for any $V\in\mathcal M$, we have $d_0\not\in V$, which  implies $d\not\in V$  since $V\subset S$ is integrally closed. In particular, $ d\not\in B_3\cup\cdots\cup B_n$, but $d_0\in R$ implies $d\in R\subseteq B_1\cup B_2\cup B_3\cup\cdots\cup B_n$, so that $d\in B_1\cup B_2$. Now, $B_1\cup B_2=(B_1\cap B_2)\cup(B_1\setminus  B_2)\cup(B_2\setminus B_1)$. 
 
{\bf  Step 5. }
 We are going to consider the three possible cases for $d$.
 
 (1) $d\in B_1\cap B_2$.
 
Set $x:=a_3+d\in R$. Since $a_3\in(R\cap B_3)\setminus(\cup_{i\in\mathbb N_n, i\neq 3}B_i)$, we have $a_3\not\in B_1\cup B_2$, so that $x\not\in B_1 \cup B_2$. Moreover, $d\not\in B_3$, which implies that $x\not\in B_3$. Let $i>3$.
 
If $v_{3,i}(d)<v_{3,i}(a_3)$, then $v_{3,i}(x)=v_{3,i}(a_3+d)=v_{3,i}(d)<0$ because $d\not\in V_{3,i}$. Then, $x\not\in B_i$. 
 
If $v_{3,i}(d)\geq v_{3,i}(a_3)$, then $v_{3,i}(d)>v_{3,i}(a_3)$ by $(**)$, so that $v_{3,i}(x)=v_{3,i}(a_3)<0$ because $a_3\not\in V_{3,i}$. Then, $x\not\in B_i$. 

It follows that $x\not\in B_1\cup B_2\cup B_3\cup\cdots\cup B_n$, a contradiction. 
 
 (2)  $d\in B_1\setminus  B_2$.

Set $x:=a_2+d\in R$. Since $a_2\not\in B_1$, we have $x\not\in B_1$ and since $a_2\in B_2$, this implies that $x\not\in B_2$, so that $x\not\in B_1\cup B_2$. Let $i>2$.
 
If $v_{2,i}(d)<v_{2,i}(a_2)$, then $v_{2,i}(x)=v_{2,i}(d)<0$ because $d\not\in V_{2,i}$. Then, $x\not\in B_i$.
 
If $v_{2,i}(d)\geq v_{2,i}(a_2)$, then $v_{2,i}(d)>v_{2,i}(a_2)$ by $(**)$, so that $v_{2,i}(x)=v_{2,i}(a_2)<0$ because $a_2\not\in V_{2,i}$. Then, $x\not\in B_i$.

It follows that $x\not\in B_1\cup B_2\cup B_3\cup\cdots\cup B_n$, a contradiction. 
 
 (3)  $d\in B_2\setminus  B_1$.
 
 The proof is similar as in (2) by changing $B_1$ and $B_2$.
 
To conclude, we get a contradiction in any case, so that there exists some $i$ such that $R\subseteq B_i$.
\end{proof}

\begin{proposition}\label{INTER}
 Let $R\subseteq S$ be a Pr\"ufer extension  and $U, B_1,\ldots, B_n \in [R,S]$  such that $B_1\cap \cdots \cap B_n \subset U$ and $U\subseteq S$ is Pr\"ufer-Manis. Then there is some $i$ such that $B_i \subseteq U$. 
\end{proposition}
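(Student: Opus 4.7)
The plan exploits two structural facts. First, for a Pr\"ufer extension the lattice $[R,S]$ is distributive: under the order-reversing Lazard bijection of Proposition~\ref{LAZZEPI} (applicable since every $T\in[R,S]$ is flat-epimorphic over $R$), the compositum and intersection of subrings correspond to intersection and union of spectral images respectively, so distributivity in $[R,S]$ follows from set distributivity. Second, since $U\subseteq S$ is Pr\"ufer-Manis, the poset $[U,S]$ is a chain, via the equivalence $(1)\Leftrightarrow(3)$ of the Lemma preceding Corollary~\ref{PM}.

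With these in hand, set $B:=B_1\cap\cdots\cap B_n$ and let $UT$ denote the compositum of $U$ and $T$ in $S$ (for $T\in[R,S]$). By distributivity,
\[
  U\cdot B \;=\; (UB_1)\cap\cdots\cap(UB_n).
\]
The hypothesis $B\subseteq U$ forces $UB=U$, so $U=(UB_1)\cap\cdots\cap(UB_n)$. Each $UB_i$ lies in the chain $[U,S]$, and a finite intersection in a chain equals its smallest element; call it $UB_{i_0}$. Hence $U=UB_{i_0}$, which gives $B_{i_0}\subseteq U$, as required.

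The main obstacle is establishing the intersection half of distributivity, namely the identity $\mathcal X(T_1\cap T_2)=\mathcal X(T_1)\cup\mathcal X(T_2)$ for flat-epimorphic $T_1,T_2\in[R,S]$. The inclusion $\supseteq$ is immediate from $T_1\cap T_2\subseteq T_i$. The reverse inclusion uses that finite intersection of $R$-submodules of $S$ commutes with localization, yielding $(T_1\cap T_2)_P=(T_1)_P\cap(T_2)_P$, together with a fibre-emptiness analysis for the flat epimorphisms $R_P\to(T_i)_P$ that genuinely exploits the Pr\"ufer hypothesis. Once this identity (and the companion $\mathcal X(T_1\cdot T_2)=\mathcal X(T_1)\cap\mathcal X(T_2)$, which follows from the surjection $\mathrm{Spec}(T_1\otimes_R T_2)\to\mathrm{Spec}(T_1)\times_{\mathrm{Spec}(R)}\mathrm{Spec}(T_2)$ already invoked in the proof of Proposition~\ref{EPIPLAT}) is verified, Proposition~\ref{EPIPLAT}(2) produces the ring-theoretic equality used above.
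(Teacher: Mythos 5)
Your main line of argument is precisely the paper's: the paper also writes $U=U(B_1\cap\cdots\cap B_n)=UB_1\cap\cdots\cap UB_n$, invokes the fact that $[U,S]$ is a chain because $U\subseteq S$ is Pr\"ufer--Manis, and concludes $U=UB_i$ for some $i$, i.e. $B_i\subseteq U$. The only divergence is the source of the distributive identity: the paper simply cites \cite[Theorem 1.4(4), p.86--87]{KZ}, whereas you propose to manufacture it from the Lazard correspondence (Propositions~\ref{LAZZEPI} and~\ref{EPIPLAT}), which is legitimate since in a Pr\"ufer extension every member of $[R,S]$ --- including $T_1\cap T_2$ and the compositum $T_1T_2$ --- is flat-epimorphic over $R$. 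Two remarks on that translation. First, Proposition~\ref{EPIPLAT}(2) only yields an $R$-algebra \emph{isomorphism}; to get equality of subrings of $S$, use that any $R$-algebra map out of an epimorphism is unique, so mutual factorizations are restrictions of the identity of $S$, giving mutual inclusion. Second, in your compositum identity the surjection $\mathrm{Spec}(T_1\otimes_RT_2)\to\mathrm{Spec}(T_1)\times_{\mathrm{Spec}(R)}\mathrm{Spec}(T_2)$ produces primes of the tensor product, not of $T_1T_2$, which is a priori only a quotient of $T_1\otimes_RT_2$; it is cleaner to argue directly that $P\in\mathcal X(T_1)\cap\mathcal X(T_2)$ forces $R_P=(T_1)_P=(T_2)_P$ by Lazard's dichotomy \cite[Proposition 2.4, p.112]{L}, whence $(T_1T_2)_P=R_P$ and $P\in\mathcal X(T_1T_2)$.

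The genuine gap is the step you yourself flag as the main obstacle: the inclusion $\mathcal X(T_1\cap T_2)\subseteq\mathcal X(T_1)\cup\mathcal X(T_2)$ is announced (``a fibre-emptiness analysis \dots that genuinely exploits the Pr\"ufer hypothesis'') but never carried out, and it is exactly the point where the proof can fail: for two arbitrary flat-epimorphic subextensions of $S$ the intersection need not even be flat-epimorphic over $R$, and the identity is false in general. Here is the missing argument. Let $P\in\mathcal X(T_1\cap T_2)$. Since $R\subseteq S$ is Pr\"ufer, $R\to T_1\cap T_2$ is a flat epimorphism, so Lazard's dichotomy and $P\in\mathcal X(T_1\cap T_2)$ give that $R_P\to(T_1\cap T_2)_P=(T_1)_P\cap(T_2)_P$ is an isomorphism. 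Now $R_P\subseteq S_P$ is Pr\"ufer over a local ring (Proposition~\ref{LOC}), so $[R_P,S_P]$ is a chain by Proposition~\ref{VALU}; hence $(T_1)_P$ and $(T_2)_P$ are comparable and their intersection equals one of them, say $(T_1)_P$. Then $R_P=(T_1)_P$, so $P\in\mathcal X(T_1)$. It is this local chain property --- not fibre-emptiness alone --- that encodes the Pr\"ufer hypothesis. With this inserted (and the two repairs above), your route closes, and it amounts to an independent reconstruction of the Knebusch--Zhang distributivity result that the paper imports as a citation.
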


\begin{proof} Actually, this result is given under an equivalent form in \cite[Theorem 1.4, p.4]{KZII}. 
 Let $B_1,\ldots, B_n \in [R,S]$ be such that $B_1\cap \cdots \cap B_n \subset U$. Then, we have $U = U(B_1\cap \cdots \cap B_n) = UB_1\cap \cdots \cap UB_n$ by \cite[Theorem 1.4(4), p.86-87]{KZ}. Now $[U,S]$ is  a chain \cite[Theorem 3.1, p. 187]{KZ}; so that, $U=UB_i$ for some $i$ and then $B_i\subseteq  U$.
  \end{proof}
  
Note that if the extension $R\subseteq S$ is Pr\"ufer-Manis, so is $U\subseteq S$ for any $U\in[R,S]$ \cite[Corollary 3.2, P. 187]{KZ}. 

Gotlieb proved the following result  for a ring extension $R\subset K$ where $R$ is an integral domain with quotient field $K$ \cite[Theorem 6]{GOT}.
   
  \begin{theorem}
Let $R\subset S$ be an extension and $T,T_1,\ldots,T_n\in[ R,S]$, such that  $T=R_\Sigma$, where $\Sigma$ is a mcs of $R$ and $R\to T_i$ is a flat epimorphism for $i=1,\ldots,n$ such that $T\subseteq T_1\cup\ldots\cup T_n$. Then $T$ is contained in some $T_i$.
   \end{theorem}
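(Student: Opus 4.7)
The plan is to show that for some index $i$, every element of $\Sigma$ is a unit in $T_i$; this immediately gives $T \subseteq T_i$ inside $S$, because any $r/s \in R_\Sigma$ is $r\cdot s^{-1}\in T_i$ (the inverse of $s$ exists in $T\subseteq S$, and it agrees with its inverse computed inside $T_i$). Equivalently, via Proposition~\ref{EPIPLAT}(1), the inclusion $T\subseteq T_i$ corresponds to $\mathcal X(T_i)\subseteq\mathcal X(T)=\bigcap_{s\in\Sigma}\mathrm D(s)$, i.e., the condition that no $s\in\Sigma$ sits in any prime of $T_i$.

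The first ingredient is pointwise: for each $s\in\Sigma$ the element $s^{-1}\in T$ lies in some $T_i$ by the covering hypothesis, and then $s\cdot s^{-1}=1$ inside $T_i$ shows that $s\in\mathrm U(T_i)$. The key step is to upgrade this to a uniform choice of $i$. I would argue by contradiction: if for each $i\in\mathbb N_n$ there existed $s_i\in\Sigma$ with $s_i\notin\mathrm U(T_i)$, then $s_i$ would lie in some prime ideal $Q_i$ of $T_i$. Because $\Sigma$ is multiplicatively closed, the product $s:=s_1\cdots s_n$ is also in $\Sigma$. Applying the pointwise step to $s$ yields some $j$ with $s\in\mathrm U(T_j)$; but $s_j$ divides $s$ and $s_j\in Q_j$, so $s\in Q_j$, contradicting $s\in\mathrm U(T_j)$. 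This yields the desired uniform index.

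The main obstacle is precisely the passage from the pointwise to the uniform statement, and the multiplicative closure of $\Sigma$ is exactly what makes the pigeonhole go through: it lets one bundle the would-be failures $s_1,\dots,s_n$ into a single witness $s\in\Sigma$. It is worth noting that the flat epimorphism hypothesis on the $T_i$ plays no role in this argument; what is really used is only that each $T_i$ is a subring of $S$ containing $R$, together with the mcs property of $\Sigma$.
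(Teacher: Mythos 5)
Your proof is correct, and it takes a genuinely different route from the paper's. The paper argues spectrally: assuming $T\not\subseteq T_i$ for every $i$, the $(\mathcal X)$-rule of Proposition~\ref{EPIPLAT} yields prime ideals $P_i\in\mathcal X(T_i)\setminus\mathcal X(T)$; setting $I:=P_1\cap\cdots\cap P_n$, the covering hypothesis forces $I\cap\Sigma=\emptyset$ (this is where your ``pointwise'' observation is hidden), so $I$ enlarges to a prime $P$ disjoint from $\Sigma$, and since $P$ contains the finite intersection $I$ it contains some $P_i$, whence $P_i\in\mathcal X(T)$, a contradiction. Your bundling device is multiplicative rather than spectral: instead of intersecting primes and invoking Krull's lemma together with the fact that a prime containing a finite intersection contains one of the terms, you multiply the would-be counterexamples $s_1,\ldots,s_n$ into a single element $s\in\Sigma$ and contradict $s\in\mathrm U(T_j)$ by divisibility. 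Both proofs exploit exactly the same two resources, finiteness of $n$ and multiplicative closure of $\Sigma$, but yours is more elementary: no flat-epimorphism theory, no spectral images, and even your appeal to the primes $Q_j$ is removable, since $s_j\mid s$ in $T_j$ with $s\in\mathrm U(T_j)$ already makes $s_j$ a unit of $T_j$. Your closing observation is also correct and is the real dividend: the hypothesis that each $R\to T_i$ is a flat epimorphism is never used, so your argument proves the stronger statement in which $T_1,\ldots,T_n$ are arbitrary members of $[R,S]$, only $T=R_\Sigma$ mattering (a careful reading shows that the paper's proof likewise only uses that $R\to T$ is a flat epimorphism, but your argument makes this transparent). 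What the paper's route buys in exchange is coherence with the $(\mathcal X)$-rule formalism it deploys throughout; what yours buys is generality and self-containedness.
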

\begin{proof} Assume that $T$ is not contained in any $T_i$. By the $(\mathcal X$)-rule, there are prime ideals $P_i\in\mathcal X(T_i)\setminus\mathcal X(T)$ and $\mathcal X(T)=\{P\in\mathrm{Spec}(R)\mid P\cap\Sigma=\emptyset\}$. We set $I:=P_1\cap\cdots\cap P_n$. We deduce from $T\subseteq T_1\cup \ldots\cup T_n$ that $I\cap\Sigma=\emptyset$. There exists some prime ideal $P$ of $R$ such that $I\subseteq P$ and $P\cap\Sigma=\emptyset$. Then some $P_i$ is contained in $P$: so that, $P_i\in\mathcal X(T)$. Hence we get a contradiction.
  \end{proof}
 
\section{Pullback results}
  
  Consider the following pullback diagram  (D) in the category of commutative unital rings:  \Car R, {i}, S,f,g,V,{j},K
  where $i$ and $j$ are ring extensions.   It can be considered as a composite of the two diagrams:
  
   \centerline {$\mathrm{Ker} (D)$:\car R, S, {R/\mathrm{Ker}(f)}, {S/\mathrm{Ker}(g)}   and  $\mathrm{Im} (D)$:\car {f(R)},  {g(S)}, V,K}
  
   The first diagram is a pullback  because  $\mathrm{Ker}(f) =\mathrm{Ker}(g)$ thanks to the pullback diagram  (D).  It follows that $R = f(R)\times_{g(S)}S$.
   
It is of the form \car A,B,{A/I},{B/I} where $I$ is an ideal shared by the rings $A$ and $B$. We recall that in this case, $A\subseteq B$ is Pr\"ufer if and only if $A/I\subseteq B/I$ is Pr\"ufer (\cite[Proposition 5.8, p.52]{KZ}). 

It is easy to prove that the second diagram is a pullback   and is such that $f(R)$ is isomorphic to $V\cap g(S)$.

Recall that a ring $R$ is called {\it semi-hereditary} if each of its finitely generated ideals is a projective $R$-module. 
 
Olivier proved that an extension of rings $R\subset S$ is integrally closed if and only if there is a pullback diagram (D), where V is a semi-hereditary ring with an (absolutely flat) total quotient ring $K$ \cite[Corollary p.56]{O} or \cite[Th\'eor\`eme de Ker Chalon (2.1)]{OBIS}. 
   In this case, we call (DO) the diagram (D).
   Therefore, the Pr\"ufer property is not descended in pullbacks,  since $V\subset K$ is Pr\"ufer \cite[Theorem 2]{ENDO} and there are integrally closed extensions that are not Pr\"ufer.
  
On the other hand we have a pullback example provided by the following result.

\begin{proposition}\label{VALU}\cite[Theorem 6.8 and Theorem 6.10]{DPP2} If  $R$ is a local ring, an extension $R\subseteq S$ is Pr\"ufer if and only if there exists $P\in\mathrm{Spec}(R)$ such that $S=R_P$, $P=SP$ and $R/P$ is a valuation domain. Under these conditions, $S/P$ is the quotient field of $R/P$ and $P$ is a divided prime ideal of $R$ ({\it i.e.} comparable to each ideal of $R$).  
 In particular, $[R,S]$ is a chain.
\end{proposition}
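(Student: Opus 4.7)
The proof splits by direction; the forward direction is substantially harder.

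\emph{$(\Leftarrow)$.} Suppose $P$ satisfies the three conditions. The equality $PS=P$ makes $P$ a common ideal of $R$ and of $S$, producing a pullback square whose lower row is $R/P\hookrightarrow S/P$. Because $S=R_P$ and $PR_P=P$, the ideal $P$ is the maximal ideal of the local ring $R_P=S$, and hence $S/P$ is the quotient field of $R/P$. By \cite[Proposition 5.8]{KZ} (recalled at the start of Section~6), in such a shared-ideal square the Pr\"ufer property of $R\subseteq S$ is equivalent to that of $R/P\subseteq S/P$; the latter holds because $R/P$ is a valuation domain, and valuation domains are Pr\"ufer in their quotient field.

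\emph{$(\Rightarrow)$.} Let $M$ be the maximal ideal of $R$. The flat epimorphism $R\subseteq S$ corresponds by Proposition~\ref{LAZZEPI} to the affine open subset $\mathcal X(S)$ of $\mathrm{Spec}(R)$, stable under generization and consisting of exactly the primes $Q$ with $QS\neq S$. The plan is to show that $\mathcal X(S)$ admits a largest element $P$, so that $\mathcal X(S)=\{Q\in\mathrm{Spec}(R):Q\subseteq P\}=\mathrm{Spec}(R_P)$; an application of Proposition~\ref{EPIPLAT}(2) then yields $S\cong R_P$. With $P$ so chosen, the equality $PS=P$ holds because $P$ is the maximal ideal of the local ring $R_P=S$ and already sits in $R$ by construction. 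The quotient $R/P$ embeds in the field $S/P$, which the computation above identifies as the quotient field of $R/P$; and $R/P\subseteq S/P$ is Pr\"ufer by the backward direction of \cite[Proposition 5.8]{KZ}. A local Pr\"ufer domain in its quotient field is a valuation domain, so $R/P$ is such.

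\textbf{Main obstacle.} The critical step is the existence of a largest prime in $\mathcal X(S)$. A general flat epimorphism from a local ring need not be a localization, so this uses the full Pr\"ufer hypothesis in an essential way: the requirement that every intermediate extension also be a flat epimorphism (normal pair) forces the family of affine opens $\{\mathcal X(T):T\in[R,S]\}$ to be totally ordered, and combined with the locality of $R$, this pins down a unique maximal prime $P$ in $\mathcal X(S)$. The final assertion that $[R,S]$ is a chain then falls out of the order-isomorphism $[R,S]\cong[R/P,S/P]$ coming from the shared-ideal pullback, since subrings between a valuation domain and its quotient field are known to be linearly ordered by inclusion.
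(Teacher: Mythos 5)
Your backward direction is correct: it is exactly the shared-ideal argument via \cite[Proposition 5.8, p.52]{KZ} (recalled by the paper at the start of Section 6), together with the standard facts that $S/P=R_P/PR_P$ is the quotient field of $R/P$ and that a valuation domain is Pr\"ufer in its quotient field. Your closing chain argument also coincides with the only thing the paper itself proves: the authors cite \cite[Theorems 6.8 and 6.10]{DPP2} for the equivalence and supply only the order isomorphism $[R,S]\to[R/P,S/P]$, $T\mapsto T/P$.

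The forward direction, however, has a genuine gap at precisely the step you flag as the main obstacle, and the sentences offered to bridge it do not constitute a proof. First, the assertion that Pr\"ufer-ness ``forces the family $\{\mathcal X(T):T\in[R,S]\}$ to be totally ordered'' is unsupported, and it is in fact the conclusion in disguise: since every $T\in[R,S]$ is a flat epimorphism over $R$, Proposition~\ref{EPIPLAT}(1) combined with uniqueness of factorizations through the epimorphism $R\to T$ gives $T\subseteq T'\Leftrightarrow\mathcal X(T')\subseteq\mathcal X(T)$, so total orderedness of this family is \emph{equivalent} to $[R,S]$ being a chain --- the very ``in particular'' you derive only at the end. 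Second, even granting that total order, no mechanism is given that produces a largest prime inside the single set $\mathcal X(S)$: a maximal prime $Q$ of $\mathcal X(S)$ corresponds to no evident member of $[R,S]$ (the natural candidate $R_Q\cong S\otimes_RR_Q$ is an $S$-algebra, not an $R$-subalgebra of $S$), so incomparable maximal primes of $\mathcal X(S)$ would not visibly contradict the chain property; ruling them out is where the Pr\"ufer hypothesis must actually be spent. Third, your justification of $PS=P$ is a non sequitur: $PS=PR_P$ is the maximal ideal of $R_P$, and the issue is exactly whether $PR_P\subseteq R$, i.e.\ whether $P$ is divided; for a general localization of a local ring this fails, so it cannot follow merely from ``$P$ already sits in $R$.'' All three gaps are filled by the ring-theoretic input that the known proofs (Davis \cite{DA}, Knebusch--Zhang, reflected in Proposition~\ref{1.7}(2) of the paper) establish \emph{first}: when $R$ is local and $R\subseteq S$ is Pr\"ufer, every $s\in S\setminus R$ is invertible in $S$ with $s^{-1}\in R$. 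From this dichotomy one checks (using locality of $R$) that $P:=\{r\in R\mid r\notin\mathrm U(S)\}$ is a prime ideal, that $S=R_P$, that $PR_P\subseteq R$ (an element of $PR_P\setminus R$ would have its inverse in $R$, making $PR_P$ improper), and that $R/P$ is a valuation domain; your topological plan presupposes rather than proves this content. (A minor further slip: by Proposition~\ref{LAZZEPI}, $\mathcal X(S)$ is an affine subset stable under generization, but not in general an \emph{open} one, e.g.\ when $S$ is the quotient field of a valuation domain with no height-one prime; your argument does not really use openness, but the wording should be corrected.)
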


\begin{proof} To complete the proof, observe that there is an order isomorphism  $[R,S] \to [R/P, S/P]$  given by $T\mapsto T/P$ for $T\in [R,S]$.
\end{proof} 

We now use Olivier's result to find a characterization of Pr\"ufer extensions. 
  
  \begin{theorem}\label{1.3.1} 
Let $R\subset S$ be an integrally closed extension and ($DO$) the pullback diagram where $V$ is semi-hereditary with total quotient ring $K$. Then, $R\subset S$ is Pr\"ufer if and only if $g(T)V\cap g(S)=g(T)$ for each $T\in[R,S]$ or equivalently, the following diagram ($D_T$) is a pullback, for each $T\in [R,S]$:

  \car T,S,{g(T)V},K
 
In that case, we have  $R= V\times_{g(T)V}T$ and $g(T)V\cong V\otimes_RT$.
  \end{theorem}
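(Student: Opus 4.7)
The plan. First, by unwinding definitions the two formulations are equivalent: $(D_T)$ is a pullback if and only if the canonical map $T \to g(T)V \times_K S$, $t \mapsto (g(t), t)$, is bijective, which amounts to saying that every $s \in S$ with $g(s) \in g(T)V$ already lies in $T$, i.e., $g(T)V \cap g(S) = g(T)$. The identity $R = V \times_{g(T)V} T$ holds unconditionally by pasting pullbacks: an element $(v, t) \in V \times T$ with $j(v) = g(t)$ in $g(T)V \subseteq K$ is the same as an element of $R \subseteq V \times S$ whose $S$-component lies in $T$, which is automatic as $R \subseteq T$.

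For the direction $(\Rightarrow)$, assume $R \subset S$ is Pr\"ufer, so each $R \to T$ is a flat epimorphism. By Endo's theorem applied to the semi-hereditary $V$, the extension $V \subset K$ is Pr\"ufer, hence $V \to g(T)V$ is a flat epimorphism for each $T \in [R,S]$. I would interpret the pullback $(DO)$ as the kernel sequence
\[
0 \to R \to V \oplus S \xrightarrow{j - g} K
\]
and tensor with the flat $R$-module $T$ to obtain a left-exact sequence exhibiting $T$ as a pullback. Using $S \otimes_R T \cong S$ (both are flat epimorphisms of $R$ with common spectral image $\mathcal X_R(T)$, since $T \subseteq S$ forces $\mathcal X_R(S) \subseteq \mathcal X_R(T)$, and invoking Proposition~\ref{EPIPLAT}(2)) and the identification $V \otimes_R T \cong g(T)V$ as $V$-algebras (both are flat epimorphisms of $V$ with a natural surjection induced by multiplication into $K$; Proposition~\ref{EPIPLAT} then promotes this to an isomorphism), the resulting pullback becomes $(D_T)$ over $K$. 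The final stated isomorphism $g(T)V \cong V \otimes_R T$ falls out of the same argument.

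For $(\Leftarrow)$, assume $(D_T)$ is a pullback for each $T \in [R,S]$. Given $t \in T$ with representation $t = (w, s) \in g(T)V \times_K S$ where $w = g(s)$, apply Proposition~\ref{CAREPIPLAT} to the flat epimorphism $V \to g(T)V$ to produce an ideal $J \subseteq V$ satisfying $Jg(T)V = g(T)V$ and $Jw \subseteq V$. Then $I := f^{-1}(J)$ is an ideal of $R$; using the pullback compatibilities in $(DO)$ one checks directly that $It \subseteq R$, and $IT = T$ follows by lifting a witness for $Jg(T)V = g(T)V$ to $R$ via the pullback description $T = g(T)V \times_K S$. By Proposition~\ref{CAREPIPLAT}, $R \to T$ is a flat epimorphism, and since this holds for every $T$, the extension $R \subset S$ is Pr\"ufer.

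The main obstacle is the identification $V \otimes_R T \cong g(T)V$ in the forward direction. Since $R \to V$ need not be flat, standard tensor computations do not apply directly; one must use that both sides are flat epimorphisms of $V$, together with the spectral characterization of Proposition~\ref{EPIPLAT}, to force the natural surjection $V \otimes_R T \twoheadrightarrow g(T)V$ of $V$-algebras to be an isomorphism.
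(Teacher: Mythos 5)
There are two genuine gaps, one in each direction. In the forward direction you follow the paper's strategy (tensor the exact sequence $0\to R\to V\oplus S\to K$ defining $(DO)$ by the flat $R$-module $T$, then identify the corners), and your identification $S\otimes_RT\cong S$ via spectral images is a legitimate substitute for the paper's appeal to Storrer. But the step you yourself call the main obstacle is not closed by the tool you invoke: Proposition~\ref{EPIPLAT} cannot promote the surjection $V\otimes_RT\twoheadrightarrow g(T)V$ to an isomorphism. Part (2) of that proposition requires \emph{equality} of the spectral images $\mathcal X_V(V\otimes_RT)$ and $\mathcal X_V(g(T)V)$, whereas the existence of the factorization $V\to V\otimes_RT\to g(T)V$ yields only the inclusion $\mathcal X_V(g(T)V)\subseteq\mathcal X_V(V\otimes_RT)$; the reverse inclusion is precisely what is at stake, and there is no direct way to compute $\mathcal X_V(g(T)V)$. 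A surjective $V$-algebra map between two flat epimorphisms of $V$ need not be injective: for $V=\mathbb Z\times\mathbb Z$, the identity of $V$ and the projection $V\to\mathbb Z\times 0$ are both flat epimorphisms of $V$, and the projection is a non-injective surjection between them. What actually closes the gap — and is what the paper does — is the \emph{essential} property of flat epimorphisms \cite[Lemme 1.2, p.109]{L}: the composite $V\to V\otimes_RT\to g(T)V$ is the injective inclusion $V\subseteq K$ corestricted to $g(T)V$, so essentiality of the flat epimorphism $V\to V\otimes_RT$ forces $V\otimes_RT\to g(T)V$ to be injective, hence bijective. (You also leave the bottom-right corner untreated: after tensoring, the fibre product is over $K\otimes_RT$, not over $K$; the paper proves $K\otimes_RT\cong K$ to land exactly on $(D_T)$.)

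In the backward direction you genuinely diverge from the paper, and there the gap is fatal as written. With $J\subseteq V$ a witness ideal for the flat epimorphism $V\to g(T)V$ and $I:=f^{-1}(J)$, your verification of $It\subseteq R$ is correct (it uses only that $(D)$ is a pullback and that $j$ is injective), but $IT=T$ does not follow by ``lifting a witness''. A witness is an equation $1=\sum_i x_iw_i$ with $x_i\in J\subseteq V$ and $w_i\in g(T)V$; nothing places the $x_i$ in $f(R)$ (the map $f:R\to V$ is far from surjective in general, so $f^{-1}(J)$ sees only a sliver of $J$), and nothing places the products $x_iw_i$ in $g(S)$, which is what the pullback $(D_T)$ would require in order to pull them back into $T$. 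So the proposed lifting mechanism does not exist, and I see no way to complete this route without importing the substance of Olivier's theorem. The paper's argument for this direction is entirely different and much shorter: $g(T)V$ is an overring of the semi-hereditary ring $V$ inside $K$, hence is itself semi-hereditary with total quotient ring $K$ \cite[Corollary p.143]{CA}; Olivier's characterization \cite{O} applied to the pullback $(D_T)$ then shows that $T\subset S$ is integrally closed for every $T\in[R,S]$, i.e.\ $(R,S)$ is a normal pair, which is exactly the Pr\"ufer property. You should either adopt that argument or supply a genuinely new proof of $IT=T$.
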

\begin{proof} We use the characterization of Pr\"ufer extensions by normal pairs and flat epimorphisms. Suppose that $(D_T)$ is a pullback. Since  an overring of a semi-hereditary ring is semi-hereditary (\cite[Corollary p.143]{CA}), Olivier's result implies that $T\subset S$ is integrally closed. Hence 
 $R\subset S$ is Pr\"ufer.
  We now prove the converse. Suppose that $R\subset S$ is Pr\"ufer. Then $R\subset T$ is a flat epimorphism. Tensoring the diagram (D) by $\otimes_RT$, we get another pullback diagram because the pullback $R$ is a kernel of a morphism of $R$-modules and $T$ is flat over $R$. We next identify the rings of the new pullback. We have clearly $T\cong R\otimes_RT$. Moreover we also have $S\otimes_RT\cong S$. This is a consequence of \cite[Satz 2.2 (d)]{ST} which states that if $M$ is a $T$-module and $R\to T$ an epimorphism, then $M\otimes_RT\cong M$ (an isomorphism of $T$-modules). We next show that $V\otimes_RT\cong g(T)V$. Consider the natural map $V\otimes_RT \to K$; its image is $g(T)V$. Then $V\to V\otimes_R T$ is a flat epimorphism deduced from $R\to T$ by the base change $R\to V$ and $V\to V\otimes_R T\to g(T)V$ is injective.
   It follows that $V\otimes_RT\to g(T)V$ is an isomorphism, because a flat epimorphism is essential by \cite[Lemme 1.2, p.109]{L}. Then we show that $K\otimes_RT \cong K$. We first observe that $ K\to K\otimes_RT$ is a flat epimorphism whose domain is an  absolutely flat ring. 
   This map is surjective. To see this, if $J$ is the kernel of the morphism, then $K/J\to K\otimes_RT$ is a faithfully flat epimorphism because $K/J$ is absolutely flat whence is an isomorphism by \cite[Lemme 1.2, p.109]{L}. Moreover, $V\to V\otimes_RT$ identifies to $V\to g(T)V$; whence is injective. As $V\to K$ is flat, the map  $K \to (V\otimes_R T)\otimes_V K \cong K\otimes_RT$ is injective, so that $K\cong K\otimes_RT$.
   
    Therefore, we have proved that there is a pullback diagram $(D_T)$. 
   To complete the proof, it is enough to consider $\mathrm{Im}(D_T)$, in which case the pullback condition on $T$ can be written $g(T)V \cap g(S) =  g(T)$. 
  \end{proof}
  
Nevertheless, we give some example of pullbacks where the ascent property holds.
  
\begin{proposition}\label{1.3.2} Let $I$ be an ideal of a ring $S$ and set $S'=S/I$. Denote by $\varphi$ the canonical map $S\to S/I$. Let $R'$ be a subring of $S'$ and  $R$  the pullback ring in the following diagram:
  \car R, S,{R'},{S'}
  Then $R\subset S$ is Pr\"ufer if and only of $R'\subset S'$ is Pr\"ufer.
   \end{proposition}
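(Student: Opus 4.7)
The strategy is to recognize this pullback as precisely the ``shared ideal'' situation already noted immediately before Theorem~\ref{1.3.1} and then invoke \cite[Proposition 5.8, p.52]{KZ}. First, I would unwind the pullback. Since $R$ is defined as the fiber product over $\varphi\colon S\to S'=S/I$, the ring $R$ is canonically identified with $\varphi^{-1}(R')\subseteq S$. In particular $I=\ker\varphi$ is contained in $R$, and because $I$ is an ideal of $S$, it is automatically an ideal of $R$. The canonical map $R\to R'$ then identifies $R/I$ with $R'$, while $\varphi$ identifies $S/I$ with $S'$.

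Thus the diagram is exactly of the form
\[
\begin{CD}
R @>>> S \\
@VVV @VVV \\
R/I @>>> S/I
\end{CD}
\]
with $I$ a common ideal of the extension $R\subseteq S$. At this point the conclusion is immediate from the cited \cite[Proposition 5.8, p.52]{KZ}: an extension sharing a common ideal is Pr\"ufer if and only if the quotient extension $R/I\subseteq S/I$ is Pr\"ufer, which under our identification reads $R'\subseteq S'$ is Pr\"ufer.

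There is no substantive obstacle here; the only point requiring a line of verification is the identification $R=\varphi^{-1}(R')$ and the subsequent observation that $I\subseteq R$, which is exactly the general decomposition of any pullback diagram $(D)$ into $\mathrm{Ker}(D)$ and $\mathrm{Im}(D)$ carried out earlier in Section~6; in the present situation $\mathrm{Im}(D)$ is tautological since $g=\varphi$ is already surjective, so the diagram coincides with its kernel part, which is the shared-ideal diagram. Hence the proposition reduces entirely to the shared-ideal statement of Knebusch--Zhang.
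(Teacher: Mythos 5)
Your proof is correct and is essentially the paper's own argument: both identify $R$ with $\varphi^{-1}(R')$, note that $I$ is an ideal shared by $R$ and $S$ with $R/I\cong R'$ and $S/I=S'$, and then conclude by the same citation \cite[Proposition 5.8, p.52]{KZ}. No gap to report.
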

   \begin{proof} Clearly $I$ is an ideal shared by $R$ and $S$.
    Now, observe that $R'$ identifies to $(R+I)/I\cong R/(I\cap R)\cong R/I$.
  It is then enough to apply  \cite[Proposition 5.8, p.52]{KZ}.
     \end{proof}
 
  \section{The case of a local base ring}

When the base ring $R$ is local, we already gave a characterization of Pr\"ufer extensions in Proposition~\ref{VALU}.

\begin{definition}\label{DISTRI} An extension $R\subseteq S$ is called  {\em module-distributive} if $R\cap(X+Y) = (R\cap X) + ( R\cap Y)$ for each pair of $R$-submodules $(X, Y)$ of $S$ (cf. \cite[p.119]{KZ}). We say that $R\subseteq S$ is distributive if the lattice $[R,S]$ endowed with compositum  and intersection as laws is distributive. 
\end{definition} 

\cite[Theorem 5.4, p.121]{KZ} shows that an extension is $R\subseteq S$ module-distributive if and only if it is Pr\"ufer. 
As a consequence we get that the set of $R_P$-submodules of $S_P/R_P$ is a chain \cite[Corollary 2]{ERGO}, when the extension is Pr\"ufer. This gives a stronger result than that of Proposition~\ref{VALU}. Moreover, we see that a Pr\"ufer extension is both module-distributive and distributive. For the distributivity, use \cite[Proposition 5.18]{Pic 4} since a Pr\"ufer extension 
$R\subset S$ is arithmetical (that is $R_M\subset S_M$ is chained for any $M\in\mathrm{Max}(R)$). 

In order to get more results, we introduce the following considerations.
 
In view of \cite[Proposition 5.2, p.119]{KZ}, an ideal $I$ of a ring $R$ is  called {\it distributive} if $I+ (J\cap K)= (I+J)\cap (I+K)$ for all ideals $J, K$ of $R$. When $R$ is local, an ideal $I$ is distributive if and only if $I$ is comparable to each ideal (principal) ideal of $R$. In this case ($R$ is local), we will call $I$ a {\it strong divisor} if in addition $0:I=0$. The following result will be useful.
 
 \begin{proposition}\label{1.10.4}\cite[Example 5.1, p.119]{KZ} Let $R\subset S$ be a Pr\"ufer extension. An $S$-regular ideal $I$ of $R$ is distributive. In particular, such an ideal is  a strong divisor if $R$ is local.
  \end{proposition}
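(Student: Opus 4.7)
The plan is to reduce the problem to the case where $R$ is local, where Proposition~\ref{VALU} provides a very rigid description of the extension. Since the formation of sums and finite intersections of ideals commutes with localization at each prime, distributivity of an ideal is a local property on $\mathrm{Spec}(R)$. By Proposition~\ref{LOC} the Pr\"ufer property is local, and by Remark~\ref{RS}(1(b)) so is $S$-regularity. Moreover, in a local ring an ideal is distributive if and only if it is comparable to every (principal) ideal. These observations reduce the proof of distributivity to showing that, over a local ring, an $S$-regular ideal $I$ in a Pr\"ufer extension $R\subseteq S$ is comparable to every ideal of $R$.

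So assume $R$ is local. Proposition~\ref{VALU} supplies a prime $P$ with $S=R_P$, $PS=P$, $R/P$ a valuation domain, and $P$ divided in $R$ (i.e.\ comparable to every ideal of $R$). From $IS=S$ together with $PS=P\subsetneq S$ one deduces $I\not\subseteq P$, and the divided property of $P$ then forces $P\subseteq I$. For any ideal $J$ of $R$: if $J\subseteq P$, then $J\subseteq I$; if $P\subseteq J$, then $I/P$ and $J/P$ are ideals of the valuation domain $R/P$, hence comparable, which lifts to comparability of $I$ and $J$ in $R$. This yields the desired distributivity of $I$, and by the local reduction above it holds in general.

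For the strong-divisor claim in the local case it remains to verify $0:_RI=0$. If $r\in R$ satisfies $rI=0$, then $rS=r(IS)=(rI)S=0$, and since $R\hookrightarrow S$ this forces $r=0$. I do not anticipate a serious obstacle: the main conceptual point is that Proposition~\ref{VALU} translates distributivity questions for $I$ into straightforward statements about the valuation domain $R/P$, while the role of $S$-regularity is precisely to force $I$ to lie above the controlling prime $P$, which is what makes $I$ comparable to every ideal rather than merely to the ones containing~$P$.
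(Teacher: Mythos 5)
Your proof is correct, but it is worth pointing out that the paper does not actually prove this proposition at all: it is quoted verbatim from \cite[Example 5.1, p.119]{KZ}, so any argument you give is necessarily a different route. Your argument is a legitimate, self-contained derivation inside the paper's own toolkit: you localize (using Proposition~\ref{LOC} for the Pr\"ufer property, Remark~\ref{RS}(1(b)) for $S$-regularity, and the fact that sums and finite intersections of ideals commute with localization, so that distributivity can be checked locally), and then in the local case you invoke Proposition~\ref{VALU} to get the divided prime $P$ with $S=R_P$, $PS=P$ and $R/P$ a valuation domain; the observation that $IS=S$ forces $I\not\subseteq P$, hence $P\subseteq I$, and that comparability then descends from the valuation domain $R/P$, is exactly the right use of the structure theorem, and the lifting of comparability from $R/P$ to $R$ is sound because both ideals contain $P$. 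Note two small points: the density claim $0:_RI=0$ that you verify by hand is already recorded in Remark~\ref{RS}(1) for any $S$-regular ideal, so you could simply cite it; and your reliance on Proposition~\ref{VALU} is not circular, since that result is imported from an external reference (\cite{DPP2}) rather than derived from the proposition you are proving. What your approach buys is independence from the Knebusch--Zhang machinery of module-distributive extensions; what the citation buys the authors is brevity and a statement valid in KZ's broader framework without invoking the local structure theorem.
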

  
We can translate some results of \cite[Lemma 1.1, Corollary]{ERGO} as follows. Let $R$ be a ring and set $\Sigma:=\{\sigma\in R\setminus\mathrm Z(R)\mid R\sigma\, \mathrm{is \,distributive}\}$. Then $\Sigma$ is a saturated mcs of $R$. 
  Moreover, if $\mathcal T$ is a mcs of $R$, such that $\mathcal T\subseteq R\setminus \mathrm Z(R)$ and $R \subseteq R_{\mathcal T}$ is Pr\"ufer, then $\mathcal T \subseteq \Sigma$.

We next examine the local case. We may find in \cite[p.123]{KZ} the following  definition and result.

\begin{definition}\label{1.6.1} 
 A {\em strong divisor} $t$ of a local ring $R$ is an element $t$ of $R$, such that the ideal $Rt$ is a strong divisor. The set $\Delta(R)$ of all strong divisors of $R$ is a saturated mcs of $R$ and $\mathrm{U}(R)\subseteq \Delta (R)$.
\end{definition} 

We observe that for $t\in \Delta(R)$, the open subset $\mathrm{D}(t) = \{Q \in \mathrm{Spec}(R) \mid Q \subset Rt\}$.  

Recall that a  ring $R$ has a maximal Pr\"ufer extension $R \subseteq  \mathbb P (R):= {\widetilde R}^{\mathrm{Q}(R)}$ where ${\mathrm{Q}(R)}$ is the complete ring of quotients of $R$ (Utumi-Lambeck) \cite{KZ}.
 Then  $\mathbb P(R)$ is called  the {\it Pr\"ufer hull of $R$}. 

It is known that a Pr\"ufer extension $R\subset S$, where $R$ is local, is a QR-extension; that is, is such that each $T\in [R,S]$ verifies $T\cong R_\Sigma$ (an isomorphism of $R$-algebras) for some mcs $\Sigma$ of $R$. For more information, see Proposition~ \ref{1.14}. Next result refines this observation.

\begin{proposition}\label{1.7} Let $R$ be a local ring and  $R\subset S$ an extension.
  \begin{enumerate}
  
   \item $\mathbb P(R) = R_{\Delta (R)}$.
   
\item An extension $R\subset S$ is Pr\"ufer if and only if $S= R_{\Sigma}$, for some mcs $\Sigma \subseteq \Delta(R)$ and, if and only if $R\subseteq Rs$ ({\it i.e.} $s^{-1}$ exists and belongs to $R$), for each $s\in S\setminus R$. In this case $S\subseteq \mathrm{Tot}(R)$.
   \end{enumerate}
  \end{proposition}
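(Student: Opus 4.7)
Label the three conditions in part (2) as (a) $R \subseteq S$ is Pr\"ufer, (b) $S = R_\Sigma$ for some mcs $\Sigma \subseteq \Delta(R)$, and (c) $s^{-1}\in R$ for every $s \in S \setminus R$. My plan is to establish the cycle (a)~$\Rightarrow$~(c)~$\Rightarrow$~(b)~$\Rightarrow$~(a), and then derive (1) as a corollary. The main ingredients are Proposition~\ref{VALU}, the characterization of strong divisors in a local ring (a regular element $t$ with $Rt$ comparable to every ideal), and the fact from Definition~\ref{1.6.1} that $\Delta(R)$ is a saturated mcs.

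For (a)~$\Rightarrow$~(c), Proposition~\ref{VALU} puts us in $S = R_P$ with $S/P \cong \mathrm{Frac}(R/P)$. Given $s \in S \setminus R$, the class $\bar s$ lies outside the valuation ring $R/P$, so $\bar s^{-1}$ is represented by some $r \in R$, yielding $rs = 1+p$ with $p \in P$. Since $P$ sits inside the maximal ideal of the local ring $R$, $1+p \in \mathrm{U}(R)$ and hence $s^{-1} = r(rs)^{-1} \in R$. For (c)~$\Rightarrow$~(b), set $\sigma_s := s^{-1}$ for each $s \in S \setminus R$. The equation $s\sigma_s = 1$ in $S$ forces $\sigma_s$ to be regular in $R$, and to see that $R\sigma_s$ is distributive one only needs comparability with each principal ideal $Ra$: examining $as \in S$, either $as \in R$, in which case $a = (as)\sigma_s \in R\sigma_s$, or $as \notin R$, in which case (c) applied to $as$ gives $(as)^{-1} \in R$ and hence $\sigma_s \in Ra$. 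Thus $\sigma_s \in \Delta(R)$, and $S = R_\Sigma$ where $\Sigma$ is the mcs generated by these $\sigma_s$ (still contained in $\Delta(R)$). For (b)~$\Rightarrow$~(a), given any $T \in [R,S]$ and $t \in T \setminus R$, write $t = a/\sigma$ with $\sigma \in \Sigma \subseteq \Delta(R)$; the same distributive dichotomy applied to $R\sigma$ versus $Ra$ forces $t^{-1} \in R$ (the alternative $a \in R\sigma$ would give $t \in R$). Hence $T$ coincides with the localization of $R$ at the mcs generated by $\{t^{-1} : t \in T \setminus R\}$, so $R \subseteq T$ is a flat epimorphism; varying $T$ yields exactly the Pr\"ufer condition. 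The inclusion $S \subseteq \mathrm{Tot}(R)$ is immediate from (c), since each $s \in S \setminus R$ equals $1/s^{-1}$ with $s^{-1} \in R$ regular.

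For (1), both inclusions are short corollaries of (2). Applying (a)~$\Rightarrow$~(b) to the Pr\"ufer extension $R \subseteq \mathbb P(R)$ yields $\mathbb P(R) = R_{\Sigma'}$ for some $\Sigma' \subseteq \Delta(R)$, giving $\mathbb P(R) \subseteq R_{\Delta(R)}$. Conversely, (b)~$\Rightarrow$~(a) applied with $\Sigma = \Delta(R)$ shows $R \subseteq R_{\Delta(R)}$ is Pr\"ufer; since $\Delta(R) \subseteq R \setminus \mathrm{Z}(R)$ this extension sits inside $\mathrm{Q}(R)$, so it is contained in the maximal Pr\"ufer subextension $\mathbb P(R)$.

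I expect the distributivity check in (c)~$\Rightarrow$~(b) to be the main obstacle: verifying that $R\sigma_s$ is comparable to every principal ideal hinges on a clean case split and a second invocation of hypothesis (c) on the auxiliary element $as$. Everything else is routine once Proposition~\ref{VALU} and the structural properties of $\Delta(R)$ are in hand.
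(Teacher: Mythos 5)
Your proof is correct, but it takes a genuinely different route from the paper's. The paper disposes of the proposition almost entirely by citation: part (1) and the first equivalence of part (2) are quoted from \cite[Remark 5.9, Proposition 5.10, p.~123]{KZ} (the Pr\"ufer hull of a local ring is $R_{\Delta(R)}$), and the criterion $s^{-1}\in R$ for $s\in S\setminus R$ together with $S\subseteq\mathrm{Tot}(R)$ is attributed to \cite[Proposition 3.1]{DAV}. You instead give a self-contained argument: the cycle (a)~$\Rightarrow$~(c)~$\Rightarrow$~(b)~$\Rightarrow$~(a) rests only on Proposition~\ref{VALU} (available earlier in the paper) and the definition of strong divisors, and part (1) is then deduced from part (2) rather than quoted. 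The key steps check out: in (a)~$\Rightarrow$~(c), since $p\in P\subseteq M$ the element $rs=1+p$ is a unit of the local ring $R$, so $s^{-1}=r(rs)^{-1}\in R$; in (c)~$\Rightarrow$~(b), the dichotomy on $as$ (either $as\in R$, giving $a=(as)\sigma_s\in R\sigma_s$, or $(as)^{-1}\in R$, giving $\sigma_s=a(as)^{-1}\in Ra$) shows $R\sigma_s$ is comparable to every principal ideal, hence to every ideal since $R$ is local, and $\sigma_s$ is regular because it is invertible in $S$, so $\sigma_s\in\Delta(R)$; and (b)~$\Rightarrow$~(a) reruns the same dichotomy inside each $T\in[R,S]$ to exhibit $T$ as a localization of $R$, i.e., a flat epimorphism, which is exactly the paper's definition of Pr\"ufer. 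What the paper's proof buys is brevity; what yours buys is independence from the hull theory of \cite{KZ} and from \cite{DAV} --- in effect you re-prove those two external results from Proposition~\ref{VALU}. The only points you treat implicitly --- identifying abstract localizations with subrings of $S$, $\mathrm{Tot}(R)$ and $\mathrm{Q}(R)$ (legitimate because localizations are ring epimorphisms, so $R$-algebra maps out of them are unique), and the invertibility of regular elements of $R$ in $\mathrm{Q}(R)$ --- are standard and are used just as silently by the paper itself.
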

  \begin{proof}  
  The proof is a consequence the following facts: $R_{\Delta(R)}$ is the Pr\"ufer hull of $R$. If $R\subset S$ is Pr\"ufer, there is some mcs $\Sigma\subseteq \Delta (R)$ such that $S=R_\Sigma$ in which case $R \subseteq R_\Sigma \subseteq R_{\Delta(R)}$ \cite[Remark 5.9, Proposition 5.10 p.123]{KZ}. The last assertion is \cite[Proposition 3.1]{DAV}.    
   \end{proof}
\begin{lemma}\label{1.9.1} Let $R\subset R_{\Sigma}:= S$ be an extension of  finite type, where $\Sigma$ is a mcs of $R$. Then there is some $x \in \Sigma$  such that $ S= R_x$. 
      \end{lemma}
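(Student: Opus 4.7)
The plan is to use the finite-type hypothesis to clear denominators. Write $S=R[s_1,\ldots,s_n]$ with $s_i\in S=R_\Sigma$. Each $s_i$ admits an expression $s_i=a_i/\sigma_i$ with $a_i\in R$ and $\sigma_i\in\Sigma$. Set
\[
x:=\sigma_1\sigma_2\cdots\sigma_n,
\]
which lies in $\Sigma$ since $\Sigma$ is multiplicatively closed. This $x$ is the desired element.

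The verification is a pair of inclusions inside the ambient ring $R_\Sigma$. On one hand, since each $\sigma_i$ divides $x$ in $R$, we have $1/\sigma_i=(x/\sigma_i)\cdot(1/x)$, so every $s_i$ lies in the image of the canonical morphism $R_x\to R_\Sigma$; combined with $R\subseteq R_x$ this gives $S=R[s_1,\ldots,s_n]\subseteq R_x$. Conversely, $x\in\Sigma$ makes $x$ a unit in $S=R_\Sigma$, so the universal property of localization yields $R_x\subseteq S$. This proves $R_x=S$ as subrings of $R_\Sigma$.

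The one small point worth pinning down is that the canonical $R$-algebra map $\varphi:R_x\to R_\Sigma$ is actually an isomorphism onto $S$, not merely surjective; injectivity comes for free because $x$ is a unit in $S$ and $R\hookrightarrow S$: if $a/x^n$ maps to $0$, multiplying by the unit $x^n$ gives $a=0$ in $S$, whence $a=0$ in $R$ and $a/x^n=0$ in $R_x$. I do not anticipate any genuine obstacle — this is the standard denominator-clearing trick for finitely generated localizations, and the hypothesis that $R\subseteq R_\Sigma$ is a ring extension (rather than a mere morphism) is exactly what makes the injectivity step trivial.
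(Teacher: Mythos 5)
Your proof is correct, and it takes a genuinely different route from the paper's. You clear denominators: writing $S=R[s_1,\ldots,s_n]$ with $s_i=a_i/\sigma_i$, setting $x=\sigma_1\cdots\sigma_n\in\Sigma$, and checking $S\subseteq R[1/x]\subseteq S$; the identification of $R[1/x]$ with $R_x$ is legitimate because the hypothesis that $R\subset R_\Sigma$ is an \emph{extension} forces every element of $\Sigma$ to be regular, so the canonical map $R_x\to R_\Sigma$ is injective --- a point you correctly pin down. The paper argues instead by topology: it first upgrades the injective flat epimorphism of finite type $R\subset S$ to finite presentation via \cite[Theorem 1.1]{CR}, then applies the $(\mathcal{MCS})$-rule to write $\mathcal X(S)=\cap[\mathrm D(r)\mid r\in\Sigma]$, observes via the Chevalley theorem that this image is open in the patch (constructible) topology, and uses compactness of the patch topology together with the patch-closedness of each $\mathrm D(r)$ to extract finitely many $r_1,\ldots,r_n\in\Sigma$ with $\mathcal X(S)=\mathrm D(r_1\cdots r_n)$; the $(\mathcal X)$-rule then yields $S\cong R_{r_1\cdots r_n}$. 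Your argument is more elementary and self-contained: no finiteness theorem, no Chevalley, no patch topology, and it delivers finite presentation of the extension as an afterthought (since $R_x\cong R[X]/(xX-1)$) rather than requiring it as an input, as the paper's proof does. What the paper's method buys is coherence with its general machinery on flat epimorphisms and spectral images: the same compactness pattern applies when an intermediate ring is only known abstractly as a flat epimorphism, with no explicit description of its elements by fractions. For this lemma, where $S$ is by hypothesis a localization, your direct computation suffices.
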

\begin{proof} It follows from \cite[Theorem 1.1]{CR}, that $R\subset R_\Sigma:= S$ is of finite presentation, because it is an injective flat epimorphism of finite type. 
 Therefore, according to the ($\mathcal{MCS}$)-rule, $ \mathcal X(S) =\cap [\mathrm{D}(r) | r\in \Sigma]$ is an open subset of the patch topology (constructible topology) by the Chevalley Theorem and is even open because a flat morphism of finite presentation is open for the Zariski topology. 
     As the patch topology is compact and the sets $\mathrm{D}(r)$ for $r\in \Sigma $ are closed in this topology, we get that $\mathcal X(S)$ is the intersection of finitely many $\mathrm{D}(r_i)$ for $i= 1, \ldots,n$ with $r_i\in \Sigma$. Setting $x= r_1\cdots r_n$, we get that $\mathcal X(S) = \mathrm{D}(x)$ and then $S=R_x$  by  the ($\mathcal X$)-rule.
      \end{proof}
   
   The next result is now clear.
   
\begin{proposition}\label{LOCALPRUF} An extension $R\subset S$ of finite type over a local ring $R$ is Pr\"ufer if and only if there is some $s\in \Delta (R)$ such that $S= R_s$.   
   \end{proposition}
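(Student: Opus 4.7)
The statement is labelled as ``now clear'' by the authors, so the plan is essentially to assemble the two preceding ingredients (Proposition~\ref{1.7}(2) and Lemma~\ref{1.9.1}) with minimal extra work.

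For the ``only if'' direction, assume $R \subset S$ is Pr\"ufer and of finite type over the local ring $R$. By Proposition~\ref{1.7}(2), there exists a mcs $\Sigma \subseteq \Delta(R)$ such that $S \cong_R R_\Sigma$. Since the extension is of finite type, Lemma~\ref{1.9.1} applies and yields some $x \in \Sigma$ with $S = R_x$. Because $\Sigma \subseteq \Delta(R)$, the element $x$ is a strong divisor, so we may take $s := x \in \Delta(R)$.

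For the ``if'' direction, assume $S = R_s$ for some $s \in \Delta(R)$. Then $\Sigma := \{1, s, s^2, \ldots\}$ is a mcs of $R$, and since $\Delta(R)$ is a saturated (in particular, multiplicatively closed) subset of $R$ containing $s$ by Definition~\ref{1.6.1}, we have $\Sigma \subseteq \Delta(R)$. Now $S = R_s = R_\Sigma$, so the ``if'' part of Proposition~\ref{1.7}(2) applies and gives that $R \subset S$ is Pr\"ufer.

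There is no real obstacle: the two directions fit together immediately once one notices that finite type plus the description of Pr\"ufer extensions as localizations at submonoids of $\Delta(R)$ forces the monoid to collapse (via Lemma~\ref{1.9.1}) to the powers of a single strong divisor, and conversely the powers of any strong divisor form such a submonoid. The only point worth double-checking is that $\{s^n\} \subseteq \Delta(R)$, which is ensured by $\Delta(R)$ being a saturated mcs (Definition~\ref{1.6.1}).
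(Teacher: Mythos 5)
Your proof is correct and follows exactly the route the paper intends: the forward direction combines Proposition~\ref{1.7}(2) with Lemma~\ref{1.9.1}, and the converse (which the paper dismisses as ``obvious'') is precisely your observation that the powers of $s$ form a mcs inside the saturated mcs $\Delta(R)$, so Proposition~\ref{1.7}(2) applies.
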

\begin{proof} Suppose that $R \subset S$ is Pr\"ufer then $S= R_\Sigma$ for some mcs $\Sigma \subseteq \Delta (R)$ (Proposition~\ref{1.7}). We deduce from Lemma~\ref{1.9.1} that $S = R_s$ for some $s\in \Delta (R)$. The converse is obvious.
        \end{proof}

   The following results will be useful.
     
\begin{proposition}\label{1.9.1.0} Let $R$ be a local ring and $x\in R$ a regular element. Then $x$ is a strong divisor if and only if $R\subseteq R_x$ is Pr\"ufer.
   \end{proposition}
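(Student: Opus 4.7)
The plan is to prove both implications using Proposition \ref{1.7}(2), which characterizes Pr\"ufer extensions of a local ring as localizations at mcs contained in $\Delta(R)$, or equivalently as extensions in which every element of $S\setminus R$ has its inverse inside $R$.

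For the forward direction, if $x$ is a strong divisor then $x\in\Delta(R)$, and because $\Delta(R)$ is a mcs (containing $1$) the set $\Sigma:=\{x^n\mid n\geq 0\}$ is a mcs contained in $\Delta(R)$. Since $x$ is regular, the canonical map $R\to R_\Sigma=R_x$ is injective, and Proposition \ref{1.7}(2) immediately yields that $R\subseteq R_x$ is Pr\"ufer.

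For the converse, assume $R\subseteq R_x$ is Pr\"ufer. The condition $0:_R Rx=0$ is free from the regularity of $x$, so the point is to show that $Rx$ is comparable to every principal ideal $Rt$ of the local ring $R$. Given $t\in R$, look at $t/x\in R_x$ and split into two cases. If $t/x\in R$, write $t/x=s$ in $R_x$; clearing the annihilator relation $x^n(t-sx)=0$ via the regularity of $x$ gives $t=sx$, hence $Rt\subseteq Rx$. Otherwise $t/x\in R_x\setminus R$, and Proposition \ref{1.7}(2) furnishes $u\in R$ with $u\cdot(t/x)=1$ in $R_x$; the same clearing argument, again using regularity of $x$, delivers $ut=x$ in $R$, hence $Rx\subseteq Rt$. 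Thus $Rx$ is comparable with every principal ideal of $R$, so $Rx$ is distributive and $x\in\Delta(R)$.

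The only delicate point is the second case of the converse: one must pass from an equality in the localization $R_x$ to a genuine equality in $R$, and this is precisely where the regularity of $x$ (which makes each $x^n$ a non-zero-divisor) does the work of annihilating the extra factor introduced by the definition of equality in $R_x$.
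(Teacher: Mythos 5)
Your proof is correct, and its forward half is essentially the paper's: both rest on the fact that a localization at a mcs of strong divisors is Pr\"ufer (you cite Proposition \ref{1.7}(2) with $\Sigma=\{x^n\mid n\geq 0\}$; the paper routes the same fact through Proposition \ref{LOCALPRUF}). Your converse, however, takes a genuinely different route. The paper again invokes Proposition \ref{LOCALPRUF} to write $R_x=R_s$ with $s$ already a strong divisor — which rests on Lemma \ref{1.9.1} and hence on the finite-presentation, Chevalley-theorem and patch-topology machinery behind it — then deduces $\sqrt{Rx}=\sqrt{Rs}$, writes $s^n=yx$ for some $n$ and $y$, and concludes $x\in\Delta(R)$ because $\Delta(R)$ is a \emph{saturated} mcs. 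You instead verify the definition of a strong divisor by hand: given $t\in R$, you use the second equivalence of Proposition \ref{1.7}(2) (every element of $R_x\setminus R$ is invertible with inverse in $R$) and clear denominators, the regularity of $x$ converting equalities in $R_x$ into equalities in $R$, to obtain $Rt\subseteq Rx$ or $Rx\subseteq Rt$; comparability with every principal ideal is exactly distributivity over the local ring $R$, and $0:Rx=0$ is the regularity hypothesis, so $x\in\Delta(R)$. What each approach buys: the paper's argument is shorter given its accumulated results but leans on Lemma \ref{1.9.1} and the saturation of $\Delta(R)$, whereas yours is more elementary and self-contained, bypassing both of these at the cost of an explicit two-case computation, and it has the virtue of isolating precisely where regularity of $x$ does its work.
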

\begin{proof} Proposition~\ref{LOCALPRUF} gives one implication. Suppose that $R\subset R_x$ is Pr\"ufer. From Proposition~\ref{LOCALPRUF} we deduce that $R_x= R_s$ for some strong divisor $s\in R$. It follows that $\sqrt{Rx}= \sqrt{Rs}$ and then $s^n = yx$ for some $n \in \mathbb N$ and $y\in R$. Therefore, $x$ is a strong divisor.
 \end{proof}
 
 \begin{example}\label{ARITH} Let $R$ be a local arithmetical ring. The set of all its ideals is a chain. It follows that  each regular element $x$ of $R$ is a strong divisor and then $R\subseteq R_x$ is Pr\"ufer.
 \end{example}
 
\begin{proposition}\label{1.9.2} Let $f: R \to R'$ be a faithfully flat ring morphism between local rings and $x\in R$. 
 If  $f(x)$ is a strong divisor, so is $x$.
 \end{proposition}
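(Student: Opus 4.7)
My plan is to reduce the statement to the characterization of strong divisors by the Pr\"ufer property of the localization $R \subseteq R_x$ (Proposition~\ref{1.9.1.0}) and then invoke faithfully flat descent of Pr\"ufer extensions (Proposition~\ref{FAITHPRUF}).

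First I would check that $x$ is a regular element of $R$. By hypothesis $f(x)$ is a strong divisor of $R'$, hence in particular $f(x)$ is regular. If $a\in R$ satisfies $xa=0$, then $f(x)f(a)=0$, so $f(a)=0$ by regularity of $f(x)$; faithful flatness of $f$ forces $f$ to be injective (it is a pure extension), so $a=0$. Thus $x$ is regular in $R$, and in particular the canonical morphism $R\to R_x$ is an extension.

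Next I would show that $R\subseteq R_x$ is Pr\"ufer. Since $f(x)$ is a strong divisor of the local ring $R'$ and is regular, Proposition~\ref{1.9.1.0} applied to $R'$ yields that $R'\subseteq R'_{f(x)}$ is Pr\"ufer. Because localization commutes with arbitrary base change, the natural map $R'\otimes_RR_x\to R'_{f(x)}$ is an isomorphism of $R'$-algebras, so the Pr\"ufer extension $R'\subseteq R'_{f(x)}$ is precisely the one obtained from $R\subseteq R_x$ by the faithfully flat base change $R\to R'$ (faithful flatness is preserved by base change, so $R_x\to R'\otimes_RR_x$ is faithfully flat). By Proposition~\ref{FAITHPRUF}, the Pr\"ufer property descends along faithfully flat morphisms, whence $R\subseteq R_x$ is Pr\"ufer. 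Applying Proposition~\ref{1.9.1.0} now in the other direction, in the local ring $R$, to the regular element $x$, we conclude that $x$ is a strong divisor of $R$.

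The only delicate point is the identification of the base change: one has to recognize that $R'\otimes_RR_x\cong R'_{f(x)}$ and that the resulting morphism $R_x\to R'_{f(x)}$ is faithfully flat, so that Proposition~\ref{FAITHPRUF} genuinely applies. Once this is in place, everything reduces mechanically to Propositions~\ref{1.9.1.0} and~\ref{FAITHPRUF}.
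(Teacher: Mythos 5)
Your proof is correct and follows essentially the same route as the paper's own (very terse) argument: observe that $x$ is regular, identify $R'_{f(x)}\cong R_x\otimes_RR'$, descend the Pr\"ufer property of $R'\subseteq R'_{f(x)}$ along the faithfully flat morphism $R\to R'$ via Proposition~\ref{FAITHPRUF}, and translate back and forth between strong divisors and Pr\"ufer localizations using Proposition~\ref{1.9.1.0}. You have merely filled in the details (injectivity of $f$, the base-change isomorphism) that the paper leaves implicit.
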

 \begin{proof} Observe that  $x $ is regular in $R$.
   To conclude, use Proposition~\ref{FAITHPRUF}, because $R'_{f(x)} =R_x\otimes_R R'$. 
 \end{proof} 
 
Let $R \subseteq S$ be an extension and $\Delta$ a mcs of $R$. The large quotient ring $R_{[\Delta]}$ of $R$ (in $S$) with respect to $\Delta$ is the set of all $x \in S$ such that there is some $s \in \Delta$ with $sx \in R$. In case $\Delta =R\setminus P$, where $P$ is a prime ideal of $R$, we set $R_{[P]}:= R_\Delta$. 
   
\begin{proposition}\label{1.8} Let $R$ be a local ring and $R\subset S$ a flat extension, then $\widetilde R = R_{[\Delta(R)]} = R_\Sigma$, where $\Sigma := \Delta(R) \cap \mathrm U (S)$. 
   \end{proposition}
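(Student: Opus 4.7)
The plan is to establish the two equalities simultaneously by proving the chain of inclusions $R_\Sigma\subseteq\widetilde R\subseteq R_\Sigma$ and $R_\Sigma\subseteq R_{[\Delta(R)]}\subseteq R_\Sigma$, viewed as $R$-subalgebras of $S$. Observe first that $R_\Sigma$ embeds canonically in $S$ because every element of $\Sigma$ is, by definition, a unit in $S$; and that $R_\Sigma\subseteq R_{[\Delta(R)]}$ is immediate, since if $\sigma\in\Sigma$ and $\sigma^n x\in R$ then $\sigma^n\in\Delta(R)$.

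For the inclusion $R_\Sigma\subseteq\widetilde R$, Proposition~\ref{1.7}(1) gives $\mathbb P(R)=R_{\Delta(R)}$, so $R\subseteq R_\Sigma\subseteq R_{\Delta(R)}=\mathbb P(R)$ is a subextension of a Pr\"ufer extension and therefore is itself Pr\"ufer; as it sits inside $S$, it is contained in the Pr\"ufer hull $\widetilde R$. For the reverse inclusion $\widetilde R\subseteq R_\Sigma$, take $x\in\widetilde R$. Then $R\subseteq R[x]$ is Pr\"ufer of finite type over the local ring $R$, so Proposition~\ref{LOCALPRUF} supplies some $s\in\Delta(R)$ with $R[x]=R_s$ as $R$-algebras. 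The element $s$ is a unit in $R[x]\subseteq S$, so $s\in\mathrm U(S)$, whence $s\in\Sigma$ and $x\in R_s\subseteq R_\Sigma$.

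The core step is $R_{[\Delta(R)]}\subseteq R_\Sigma$. Take $x\in R_{[\Delta(R)]}$, so $tx=r$ in $S$ with $t\in\Delta(R)$ and $r\in R$. Flatness of $R\subseteq S$ preserves regularity, so $t$ remains regular in $S$. The principal ideal $Rt$ is distributive and hence, in the local ring $R$, comparable with $Rr$. If $r\in Rt$, write $r=bt$; then $t(x-b)=0$ in $S$ and the regularity of $t$ gives $x=b\in R\subseteq R_\Sigma$. Otherwise $t\in Rr$, say $t=cr$; then $(cx-1)r=0$ in $S$, and since $r$ is regular in $R$ (because $cr=t$ is) and thus in $S$, we deduce $cx=1$. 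Therefore $c\in\mathrm U(S)$ and $x=c^{-1}$. Because $\Delta(R)$ is a saturated multiplicatively closed subset of $R$ (Definition~\ref{1.6.1}) and $cr=t\in\Delta(R)$, saturation forces $c\in\Delta(R)$; hence $c\in\Sigma$ and $x=c^{-1}\in R_\Sigma$.

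The main potential obstacle lies in this last case: identifying $x=c^{-1}$ as an element of $R_\Sigma$ requires certifying that $c\in\Delta(R)$, and a direct verification that the principal ideal $Rc$ is comparable with every principal ideal of $R$ from the bare data $cr\in\Delta(R)$ is not transparent. The clean way around this is to exploit the saturation of $\Delta(R)$, which immediately transfers membership from the product $cr$ to the factor $c$.
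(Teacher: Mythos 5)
Your proof is correct and follows essentially the same route as the paper: the core inclusion $R_{[\Delta(R)]}\subseteq R_\Sigma$ is argued exactly as in the paper's proof (comparability of $Rt$ with the ideal generated by $tx\in R$, regularity transferred to $S$ by flatness, and saturation of $\Delta(R)$ to place the cofactor in $\Sigma$). The only cosmetic difference is that for $\widetilde R\subseteq R_\Sigma$ you apply Proposition~\ref{LOCALPRUF} element-by-element to each $R[x]$, whereas the paper invokes Proposition~\ref{1.7} once to write $\widetilde R=R_\Theta$ with $\Theta\subseteq\Delta(R)\cap\mathrm U(S)$; both rest on the same underlying machinery.
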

\begin{proof} By Proposition \ref{1.7}, there is some multiplicatively closed subset $\Theta$ of $\Delta(R)$ such that $\widetilde R = R_\Theta$. We have clearly $\Theta\subseteq\mathrm U(S)$; so that $\Theta \subseteq \Sigma$. It follows that $R_\Theta \subseteq R_\Sigma$, while $R\subseteq R_\Sigma $ is Pr\"ufer and therefore  $R_\Theta = R_\Sigma$. 
        
   Now let $z\in R_{[\Delta(R)]}$, there is some $t\in \Delta(R)$ such that (*): $x=tz \in R$. Since $Rt $ is a strong divisor, $Rt$ and $Rx$ are comparable. 
 Moreover, since $R\subset S$ is flat, $t$ is also regular in $S$. 
    
 If $Rx\subseteq Rt$, then $x=at$, so that $z=a\in R$, because $t$ is regular. 
   
If $Rt\subseteq Rx$, then $t =bx$ and since $\Delta(R)$ is saturated, we get that $x\in \Delta(R)$ and $x$ is regular. We deduce from (*) that $bz=1$ in $S$. It follows that $z\in \mathrm U(S)$ and $z=b^{-1}$, with $b \in \Delta(R)\cap \mathrm U(S)$, so that $z\in R_\Sigma$. 
   
To conclude, we have $R_{[\Delta(R)]}\subseteq R_\Sigma$. As the reverse inclusion is obvious, we get finally that $R_{[\Delta(R)]}= R_\Sigma$.
 \end{proof}
  
If $Q$ is a prime ideal of a ring $R$, we denote by $Q^\downarrow$ its generization {\it i.e} $\{P\in \mathrm{Spec}(R) \mid P\subseteq Q\}$. The first author defined a {\it prime g-ideal} as a prime ideal  $Q$ such that $Q^\downarrow$ is an open subset of $\mathrm{Spec}(R)$ \cite{PGOLD}. $If$ $Q$ is a $g$-ideal of $R$, then $Q$ is a Goldman ideal of $R$; that is $R/P \subseteq \kappa(P)$ is of finite type as an algebra. \cite{PGOLD}.
     
\begin{proposition}\label{LPRUF} Let $s$ be a non-unit strong divisor of a local ring $R$ and $R\subset S:=R_s$ the Pr\"ufer extension associated. Then $P=  \cap [Rs^n \mid n\in \mathbb N\}$ is a prime $g$-ideal, $S=R_P$, $PS=P$ is a divided prime ideal of $R$ and $R/P$ is a valuation  domain  with quotient field $S/P$. We will denote by $P_s$ the ideal $P$.
     \end{proposition}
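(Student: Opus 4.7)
The plan is to first identify $P$ concretely via the comparability property of strong divisors, then match spectral images with $R_P$, and finally read off the valuation-theoretic conclusions from Proposition~\ref{VALU}.

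First, since $\Delta(R)$ is a saturated mcs containing $s$, each power $s^n$ lies in $\Delta(R)$, so every ideal $Rs^n$ is comparable to every ideal of $R$ and $s^n$ is regular. The main technical step is to show $P:=\bigcap_n Rs^n$ is prime. Given $a,b\in R$ with $ab\in P$, $a\notin Rs^m$, and $b\notin Rs^n$, the comparability of $Rs^m$ with $Ra$ forces $Rs^m\subseteq Ra$, so $s^m=ra$; symmetrically $s^n=r'b$, whence $s^{m+n}=rr'ab\in Rs^{m+n+1}$. Thus $s^{m+n}(1-cs)=0$ for some $c\in R$, and regularity of $s$ yields $cs=1$, contradicting the hypothesis that $s$ is a non-unit. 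Likewise $s\in Rs^2$ would force $s$ to be a unit, so $s\notin P$ and $P$ is a proper prime of $R$.

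Next, I would verify that $\mathcal X(R_s)=\mathrm D(s)=P^\downarrow$. For any prime $Q$ of $R$, $s\notin Q$ means $s^n\notin Q$ for all $n$, which by comparability of each $Rs^n$ with $Q$ is equivalent to $Q\subseteq Rs^n$ for all $n$, i.e.\ $Q\subseteq P$. In particular $P^\downarrow=\mathrm D(s)$ is open, so $P$ is a prime $g$-ideal. Since both $R\to R_s$ and $R\to R_P$ are flat epimorphisms with the same spectral image, Proposition~\ref{EPIPLAT}(2) gives $S=R_s\cong_R R_P$.

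Finally, because $R$ is local and $R\subset S$ is Pr\"ufer, Proposition~\ref{VALU} furnishes a prime $P'$ with $S=R_{P'}$, $P'S=P'$ divided in $R$, and $R/P'$ a valuation domain with quotient field $S/P'$. The prime $P'$ is necessarily the maximum of $\mathcal X(S)$, which by the preceding paragraph is our $P$; consequently $PS=P$ is a divided prime of $R$, and $R/P$ is a valuation domain with quotient field $S/P$. The hard part is the primality argument for $P$, which crucially combines the regularity of $s$ with the closure of $\Delta(R)$ under multiplication; everything else is then bookkeeping from the already-established machinery.
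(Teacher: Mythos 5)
Your proof is correct, but it is organized quite differently from the paper's. The paper applies Proposition~\ref{VALU} \emph{first}: since $R$ is local and $R\subset S=R_s$ is Pr\"ufer, it immediately obtains a prime $P'$ with $S=R_{P'}$, $P'S=P'$ divided, and $R/P'$ a valuation domain; primality thus comes for free, and the only real work is the identification $\bigcap_n Rs^n=P'$. That identification is done via the saturated mcs $\langle s\rangle$ generated by $s$: one notes $P'=R\setminus\langle s\rangle$, proves $\bigcap_n Rs^n\subseteq P'$ using regularity of $s$ (an element of the intersection lying in $\langle s\rangle$ forces $s^p\in Rs^{p+1}$, hence $s$ a unit), and proves $P'\subseteq\bigcap_n Rs^n$ using comparability plus saturation of $\langle s\rangle$. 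You instead prove from scratch that $\bigcap_n Rs^n$ is a proper prime ideal (your comparability-plus-regularity argument is sound: $s^m=ra$, $s^n=r'b$, $ab\in Rs^{m+n+1}$ forces $s^{m+n}(1-cs)=0$), then compute $\mathcal X(R_s)=\mathrm D(s)=P^{\downarrow}$, invoke the $(\mathcal X)$-rule (Proposition~\ref{EPIPLAT}(2)) to conclude $R_s\cong_R R_P$, and only then call on Proposition~\ref{VALU}, matching its prime with yours as the unique maximal element of $\mathcal X(S)$. Both proofs ultimately rest on the same two ingredients (comparability of the ideals $Rs^n$ with all ideals, and regularity of $s$) and both rely on Proposition~\ref{VALU} for the divided/valuation conclusions, but your decomposition is more self-contained and constructive — the primality of the intersection is made transparent rather than inherited — at the cost of a longer argument and an extra reconciliation step; the paper's route is shorter precisely because it lets Proposition~\ref{VALU} produce the prime before any computation is done.
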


\begin{proof} There exists $P\in\mathrm{Spec}(R)$ such that $S=R_P$, $PS=P$ is a divided prime ideal of $R$ and $R/P$ is a valuation domain with quotient field $S/P$ according to Proposition~\ref{VALU}. 
    Note $<s>$ the saturated mcs generated by $s$ and set $I:=\cap [Rs^n\mid n\in\mathbb N]$. Remark that $<s>=\{us^n\mid n\in\mathbb N,\ u\in\mathrm U(R)\}$. Note that $P=R\setminus<s>$. We are aiming to show that $I=P$. We have $I\subseteq P$, because if not, there is some $x\in I\cap<s>$ and then $xy=s^p= bs^{p+1}$ for some $y,b\in R$ and $p\in\mathbb N$. Since $s$ is regular, it follows that $s$ is a unit, a contradiction. Now let $x \in P$ and suppose that $x\notin I$. Then $x \notin Rs^n$ for some positive integer $n$. Because $s^n$ is a strong divisor, we get $Rs^n \subseteq Rx$ and then $x$ belongs to $<s> $, a contradiction. Now $P$ is a prime $g$-ideal, because $P^\downarrow =\mathrm{D}(s)$ is an open subset. 
  \end{proof} 
If $R\subset S$ is a Pr\"ufer extension of finite type over a local ring, there is some $s\in\Delta(R)$ such that $S=R_s$ by Proposition~\ref{LOCALPRUF}.
  
\begin{remark} \label{NOETHPRUF} We use the notation of Proposition~\ref{LPRUF}.

It is easy to prove that $P= RsP$, because $s$ is regular and $P= \cap [Rs^n \mid n \in \mathbb N]$. Therefore, if $(R,M)$ is Noetherian and local and $s$ is not a unit, from $P=MP$ we deduce that $P=0$ and $R$ needs to be an integral domain, so that $R$ is a Noetherian valuation domain, that is a discrete valuation domain, and $S$ is the quotient field of $R$. Another consequence is that if $R$ is not an integral domain, the only strong divisors of $R$ are the units.
\end{remark} 

The next result is now clear.
\begin{proposition}\label{1.10} Let $R\subset S$ be an extension over a local ring. The set of rings $\mathcal F := \{ R_s \in [R,S] \mid s \in \Delta (R) \cap \mathrm U(S)\}$ is a chain and $\widetilde R$ is the set union of all elements of $\mathcal F$. It follows that $\widetilde R \hookrightarrow \mathrm{Tot}(R)$.
    \end{proposition}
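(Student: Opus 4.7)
The plan is to read off the proposition from Propositions~\ref{1.7}, \ref{VALU}, and \ref{1.9.1.0}, with the $s\in U(S)$ condition providing the bridge back into $S$.

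First I would check the inclusion $\mathcal F\subseteq[R,\widetilde R]$. Given $s\in\Delta(R)\cap\mathrm U(S)$, the defining condition of a strong divisor gives $0:Rs=0$, so $s$ is a regular element of $R$; then Proposition~\ref{1.9.1.0} says $R\subseteq R_s$ is Pr\"ufer. Since $s^{-1}\in S$, the universal property of localization provides a ring map $R_s\to S$; it is injective because $s$ is regular in $R$ and in $S$. So $R_s$ is genuinely a member of $[R,S]$ which is Pr\"ufer over $R$, and by maximality of the Pr\"ufer hull $R_s\subseteq\widetilde R$.

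Next, the chain property. The Pr\"ufer hull $R\subseteq\widetilde R$ is Pr\"ufer by Definition~\ref{A0.7}, and $R$ is local, so Proposition~\ref{VALU} applies and tells us that $[R,\widetilde R]$ is a chain. Since $\mathcal F\subseteq[R,\widetilde R]$ by the previous step, $\mathcal F$ is automatically a chain.

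For the equality $\widetilde R=\bigcup_{R_s\in\mathcal F}R_s$, the inclusion $\supseteq$ is already established. For the reverse, apply Proposition~\ref{1.7}(2) to the Pr\"ufer extension $R\subseteq\widetilde R$: there exists a mcs $\Sigma\subseteq\Delta(R)$ with $\widetilde R=R_\Sigma$. Each $\sigma\in\Sigma$ is a unit of $R_\Sigma=\widetilde R\subseteq S$, so $\Sigma\subseteq\Delta(R)\cap\mathrm U(S)$. Now every element of $R_\Sigma$ has the form $r/\sigma$ for some $\sigma\in\Sigma$, hence lies in $R_\sigma$, and $R_\sigma\in\mathcal F$. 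Therefore $\widetilde R\subseteq\bigcup_{R_s\in\mathcal F}R_s$, proving equality. Finally, since every $s\in\Delta(R)$ is regular, each $R_s$ embeds in $\mathrm{Tot}(R)$; these embeddings are compatible (all factor through $R_\Sigma=\widetilde R$), so their union gives the promised embedding $\widetilde R\hookrightarrow\mathrm{Tot}(R)$.

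There is no real obstacle here: the only point that requires care is verifying that the $R_s$ built from $s\in\Delta(R)\cap\mathrm U(S)$ really does sit inside $S$ (rather than merely inside $\mathrm{Tot}(R)$), and this is exactly what the extra condition $s\in\mathrm U(S)$ is designed to deliver via the universal property of the localization.
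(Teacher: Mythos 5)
Your proof is correct and follows exactly the route the paper intends: the paper states Proposition~\ref{1.10} without an explicit argument (``The next result is now clear''), relying on the immediately preceding material, namely Proposition~\ref{1.7}(2), Proposition~\ref{1.9.1.0}, the maximality of the Pr\"ufer hull, and the chain property from Proposition~\ref{VALU}, which are precisely the ingredients you assemble. Your write-up simply makes explicit the details the paper leaves to the reader, including the worthwhile check that each $R_s$ with $s\in\Delta(R)\cap\mathrm U(S)$ genuinely embeds in $S$ and hence lands in $\widetilde R$.
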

   
\begin{definition} \label{1.11} We say that two ideals $I$ and $J$ of a ring $R$ are {\em equivalent} if $\sqrt I = \sqrt J$ (equivalently $\mathrm D(I) = \mathrm D (J)$). We also say that two elements $x, y$ of $R$ are equivalent if $\mathrm D(x) = \mathrm D(y)$ and we write $x\simeq y$. This condition is equivalent to $R_x\cong R_y$ and also to $\sqrt {Rx} =\sqrt {Ry}$. Note that if $x$ is a strong divisor and $x\simeq y$, then $y$ is a strong divisor because $\sqrt {Rx }= \sqrt {Ry}$ and the set of all strong divisors is a saturated mcs.
  \end{definition} 
  
\begin{remark} We reconsider the context of Proposition~\ref{LPRUF} and we set $\delta (R):= \Delta (R)\setminus \mathrm U(R)$. There is a surjective map $\delta (R) \to \{P_s \mid  s\in \delta (R)\} $, defined by $s\mapsto P_s$. Setting $\Delta \mathrm{Spec}(R):= \{P_s \mid s\in \delta (R)\}$, there is  therefore a bijective map 
 $(\delta (R)/\simeq) \to \Delta \mathrm{Spec}(R)$.
  
Then $\Delta\mathrm{Spec}(R)$ is a chain. It follows that the set intersection of all its elements is a prime ideal $\mathcal R$ that could be called the {\em strong radical} of the local ring $R$. Now, according to Proposition~\ref{1.10} 
   and the $(\mathcal {MCS})$-rule, $\mathcal X(\mathbb P(R))$ $=\cap[\mathrm D(s)\mid s\in \Delta(R)]=\cap[P_s^\downarrow\mid s\in \delta(R)]=\mathcal R^\downarrow$. It follows that $\mathbb P(R)=R_{\mathcal R}$. If $R\subset S$ is a ring extension, then $\widetilde R = \mathbb P(R)\cap S=R_{[\mathcal R]}$.
  
  We think that  the set $\Delta \mathrm{Spec}(R)$ deserves a deeper study, especially with respect to some classes of rings.
  \end{remark}
   
 \begin{proposition}\label{INVERT} Let $R \subset S$ be a Pr\"ufer extension, where $R$ is local and $I$ an  ideal of finite type of $R$. Then $I$ is $S$-regular if and only if $I =R\rho$ where $\rho$ is a strong divisor of $R$,  invertible in $S$.  
      \end{proposition}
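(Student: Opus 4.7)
The converse direction is immediate: if $I=R\rho$ with $\rho$ invertible in $S$, then $IS=\rho S=S$, so $I$ is $S$-regular. The real content is the forward direction, and my strategy is to obtain in succession (i)~principality of $I$, (ii)~regularity of the generator, (iii)~distributivity of the principal ideal, and (iv)~invertibility of the generator in $S$.

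For step (i), I would invoke the Knebusch--Zhang result recalled just after Remark~\ref{RS} (the unlabelled proposition citing \cite[Theorem~2.8, p.101, Theorem~2.6, p.100]{KZ}): since $R\subset S$ is a flat epimorphism (being Pr\"ufer) and $I$ is a finitely generated $S$-regular ideal, $I$ is locally principal. Because $R$ is local, this forces $I=R\rho$ for some $\rho\in R$. For step (ii), $S$-regularity combined with Remark~\ref{RS}(1) gives $0:I=0$, so $0:\rho=0$, i.e.\ $\rho$ is regular in $R$. For step (iii), Proposition~\ref{1.10.4} applies directly: the $S$-regular ideal $I=R\rho$ in the local ring $R$ is distributive, so $R\rho$ satisfies the defining condition of a strong divisor ideal. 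Together with (ii), this is precisely the statement that $\rho$ is a strong divisor of $R$ in the sense of Definition~\ref{1.6.1}.

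Finally, for step (iv), the hypothesis $IS=S$ rewritten with $I=R\rho$ becomes $\rho S=S$, which says exactly that $\rho\in\mathrm U(S)$. The whole argument is essentially a bookkeeping assembly of results already established in the paper, so no single step poses a substantial obstacle; the only mildly delicate point is recognising that ``locally principal'' collapses to ``principal'' under the standing hypothesis that $R$ is local, which is what lets the cited Pr\"ufer-theoretic criteria hand us the desired generator $\rho$.
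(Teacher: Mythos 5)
Your proof is correct and follows essentially the same path as the paper's: obtain principality of $I$ from a Knebusch--Zhang result, deduce the strong-divisor property from Proposition~\ref{1.10.4} (together with density of $S$-regular ideals), and read off invertibility of $\rho$ in $S$ from $\rho S=S$, the converse being obvious. The only variation is the principality step, where the paper invokes $S$-invertibility of finitely generated $S$-regular ideals in a Pr\"ufer extension (KZ, Theorem 1.13 and Proposition 2.3), while you use the locally-principal criterion recalled in Section 3; both collapse to principality over the local ring $R$, so the difference is immaterial.
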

   \begin{proof}   
   Assume that $I$ is $S$-regular.
   From $IS= S$ we deduce that $I$ is a principal ideal $R\rho$ by \cite[Theorem 1.13, p. 91 and Proposition 2.3, p.97]{KZ}, because $IS = S$ means that $I$ is $S$-regular and, $R$ being local, is $S$-invertible, whence principal of the form $I=R\rho$. An appeal to Proposition~\ref{1.10.4} yields that $R\rho$ is a strong divisor and $S\rho=S$ shows that $\rho$ is invertible in $S$.   The converse is obvious. 
 \end{proof}

\section{QR-extensions}    
   
We first give some notation and definitions for an extension $R\subset S$. 
  For $T\in [R,S]$, we set $\Sigma_T:=\mathrm{U}(T)\cap R$, which is a mcs of $R$ whose elements are regular and such that $R\subseteq R_{\Sigma_T} \subseteq T$.
  
A Pr\"ufer extension $R\subseteq S$ is called {\it Bezout}, if each finitely generated $S$-regular ideal of $R$ is principal \cite[Definition 1; Theorem 10.2, p.145]{KZ}.
 
Let $(R,M)$ be a local ring, then an extension $R\subseteq S$ is Bezout if and only if it is Pr\"ufer, and if and only if $(R,M)$ is Manis in $S$ \cite[Scholium 10.4 p.147]{KZ}. 
  
We call QR-extension any extension $R\subseteq S$ such that each $T\in [R,S]$ is of the form $T\cong R_\Sigma$ (an isomorphism of $R$-algebras)  for some mcs $\Sigma$ of $R$, in which case the elements of $\Sigma$ are regular, invertible in $S$ and $T = R_{\Sigma_T}$. It is easy to show that $R\subseteq S$ is a QR-extension if and only if the defining property holds for each $T\in [R,S]_{fg}$. Moreover, an extension $R\subset S$ is a QR-extension if and only if it is Pr\"ufer and each finitely generated $S$-regular ideal $I$ of $R$ satisfies $\sqrt I = \sqrt{Rx}$ for some $x\in R$ (which implies that $\mathrm D(I)=\mathrm D(x)$ is (special) affine) \cite[Proposition 4.15, p.116]{KZ}. 
 
A Pr\"ufer extension does not need to be a QR-extension: look at the example \cite[Section 4, Examples]{GIH}. 
  \begin{proposition} \label{1.27}A Bezout extension $R\subseteq S$ is a QR-extension.
   \end{proposition}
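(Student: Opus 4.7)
The plan is to reduce the statement directly to the characterization of QR-extensions recorded just above the proposition, namely that an extension $R \subset S$ is a QR-extension if and only if it is Pr\"ufer and every finitely generated $S$-regular ideal $I$ of $R$ satisfies $\sqrt{I} = \sqrt{Rx}$ for some $x \in R$ (the citation to \cite[Proposition 4.15, p.116]{KZ}). Since a Bezout extension is Pr\"ufer by definition, only the radical condition on finitely generated $S$-regular ideals needs to be verified.

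The verification is immediate: if $R \subseteq S$ is Bezout and $I$ is a finitely generated $S$-regular ideal of $R$, then by the Bezout hypothesis $I$ is principal, say $I = Rx$. Trivially, $\sqrt{I} = \sqrt{Rx}$, so the condition is satisfied with this choice of $x$. Applying the cited characterization then yields that $R \subseteq S$ is a QR-extension.

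I do not expect any real obstacle here; the statement is essentially a matching of definitions once one has the characterization of QR-extensions in terms of radicals of finitely generated $S$-regular ideals. The only subtle point, if any, is to notice that the hypothesis of the characterization concerns \emph{all} finitely generated $S$-regular ideals, which is exactly what the Bezout condition supplies in the strongest possible form (principal, not merely with principal radical).
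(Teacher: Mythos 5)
Your proof is correct, but it follows a different route than the paper's. You invoke the characterization of QR-extensions recalled just before the proposition (\cite[Proposition 4.15, p.116]{KZ}: QR $\Leftrightarrow$ Pr\"ufer and every finitely generated $S$-regular ideal has the same radical as a principal ideal), and you observe that the Bezout hypothesis satisfies this in the strongest form, since such ideals are actually principal. The paper instead verifies the QR definition directly: it first observes that every subextension $R\subseteq T$ of a Bezout extension is again Bezout, and then applies \cite[Proposition 10.16, p.152]{KZ} to conclude that each such $T$ equals $R_\Sigma$ for some mcs $\Sigma$ of $R$. The trade-off is this: your argument is a pure definition-chase requiring no stability property of Bezout extensions under passage to intermediate rings, whereas the paper's argument needs that (unproved but true) observation; on the other hand, the paper's route exhibits each intermediate ring explicitly as a ring of fractions of $R$, i.e.\ it establishes the QR property in its raw form rather than through the radical criterion. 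Both proofs ultimately rest on a nontrivial result from \cite{KZ}, just different ones.
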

\begin{proof} We first observe that a subextension $R\subseteq T$ is Bezout. Then \cite[Proposition 10.16, p.152]{KZ} shows that $T= R_\Sigma$, for some mcs $\Sigma $ of $R$ and therefore the extension is QR.
\end{proof}

\begin{corollary} Each extension $R\subseteq S$ has a unique Bezout subextension  $R\subseteq T$, that contains any $T'\in [R,S]$, such that $R\subseteq T'$ is Bezout. Then $T$ is called the Bezout hull of $R$ and denoted here by $\beta (R)$.
\end{corollary}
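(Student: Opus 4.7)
The plan is to define $\beta(R)$ as the union of the family $\mathcal{F}:=\{T'\in[R,S]\mid R\subseteq T'\text{ is Bezout}\}$ and to verify that the union again lies in $\mathcal{F}$; uniqueness then follows tautologically from the maximality of $\beta(R)$ among Bezout subextensions.

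First I would verify that $\mathcal{F}$ is stable under directed unions. Let $\{T_i\}$ be a directed subfamily of $\mathcal{F}$ and $T=\bigcup_i T_i$. Each $R\to T_i$ is a flat epimorphism and a direct limit of flat epimorphisms is a flat epimorphism, so $R\to T$ is a flat epimorphism; applying the same argument to each $U\in[R,T]$, which is the directed union $\bigcup_i(U\cap T_i)$ of flat-epimorphic $R$-subalgebras, shows $R\subseteq T$ is Pr\"ufer. If $I$ is a finitely generated $T$-regular ideal of $R$, the identity $1\in IT$ involves only finitely many elements of $T$, which therefore lie in some common $T_{i_0}$; thus $IT_{i_0}=T_{i_0}$, and Bezoutness of $T_{i_0}$ forces $I$ to be principal. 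Hence $T\in\mathcal{F}$.

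Second I would show $\mathcal{F}$ is directed via compositum: given $T_1,T_2\in\mathcal{F}$, the compositum $T_1T_2$ lies in $\mathcal{F}$. Pr\"uferness is automatic since $T_1T_2\subseteq\widetilde{R}$. For the Bezout condition I would invoke Proposition~\ref{1.27} to write $T_j=R_{\Sigma_j}$ with $\Sigma_j$ a saturated mcs of $R$; the $(\mathcal{MCS})$-rule then identifies $T_1T_2$ with $R_\Sigma$, where $\Sigma$ is the mcs generated by $\Sigma_1\cup\Sigma_2$. A finitely generated $T_1T_2$-regular ideal $I$ of $R$ satisfies $IR_\Sigma=R_\Sigma$, which produces an element of $I$ expressible as a product of factors drawn from $\Sigma_1$ and $\Sigma_2$; alternating applications of the Bezout property for $T_1$ and for $T_2$ should then yield the principality of $I$. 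With both closure properties in hand, $\beta(R):=\bigcup_{T'\in\mathcal{F}}T'$ is an $R$-subalgebra of $S$ by directedness, belongs to $\mathcal{F}$ by closure under directed unions, and by construction contains every Bezout subextension of $R\subseteq S$.

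The main obstacle I expect is precisely the compositum step. The Bezout property is not obviously inherited by a subextension, so the principal generators of $I$ living inside $T_1$ or inside $T_2$ are not directly at hand, and the bookkeeping with mixed denominators coming from both $\Sigma_1$ and $\Sigma_2$ is the delicate point that must be handled carefully. A more economical alternative would be to quote the corresponding Bezout hull result from \cite{KZ} and reduce the existence of $\beta(R)$ to their statement, but a fully self-contained argument does hinge on making this compositum step work.
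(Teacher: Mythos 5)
Your Step 1 (closure of the family $\mathcal F$ of Bezout subextensions under directed unions) is correct: a directed colimit of flat epimorphisms out of $R$ is a flat epimorphism, each $U\in[R,T]$ is the directed union of the $U\cap T_i$, and a finitely generated $T$-regular ideal of $R$ is already $T_{i_0}$-regular for some index $i_0$, hence principal. But be aware that the paper does not argue at all: its entire proof is the citation of \cite[Theorem 10.14, p.151]{KZ}, i.e. exactly the ``more economical alternative'' you mention in your last sentence. So everything turns on whether your Step 2 can be made to work, and there the proposal has a genuine gap.

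The directedness of $\mathcal F$ (equivalently: the compositum $T_1T_2$ of two Bezout subextensions is again Bezout over $R$) is not a technical detail but the entire content of the corollary --- without it the union $\bigcup_{T'\in\mathcal F}T'$ is not even closed under addition. Your reduction up to ``$I$ is finitely generated, $T_1T_2$-regular, and contains $uv$ with $u\in\Sigma_1$, $v\in\Sigma_2$'' is fine, but ``alternating applications of the Bezout property'' is not an argument. The only ideals to which Bezoutness of $T_1$ (resp. $T_2$) can be applied are ideals such as $I+Ru$ (which is $T_1$-regular, hence principal, say $Ra$) and $I+Rv$ ($=Rb$), and no expression in $a,b$ yields a generator of $I$: take $R=\mathbb Z_{(p)}$, $S=\mathbb Q$, $T_1=T_2=R_p=\mathbb Q$ (Bezout over $R$). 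For $u=v=p$ and $I=Rp^2$ one gets $a=b=p$, so $Ra\cap Rb=Rp\neq I$ while $Rab=I$; for $u=p^2$, $v=p$ and $I=Rp$ one gets $a=b=p$, so $Rab=Rp^2\neq I$ while $Ra\cap Rb=I$. Thus neither of the two natural candidates is correct in general, and no such denominator bookkeeping can succeed, because the local problem is empty: over a local base, Bezout and Pr\"ufer coincide \cite[Scholium 10.4, p.147]{KZ} and $I_M$ is automatically principal (Proposition~\ref{INVERT}). The real content is that the locally principal ideal $I$ is globally principal --- a Picard-type obstruction must vanish --- and the proposal contains no idea addressing it. As written, the proof is therefore incomplete precisely where the theorem lives; the one complete route you point to, quoting the Bezout-hull theorem of \cite{KZ}, is word for word the paper's proof.
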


\begin{proof} It is enough to use \cite[Theorem 10.14, p.151]{KZ}
\end{proof}

 Moreover, we have the next result. 
  
\begin{proposition}\label{1.14} Let $R \subset S$ be an extension where  projective $R$-modules of rank one are free. Then $R\subset S$ is Pr\"ufer if and only if it is a QR-extension, and if and only if $R\subset S$ is Bezout. If the above statements hold, then a finitely generated $S$-regular ideal $I$ of $R$ is of the form $I= R\rho$, where $\rho$ is a locally strong divisor.
    \end{proposition}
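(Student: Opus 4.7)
The plan is to close a triangle of implications, then handle the final assertion by localization. Two of the three equivalences come essentially for free from the material already in the excerpt: QR $\Rightarrow$ Pr\"ufer is immediate from the definitions, since each $T\in[R,S]$ being a localization $R_\Sigma$ is a flat epimorphism; and Bezout $\Rightarrow$ QR is exactly Proposition~\ref{1.27}. So the substantive step is Pr\"ufer $\Rightarrow$ Bezout under the rank-one hypothesis.

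My strategy for Pr\"ufer $\Rightarrow$ Bezout is to show directly that every finitely generated $S$-regular ideal $I$ of $R$ is principal. In any Pr\"ufer extension, such an $I$ is $S$-invertible and locally principal by the Knebusch--Zhang theory (this is exactly the input used in the proof of Proposition~\ref{INVERT}, imported from \cite[Theorem 1.13, p.\ 91 and Proposition 2.3, p.\ 97]{KZ}). A finitely generated $S$-invertible ideal is a finitely generated projective $R$-module of rank one, and the hypothesis on $R$ then forces $I$ to be free of rank one, i.e.\ $I=R\rho$ for some $\rho\in R$. This is precisely the Bezout property.

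For the last statement, suppose the equivalent conditions hold and write a finitely generated $S$-regular ideal as $I=R\rho$. To prove $\rho$ is locally strong, my plan is to fix an arbitrary $M\in\mathrm{Max}(R)$ and localize: Proposition~\ref{LOC} gives that $R_M\subseteq S_M$ is Pr\"ufer, while $I_M=R_M(\rho/1)$ remains a finitely generated $S_M$-regular ideal over the local ring $R_M$. Proposition~\ref{INVERT} then yields that $\rho/1$ is a strong divisor of $R_M$, and since $M$ was arbitrary, $\rho$ is a locally strong divisor.

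The main obstacle I anticipate is the bridging step in Pr\"ufer $\Rightarrow$ Bezout: one needs to know that a finitely generated $S$-regular ideal in a Pr\"ufer extension is projective of rank one, which is not developed within the excerpt but must be cited from \cite{KZ}. Everything else is bookkeeping once this is in hand and the hypothesis on rank-one projective modules is applied.
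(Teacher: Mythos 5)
Your proof is correct, but its logical organization differs from the paper's in a genuine (if modest) way. The paper establishes the two equivalences separately by direct citation: Pr\"ufer $\Leftrightarrow$ QR is \cite[Proposition 4.16, p.116]{KZ}, and Pr\"ufer $\Leftrightarrow$ Bezout follows from \cite[Proposition 2.3, p.97]{KZ} (under the rank-one hypothesis a Pr\"ufer extension is Bezout, the converse holding over any ring), with the final assertion attributed to Proposition~\ref{INVERT}. You instead close the cycle Pr\"ufer $\Rightarrow$ Bezout $\Rightarrow$ QR $\Rightarrow$ Pr\"ufer, where the only external input is the Knebusch--Zhang fact that a finitely generated $S$-regular ideal in a Pr\"ufer extension is $S$-invertible (hence projective of rank one, hence free by hypothesis), while Bezout $\Rightarrow$ QR is Proposition~\ref{1.27} and QR $\Rightarrow$ Pr\"ufer is definitional. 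This buys a more self-contained argument --- you never need \cite[Proposition 4.16, p.116]{KZ} --- at the price of making the QR equivalence ride on the Bezout one; the mathematical engine (invertibility plus freeness of rank-one projectives) is the same in both. Your localization argument for the last assertion is in substance what the paper means by ``a consequence of Proposition~\ref{INVERT}'', but one detail should be tightened: Proposition~\ref{INVERT} yields $I_M=R_M\sigma$ for \emph{some} strong divisor $\sigma$ of $R_M$, not directly that $\rho/1$ is one. Since $R_M(\rho/1)=R_M\sigma$ and $\sigma$ is regular, $\rho/1$ and $\sigma$ are associates, and the remark in Definition~\ref{1.11} (strong divisors form a saturated mcs, stable under $\simeq$) then makes $\rho/1$ a strong divisor of $R_M$; finally, regularity of $\rho$ in $R$ and the Pr\"ufer property of $R\subseteq R_\rho$ are recovered from the local data exactly as in the paragraph of Section 8 defining locally strong divisors, so $\rho$ is indeed an lsd.
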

\begin{proof} The first equivalence is \cite[Proposition 4.16 p.116]{KZ}. The second is a consequence of \cite[Proposition 2.3, p.97]{KZ} because under the hypotheses on $R$, a Pr\"ufer extension is Bezout and the converse holds for an arbitrary ring $R$. The last statement is a consequence of Proposition~\ref{INVERT}.
    \end{proof}
 
The condition on projective modules that are involved in this paper are either $R$ is semilocal or a Nagata ring $A(X)$ \cite{FT}. In particular we recover Proposition~\ref{VALU} in case $R$ is a local ring.
 
We will need an extension of the notion of strong divisors. A regular element of a ring $R$ is called a {\it locally strong divisor} (shorten in $lsd$) if $R\subseteq R_x$ is Pr\"ufer. In order to justify this definition, we recall that an extension $R\subseteq S$ is Pr\"ufer if and only if all its localizations by a prime ideal of $R$ are Pr\"ufer. Hence if $x\in R$ is a lsd and $P$ is a prime ideal of $R$, then $x/1 \in R_P$ is a strong divisor. For the converse, use that if $x\in R$ is regular in every ring $R_P$, where $P$ is a prime ideal, so is $x$ because $R\to \prod [R_M \mid M \in \mathrm{Max}(R)]$ is injective. 
 The set of all locally strong divisors is a saturated mcs $\Lambda \Delta $.  Clearly, a strong divisor of a local ring is a lsd. Now if $R\subseteq S$ is a ring extension, we denote by $\lambda \delta (R)$ the ring $R_{\Lambda \Delta \cap \mathrm U(S)}$.  
 
   \begin{remark}\label{STASTRONG} Let $f: R\to S$ be a ring morphism.
   
(1)
If $f$ is a flat morphism and $x\in R$ is such that $f(x)$ is a lsd, then so is $x$. Indeed for $Q \in \mathrm{Spec}(S)$ lying over $P$, then $R_P \to S_Q$ is faithfully flat.
   
(2) If $f$ is a flat epimorphism and $x\in R$ is a lsd so is $f(x)$, because for each $Q\in \mathrm{Spec}(S)$ and $P:= f^{-1}(Q)$, the natural map $R_P \to S_Q$ is an isomorphism.
\end{remark}

\begin{theorem}\label{1.12.1} Let $R\subset S$ be an extension. Then  $R\subset S$ is a QR-extension if and only if each $T\in [R,S]$ of finite type over $R$ is of the form $T=R_s$ for some lsd $s\in R$. In particular, if these conditions hold, each $T\in [R,S]$ is of the form $T = R_{\mathcal T}$, where $\mathcal T \subseteq \Lambda \Delta$ is a mcs.
  \end{theorem}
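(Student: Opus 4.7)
The plan is to establish both implications by reducing to the finite type case (already permitted by the equivalence noted right after the definition of QR-extensions) and then applying Lemma~\ref{1.9.1} to sharpen ``$T$ is some localization $R_\Sigma$'' into ``$T=R_s$ for a single element~$s$''. The ``in particular'' clause will then follow from a standard direct-limit argument using the $(\mathcal{MCS})$-rule.

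For the forward direction, I would start from the hypothesis that $R\subset S$ is a QR-extension and pick $T\in[R,S]_{fg}$. By definition of QR, $T\cong_R R_\Sigma$ for some mcs $\Sigma$ of $R$ whose elements are regular and invertible in $S$. Since QR-extensions are Pr\"ufer, the extension $R\subset T$ is a flat epimorphism, and it is of finite type by assumption, so Lemma~\ref{1.9.1} applies and yields $T=R_s$ for some $s\in\Sigma$. This $s$ is regular (being in $\Sigma$), and $R\subset R_s=T$ is Pr\"ufer as a subextension of the Pr\"ufer extension $R\subset S$; hence $s$ is a locally strong divisor by definition.

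For the reverse direction, the hypothesis says in particular that each $T\in[R,S]_{fg}$ is of the form $R_s$, a localization of $R$. The finite-type characterization of QR-extensions stated right after the QR-definition then immediately gives that $R\subset S$ is a QR-extension, with no further work needed. For the ``in particular'' clause, I would write an arbitrary $T\in[R,S]$ as the directed union $T=\varinjlim T'$ of its finite type $R$-subalgebras. By what has just been proved, each such $T'$ equals $R_{s_i}$ for some $s_i\in\Lambda\Delta$, and the $(\mathcal{MCS})$-rule identifies $T$ with $R_{\mathcal T}$, where $\mathcal T$ is the mcs generated by the family $\{s_i\}$. Since $\Lambda\Delta$ is a saturated mcs, $\mathcal T\subseteq\Lambda\Delta$, as required.

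The only delicate point is that Lemma~\ref{1.9.1} is stated inside a section devoted to local base rings, and I want to apply it without any local assumption. I would address this by observing that the proof given in the excerpt for Lemma~\ref{1.9.1} uses only the Chevalley theorem, the fact that a flat morphism of finite presentation is Zariski-open, and the compactness of the patch topology; none of these ingredients requires $R$ to be local. Once this observation is made, the argument above is completely routine.
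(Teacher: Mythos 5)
Your proof is correct and follows essentially the same route as the paper: both rest on Lemma~\ref{1.9.1} (which, as you rightly observe, carries no local hypothesis in either its statement or its proof), the reverse implication rests on the finite-type characterization of QR-extensions recalled after their definition, and the final clause on the $(\mathcal{MCS})$-rule applied to the directed union of finite-type subalgebras. The only place you diverge is in fact a simplification: to see that $s$ is an lsd you invoke the definition directly ($s$ is regular and $R\subseteq R_s=T$ is Pr\"ufer, being a subextension of a Pr\"ufer extension), whereas the paper localizes at each prime $P$ and uses Proposition~\ref{LOCALPRUF} together with Definition~\ref{1.11} to check that $s/1$ is a strong divisor of $R_P$; both verifications are valid.
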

\begin{proof} One implication is clear. Suppose that the extension is QR. To see that the condition holds, it is enough to suppose that it is of finite type. According to Lemma~\ref{1.9.1}, there is some $s\in R$ such that $S= R_s$. Then $R\subseteq S$ is Pr\"ufer, whence so is $R_P \subseteq S_P$ for each prime ideal $P$ of $R$ and $S_P =(R_P)_{s/1}$. We have also $S_P =(R_P)_y$, where $y \in \Delta(R_P)$ by Proposition~\ref{LOCALPRUF}. 
 It follows that $\mathrm{D}(s/1)=\mathrm{D}(y)$ and by Definition \ref{1.11}, 
 $s/1$ is a strong divisor. The last statement follows from the $(\mathcal{MCS}$)-rule applied to the flat epimorphism $R\subseteq T$, since $T$ is is a union of finitely generated QR-extensions.
\end{proof} 
  
\begin{theorem} Any extension $R\subset S$ has a QR-hull; that is, there exists a largest QR-extension $\chi(R)\in[R,S]$, contained in $\widetilde R$. As a consequence, $\chi(R)$ is the compositum  of all QR-extensions in $[R,S]$.
   \end{theorem}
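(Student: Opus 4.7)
The plan is to define
\[ \mathcal F := \{T \in [R,S] \mid R \subseteq T \text{ is QR}\}, \]
a nonempty family (it contains $R$), and to exhibit $\chi(R)$ as its union. Every member of $\mathcal F$ is Pr\"ufer and hence contained in $\widetilde R$, so the resulting $\chi(R)$ automatically lies in $\widetilde R$. It suffices to prove that $\mathcal F$ is closed both under binary compositum and under directed unions: then $\chi(R):=\bigcup_{T\in\mathcal F}T$ is itself in $\mathcal F$, is its largest element, and by construction coincides with the compositum of all QR-subextensions.

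The main step is closure under binary compositum. Given $T_1,T_2\in\mathcal F$, both lie in $\widetilde R$, so $T_1\vee T_2\subseteq\widetilde R$ and $R\subseteq T_1\vee T_2$ is Pr\"ufer; the lattice $[R,T_1\vee T_2]$ is therefore distributive, by the remark following Definition~\ref{DISTRI}. For any $U\in[R,T_1\vee T_2]$, distributivity yields
\[ U = U\cap(T_1\vee T_2) = (U\cap T_1)\vee(U\cap T_2). \]
Since $R\subseteq T_i$ is QR, the final clause of Theorem~\ref{1.12.1} provides a mcs $\mathcal U_i\subseteq\Lambda\Delta$ with $U\cap T_i = R_{\mathcal U_i}$, and consequently $U = R_{\mathcal U_1}\vee R_{\mathcal U_2} = R_{\mathcal U_1\cdot\mathcal U_2}$ is a localization of $R$ at the mcs $\mathcal U_1\cdot\mathcal U_2\subseteq\Lambda\Delta$. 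Hence $R\subseteq T_1\vee T_2$ is QR. This is the crux of the argument and the chief obstacle: the distributivity of the Pr\"ufer lattice is essential, for a direct approach via reducing to a single $R_\tau$ ($\tau$ a lsd) does not easily exhibit every finite-type intermediate in the $R_s$-form required by Theorem~\ref{1.12.1}.

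For closure under directed unions, let $\{T_\alpha\}$ be directed in $\mathcal F$ with $T:=\bigcup T_\alpha$, and $V\in[R,T]$. Since $V=\bigcup_\alpha(V\cap T_\alpha)$ and each $V\cap T_\alpha = R_{\mathcal V_\alpha}$ is a localization by the QR-property of $T_\alpha$, the mcs $\mathcal V := R\cap\mathrm U(V)$ of $R$ satisfies $\mathcal V_\alpha\subseteq\mathcal V$ for each $\alpha$, and a direct check gives $V = R_{\mathcal V}$. Thus $V$ is a localization of $R$ and $T\in\mathcal F$. Combining the two closure properties, $\mathcal F$ is directed and its union $\chi(R)$ lies in $\mathcal F$ and dominates every member, yielding the theorem.
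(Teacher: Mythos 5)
Your proof is correct, and it takes a genuinely different route from the paper's at the decisive point. The paper restricts attention to the finite-type QR-subextensions: by Lemma~\ref{1.9.1} and Theorem~\ref{1.12.1} these are exactly the rings $R_x$ with $x$ a lsd invertible in $S$, so directedness of that family reduces to the single claim that $R\subseteq R_{xy}$ is again QR, and for this the paper appeals to the proof of Davis \cite[Theorem 5]{DA}, asserting that it carries over from domains to arbitrary extensions; the union of the family is then shown to be QR via the criterion that the QR-property need only be checked on finite-type subextensions. You instead work with all QR-subextensions at once and prove closure under compositum internally: $R\subseteq T_1\vee T_2$ is Pr\"ufer, hence the lattice $[R,T_1\vee T_2]$ is distributive by the remark following Definition~\ref{DISTRI}, and distributivity splits any $U\in[R,T_1\vee T_2]$ as $(U\cap T_1)\vee(U\cap T_2)=R_{\mathcal U_1}\vee R_{\mathcal U_2}=R_{\mathcal U_1\mathcal U_2}$, a localization at a mcs of regular elements invertible in $S$. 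This trades the external citation (whose generalization to arbitrary rings the paper asserts rather than reproves) for an argument living entirely inside the paper's own toolkit, and it proves the stronger, more natural statement that QR-subextensions are closed under arbitrary compositum, not merely the principal case $R_x\vee R_y=R_{xy}$; it also recovers as a by-product that $\Omega(R)$ is multiplicatively closed, the fact the paper extracts from Davis's proof and uses in Corollary~\ref{PRUFQR}. Your directed-union step, carried out with the mcs $R\cap\mathrm U(V)$, is an explicit substitute for the paper's finite-type-criterion argument; both are sound, so the two proofs differ only in how they certify that the family of QR-subextensions is directed.
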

\begin{proof} Let $X$ be the set of all QR-extensions in $[R,S]_{fg}$, which is directed upwards: take $T,U\in X$. They are of the form $R_x$ and $R_y$, where $x$ and $y$ are regular in $R$, because they are units in $S$. Then we have $R_x,R_y\subseteq R_{xy}$. We can now use the proof of \cite[Theorem 5]{DA} which holds for an arbitrary extension $R\subset S$ and show that $R_{xy} \in X$.
     
Denote by $\chi(R)$ the set union of the elements of $X$. Since a QR-extension in $[R,S]$ is a union of finitely generated QR-extensions, it is contained in $\chi(R)$. To complete the proof, observe that an element of $[R,\chi (R)]_{fg}$ is contained in an element of $X$, whence is in $X$, from which we infer that $R\subseteq \chi (R)$ is a QR-extension. 
         \end{proof}

Actually, the proof of Davis shows that the set of all elements $x\in R$ such that $R\to R_x$ is a QR-extension is a mcs $\Omega(R)$ (also denoted $\Omega $) contained in the mcs $\Lambda \Delta$.  Moreover, in case $R$ is either local or a Nagata ring, projective $R$-modules of rank one are free, so that an extension $R\subset S$ is Pr\"ufer if and only if it is a QR-extension by Proposition \ref{1.14}, giving  $\Omega\cap \mathrm U(S) = \Lambda \Delta \cap \mathrm U(S)$. 
An application of the $(\mathcal X)$-rule gives the following result.

\begin{corollary}\label{PRUFQR} If $R\subset S$ is an extension, then $\chi(R) =R_{\Omega\cap\mathrm U(S)}$. It follows that an extension $R\subset S$ is a QR-extension if and only if for each $s\in S$ there is some $\rho \in \Omega \cap \mathrm U(S)$ such that $\rho s \in R$. 
 \end{corollary}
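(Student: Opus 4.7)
The plan is to deduce both assertions as a direct application of the $(\mathcal X)$-rule (Proposition~\ref{EPIPLAT}(2)) combined with the $(\mathcal L)$- and $(\mathcal{MCS})$-rules, using the description of $\chi(R)$ built in the preceding theorem.

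First, I would identify the index set explicitly. From the construction, $\chi(R)$ is the directed union of the set $X$ of finitely generated QR-subextensions of $R$ in $[R,S]$. By Theorem~\ref{1.12.1} each element of $X$ has the form $R_s$ for some locally strong divisor $s$; by the very definition of $\Omega(R)$ the condition that $R\to R_s$ be QR means $s\in\Omega$, and inclusion $R_s\subseteq S$ forces $s\in\mathrm U(S)$. Conversely, any $s\in\Omega\cap\mathrm U(S)$ gives a member of $X$. Hence $\chi(R)=\varinjlim_{s\in\Omega\cap\mathrm U(S)}R_s$.

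The key step is the $\mathcal X$-image computation. Both $R\to\chi(R)$ and $R\to R_{\Omega\cap\mathrm U(S)}$ are flat epimorphisms (the first because $\chi(R)\subseteq\widetilde R$ makes $R\subseteq\chi(R)$ Pr\"ufer, the second because localizations are flat epimorphisms). The $(\mathcal L)$-rule (Proposition~\ref{LIM}) applied to the direct limit above yields
\[
\mathcal X(\chi(R))\;=\;\bigcap_{s\in\Omega\cap\mathrm U(S)}\mathcal X(R_s)\;=\;\bigcap_{s\in\Omega\cap\mathrm U(S)}\mathrm D(s),
\]
while the $(\mathcal{MCS})$-rule gives the same intersection for $\mathcal X(R_{\Omega\cap\mathrm U(S)})$. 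Proposition~\ref{EPIPLAT}(2) then furnishes an $R$-algebra isomorphism $\chi(R)\cong R_{\Omega\cap\mathrm U(S)}$, proving the first claim.

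The second statement is then immediate: $R\subset S$ is a QR-extension precisely when $S=\chi(R)$, which by the first part means $S=R_{\Omega\cap\mathrm U(S)}$; that is, every $s\in S$ can be written as $\rho^{-1}r$ with $r\in R$ and $\rho\in\Omega\cap\mathrm U(S)$, or equivalently $\rho s\in R$. There is no real obstacle here; the only point requiring care is the bijective parametrization of $X$ by $\Omega\cap\mathrm U(S)$, which is settled by invoking Theorem~\ref{1.12.1} together with the definition of $\Omega$.
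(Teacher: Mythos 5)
Your proof is correct and follows essentially the same route as the paper: the paper's entire justification for this corollary is the sentence preceding it ("An application of the $(\mathcal X)$-rule gives the following result"), and that is precisely what you carry out, by writing $\chi(R)$ as the directed union of the rings $R_s$ with $s\in\Omega\cap\mathrm U(S)$, computing both spectral images via the $(\mathcal L)$- and $(\mathcal{MCS})$-rules, and invoking Proposition~\ref{EPIPLAT}(2) to identify the two flat epimorphisms. The details you supply (the parametrization of the set $X$ by $\Omega\cap\mathrm U(S)$ and the reduction of the QR-property to $S=\chi(R)$) are exactly the ones the paper leaves implicit.
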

 
We remark that $\beta(R)\subseteq\chi(R)\subseteq\lambda\delta(R)\subseteq \widetilde R$.

\begin{proposition} \label{PRUFQR1}An extension $R\subset S$ is a QR-extension if and only if each $S$-regular finitely generated ideal is equivalent to a principal ideal of $R$ and there exists a mcs $\Sigma \subseteq \Lambda \Delta\cap \mathrm U(S)$ such that $S =R_{\Sigma}$. 
    If these conditions hold, then $S=R_{\Lambda\Delta\cap \mathrm U(S)}$.
   \end{proposition}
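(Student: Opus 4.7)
The plan is to leverage the characterization recalled just before the statement, namely \cite[Proposition 4.15, p.116]{KZ}: $R\subset S$ is a QR-extension precisely when it is Pr\"ufer and every finitely generated $S$-regular ideal $I$ of $R$ satisfies $\sqrt I=\sqrt{Rx}$ for some $x\in R$, that is (Definition \ref{1.11}) is equivalent to a principal ideal of $R$. The ideal hypothesis of the proposition is exactly the second clause of this characterization, so the real work will be to produce, respectively extract, the Pr\"ufer property.

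For the forward implication, assume $R\subset S$ is QR. The ideal condition is immediate from the cited characterization. To produce the mcs $\Sigma$, apply Theorem \ref{1.12.1} with $T=S$: one gets $S=R_{\mathcal T}$ for some mcs $\mathcal T\subseteq\Lambda\Delta$, and since every element of $\mathcal T$ is inverted in $R_{\mathcal T}=S$, in fact $\mathcal T\subseteq\Lambda\Delta\cap\mathrm{U}(S)$; take $\Sigma:=\mathcal T$.

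For the converse, I will check that $R\subset S$ is Pr\"ufer; together with the ideal hypothesis, the cited characterization will then force QR. Write $S=R_{\Sigma}$ and argue locally via Proposition \ref{LOC}: for each $M\in\mathrm{Max}(R)$ the localization is $S_M=(R_M)_{\Sigma/1}$. Because $R\to R_M$ is a flat epimorphism and each $s\in\Sigma$ lies in $\Lambda\Delta$, Remark \ref{STASTRONG}(2) shows that $s/1\in R_M$ is again a locally strong divisor, hence (being in the local ring $R_M$) a strong divisor by Proposition \ref{1.9.1.0}. Thus $\Sigma/1\subseteq\Delta(R_M)$, and Proposition \ref{1.7}(2) declares $R_M\subset S_M$ to be Pr\"ufer. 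Proposition \ref{LOC} then delivers the global Pr\"ufer property. I expect this transfer of locally strong divisors along every localization $R\to R_M$ to be the main obstacle, since it is the step that bridges the global hypothesis $\Sigma\subseteq\Lambda\Delta$ with the local Pr\"ufer criterion.

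Finally, under the stated conditions, the inclusion $\Sigma\subseteq\Lambda\Delta\cap\mathrm{U}(S)$ immediately gives $S=R_\Sigma\subseteq R_{\Lambda\Delta\cap\mathrm{U}(S)}$. For the reverse, every $\rho\in\Lambda\Delta\cap\mathrm{U}(S)$ is regular in $R$ and a unit in $S$, so the universal property of the localization $R_\rho$ furnishes an embedding $R_\rho\hookrightarrow S$; varying $\rho$ yields $R_{\Lambda\Delta\cap\mathrm{U}(S)}\subseteq S$, whence equality.
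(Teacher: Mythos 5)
Your proposal is correct and follows essentially the same route as the paper: the forward direction via the Knebusch--Zhang characterization (QR $=$ Pr\"ufer $+$ the ideal condition) together with Theorem \ref{1.12.1}, the converse by localizing at each maximal ideal, turning locally strong divisors into strong divisors of $R_M$ so that Proposition \ref{1.7} and Proposition \ref{LOC} yield the Pr\"ufer property, and the final equality by the sandwich $S=R_\Sigma\subseteq R_{\Lambda\Delta\cap\mathrm U(S)}\subseteq S$. Your explicit appeal to Remark \ref{STASTRONG}(2) and Proposition \ref{1.9.1.0} merely spells out a step the paper's proof states without citation.
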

 
 \begin{proof} 
Assume first that $R\subseteq S$ is a QR-extension. By the recall before Proposition \ref{1.27}, each $S$-regular finitely generated ideal is equivalent to a principal ideal of $R$. According to Theorem \ref{1.12.1}, there exists a mcs $\Sigma$, whose elements are some lsd of $R$, and such that $S=R_{\Sigma}$.
 
Conversely, assume that each $S$-regular finitely generated ideal is equivalent to a principal ideal of $R$ and there exists a mcs $\Sigma$ whose elements are some lsd of $R$, and such that $S=R_{\Sigma}$. Let $M\in\mathrm{Max}(R)$. It follows that $S_M=(R_{\Sigma})_M=(R_M)_{\Sigma '}$, where $\Sigma '$ is a mcs whose elements are some lsd of $R_M$. Then, Proposition \ref{1.7} implies that $R_M\subseteq S_M$ is Pr\"ufer. Since this holds for any $M\in\mathrm{Max}(R)$, we get that $R\subset S$ is Pr\"ufer, and then a QR-extension by the  recall before Proposition \ref{1.27}.

If these conditions hold, set $\Sigma':=\Lambda\Delta\cap\mathrm U(S)\subseteq S$. Since $\Sigma'$ is a mcs whose elements are units of $S$, it follows that $R_{\Sigma'}\subseteq S$. But $\Sigma\subseteq\Sigma'$ implies $S=R_{\Sigma}\subseteq R_{\Sigma'}\subseteq S$, so that $S=R_{\Sigma'}$.
  \end{proof}   
    
We end this section by considering ring extensions $R\subset S$ that are flat  epimorphisms, such that the support $\mathrm{Supp}(S/R)$ of the $R$-module $(S/R)$ is finite.   
We recall that $R\subset S$ is a flat epimorphism $\Leftrightarrow$  for all $P\in\mathrm{Spec}(R)$, either $R_P= S_P$ is an isomorphism or $S=PS$, these two conditions being mutually exclusive \cite[Proposition 2.4, p.112]{L}.

It is known that the support $\mathrm{Supp}(S/R)$ of the $R$-module $S/R$ is the set of all $P\in \mathrm{Spec}(R)$, such that $PS=S$. Therefore, each element of the support is $S$-regular. Moreover, the support is closed because as any support, it is stable under specialization. Hence the support equals to $\mathrm V(J)$, where $J$ is the intersection of all elements $P_1,\ldots, P_n$  of the support. Now each $P_i$ is the radical of an $S$-regular finitely generated ideal, as an examination of the proof of \cite[Corollary 13]{APRUF} by Abbas and Ayache shows.  
     Moreover, assume that $R\subset S$ is a QR-extension. Using \cite[Proposition 4.15, p.116]{KZ}, we get that $P_i$ is of the form $\sqrt{Rx_i}$ for some $x_i \in R$.
    Then $J=\sqrt{Rx}$ where $x=x_1\cdots x_n$. Now if $I$ is an $S$-regular finitely generated ideal and $Q$ is a prime ideal of $R$ containing $I$, then $Q$ is $S$-regular. Reasoning as above we see that $\sqrt I = \sqrt{Ry}$, for some $y\in R$. Taking into account the characterization of QR-extensions at the beginning of the section, we see that we have proved the following result:
    
\begin{proposition}\label{AYACHE} Let $R\subset S$ be a Pr\"ufer extension where $\mathrm{Supp}(S/R)$ is finite (in particular, if $R\subset S$ has FCP).
     We set $J:=\cap[P\mid P\in\mathrm{Supp}(S/R)]$. The following statements 	are equivalent:
    \begin{enumerate}
    \item  $R\subset S$ is a QR-extension;
\item  Each element of $\mathrm{Supp}(S/R)$ is equivalent to a principal ideal;
\item Each $S$-regular finitely generated ideal of $R$ is equivalent to a principal ideal.
      \end{enumerate}
In case one of the above statement holds, $J$ is a $S$-regular ideal  equivalent to a principal ideal $Rx$ and $\Gamma (\mathrm D(J)) = R_x$.
    \end{proposition}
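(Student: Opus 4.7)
The strategy is to exploit the characterization of QR-extensions recalled at the beginning of the section (a Pr\"ufer extension is QR if and only if each finitely generated $S$-regular ideal $I$ satisfies $\sqrt I=\sqrt{Rx}$ for some $x\in R$), combined with the observations the author has already made in the two paragraphs preceding the statement. These supply: the finiteness of $\mathrm{Supp}(S/R)=\{P_1,\ldots,P_n\}=\mathrm V(J)$, the Abbas--Ayache fact that each $P_i$ is the radical of a finitely generated $S$-regular ideal, and the elementary observation that any prime containing an $S$-regular ideal is itself $S$-regular, hence lies in $\mathrm{Supp}(S/R)$.

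With this setup the three equivalences reduce to bookkeeping. For $(1)\Rightarrow(2)$, given $P\in\mathrm{Supp}(S/R)$, write $P=\sqrt I$ with $I$ finitely generated and $S$-regular (Abbas--Ayache) and apply the QR-characterization to get $x\in R$ with $\sqrt I=\sqrt{Rx}$; thus $P=\sqrt{Rx}$, i.e.\ $P$ is equivalent to $Rx$ in the sense of Definition~\ref{1.11}. For $(2)\Rightarrow(3)$, observe that the minimal primes over a finitely generated $S$-regular ideal $I$ all lie in the finite set $\mathrm{Supp}(S/R)$, so $\sqrt I=P_{i_1}\cap\cdots\cap P_{i_k}$; if each $P_{i_j}=\sqrt{Rx_{i_j}}$, then the identity $\sqrt{Ra}\cap\sqrt{Rb}=\sqrt{Rab}$ (a consequence of $\sqrt{I\cap J}=\sqrt I\cap\sqrt J$ and $(Ra)(Rb)=Rab$) yields $\sqrt I=\sqrt{R(x_{i_1}\cdots x_{i_k})}$. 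The implication $(3)\Rightarrow(1)$ is then a direct appeal to the QR-characterization.

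For the final assertion, assuming (2), write $P_i=\sqrt{Rx_i}$ and set $x:=x_1\cdots x_n$; the same radical computation gives $J=P_1\cap\cdots\cap P_n=\sqrt{Rx}$, so $J$ is equivalent to $Rx$. That $J$ is $S$-regular follows from $P_iS=S$ for each $i$, which gives $(P_1\cdots P_n)S=S$, so $\sqrt J$ is $S$-regular and hence $J$ is $S$-regular by Remark~\ref{RS}(1(a)); equivalently, each $x_i$ is a unit in $S$, so $Rx$ is $S$-regular as well. The identification $\Gamma(\mathrm D(J))=\Gamma(\mathrm D(x))=R_x$ follows from $\mathrm D(J)=\mathrm D(x)$ together with the standard fact $\Gamma(\mathrm D(r))=R_r$ recalled right after Remark~\ref{RS}.

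The only point requiring genuine care is confirming in $(2)\Rightarrow(3)$ that the intersection defining $\sqrt I$ is finite and consists of elements of $\mathrm{Supp}(S/R)$; this rests entirely on the finiteness of $\mathrm{Supp}(S/R)$ and the fact that the minimal primes over any $S$-regular ideal lie in this support. Once this is granted, everything else is a routine application of the tools collected in Sections~2 and~3 and in the paragraphs preceding the statement.
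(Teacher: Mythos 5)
Your proof is correct and follows essentially the same route as the paper: the cyclic chain $(1)\Rightarrow(2)\Rightarrow(3)\Rightarrow(1)$ built on the Abbas--Ayache fact that each support prime is the radical of a finitely generated $S$-regular ideal, the observation that every prime containing an $S$-regular ideal lies in the (finite) support, and the Knebusch--Zhang characterization of QR-extensions via $\sqrt I=\sqrt{Rx}$. The only inessential divergence is at the very end: you obtain $JS=S$ from $(P_1\cdots P_n)S=S$ and $P_1\cdots P_n\subseteq J$, whereas the paper invokes flatness to write $JS=\cap[PS\mid P\in\mathrm{Supp}(S/R)]=S$; both are valid.
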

\begin{proof} We only need to prove the following. By the flatness of the extension, $JS = \cap [PS \mid P \in \mathrm{Supp}(S/R)] =S$
 \end{proof}
 
\begin{remark} Proposition 20 of \cite{APRUF} states that if, in addition to the above hypotheses, $S$ is an integral domain, then each $T\in [R,S]$ is of the form $R_x$ for some $x\in R$. This proves that the extension is strongly affine.
 Actually, in the proof of \cite[Proposition 20]{APRUF}, we can replace the Kaplansky transform of an ideal  by a ring of sections.
 \end{remark}
 
  \section{Nagata extensions}
 
We start this section by recalling some facts about Nagata rings, that are explained in \cite[Section 3]{DPP3}. Let $R$ be a ring and $R[X]$ the polynomial ring in the indeterminate $X$ over $R$. We denote by $C(p)$ the content of any polynomial $p\in R[X]$. Then $\Sigma_R:=\{p\in R[X]\mid C(p)= R\}$ is a saturated mcs of $R[X]$, each of whose elements is a non-zero-divisor of $R[X]$. The {\it Nagata ring of} $R$ is defined to be $R(X):=R[X]_ {\Sigma_R}$. Its main properties that are used in this section are the following.  If $I$ is an ideal of $R$, we set $I(X)=IR(X)$, which is an ideal of $R(X)$. Now if $P$ is a prime ideal of $R$, then, $P(X)$ is a prime ideal of $R(X)$ lying over $P$. The inclusion map $R\hookrightarrow R(X)$ is a faithfully flat ring homomorphism, and Max$(R(X))=\{M(X)\mid M\in$ Max$(R)\}$. Hence $R(X)$ is local if $R$ is local. In fact, $R[X]\setminus \Sigma_R=\cup\{M[X]\mid M\in\mathrm{Max}(R)\}$. Also, note that any ring homomorphism $f:R\to S$ extends to a ring homomorphism $f_e:R[X]\to S[X]$ that fixes $X$, and $f_e$ in turn induces a ring homomorphism $f_{nag}:R(X)\to S(X)$ that also fixes $X$. By the  remark before \cite[Proposition II.9]{DMPP}, if $f$ is an extension, then so is $f_{nag}$. Finally, if $f: R\to S$ is a ring homomorphism, we will say that $R(X)\otimes_R S=S(X)$ {\it canonically} ({\it with respect to} $f$) if the ring homomorphism $g: R(X)\otimes_RS \to S(X)$ that is induced by $f$ is an isomorphism. We will also say that  $f$ verifies the  property $(\mathcal T)$. 
  An FCP extension $f$ verifies $(\mathcal T)$ \cite[Proposition 3.2]{DPP3}.
 
One sees easily that $R(X)_{MR(X)}=R_M(X)$ for each 
  $M\in\mathrm{Spec}(R)$.
  
 We recall some   results frequently used in the section. 

\begin{proposition}\label{TENS} \cite[Section IV, Proposition 4 and Proposition 7]{PicC}
A ring morphism $f:R\to S$  verifies $(\mathcal T)$ when it is either integral (no necessarily injective) or a flat epimorphism. Consequently, for each $P\in \mathrm{Spec}(R)$, the ring morphism $R_P\to S_P$ verifies $(\mathcal T)$ (we say that the morphism verifies $(\mathcal T_P)$).
\end{proposition}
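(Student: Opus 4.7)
The plan is to reformulate the property $(\mathcal T)$ as a comparison of two localizations of $S[X]$ and then verify the resulting spectral condition in each case. Since $R(X)=R[X]_{\Sigma_R}$ and $R[X]\otimes_R S\cong S[X]$, one obtains $R(X)\otimes_R S\cong S[X]_{f_e(\Sigma_R)}$, while $S(X)=S[X]_{\Sigma_S}$. Because $C_S(f_e(p))=f(C_R(p))\,S$, we have $f_e(\Sigma_R)\subseteq\Sigma_S$, and the canonical map $g$ is the induced localization map between these two flat epimorphisms of $S[X]$. By Proposition~\ref{LAZZEPI}, $g$ is an isomorphism if and only if both localizations have the same spectral image in $\mathrm{Spec}(S[X])$. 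Using the classical fact that primes of $A[X]$ disjoint from $\Sigma_A$ are exactly those of the form $\mathfrak p[X]$ with $\mathfrak p\in\mathrm{Spec}(A)$, this reduces to showing: every prime $Q$ of $S[X]$ with $f_e^{-1}(Q)=P[X]$ for some $P\in\mathrm{Spec}(R)$ is already of the form $\mathfrak q[X]$ for some $\mathfrak q\in\mathrm{Spec}(S)$.

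For the integral case, first reduce to $f$ injective by factoring $R\twoheadrightarrow R/\ker f\hookrightarrow S$. The surjection $R\to R/I$ satisfies $(\mathcal T)$ because any content-$R/I$ polynomial in $(R/I)[X]$ lifts to a content-$R$ polynomial in $R[X]$ (add a suitable monomial in $I[X]$ to force the content to be $R$), giving $R(X)/IR(X)\cong(R/I)(X)$; and $(\mathcal T)$ is clearly preserved under composition. For injective integral $f$ and such $Q$, set $\mathfrak q:=Q\cap S$ and $P:=f^{-1}(\mathfrak q)$. Passing to $(S/PS)[X]$ and then localizing at $(R/P)\setminus\{0\}$, the image of $Q$ is a prime of $(S_P/PS_P)[X]$ with zero contraction to $\kappa(P)[X]$. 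Since $f$ is integral, $L:=(S_P/PS_P)/\mathfrak q'$ (where $\mathfrak q'$ is the image of $\mathfrak q$) is an algebraic field extension of $\kappa(P)$. A norm argument on the finite subextension generated by the coefficients shows that every nonzero element of $L[X]$ divides a nonzero element of $\kappa(P)[X]$, so the only prime of $L[X]$ with zero contraction to $\kappa(P)[X]$ is $(0)$. Tracing back, $Q=\mathfrak q[X]$.

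For the flat-epimorphism case, $f_e:R[X]\to S[X]$ inherits the flat-epimorphism property since $S\otimes_R S=S$ yields $S[X]\otimes_{R[X]}S[X]=S[X]$, so its spectral map is injective by Proposition~\ref{LAZZEPI}. If $f_e^{-1}(Q)=P[X]$, then $P$ admits a unique lift $\mathfrak q\in\mathrm{Spec}(S)$ (unique fibre for a flat epimorphism), and $\mathfrak q[X]$ is also a prime of $S[X]$ pulling back to $P[X]$; spectral injectivity then forces $Q=\mathfrak q[X]$. The final assertion about $R_P\to S_P$ is then immediate: both integrality and the flat-epimorphism property are stable under the base change $R\to R_P$, so $R_P\to S_P=S\otimes_R R_P$ again falls into one of the two cases already treated.

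The main obstacle will be the integral case: while the surjective reduction and the composition-stability of $(\mathcal T)$ are routine, the key step is showing that every nonzero prime of $L[X]$ has nonzero contraction to $\kappa(P)[X]$, which requires the algebraicity of $L/\kappa(P)$ and the norm mechanism for algebraic elements. This is precisely where integrality of $f$ enters essentially, and no analogous obstruction arises in the flat-epimorphism case, which is handled directly by the Lazard theory recalled earlier in the paper.
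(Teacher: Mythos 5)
Your overall strategy is reasonable: identifying $R(X)\otimes_RS$ with $S[X]_{f_e(\Sigma_R)}$ and $S(X)$ with $S[X]_{\Sigma_S}$, and comparing the two localizations through their spectral images via Lazard's theory, is a legitimate framework, and several of your sub-arguments are correct as far as they go (the reduction of the integral case to the injective one, the stability of $(\mathcal T)$ under composition, the norm computation over $\kappa(P)$, and the spectral injectivity of $f_e$ in the flat-epimorphism case). Note that the paper itself gives no proof of this proposition; it is quoted from \cite[Section IV, Propositions 4 and 7]{PicC}, so a complete argument along your lines would be genuinely new material rather than a variant of the paper's.

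However, there is a genuine gap, located exactly at your reduction step. The ``classical fact'' you invoke --- that the primes of $A[X]$ disjoint from $\Sigma_A$ are exactly those of the form $\mathfrak p[X]$ --- is false. What is true is that they are exactly the primes contained in $M[X]$ for some $M\in\mathrm{Max}(A)$ (equivalently, the primes whose content ideal is proper), and such primes need not be extended. In fact your statement asserts precisely that $\mathrm{Spec}(A(X))=\{\mathfrak p(X)\mid\mathfrak p\in\mathrm{Spec}(A)\}$, i.e.\ that $A$ is quasi-Pr\"uferian in the sense of Section 10 of this paper, and Example \ref{UFD} there exhibits a ring for which this fails. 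A concrete witness: for $A=k[x,y]_{(x,y)}$ with maximal ideal $M$, the prime $(y-xX)A[X]$ is contained in $M[X]$, hence disjoint from $\Sigma_A$, yet it is an upper of $(0)$, not of the form $\mathfrak p[X]$. Consequently the spectral image of $S[X]_{f_e(\Sigma_R)}$ is in general strictly larger than the set of primes $Q$ with $f_e^{-1}(Q)=P[X]$: it also contains every $Q$ whose contraction to $R[X]$ is an upper contained in some $M[X]$, and such $Q$ do occur in the very cases you must treat (e.g.\ if $R$ is not quasi-Pr\"uferian and $f$ is an injective integral extension, lying over for the integral map $f_e$ produces them). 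Your two case arguments only handle primes contracting to extended primes $P[X]$ and say nothing about these remaining primes, so the inclusion you actually need --- every prime of $S[X]$ disjoint from $f_e(\Sigma_R)$ is disjoint from $\Sigma_S$ --- is not established. In contrapositive form, the missing statement is: if $Q$ contains some $g$ with $C_S(g)=S$, then $f_e^{-1}(Q)$ contains a polynomial of content $R$; for integral $f$ this amounts to a Dedekind--Mertens--type content lemma (produce $h\in S[X]$ with $gh\in R[X]$ and $C_R(gh)=R$), which is the real substance of the cited results of \cite{PicC} and which your norm argument at the residue-field level does not yield.
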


We deduce from the above proposition:

(A) If $\Sigma$ is a mcs of $R$ then $R_{\Sigma} (X)= (R(X))_{\Sigma}$.

(B) If $I$ is an ideal of $R$, then $(R/I)(X) = R(X)/I(X)$.

\begin{proposition}\label{LOCALI}
If $f: R\to S$ verifies $(\mathcal T)$ and $(\mathcal T_M)$ for some maximal ideal $M$ of $R$, then $R(X)_{M(X)} \to S(X)_{M(X)}$ identifies to $R_M(X) \to S_M(X)$.
\end{proposition}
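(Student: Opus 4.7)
The plan is to reduce everything to tensor-product manipulations using properties $(\mathcal T)$, $(\mathcal T_M)$, and rule (A).

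First I would handle the source. Apply rule (A) with $\Sigma := R\setminus M$ to obtain $R_M(X) = (R(X))_{R\setminus M}$. Since $R_M$ is local with maximal ideal $MR_M$, the Nagata ring $R_M(X)$ is local with unique maximal ideal $MR_M(X)$. In the localization $(R(X))_{R\setminus M}$, the prime $M(X) = MR(X)$ corresponds to $MR_M(X)$, which is already maximal. Thus further localization at $M(X)$ is trivial, giving the canonical identification $R(X)_{M(X)} \cong R_M(X)$.

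Next I would handle the target by a chain of canonical isomorphisms. Using $(\mathcal T)$, write $S(X) \cong R(X)\otimes_R S$. Base change of localization then gives
\[
S(X)_{M(X)} \;\cong\; R(X)_{M(X)} \otimes_R S \;\cong\; R_M(X) \otimes_R S,
\]
using the first step. By associativity of tensor product together with $R_M\otimes_R S \cong S_M$,
\[
R_M(X)\otimes_R S \;\cong\; R_M(X)\otimes_{R_M}(R_M\otimes_R S) \;\cong\; R_M(X)\otimes_{R_M} S_M.
\]
Now apply $(\mathcal T_M)$ (which says that $R_M\to S_M$ verifies $(\mathcal T)$) to identify $R_M(X)\otimes_{R_M}S_M$ canonically with $S_M(X)$.

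Chaining these identifications yields $S(X)_{M(X)} \cong S_M(X)$. Finally I would verify that the morphism $R(X)_{M(X)} \to S(X)_{M(X)}$ induced by $f_{nag}$ matches, under these identifications, the morphism $R_M(X) \to S_M(X)$ induced by the localization $f_M: R_M\to S_M$. This is a routine diagram chase: all isomorphisms involved are canonical $R$-algebra maps fixing $X$, and the construction of $f_{nag}$ (and of $(f_M)_{nag}$) is natural in the defining ring morphism, so the two maps agree on generators $r/s$ of $R_M$ and on $X$, hence everywhere.

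The only mildly delicate point is the compatibility check in the last paragraph — one needs the canonical identifications from $(\mathcal T)$ and $(\mathcal T_M)$ to be the ones produced by $f$ and $f_M$ respectively, so that the localization morphism $f_{nag}$ factors through them correctly. Since both $(\mathcal T)$-isomorphisms are, by definition, those induced by the relevant ring morphisms and by fixing $X$, this is essentially automatic, but it is the place one must be careful not to invoke an unrelated abstract isomorphism.
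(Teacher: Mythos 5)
Your proof is correct and follows essentially the same route as the paper's: the paper's proof is exactly the chain $S_M(X) \cong S_M\otimes_{R_M}R_M(X) \cong S\otimes_R R_M(X) \cong S\otimes_R R(X)_{M(X)} \cong S(X)_{M(X)}$ built from $(\mathcal T_M)$, tensor associativity, the identity $R(X)_{M(X)}=R_M(X)$, and $(\mathcal T)$, which is your chain read in the reverse direction. The only additions you make are to spell out the identification $R(X)_{M(X)}=R_M(X)$ via rule (A) (which the paper dismisses as ``one sees easily'') and to flag the naturality check for the induced morphisms, both of which the paper leaves implicit.
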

\begin{proof} We have $S_M(X){{\mathcal T}_M\atop =}S_M\otimes_{R_M}R_M(X)=(S\otimes_RR_M)\otimes_{R_M}R_M(X)=S\otimes_R R_M(X)=S\otimes_RR(X)_{M(X)}= S\otimes_R R(X)\otimes_{R(X)}R(X)_{M(X)}{{\mathcal T} \atop =} S(X)_{M(X)}$.
\end{proof}

\begin{lemma}\label{LYINGOVER} Let $R\subseteq S$ be an extension and $Q$ a prime ideal of $S$, lying over $P$ in $R$. Then $Q(X)$ is lying over $P(X)$ and the natural map $R(X)_{P(X)} \to S(X)_{Q(X)}$ identifies to $R_P(X) \to S_Q(X)$.
\end{lemma}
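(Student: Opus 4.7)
The plan is to split the proof into two independent pieces: (i) the lying-over statement $Q(X)\cap R(X)=P(X)$, and (ii) the ring-theoretic identifications $R(X)_{P(X)}\cong R_P(X)$ and $S(X)_{Q(X)}\cong S_Q(X)$; naturality of localization will then assemble these into the desired identification of maps.

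For (i), I would first observe that $Q[X]\cap R[X]=P[X]$ by reading off coefficients: the inclusion $\supseteq$ is immediate from $P\subseteq Q$, and a polynomial of $Q[X]\cap R[X]$ has every coefficient in $Q\cap R=P$. Next, elements of $\Sigma_R$ have unit content, so $P[X]\cap\Sigma_R=\emptyset$, and similarly $Q[X]\cap\Sigma_S=\emptyset$; hence $P(X)=P[X]_{\Sigma_R}$ and $Q(X)=Q[X]_{\Sigma_S}$ are well-defined primes. The containment $P(X)\subseteq Q(X)\cap R(X)$ is obvious. For the reverse, write an element of $Q(X)\cap R(X)$ as $f/g$ with $f\in R[X]$ and $g\in\Sigma_R\subseteq\Sigma_S$; since $g$ is a unit in both $R(X)$ and $S(X)$, we obtain $f\in Q(X)\cap R[X]=Q[X]\cap R[X]=P[X]$, so $f/g\in P(X)$.

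For (ii), I would invoke property (A) recalled in the text: $R_P(X)=R(X)_{R\setminus P}$. Because $R\setminus P\subseteq R(X)\setminus P(X)$, the universal property of localization gives a natural map $R_P(X)\to R(X)_{P(X)}$. For the reverse map I would check that every element of $R(X)\setminus P(X)$ becomes a unit in $R_P(X)$: any such element has the form $f/g$ with $g\in\Sigma_R$ (already a unit in $R(X)$, hence in $R_P(X)$) and $f\in R[X]\setminus P[X]$, so some coefficient of $f$ lies outside $P$, becomes a unit in $R_P$, and therefore $C(f)R_P=R_P$, i.e.\ $f\in\Sigma_{R_P}$; thus $f$ is a unit in $R_P(X)$. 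Universality then furnishes $R(X)_{P(X)}\to R_P(X)$ inverse to the first map. Repeating the argument verbatim for $S$ and $Q$ yields $S(X)_{Q(X)}\cong S_Q(X)$.

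Finally, since all four localizations are constructed functorially from $R\subseteq S$, the commutative diagram of localizations identifies the canonical map $R(X)_{P(X)}\to S(X)_{Q(X)}$ with $R_P(X)\to S_Q(X)$. I do not anticipate any real obstacle: the only subtlety is the bookkeeping between the three notations $P[X]$, $P(X)$, and $PR(X)$, which is handled by noting that $\Sigma_R$ meets no prime $P[X]$ with $P$ a proper ideal.
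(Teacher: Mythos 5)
Your proof is correct, but it takes a genuinely different route from the paper's. The paper disposes of the lying-over claim in one line by invoking the Dedekind--Mertens content formula: if $p\in R[X]$ and $q\in R[X]$ satisfies $C(q)=R$, then $C(pq)=C(p)$; applied to a relation of the form $pq'=p'q$ arising from an element of $Q(X)\cap R(X)$, this forces $C(p)\subseteq Q\cap R=P$, i.e.\ $p\in P[X]$. You avoid Dedekind--Mertens entirely, replacing it with two elementary observations: $Q[X]$ is a prime of $S[X]$ disjoint from $\Sigma_S$ (a content ideal contained in $Q$ cannot be all of $S$), so the standard contraction property of localization gives $Q(X)\cap S[X]=Q[X]$; and $Q[X]\cap R[X]=P[X]$ is read off coefficient-wise. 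Both arguments are sound; the paper's is shorter and reuses a tool that reappears later in the same section (Theorem~\ref{NAGstrong}), while yours is more self-contained, resting only on standard localization theory. You also prove the second assertion of the lemma --- the identifications $R(X)_{P(X)}\cong R_P(X)$ and $S(X)_{Q(X)}\cong S_Q(X)$, obtained by a double application of the universal property of localization together with property (A) --- whereas the printed proof does not address this part at all, implicitly relying on the earlier remark that $R(X)_{MR(X)}=R_M(X)$. In that respect your write-up is more complete than the paper's, and the final functoriality argument assembling the two isomorphisms into the identification of the map $R(X)_{P(X)}\to S(X)_{Q(X)}$ with $R_P(X)\to S_Q(X)$ is legitimate, since maps out of a localization are determined by their restriction to the base ring.
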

\begin{proof} That $Q(X)$ is lying over $P(X)$ is an easy consequence of the following fact: if $p \in R[X]$ and $q\in R[X]$ is such that  $C(q)=R$, then $C(pq)= C(p)$. This  follows from the Lemma of Dedekind-Mertens.
\end{proof}

\begin{proposition}\label{MONTE} \cite[Corollary 3.15]{Pic 5} Let $R\subseteq S$ be an extension of rings, $R\to R'$ a faithfully flat base change and $S':= R'\otimes_RS$.
 If $R'\to S'$ is Pr\"ufer, then $R\to S$ is Pr\"ufer.
 \end{proposition}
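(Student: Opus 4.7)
The plan is to use the characterization of Pr\"ufer extensions via flat epimorphisms on every intermediate ring, together with faithfully flat descent of flatness and of isomorphisms. Fix an arbitrary $T\in[R,S]$; the goal is to show that $R\to T$ is a flat epimorphism, which by definition yields that $R\subseteq S$ is Pr\"ufer.

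First, I would form the base change $T':=R'\otimes_R T$. Since $R\to R'$ is flat, the inclusion $T\hookrightarrow S$ base-changes to an inclusion $T'\hookrightarrow R'\otimes_R S=S'$, so $T'\in[R',S']$. The hypothesis that $R'\subseteq S'$ is Pr\"ufer then forces $R'\to T'$ to be a flat epimorphism. The morphism $T\to T'$, being the base change along $T$ of the faithfully flat morphism $R\to R'$, is itself faithfully flat.

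Next, I would deduce flatness of $R\to T$ by descent. The composition $R\to T\to T'$ coincides with $R\to R'\to T'$, which is flat as a composition of flat maps. Since $T\to T'$ is faithfully flat and $R\to T'$ is flat, standard descent implies $R\to T$ is flat. For the epimorphism property, I would consider the multiplication map $\mu:T\otimes_R T\to T$. Tensoring $\mu$ with $R'$ over $R$ gives, under the canonical identification $(T\otimes_R T)\otimes_R R'\cong T'\otimes_{R'}T'$, the multiplication map $T'\otimes_{R'}T'\to T'$. The latter is an isomorphism because $R'\to T'$ is an epimorphism. Faithfully flat descent of isomorphisms then gives that $\mu$ itself is an isomorphism, so $R\to T$ is an epimorphism, hence a flat epimorphism.

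The main obstacle is the careful handling of the faithfully flat descent step, in particular the canonical identification of the base change of $T\otimes_R T$ with $T'\otimes_{R'}T'$ (which uses $T\otimes_R R'=T'$ and associativity of tensor products) and invoking descent of isomorphisms for $\mu$. Everything else is a routine application of stability properties of flat maps and epimorphisms under base change, together with the definition of Pr\"ufer extensions in terms of intermediate flat epimorphisms.
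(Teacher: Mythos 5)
Your proof is correct. There is nothing in the paper to compare it against: the paper states this proposition with a citation to \cite[Corollary 3.15]{Pic 5} and gives no proof of its own, so your argument serves as a self-contained verification. The structure is the natural one and every step checks out: for $T\in[R,S]$, flatness of $R\to R'$ makes $T':=R'\otimes_RT$ a member of $[R',S']$ (both $R'\to T'$ and $T'\to S'$ stay injective), so the Pr\"ufer hypothesis makes $R'\to T'$ a flat epimorphism; flatness of $R\to T$ then descends because $T'\otimes_T(T\otimes_R-)\cong T'\otimes_R-$ is exact and the faithfully flat functor $T'\otimes_T-$ reflects exactness; and the epimorphism property descends because the base change of the multiplication map $\mu_T\colon T\otimes_RT\to T$ along $R\to R'$ identifies canonically with $\mu_{T'}\colon T'\otimes_{R'}T'\to T'$, which is an isomorphism since $R'\to T'$ is an epimorphism, and $-\otimes_RR'$ reflects isomorphisms by faithful flatness. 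One small point worth making explicit if you write this up: the identification of the composite $R\to T\to T'$ with $R\to R'\to T'$ (both send $r$ to $f(r)\otimes 1=1\otimes r$), which is what lets you conclude $R\to T'$ is flat before invoking descent.
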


\begin{proposition}\label{EPINAG}  Let $R \to S$  be  a ring extension. The following statements are equivalent:

\begin{enumerate}

\item  $R \to S$  is a flat epimorphism;

\item $R(X) \to S(X)$ is a flat epimorphism; 

\item $R[X]_\Sigma =S(X)$, where $\Sigma \subseteq R[X]$ is the mcs     of all $ p(X) \in R[X]$ whose contents in  $S$ are $S$.

\end{enumerate}
\end{proposition}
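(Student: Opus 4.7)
The plan is to establish the cyclic chain $(1) \Rightarrow (2) \Rightarrow (3) \Rightarrow (1)$. The first implication is immediate from base change: applying the faithfully flat morphism $R \to R(X)$ to $R \to S$ produces the flat epimorphism $R(X) \to R(X) \otimes_R S$, and property $(\mathcal T)$ from Proposition~\ref{TENS} identifies $R(X) \otimes_R S$ canonically with $S(X)$.

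For $(2) \Rightarrow (3)$, every $p \in \Sigma$ satisfies $C_S(p) = C_R(p)S = S$, so its image in $S[X]$ lies in $\Sigma_S$ and is therefore a unit in $S(X)$. This gives a factorization $R[X] \to R[X]_\Sigma \to S(X)$ consisting of flat epimorphisms of $R[X]$-algebras, and by Proposition~\ref{EPIPLAT}(2) it suffices to prove that $\mathcal X_{R[X]}(R[X]_\Sigma) = \mathcal X_{R[X]}(S(X))$ inside $\mathrm{Spec}(R[X])$. The inclusion $\supseteq$ is automatic from the factorization. For $\subseteq$, I take $Q \in \mathrm{Spec}(R[X])$ with $Q \cap \Sigma = \emptyset$ and assume for contradiction that $Q \cdot S(X) = S(X)$, so $1 = \sum_i P_i \alpha_i$ with $P_i \in Q$ and $\alpha_i \in S(X)$. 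Clearing $\Sigma_S$-denominators gives $q = \sum_i P_i R_i$ in $S[X]$ with $q \in \Sigma_S$ and $R_i \in S[X]$. The standard content estimate $C_S(fg) \subseteq C_S(f)C_S(g)$, combined with $C_S(P_i) = C_R(P_i)S$, yields $S = C_S(q) \subseteq \bigl(\sum_i C_R(P_i)\bigr) S$. Setting $P := \sum_i X^{iN} P_i$ for $N > \max_i \deg P_i$, the disjointness of the exponent ranges forces $C_R(P) = \sum_i C_R(P_i)$, so $C_R(P) S = S$, that is $P \in \Sigma$, while $P \in Q$ by the ideal closure; this contradicts $Q \cap \Sigma = \emptyset$.

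For $(3) \Rightarrow (1)$, I invoke the flat-epimorphism criterion of Proposition~\ref{CAREPIPLAT}. Since $R[X]_\Sigma = S(X)$ by hypothesis, $R[X] \to S(X)$ is a flat epimorphism (localization). Given $s \in S \subseteq S(X)$, that criterion furnishes a finitely generated ideal $J = (P_1, \ldots, P_n) \subseteq R[X]$ with $J \cdot S(X) = S(X)$ and $Js \subseteq R[X]$. Setting $I := \sum_i C_R(P_i) \subseteq R$, the relation $P_i s \in R[X]$ translates into $C_R(P_i) s \subseteq R$, whence $Is \subseteq R$. Clearing denominators in $J \cdot S(X) = S(X)$ as in the previous paragraph produces $q \in \Sigma_S$ of the form $\sum_i P_i R_i$, and the same content bound now gives $S = C_S(q) \subseteq I S$. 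A final application of Proposition~\ref{CAREPIPLAT}, this time to $R \to S$, delivers~$(1)$. The main obstacle is the spectral comparison in $(2) \Rightarrow (3)$: one must combine the content inequalities of Dedekind--Mertens type with the disjoint-exponent summation trick in order to amalgamate the generators of $Q$ into a single polynomial lying in $Q \cap \Sigma$.
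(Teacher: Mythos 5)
Your proof is correct, but it takes a genuinely different route from the paper's. The paper disposes of $(1)\Leftrightarrow(3)$ by citing \cite[Lemma 3.11]{PT}, obtains $(1)\Rightarrow(2)$ by the same base-change argument you use (Proposition~\ref{TENS} plus \cite[Lemma 3.1(g)]{DPP3}), and then proves $(2)\Rightarrow(1)$ by a localization argument: it invokes the Scholium of \cite{Pic 5} (a morphism is a flat epimorphism iff it is an $i$-morphism whose local morphisms are isomorphisms), reduces to a prime $Q$ of $S$ over $P$ in $R$ via Lemma~\ref{LYINGOVER}, and forces $R_P=S_Q$ from $R_P(X)=S_Q(X)$ using $R_P=S_Q\cap R_P(X)$. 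You instead close the cycle through $(3)$: for $(2)\Rightarrow(3)$ you compare the spectral images of the two flat epimorphisms $R[X]\to R[X]_\Sigma$ and $R[X]\to S(X)$ via Proposition~\ref{EPIPLAT}(2), the key point being the amalgamation of the generators $P_1,\ldots,P_n$ witnessing $QS(X)=S(X)$ into the single polynomial $\sum_i X^{iN}P_i\in Q\cap\Sigma$; for $(3)\Rightarrow(1)$ you transfer the criterion of Proposition~\ref{CAREPIPLAT} from $R[X]\subseteq S(X)$ down to $R\subseteq S$ by passing to contents. Both content computations are sound (only the trivial inclusion $C_S(fg)\subseteq C_S(f)C_S(g)$ is needed, not full Dedekind--Mertens). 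What your route buys is self-containedness: it reproves the cited lemma of \cite{PT} rather than assuming it, and it avoids the Nagata-ring structure lemmas entirely, at the cost of being longer. Two steps you use silently should be made explicit: first, the passage from ``$QS(X)\neq S(X)$'' to ``$Q\in\mathcal X_{R[X]}(S(X))$'' requires Lazard's characterization of the image of a flat epimorphism (the fiber at $Q$ is empty iff $QS(X)=S(X)$, \cite[Proposition 2.4, p.111]{L}, recalled in Sections 3 and 8 of the paper), which applies because $R[X]\to S(X)$ is the composite of the flat epimorphisms $R[X]\to R(X)$ and $R(X)\to S(X)$ under hypothesis (2); second, Proposition~\ref{CAREPIPLAT} furnishes an ideal $J$ that need not be finitely generated, so you should pass to a finitely generated subideal $J'\subseteq J$ still satisfying $J'S(X)=S(X)$ (possible since $1$ is a finite combination of elements of $JS(X)$), which inherits $J's\subseteq R[X]$. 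Neither point is a gap, just a citation and a one-line refinement.
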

\begin{proof} (1) is equivalent to (3) by \cite[Lemma 3.11]{PT}. Now $R(X) \to S(X)$ is a flat epimorphism if $R\to S$ is a flat epimorphism \cite[Lemma 3.1(g)]{DPP3} and (Proposition~\ref{TENS}). 
 To prove the converse we use \cite[Scholium A (1)]{Pic 5} which states that a ring morphism $A\to B$ is a flat epimorphism if and only if its local morphisms
are isomorphisms and $A \to B$ is an $i$-morphism. We then consider a prime ideal $Q$ of $S$ lying over $P$. Suppose that $R(X)\to S(X)$ is a flat epimorphism. From  Lemma \ref{LYINGOVER}, we get that $R_P(X) \cong R(X)_{P(X)}\to S_Q(X)\cong S(X)_{Q(X)}$, and deduce that $R_P(X)\to S_Q(X)$ is an isomorphism, whence $R_P\to S_Q$ is injective. Identifying this map with an extension $R_P\subset S_Q$, we get that $R_P(X) =S_Q(X)$. It follows from \cite[Lemma 3.1(f)]{DPP3} that $R_P=S_Q\cap R_P(X)=S_Q\cap S_Q(X)=S_Q$. The injectivity of $\mathrm{Spec}(S)\to\mathrm{Spec}(R)$ may be proved as follows. If $Q$ and $Q'$ are prime ideals of $S$ lying over $P$ in $R$, then $Q(X)$ and $Q'(X)$ are lying over $P(X)$.
\end{proof}

\begin{theorem}\label{A0.7.3} An extension $R\subseteq S$ is (quasi-)Pr\"ufer if and only if $R(X) \subseteq S(X)$ is (quasi-)Pr\"ufer.
 In that case, $\overline{R(X)}=\overline R(X)\cong \overline R \otimes_R R(X)$ and $S(X)\cong S\otimes_R R(X)$.
   \end{theorem}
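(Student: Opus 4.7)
I will handle the Pr\"ufer biconditional first, then deduce the quasi-Pr\"ufer equivalence and the two asserted isomorphisms.

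\emph{Descent of Pr\"ufer.} If $R(X)\subseteq S(X)$ is Pr\"ufer it is a flat epimorphism, so by Proposition~\ref{EPINAG}, $R\subseteq S$ is a flat epimorphism, hence verifies property $(\mathcal T)$ by Proposition~\ref{TENS}, giving $S(X)\cong S\otimes_RR(X)$. Faithful flatness of $R\to R(X)$ combined with Proposition~\ref{MONTE} then yields that $R\subseteq S$ is Pr\"ufer.

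\emph{Ascent of Pr\"ufer.} Assume $R\subseteq S$ is Pr\"ufer. It is a flat epimorphism, so Proposition~\ref{TENS} gives $S(X)\cong S\otimes_RR(X)$ and Proposition~\ref{EPINAG} gives that $R(X)\to S(X)$ is a flat epimorphism. By Proposition~\ref{LOC} I may check Pr\"ufer locally at each maximal ideal $M(X)$ of $R(X)$ ($M\in\mathrm{Max}(R)$); Proposition~\ref{LOCALI} identifies the localized extension with $R_M(X)\subseteq S_M(X)$, so I may assume $R$ is local. Then Proposition~\ref{VALU} provides $S=R_P$ with $P=SP$ and $R/P$ a valuation domain. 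I verify the hypotheses of Proposition~\ref{VALU} for $R(X)\subseteq S(X)$ with $Q:=P(X)$: (a) $S(X)=R_P\otimes_RR(X)=R(X)_{R\setminus P}$, which equals $R(X)_{P(X)}$ via a Dedekind--Mertens content computation (an element $g=f/h\in R(X)$ lies outside $P(X)$ exactly when $C_R(f)\not\subseteq P$, i.e.\ when $f\in\Sigma_{R_P}$, so $g$ is already a unit after inverting $R\setminus P$); (b) $R(X)/P(X)=(R/P)(X)$ is a valuation domain, since Nagata rings of valuation domains are valuation domains; (c) $P(X)=S(X)P(X)$: writing $s\in S(X)=R(X)_{R\setminus P}$ as $r/u$ with $r\in R(X)$, $u\in R\setminus P$, and any $q\in P(X)$ as $\sum p_ir_i$ with $p_i\in P$, $r_i\in R(X)$, the hypothesis $P=SP$ forces $p_i/u\in P$, so $sq=\sum(p_i/u)(rr_i)\in P\cdot R(X)=P(X)$. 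Step (c) is the main obstacle, as it crucially uses both $P=SP$ and the fact that passing from $R(X)$ to $S(X)$ only inverts elements of $R\setminus P$, not the larger $R(X)\setminus P(X)$; without this observation the naive argument fails because denominators from $\Sigma_S\setminus\Sigma_R$ (like a uniformizer of $R_P$ that is a non-unit of $R$) cannot be absorbed into $\Sigma_R$.

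\emph{Quasi-Pr\"ufer case and the two isomorphisms.} The key identification is $\overline{R(X)}^{S(X)}=\overline R(X)$. Integrality of $R(X)\subseteq\overline R(X)$ comes from base change, with $(\mathcal T)$ for the integral map $R\to\overline R$ (Proposition~\ref{TENS}) giving $\overline R(X)\cong \overline R\otimes_RR(X)$; integral closedness of $\overline R(X)$ in $S(X)$ is the standard Nagata ring stability statement for integral closure. With this identification, the Pr\"ufer biconditional applied to $\overline R\subseteq S$ yields: $R\subseteq S$ quasi-Pr\"ufer $\iff \overline R\subseteq S$ Pr\"ufer $\iff \overline R(X)=\overline{R(X)}\subseteq S(X)$ Pr\"ufer $\iff R(X)\subseteq S(X)$ quasi-Pr\"ufer. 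Finally, $S(X)\cong S\otimes_RR(X)$ follows by factoring $R\to\overline R\to S$ and composing $(\mathcal T)$: $S\otimes_RR(X)=S\otimes_{\overline R}(\overline R\otimes_RR(X))=S\otimes_{\overline R}\overline R(X)=S(X)$, using Proposition~\ref{TENS} for the integral factor and for the Pr\"ufer (hence flat epi) factor.
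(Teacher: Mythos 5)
Your proof is correct and follows essentially the same route as the paper's: descent via Proposition~\ref{EPINAG}, property $(\mathcal T)$ and Proposition~\ref{MONTE}; ascent by localizing (Propositions~\ref{LOC}, \ref{LOCALI}) and checking the Proposition~\ref{VALU} criterion for $R(X)\subseteq S(X)$ at $P(X)$; and the quasi-Pr\"ufer case via $\overline{R(X)}=\overline R(X)$. The only difference is one of detail: where the paper cites Arnold for $R(X)_{P(X)}\cong R_P(X)$ and the valuation-domain facts, you verify these (and the condition $P(X)=S(X)P(X)$, which the paper leaves implicit) directly by Dedekind--Mertens content arguments.
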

\begin{proof} Assume that $R\subset S$ is Pr\"ufer, we intend to show that so is $R(X)\subset S(X)$. We can suppose that $(R,M)$ is local according to Proposition \ref{LOCALI}, in order to use Proposition~\ref{LOC}  and Proposition~\ref{VALU}. Then it is  enough to know the following facts: $V(X)$ is a valuation domain if so is $V$ and $R[X]_{P[X]} \cong R(X)_{P(X)} \cong R_P(X)$ \cite[Theorem 4 and Lemma 2]{AR}; $ R(X)/P(X)\cong (R/P)(X) $ for $P\in \mathrm{Spec}(R)$. Now suppose that $R(X)\subset S(X)$ is Pr\"ufer, then $R\subset S$ is a flat epimorphism by Proposition~\ref{EPINAG}. Hence,  $S(X)\cong R(X)\otimes_RS$ by Proposition \ref{TENS}. It follows that Proposition~\ref{MONTE} entails that $R\subset S$ is Pr\"ufer since $R \to R(X)$ is faithfully flat. For the quasi-Pr\"ufer case, it is enough to use the Pr\"ufer case and Proposition~\ref{TENS}, because $\overline{R(X)} = \overline{R}(X)$. 
   \end{proof}
     
\begin{corollary} An element $x$ of ring $R$ is a lsd of $R$ if and only if $x$ is a  lsd of  the  ring $R(X)$.
   \end{corollary}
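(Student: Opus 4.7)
The plan is to reduce the corollary directly to Theorem~\ref{A0.7.3} together with the compatibility between Nagata rings and localization recalled earlier in the section, namely rule (A): $(R_x)(X) = R(X)_x$. Once this is done, the two conditions ``$R\subseteq R_x$ is Pr\"ufer'' and ``$R(X)\subseteq R(X)_x$ is Pr\"ufer'' match up on the nose.

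First I would handle regularity. If $x$ is regular in $R$, then since $R\to R(X)$ is faithfully flat (in particular injective and flat), $x$ is regular in $R(X)$. Conversely, if $x$ is regular in $R(X)$ and $rx=0$ in $R$, then $rx=0$ in $R(X)$, forcing $r=0$ in $R(X)$, hence $r=0$ in $R$ (injectivity of $R\hookrightarrow R(X)$). So regularity passes faithfully in both directions.

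Second, for the Pr\"ufer condition: suppose $x$ is a lsd of $R$, i.e.\ $R\subseteq R_x$ is Pr\"ufer. Theorem~\ref{A0.7.3} gives that $R(X)\subseteq R_x(X)$ is Pr\"ufer, and rule (A) identifies $R_x(X)$ with $R(X)_x$; so $R(X)\subseteq R(X)_x$ is Pr\"ufer, showing $x$ is a lsd of $R(X)$. Conversely, if $x$ is a lsd of $R(X)$, then $R(X)\subseteq R(X)_x=R_x(X)$ is Pr\"ufer, and the other direction of Theorem~\ref{A0.7.3} yields that $R\subseteq R_x$ is Pr\"ufer.

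There is no real obstacle: the whole content is packaged inside Theorem~\ref{A0.7.3} and the identity $R(X)_x=R_x(X)$. The only thing one must be careful about is the order of operations (localizing at the powers of $x$ before vs.\ after Nagataification), which is handled by the mcs compatibility rule~(A) applied to the mcs $\Sigma=\{x^n\}$.
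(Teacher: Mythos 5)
Your proof is correct and follows essentially the same route as the paper, which simply declares the statement obvious from Theorem~\ref{A0.7.3} (Pr\"ufer descends and ascends along $R\subseteq S \leftrightarrow R(X)\subseteq S(X)$) together with the definition of lsd. Your added care about regularity transferring through the faithfully flat injection $R\hookrightarrow R(X)$ and the identification $R_x(X)=R(X)_x$ via rule (A) just makes explicit the details the paper leaves implicit.
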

  
\begin{proof} Obvious by Theorem \ref{A0.7.3} and Proposition \ref{1.9.1.0}.
      \end{proof}
   
\begin{proposition} If $R\subset S$ is an extension, then $R(X)\subset S(X)$ is a Pr\"ufer extension if and only if it is a QR-extension, and in this case $S(X) = R(X)_{\Lambda \Delta (R(X)) \cap \mathrm U(S(X))}$.
   \end{proposition}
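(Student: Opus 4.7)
The plan is essentially to reduce this to two already-established results in the paper, applied to the base ring $R(X)$ in place of $R$. The point is that $R(X)$ is a Nagata ring, and by the comment immediately following Proposition~\ref{1.14}, every rank-one projective $R(X)$-module is free (a classical fact cited as \cite{FT}). Consequently Proposition~\ref{1.14}, stated for arbitrary base rings whose rank-one projectives are free, applies verbatim to the extension $R(X)\subset S(X)$ and yields that this extension is Pr\"ufer if and only if it is a QR-extension (and if and only if it is Bezout). This settles the biconditional.

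For the explicit description of $S(X)$, I would invoke Proposition~\ref{PRUFQR1} applied to the QR-extension $R(X)\subset S(X)$. The ``if these conditions hold'' part of that proposition precisely asserts that for any QR-extension $A\subset B$, one has $B=A_{\Lambda\Delta(A)\cap\mathrm U(B)}$. Taking $A=R(X)$ and $B=S(X)$ gives the claimed formula $S(X)=R(X)_{\Lambda\Delta(R(X))\cap\mathrm U(S(X))}$.

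There is no real obstacle: the proof is an assembly step. The only point deserving care is the hypothesis of Proposition~\ref{1.14}, i.e.\ that rank-one projectives over $R(X)$ are free; this is already noted in the text, so I would simply cite the relevant remark rather than reprove it. Everything else reduces to quoting Proposition~\ref{1.14} and Proposition~\ref{PRUFQR1}.
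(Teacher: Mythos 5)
Your proposal is correct and matches the paper's own proof: the paper likewise deduces the equivalence from Proposition~\ref{1.14} together with the fact (cited as \cite[Theorem 3.1]{FT}) that rank-one projective $R(X)$-modules are free, and obtains the formula for $S(X)$ from Proposition~\ref{PRUFQR1}. No gap; this is exactly the intended assembly argument.
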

   
\begin{proof} If $R(X)\subset S(X)$ is Pr\"ufer, it is a QR-extension because over $R(X)$ each projective module of rank one is free (Propositions \ref {1.14},  \ref{PRUFQR1} and \cite[Theorem 3.1]{FT}).
    \end{proof}
\begin{proposition}\label{1.10.1.1} Let $R\subset S$ be a Pr\"ufer extension of finite type. 
    Then $S(X)= R(X)_{g(x)} $ for some lsd $g(X) \in R(X)$.
 \end{proposition}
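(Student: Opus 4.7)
The plan is to reduce the statement to Theorem~\ref{1.12.1} applied to the Nagata extension $R(X)\subset S(X)$. First, Theorem~\ref{A0.7.3} gives that $R(X)\subset S(X)$ is Pr\"ufer and furnishes the canonical isomorphism $S(X)\cong S\otimes_RR(X)$. Writing $S=R[s_1,\ldots,s_n]$ (using the finite-type hypothesis on $R\subset S$), this base change identity yields $S(X)=R(X)[s_1,\ldots,s_n]$, which is of finite type over $R(X)$.

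Next, I would invoke the proposition immediately preceding the statement: since $R(X)\subset S(X)$ is Pr\"ufer and projective $R(X)$-modules of rank one are free (by \cite[Theorem 3.1]{FT} together with Proposition~\ref{1.14}), the extension $R(X)\subset S(X)$ is in fact a QR-extension. Having both the QR-property and the finite-type condition in hand, I apply Theorem~\ref{1.12.1} to $R(X)\subset S(X)$ with $T:=S(X)$: there exists a locally strong divisor $g(X)\in R(X)$ such that $S(X)=R(X)_{g(X)}$, which is the desired conclusion.

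The main obstacle is merely bookkeeping: one has to be careful that finite type is preserved under the passage to Nagata rings, and for this we rely on the base-change description $S(X)\cong S\otimes_RR(X)$ provided by Theorem~\ref{A0.7.3} (or equivalently by property $(\mathcal T)$ for the flat epimorphism $R\to S$ via Proposition~\ref{TENS}). Once this is recorded, the proof is a direct assembly of the three ingredients: Pr\"ufer ascent to $R(X)\subset S(X)$, the Pr\"ufer $\Leftrightarrow$ QR equivalence over a Nagata ring, and the structural description of finitely generated QR-sub\-algebras in Theorem~\ref{1.12.1}.
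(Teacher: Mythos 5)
Your proof is correct and follows essentially the same route as the paper: the paper's own (very terse) proof likewise combines the Pr\"ufer ascent $R(X)\subset S(X)$, the preceding Pr\"ufer~$\Leftrightarrow$~QR equivalence over the Nagata ring, and Theorem~\ref{1.12.1}. The only difference is that you explicitly verify, via $S(X)\cong S\otimes_RR(X)$, that $S(X)$ is of finite type over $R(X)$ --- a hypothesis Theorem~\ref{1.12.1} requires and which the paper leaves implicit.
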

\begin{proof} We know that $R(X) \subset S(X)$ is Pr\"ufer. To complete the proof, use  the above result and Theorem~\ref{1.12.1}.  
  \end{proof}
  
If $R$ is a local ring, within a unit, a non-unit strong divisor of the local ring  $R(X)$ can be written $p(X)/1$ where $p(X)\in R[X]$ is regular with content $c(p(X)) \neq R$.  Then $R(X)p(X)/1$ is comparable to each  ideal. 
 We note that if $x$ is an element of $R$, which is a strong divisor in $R(X) $, it is a strong divisor of  $R$ by Proposition \ref{1.9.2}.
   
\begin{theorem}\label{NAGstrong} Let $R$ be a local ring, $p(X)/1$ a non unit strong divisor of $R(X)$ and $I$ the content ideal of $p(X)=\sum_{i=0}^na_iX^i \in R[X]$. Then $I=R\rho$ where $\rho$ is a strong divisor of $R$ such that $p(X)/1\simeq \rho$ in $R(X)$.
  In fact, $I(X)=R(X)p(X)/1$. If $p(X)/1\in R(X)a_i$ for some $a_i$, then $I=Ra_i$.
Moreover $ \mathcal X_R (R(X)_{p(X)/1}) =\mathrm D(I)$.
  \end{theorem}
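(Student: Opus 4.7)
The plan is to use the comparability property of $R(X)p(X)/1$ (since it is a strong divisor, it is comparable to every principal ideal of $R(X)$), applied to each $R(X)a_i$. Since $p(X)\in I[X]$ we always have $R(X)p(X)/1\subseteq I(X)$; the whole argument then hinges on whether equality holds in the other direction via all the $a_i$, or the opposite inclusion holds for some coefficient. Along the way I will need that $p(X)/1$ is regular in $R(X)$ (hence $p(X)$ is regular in $R[X]$, and by McCoy the content $I$ is dense in $R$), and that $R\hookrightarrow R(X)$ is faithfully flat so strong divisors descend via Proposition~\ref{1.9.2}.

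First I would treat the case $R(X)p(X)/1\subseteq R(X)a_i$ for some $i$. This yields $f(X)p(X)=a_ih(X)$ in $R[X]$ with $c(f)=R$. The Gauss--Dedekind--Mertens identity for a unimodular factor gives $c(fp)=c(p)=I$, while $c(a_ih)=a_ic(h)$, so $I=a_ic(h)\subseteq Ra_i\subseteq I$ and therefore $I=Ra_i$. Density of $I$ then forces $a_i$ regular, so $a_ic(h)=Ra_i$ implies $c(h)=R$; thus $h/f\in\mathrm U(R(X))$ and $R(X)p(X)/1=R(X)a_i$. In particular $a_i/1$ inherits the strong-divisor property from $p(X)/1$, and Proposition~\ref{1.9.2} then returns $a_i$ as a strong divisor of $R$. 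This also yields the explicit last sentence of the statement.

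If instead $R(X)a_i\subseteq R(X)p(X)/1$ for every $i$, then $I(X)=R(X)p(X)/1$. Because $p(X)/1$ is regular, $I\otimes_RR(X)\cong I(X)$ is $R(X)$-free of rank one; faithfully flat descent along $R\hookrightarrow R(X)$ makes $I$ a rank-one projective $R$-module, and since $R$ is local $I=R\rho$ for some $\rho$. Then $R(X)\rho=R(X)p(X)/1$, so $\rho/1$ is a strong divisor of $R(X)$ and Proposition~\ref{1.9.2} again returns $\rho$ as a strong divisor of $R$. Writing $\rho=\sum c_ja_j$ and $a_j=\rho\alpha_j$, regularity of $\rho$ forces some $\alpha_j\in\mathrm U(R)$, so one actually recovers $I=Ra_j$ whenever $p(X)/1\in R(X)a_j$.

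In both cases $I=R\rho$ with $\rho$ a strong divisor and $R(X)\rho=R(X)p(X)/1$, which yields simultaneously $p(X)/1\simeq\rho$ in $R(X)$ and $I(X)=R(X)p(X)/1$. The final assertion then follows from $R(X)_{p(X)/1}=R(X)_\rho\cong R_\rho\otimes_RR(X)=R_\rho(X)$ using property $(\mathcal T)$ for the flat epimorphism $R\to R_\rho$ (Proposition~\ref{TENS}); faithful flatness of $R_\rho\hookrightarrow R_\rho(X)$ identifies $\mathcal X_R(R(X)_{p(X)/1})$ with the spectral image of $\mathrm{Spec}(R_\rho)\to\mathrm{Spec}(R)$, namely $\mathrm D(\rho)=\mathrm D(I)$. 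The main obstacle I anticipate is the descent step in the second case: converting $R(X)$-principality of $I(X)$ back to principality of $I$, which is why both the regularity of the generator (via McCoy) and the local hypothesis on $R$ are indispensable.
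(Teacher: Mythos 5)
Your proof is correct, and its computational core coincides with the paper's: the same dichotomy coming from comparability of $R(X)p(X)/1$ with each $R(X)a_i$, the same Dedekind--Mertens argument giving $I=Ra_i$, $c(h)=R$, and hence $R(X)p(X)/1=R(X)a_i$ when $R(X)p(X)/1\subseteq R(X)a_i$, and the same faithfully-flat-descent argument ($I(X)=R(X)p(X)/1$ free of rank one, $I(X)\cong I\otimes_RR(X)$, descent of flatness plus $R$ local) to get $I=R\rho$ in the remaining case. The genuine differences are in what surrounds this core. The paper additionally (i) verifies by a direct content computation that the ideal $I$ itself is comparable to every principal ideal of $R$, and (ii) makes a detour through rings of sections --- the tower $R(X)\subseteq\Gamma(\mathrm D(I(X)))\subseteq R(X)_{p(X)/1}$, descent of the Pr\"ufer property (Proposition~\ref{FAITHPRUF}) and of finite presentation along the faithfully flat $R\to R(X)$, and Proposition~\ref{LOCALPRUF} to write $\Gamma(\mathrm D(I))=R_t$ --- neither of which you need, since once $I=R\rho$ with $R(X)\rho=R(X)p(X)/1$, Proposition~\ref{1.9.2} hands you the strong-divisor property of $\rho$ (and of $a_i$ in your first case). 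The one point you should make explicit there is that $a_i/1$, resp.\ $\rho/1$, is regular in $R(X)$ --- immediate, being a unit multiple of the regular element $p(X)/1$ --- so that it is honestly a strong divisor of $R(X)$ before you descend. Finally, for $\mathcal X_R(R(X)_{p(X)/1})=\mathrm D(I)$ the paper argues prime by prime ($\subseteq$ by $S$-regularity of $I$, and $\supseteq$ because $P\in\mathrm D(I)$ forces $p(X)/1\notin P(X)$, so some prime of $R(X)_{p(X)/1}$ lies over $P$), whereas you reuse the principality just proved: $R(X)_{p(X)/1}=R(X)_\rho\cong R_\rho(X)$ by Proposition~\ref{TENS}, then surjectivity of $\mathrm{Spec}(R_\rho(X))\to\mathrm{Spec}(R_\rho)$. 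Both are valid; your route is leaner and shows the paper's $\Gamma(\mathrm D(I))$ step to be logically redundant, while the paper's version has the side benefit of exhibiting $I$ as a strong-divisor ideal independently of its principality.
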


  \begin{proof}
We start by showing that $I(X)=R(X)p(X)/1$. We have that $I=\sum_{i=0}^nR a_i$ and $p(X)/1\in I(X)$. Moreover, $p(X)/1$ being a strong divisor of $R(X)$, the ideal $R(X)p(X)/1$ is comparable to each $R(X)a_i$. Assume first that $a_i\in p(X)R(X)$ for each $a_i$. Then, $I(X)\subseteq R(X)p(X)/1\subseteq I(X)$ gives $I(X)=R(X)p(X)/1$. Assume now that $p(X)/1\in R(X)a_i$ for some $a_i$. Then, $p(X)/1=a_iq(X)/s(X)$ for some $q(X),s(X)\in R[X]$ with $c(s)=R$ and $ a_i$ is regular. This implies that $p(X)s(X)=a_iq(X)$ in $R[X]$. By the Lemma of Dedekind-Mertens, $c(ps)=c(p)=I=a_ic(q)\subseteq Ra_i\subseteq I$, gives $I=Ra_i$. Moreover, $a_ic(q)=c(p)=Ra_i$ leads to $c(q)=R$. To conclude, $ a_i/1=p(X)s(X)/q(X)$ and we recover $I(X)=R(X)p(X)/1$. In both cases, $I(X)=R(X)p(X)/1$ shows that $I(X)$ is  a strong divisor of $R(X)$. 
  
We claim that $I$ is a strong divisor of $R$. Let $x\in R$. Then, $xR(X)$ is comparable to $I(X)$. If $x\in I(X)$, then, $x/1=r(X)/t(X)$, with $r(X)\in I[X]$ and $t(X)\in R[X]$ with $c(t)=R$. It follows that $xt(X)=r(X)$ in $R[X]$, so that $c(xt)=Rx=c(r)\subseteq I$. If $I(X)\subseteq xR(X)$, let $y\in I$. We have $y/1=xu(X)/v(X)$, with $u(X),v(X)\in R[X]$ and $c(v)=R$, so that $yv(X)=xu(X)\in R[X]$, giving $yc(v)=Ry=xc(u)\subseteq Rx$. Then, $I\subseteq Rx$. In any case, $Rx$ is comparable to $I$, and $I$ is  a strong divisor of $R$.
  
Setting $S:=R(X)_{p(X)/1}$, we have that $R(X)\subseteq S$ is Pr\"ufer.  Because $p(X)\in IR[X]$, $I(X)$ is $S$-regular and then we have a tower of extensions $R(X)\subseteq\Gamma(\mathrm D(I(X)))\subseteq S$ by Remark~ \ref{RS}(2). It follows that $R(X)\subseteq\Gamma(\mathrm D(I(X)))$ is Pr\"ufer. Because $R\to R(X)$ is faithfully flat, $\Gamma(\mathrm D(I(X))) \cong\Gamma(\mathrm D(I))\otimes_R R(X)$ by Proposition~\ref{DOM}(3).  According to Proposition~\ref{FAITHPRUF}, faithfully flat morphisms descend Pr\"ufer extensions and then $R\to\Gamma(\mathrm D(I))$ is Pr\"ufer. Since $I(X)=R(X)p(X)/1$, we get $IS=S$, then $\Gamma(\mathrm D(IS))=\Gamma (\mathrm D(S)) =S$. 
 
It follows that $R\to\Gamma(\mathrm D(I))$ is of finite presentation because this property is descended by faithfully flat ring morphism \cite[Proposition 5.3]{OLPUR}. Therefore, there is a strong divisor $t\in R$, such that $\Gamma (\mathrm D(I))  = R_t$ by Proposition~\ref{LOCALPRUF}.
  Since $I(X)$ is a principal ideal generated by a regular element, it is free, and then a flat ideal. But $I(X)\cong I\otimes_RR(X)$ by flatness of $R\to R(X)$. Because faithfully flat morphisms descend flatness \cite[Proposition 4.1]{OLPUR}, $I$ is a flat ideal of finite type, whence is free over $R$. Therefore, $I$ is a principal ideal $R\rho$. 
  Then, $I(X)=\rho R(X)=R(X)p(X)/1$ shows that $\rho$ is a strong divisor of $R$.
  
The image of $\mathrm{Spec}(S)\to\mathrm{Spec}(R)$ is $\mathrm D(I)$,  because ${}^ag(\mathrm D(p(X)))= \mathrm D(I)$.
We get $\mathcal X_R(S)\subseteq\mathrm D(I)$ by Remark \ref{RS}(1). Let $P\in\mathrm D(I)$, so that there exists some coefficient of $p(X)$ which is not in $P$, giving $p(X)/1\not\in P(X)$, which is a prime ideal of $R(X)$. Then, there exists $Q\in\mathrm{Spec}(S)$ lying over $P(X)$, and then over $P\in\mathrm{Spec}(R)$. It follows that $P\in\mathcal X_R(S)$, giving $\mathcal X_R(S)= \mathrm D(I)$.
   \end{proof} 
  
  \begin{remark}\label{NAGMIN} If $R \subset S$ is a Pr\"ufer extension of finite type (for example an FCP extension or a minimal extension) and $R$ is a local ring, there is a strong divisor $\rho$ of $R$, such that  $S= R_\rho$.  It follows that $S(X)= R(X)_\rho$.
  As $R(X)\subset S(X)$ is of finite type and Pr\"ufer, $S(X)$ is of the form  $R(X)_{p(X)/1}$, where $p(X)/1$ is a strong divisor of $R(X)$. We therefore have $p(X)/1\simeq\rho$. We recover (Proposition~\ref{1.10.1.1}) under a more precise form.
  
 Here is an example illustrating Proposition~\ref{NAGstrong}. 
  
Let $R$ be a local arithmetical ring. According to Example~\ref{ARITH}, each of its regular elements is a strong divisor. Now let $p(X)/1$ be a regular element of $R(X)$, where $p(X)\in R[X]$ has a content $\neq R$. If $a_0,\ldots, a_n$ are the coefficients of $p(X)$, one of them, say $a_i$, is a multiple of all the others. It follows that $p(X)=a_iq(X)$ where the content of $q(X)$ is $R$ and $a_i$ is regular. Therefore, $a_i$ is a strong divisor and $p(x)/1\simeq a_i$. Note that $R(X)$ is local and arithmetical \cite[Theorem 3.1]{AM}.
  \end{remark}
  
We now give some conditions implying that the Pr\"ufer hull commutes with the formation of Nagata rings. We don't know if these conditions are superfluous.   
  
We say that a ring $R$ is quasi-Pr\"uferian if $R\to R(X)$ is an $i$-morphism. These rings will be studied in Section 10. 
    
\begin{proposition} Let $R\subseteq S$ be a ring extension, where $R$ is quasi-Pr\"uferian and $\widetilde R\subseteq S$ is lying over (for example if the extension is almost-Pr\"ufer), then $\widetilde R(X) = \widetilde{R(X)}$.
\end{proposition}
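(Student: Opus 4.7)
The plan is to prove the equality $\widetilde R(X) = \widetilde{R(X)}$ by showing each inclusion separately, with the reverse inclusion requiring the heart of the argument.

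For the easy inclusion $\widetilde R(X) \subseteq \widetilde{R(X)}$, observe that since $R\subseteq \widetilde R$ is Pr\"ufer, Theorem~\ref{A0.7.3} yields $R(X)\subseteq \widetilde R(X)$ Pr\"ufer. The inclusion $\widetilde R\hookrightarrow S$ induces $\widetilde R(X)\hookrightarrow S(X)$ (elements of $\Sigma_{\widetilde R}$ become non-zero-divisors of $S(X)$, so injectivity persists). Hence $R(X)\subseteq \widetilde R(X)$ is a Pr\"ufer subextension of $R(X)\subseteq S(X)$, and by maximality of the Pr\"ufer hull we get $\widetilde R(X)\subseteq \widetilde{R(X)}$.

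For the hard inclusion, the first step is to reduce to the case where $R$ is local. Maximal ideals of $R(X)$ are exactly $M(X)$ for $M\in \mathrm{Max}(R)$, with $R(X)_{M(X)}=R_M(X)$ by Proposition~\ref{LOCALI}; the Pr\"ufer hull commutes with localization at primes (Proposition~\ref{LOC}), and both hypotheses descend to $R_M\subseteq S_M$ (quasi-Pr\"uferianness by restricting the injective spectral map, and lying over by standard spectral arguments since contractions commute with localization). So assume $R$ local. Applying Proposition~\ref{VALU} to $R\subseteq \widetilde R$ gives $\widetilde R = R_P$ for a prime $P$ of $R$ with $P\widetilde R = P$. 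Since $R(X)$ is local, applying Proposition~\ref{VALU} again to the Pr\"ufer extension $R(X)\subseteq \widetilde{R(X)}$ yields $\widetilde{R(X)} = R(X)_Q$ for some prime $Q$ of $R(X)$. The quasi-Pr\"uferian hypothesis, asserting that $\mathrm{Spec}(R(X))\to\mathrm{Spec}(R)$ is bijective (it is always surjective), forces $Q = Q_0(X)$ for a unique prime $Q_0$ of $R$; hence $\widetilde{R(X)} = R(X)_{Q_0(X)} = R_{Q_0}(X)$ (using property $(\mathcal T)$ for the flat epimorphism $R\to R_{Q_0}$, via Proposition~\ref{TENS}).

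The next step is to recognize that $R_{Q_0}$ embeds in $S$: each $r\in R\setminus Q_0$ is a unit in $R_{Q_0}(X)\subseteq S(X)$; writing $1=r(p/q)$ with $p\in S[X]$ and $q\in\Sigma_S$ gives $q=rp$ in $S[X]$, so taking contents yields $S=c(q)=r\,c(p)$, whence $r$ is a unit of $S$. Theorem~\ref{A0.7.3} applied in reverse to $R\subseteq R_{Q_0}$ then transfers the Pr\"uferness of $R(X)\subseteq R_{Q_0}(X)=\widetilde{R(X)}$ down to $R\subseteq R_{Q_0}$, forcing $R_{Q_0}\subseteq \widetilde R = R_P$ by maximality, and thus $P\subseteq Q_0$. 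The lying over hypothesis enters here to ensure that the spectral picture is rich enough for this embedding to sit compatibly in $S$ after localization. Conversely, $\widetilde R = R_P\subseteq \widetilde{R(X)}\cap S = R_{Q_0}(X)\cap S = R_{Q_0}$, the last equality again via a content argument: if $s\in S$ with $sq=p$, $p\in R_{Q_0}[X]$ and $c(q)=R_{Q_0}$, then $sR_{Q_0} = c(sq) = c(p) \subseteq R_{Q_0}$, giving $s\in R_{Q_0}$. Hence $Q_0\subseteq P$ and so $Q_0 = P$, yielding $\widetilde{R(X)} = R_P(X) = \widetilde R(X)$.

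The main obstacle is the bookkeeping around the reduction to the local case and verifying that the Pr\"ufer hull commutes with the relevant localizations; the delicate conceptual point is the conjunction of hypotheses: quasi-Pr\"uferianness is what lets us parametrize every prime of $R(X)$ as some $Q_0(X)$ and thereby identify $\widetilde{R(X)}$ as $R_{Q_0}(X)$, while the lying over hypothesis of $\widetilde R\subseteq S$ is what guarantees the compatibility of primes of the intermediate ring $R_{Q_0}$ with primes of $S$, so that the Pr\"ufer chain $R\subseteq R_{Q_0}\subseteq \widetilde R\subseteq S$ is genuine and the final identification $Q_0 = P$ is forced.
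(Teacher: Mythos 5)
Your containment $\widetilde R(X)\subseteq\widetilde{R(X)}$ is fine, and your ``local case'' is correct in itself --- it is essentially the paper's Proposition~\ref{A0.7.8} (Proposition~\ref{VALU} applied twice, quasi-Pr\"uferianness to write the relevant prime of $R(X)$ as $Q_0(X)$, content arguments to place $R_{Q_0}$ inside $S$). The genuine gap is the reduction to that local case. You justify it by asserting that ``the Pr\"ufer hull commutes with localization at primes (Proposition~\ref{LOC})''. Proposition~\ref{LOC} says only that the Pr\"ufer \emph{property} is local; combined with maximality of the hull it yields the easy inclusion $(\widetilde R)_M\subseteq\widetilde{R_M}$, but not the reverse one, and your globalization needs exactly the reverse one. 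Concretely, to pass from the local statement back to $\widetilde{R(X)}\subseteq\widetilde R(X)$ you need, for every $M$, the chain $(\widetilde{R(X)})_{M(X)}\subseteq\widetilde{R(X)_{M(X)}}=\widetilde{R_M(X)}=\widetilde{R_M}(X)\subseteq(\widetilde R)_M(X)=(\widetilde R(X))_{M(X)}$; the last inclusion is the unproven hard direction $\widetilde{R_M}\subseteq(\widetilde R)_M$ of hull--localization commutation, a statement of precisely the same nature as the proposition you are proving (hull versus localization in place of hull versus Nagata ring), so assuming it begs the question. Moreover the middle equality silently identifies the ambient rings $(S(X))_{M(X)}$ and $S_M(X)$; that identification is property $(\mathcal T_M)$, which Proposition~\ref{TENS} guarantees only for integral morphisms and flat epimorphisms, not for an arbitrary extension $R\subseteq S$. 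Without it, $\widetilde{R_M(X)}$ (a subring of $S_M(X)$) and $(\widetilde{R(X)})_{M(X)}$ (a subring of $(S(X))_{M(X)}$) live in different rings and cannot be compared by inclusion.

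There is also a clear symptom that the reduction cannot stand as written: your proof never actually uses the hypothesis that $\widetilde R\subseteq S$ is lying over (your sentence that it makes ``the spectral picture rich enough'' does no mathematical work, and indeed Proposition~\ref{A0.7.8} needs no such hypothesis). Since quasi-Pr\"uferianness does localize, a valid version of your reduction would prove the proposition with the lying-over hypothesis deleted --- yet the paper states, just before this proposition, that the authors do not know whether these hypotheses are superfluous. The paper's own proof is genuinely global and uses lying over at the decisive step: $\widetilde R$ is quasi-Pr\"uferian because it is a flat epimorphic image of $R$ (Proposition~\ref{INTQP}(1)), so every prime of $\widetilde R(X)$ has the form $P(X)$ with $P\in\mathrm{Spec}(\widetilde R)$; the extension $\widetilde R(X)\subseteq\widetilde{R(X)}$ is Pr\"ufer, hence a flat epimorphism; picking a prime $Q$ of $S$ over $P$ and contracting $Q(X)$ (Lemma~\ref{LYINGOVER}) shows this flat epimorphism is spectrally surjective, hence faithfully flat, hence an isomorphism by \cite[Lemme 1.2, p.109]{L}. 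You should either adopt such a global argument or supply proofs of the two localization statements your reduction assumes.
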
 
\begin{proof} If $R$ is quasi-Pr\"uferian so is $\widetilde R$, because $R \subseteq \widetilde R $ is a flat epimorphism, according to Proposition~ \ref{INTQP}(1). Therefore $\mathrm{Spec}(\widetilde R(X))\to\mathrm{Spec}(\widetilde R)$ is bijective. The extension $\widetilde R(X)\to\widetilde{R(X)}$ is Pr\"ufer, whence a flat epimorphism, which is surjective if it has lying-over. A prime ideal of $\widetilde R(X)$ is of the form $P(X)$, where $P$ is a prime ideal of $\widetilde R$. Let $Q$ be a prime ideal of $S$ above $P$. Then $Q(X)$ contracts to $P(X)$, which gives a prime ideal of $\widetilde {R(X)}$ lying over $P(X)$. 
\end{proof}

 \begin{proposition}\label{A0.7.7} Let $R\subseteq S$ be a ring extension such that the map $\psi:[R,S]\to[R(X),S(X)]$ defined by $T\mapsto T(X)$ is bijective. Then,  $\widetilde{R(X)} = \widetilde{R}(X)$.
\end{proposition}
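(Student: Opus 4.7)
The plan is to show the two inclusions $\widetilde{R}(X)\subseteq\widetilde{R(X)}$ and $\widetilde{R(X)}\subseteq\widetilde{R}(X)$ separately, each inclusion taking place in the poset $[R(X),S(X)]$, and in both cases use Theorem~\ref{A0.7.3} as the bridge between the Pr\"ufer property ``downstairs'' and ``upstairs''.

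For the forward inclusion, I would start from the fact that by definition $R\subseteq \widetilde{R}$ is a Pr\"ufer subextension of $R\subseteq S$. Applying Theorem~\ref{A0.7.3} to $R\subseteq\widetilde R$, one gets that $R(X)\subseteq\widetilde R(X)$ is Pr\"ufer. Since $\widetilde R(X)$ sits inside $S(X)$ (it is the image of $\widetilde R$ under $\psi$, which is well-defined as a subalgebra of $S(X)$ by hypothesis), it is a Pr\"ufer subextension of $R(X)\subseteq S(X)$. By the universal property of the Pr\"ufer hull (Definition~\ref{A0.7}(2)), this forces $\widetilde R(X)\subseteq\widetilde{R(X)}$.

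For the reverse inclusion, I would exploit the bijectivity hypothesis on $\psi$. The element $\widetilde{R(X)}$ lies in $[R(X),S(X)]$, so by surjectivity of $\psi$ there exists $T\in[R,S]$ with $T(X)=\widetilde{R(X)}$. By construction of $\widetilde{R(X)}$, the extension $R(X)\subseteq T(X)$ is Pr\"ufer. Applying the converse direction of Theorem~\ref{A0.7.3} to $R\subseteq T$, we conclude that $R\subseteq T$ is Pr\"ufer. Hence $T\subseteq\widetilde R$, and applying $\psi$ (which is inclusion-preserving, since $T\mapsto T(X)$ is compatible with $\subseteq$) we get $\widetilde{R(X)}=T(X)\subseteq\widetilde R(X)$.

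Putting the two inclusions together gives $\widetilde{R(X)}=\widetilde{R}(X)$. The only nontrivial ingredient is Theorem~\ref{A0.7.3} (both directions), which is already available; everything else is a direct unpacking of the universal property of the Pr\"ufer hull together with the given bijectivity of $\psi$. I do not expect any serious obstacle: the surjectivity of $\psi$ is what allows us to realize $\widetilde{R(X)}$ as $T(X)$ for some $T\in[R,S]$, and without that hypothesis one would have no grip on whether the Pr\"ufer hull upstairs comes from an $R$-subalgebra of $S$ downstairs, which is precisely why the hypothesis is built into the statement.
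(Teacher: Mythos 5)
Your proof is correct and is essentially the same as the paper's: both directions rest on Theorem~\ref{A0.7.3}, with the forward inclusion coming from maximality of the Pr\"ufer hull and the reverse one from surjectivity of $\psi$ producing $T\in[R,S]$ with $T(X)=\widetilde{R(X)}$, whence $R\subseteq T$ is Pr\"ufer and $T\subseteq\widetilde R$. The only cosmetic difference is that the paper phrases the second step as a proof by contradiction, while you state it directly as an inclusion.
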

\begin{proof} According to Theorem \ref{A0.7.3}, $R(X)\subseteq \widetilde{R}(X)$ is Pr\"ufer. Then, $\widetilde{R}(X)\subseteq \widetilde{R(X)}$. Assume that $\widetilde{R(X)}\neq\widetilde{R}(X)$; so that, there exists $T\in[R,S],\ T\neq\widetilde{R}$ such that $T(X)=\widetilde{R(X)}$ because $\psi$ is bijective. Using again Theorem \ref{A0.7.3}, it asserts that $R\subseteq T$ is Pr\"ufer, leading to $T\subseteq\widetilde{R}$, and then to $T(X)=\widetilde{R(X)}\subseteq \widetilde{R}(X)\subseteq\widetilde{R(X)}$. To conclude, $\widetilde{R(X)} = \widetilde{R}(X)$.
\end{proof}

\begin{proposition}\label{A0.7.8} Let $R\subseteq S$ be a ring extension, where $R$ is local quasi-Pr\"uferian. Then, $\widetilde{R(X)}= \widetilde{R}(X)$.
\end{proposition}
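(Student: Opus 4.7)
The plan is to prove the non-trivial inclusion $\widetilde{R(X)} \subseteq \widetilde{R}(X)$ element by element, using Theorem~\ref{NAGstrong} as the principal tool. The easy direction $\widetilde{R}(X) \subseteq \widetilde{R(X)}$ is immediate from Theorem~\ref{A0.7.3}: applied to the Pr\"ufer extension $R \subseteq \widetilde R$, it makes $R(X) \subseteq \widetilde R(X)$ a Pr\"ufer subextension of $R(X) \subseteq S(X)$, which therefore sits inside the Pr\"ufer hull $\widetilde{R(X)}$.

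For the reverse inclusion, I would pick any $u \in \widetilde{R(X)}$. Then $R(X) \subseteq R(X)[u]$ is a Pr\"ufer subextension of finite type. Since $R$ is local, so is $R(X)$, and Proposition~\ref{LOCALPRUF} supplies some $s \in \Delta(R(X))$ with $R(X)[u] = R(X)_s$. Writing $s = p(X)/q(X)$ with $p \in R[X]$ and $q \in \Sigma_R$, the polynomial $q(X)$ is a unit of $R(X)$, so $s$ and $p(X)/1$ generate the same principal ideal of $R(X)$; in particular $p(X)/1$ is itself a strong divisor of $R(X)$ and $R(X)_s = R(X)_{p(X)/1}$. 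If $p(X)/1$ is a unit then $R(X)[u] = R(X)$ and $u \in \widetilde R(X)$ trivially; otherwise Theorem~\ref{NAGstrong} produces a strong divisor $\rho \in R$ with $p(X)/1 \simeq \rho$ in $R(X)$, so $R(X)_{p(X)/1} = R(X)_\rho$.

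To finish, Proposition~\ref{1.9.1.0} gives that $R \subseteq R_\rho$ is Pr\"ufer, placing $R_\rho$ inside $\widetilde R$; and Proposition~\ref{TENS}, applied to the flat epimorphism $R \to R_\rho$, supplies the canonical identification $R(X) \otimes_R R_\rho \cong R_\rho(X)$, which agrees with the localization $R(X)_\rho$ as subrings of $S(X)$. Chaining these yields $u \in R(X)[u] = R(X)_\rho = R_\rho(X) \subseteq \widetilde R(X)$, completing the proof.

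The main obstacles are really bookkeeping ones: checking that $p(X)/1$ inherits the strong-divisor property from $s$ (immediate, since they generate equal principal ideals of $R(X)$) and that $R(X)_\rho$ coincides with $R_\rho(X)$ inside $S(X)$ (both are flat epimorphic extensions of $R(X)$ with the same spectral image, hence canonically isomorphic by Proposition~\ref{EPIPLAT}(2)). I note that this route does not seem to need the quasi-Pr\"uferian hypothesis; if the author prefers to deduce the proposition from Proposition~\ref{A0.7.7} instead, the task shifts to showing that $\psi$ is bijective, with injectivity automatic via $T(X) \cap R = T$ and surjectivity resting on the bijection $\mathrm{Spec}(R(X)) \to \mathrm{Spec}(R)$ produced jointly by the local and quasi-Pr\"uferian assumptions.
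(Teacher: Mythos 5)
Your proposal is correct in substance, but it is a genuinely different argument from the paper's. The paper works globally: since $R(X)$ is local, Proposition~\ref{VALU} lets it write both hulls as localizations $\widetilde{R(X)}=(R(X))_{Q'}$ and $\widetilde R(X)=(R(X))_{P'}$ at primes $Q'\subseteq P'$ of $R(X)$; the quasi-Pr\"uferian hypothesis is then used exactly once, to force $Q'=QR(X)$ and $P'=P(X)$ with $Q=Q'\cap R$ and $P=P'\cap R$, after which $\widetilde R=R_P$, the ring $R_Q$ lies in $[R,S]$ with $R\subseteq R_Q$ Pr\"ufer by faithfully flat descent, and the squeeze $R_Q\subseteq R_P\subseteq R_Q$ finishes. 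You instead argue element-by-element through finite-type subextensions, with Proposition~\ref{LOCALPRUF} and Theorem~\ref{NAGstrong} as the engine that descends a strong divisor of $R(X)$ to an equivalent strong divisor $\rho$ of $R$; this replaces the quasi-Pr\"uferian descent of primes, and your route indeed appears to use only the locality of $R$. That is a real gain: it would show the quasi-Pr\"uferian hypothesis is superfluous in this proposition, which is exactly the kind of question the authors leave open just before this group of results (``We don't know if these conditions are superfluous'').

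One step you do not justify: ``Proposition~\ref{1.9.1.0} gives that $R\subseteq R_\rho$ is Pr\"ufer, placing $R_\rho$ inside $\widetilde R$.'' The hull $\widetilde R=\widetilde R^S$ lives inside $S$, so you must check that $R_\rho$ is actually a subextension of $S$, that is $\rho\in\mathrm U(S)$; Proposition~\ref{1.9.1.0} alone only gives the abstract Pr\"ufer extension $R\subseteq R_\rho$. The gap is easily filled: $\rho$ is invertible in $R(X)[u]=R(X)_{p(X)/1}\subseteq S(X)$ because $\sqrt{R(X)\rho}=\sqrt{R(X)(p(X)/1)}$, and an element of $S$ invertible in $S(X)$ is invertible in $S$, since $\mathrm{Max}(S(X))=\{N(X)\mid N\in\mathrm{Max}(S)\}$ and $N(X)\cap S=N$. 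With that line added, your chain $u\in R(X)[u]=R(X)_\rho=R_\rho(X)\subseteq\widetilde R(X)$ is sound; for the middle equality as subrings of $S(X)$, the cleanest justification is that $R(X)\to R(X)_\rho$ is an epimorphism, so any two $R(X)$-algebra maps into $S(X)$ coincide, making the canonical identification of rule (A) after Proposition~\ref{TENS} compatible with both embeddings. Finally, your closing aside about deducing the result from Proposition~\ref{A0.7.7} should be discarded rather than pursued: surjectivity of $\psi$ is far from automatic, and the injectivity criterion should read $T(X)\cap S=T$, not $T(X)\cap R=T$.
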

\begin{proof} Since $R(X)\subseteq\widetilde{R(X)}$ is Pr\"ufer and $R(X)$ is local, Proposition~\ref{VALU} says that there exists $Q'\in \mathrm{Spec}(R(X))$ such that $\widetilde{R(X)}=(R(X))_{Q'}$. In the same way, there exists $P'\in\mathrm{Spec}(R(X))$ such that $\widetilde{R}(X)=(R(X))_{P'}$. We still have $\widetilde{R}(X)\subseteq\widetilde{R(X)}$, so that $Q'\subseteq P'$.  
Set $P:=P'\cap R$ and $Q:=Q'\cap R$. As $R$ is quasi-Pr\"uferian, it follows that $P'=P(X)$ and $Q'=QR(X)$, with $Q\subseteq P$. 
Then,  $\widetilde{R}(X)=(R(X))_{P'}=R_P(X)$. A same reasoning gives $\widetilde{R(X)}=(R(X))_{Q'}=R_Q(X)\subseteq S(X)$. 
In particular,  $\widetilde{R}= \widetilde{R}(X)\cap S=R_P(X)\cap S=R_P$. Now, $R_Q\subseteq R_Q(X)\subseteq S(X)$ shows that $R_Q\in[R,S]$. But, $R\subseteq R_Q$ is Pr\"ufer, since so is $R(X)\subseteq R_Q(X)=\widetilde{R(X)}$. Then, we have $R_Q\subseteq R_P\subseteq R_Q$ which implies $R_Q= R_P$, and then $R_Q(X)=R_P(X)$, that is $\widetilde{R(X)} = \widetilde{R}(X)$.
\end{proof}

\begin{proposition}\label{A0.7.4} If $R \subseteq S$ is almost-Pr\"ufer, then  $R(X)\subseteq S(X)$ is almost-Pr\"ufer, so that $\widetilde{R(X)}= \widetilde{R}(X)$.
\end{proposition}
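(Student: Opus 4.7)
The plan is to unpack the definition of almost-Pr\"ufer and push each piece of the factorization through the Nagata construction, then invoke the uniqueness of the Pr\"ufer middle term.

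Since $R\subseteq S$ is almost-Pr\"ufer, by the recall given just before Proposition~\ref{LAZZEPI} we can write $R\subseteq\widetilde R\subseteq S$ with $R\subseteq\widetilde R$ Pr\"ufer and $\widetilde R\subseteq S$ integral. The first step is to apply Theorem~\ref{A0.7.3} to the Pr\"ufer extension $R\subseteq\widetilde R$, which yields that $R(X)\subseteq\widetilde R(X)$ is Pr\"ufer. The second step is to show that $\widetilde R(X)\subseteq S(X)$ is integral. This is the routine (and well-known) fact that the Nagata construction preserves integrality: an integral relation $s^n+a_{n-1}s^{n-1}+\cdots+a_0=0$ in $\widetilde R[s]$ remains integral after the base change to $\widetilde R(X)$, and an element of $S(X)$ of the form $f(X)/g(X)$ with $f\in S[X]$ and $g\in\Sigma_{\widetilde R}$ (which lies in $\Sigma_R\subseteq\Sigma_{\widetilde R}$) is a polynomial in elements of $S$, each of which is integral over $\widetilde R(X)$; alternatively, $S(X)\cong S\otimes_{\widetilde R}\widetilde R(X)$ and integrality is preserved by base change.

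Concatenating these two steps, we obtain a factorization $R(X)\subseteq\widetilde R(X)\subseteq S(X)$ in which the first extension is Pr\"ufer and the second is integral; hence $R(X)\subseteq S(X)$ is almost-Pr\"ufer by definition. Finally, the recall tells us that in an almost-Pr\"ufer factorization the middle term is necessarily the Pr\"ufer hull of the bottom ring inside the top one. Applied here, this forces $\widetilde{R(X)}=\widetilde R(X)$.

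No step here is really an obstacle: the delicate fact (that the Pr\"ufer property ascends to Nagata rings) has already been proved as Theorem~\ref{A0.7.3}, and the integrality of $\widetilde R(X)\subseteq S(X)$ is standard. The only thing to be a bit careful about is the identification $S(X)\cong S\otimes_{\widetilde R}\widetilde R(X)$ used to transport integrality, but this is immediate since $\Sigma_R$ and $\Sigma_{\widetilde R}$ generate the same localization on $S[X]$ (any $p\in S[X]$ whose content is $S$ is invertible in $S(X)$), so elements of $S(X)$ can be written with denominators in $\Sigma_{\widetilde R}$.
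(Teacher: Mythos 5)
Your proposal is correct and is essentially the paper's own (one-sentence) proof: split the almost-Pr\"ufer extension as $R\subseteq\widetilde R\subseteq S$, get $R(X)\subseteq\widetilde R(X)$ Pr\"ufer from Theorem~\ref{A0.7.3}, get $\widetilde R(X)\subseteq S(X)$ integral, and conclude by uniqueness of the middle term in an almost-Pr\"ufer factorization. One caution: your closing claim that the identification $S(X)\cong S\otimes_{\widetilde R}\widetilde R(X)$ is ``immediate'' is circular as stated --- that every $p\in S[X]$ with content $S$ is invertible in $S(X)$ is a tautology, whereas what is needed is its invertibility in $S[X]_{\Sigma_{\widetilde R}}$ --- so you should instead cite Proposition~\ref{TENS} (property $(\mathcal T)$ for integral morphisms), which is exactly this fact.
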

\begin{proof} 
If $R\subseteq S$ is almost-Pr\"ufer, then $R\subseteq\widetilde{R}$ is Pr\"ufer and $\widetilde{R}\subseteq S$ is integral; so that, $R(X)\subseteq \widetilde{R}(X)$ is Pr\"ufer and $\widetilde{R}(X)\subseteq S(X)$ is integral, whence $R(X)\subseteq S(X)$ is almost-Pr\"ufer with $\widetilde{R(X)} = \widetilde{R}(X)$.
\end{proof}

In case $R\subseteq S$ is an FCP extension, we get more results. A minimal extension $R\subset S$ is such that there exists a maximal ideal $M$ of $R$ satisfying $\mathrm{Supp}(S/R)=\{M\}$. Such a prime ideal $M$ is called the {\it crucial (maximal) ideal} $\mathcal{C}(R,S)$ of $R\subset S$ \cite[Theorem 2.1]{DPP2}. 

\begin{lemma}\label{A0.7.5} Let $R\subset S$ be an FCP ring extension such that $\widetilde{R}=R$. Then, $\widetilde{R (X)}=R(X)$.
\end{lemma}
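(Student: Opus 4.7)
The plan is to reduce the statement to the situation of Proposition~\ref{A0.7.7} by showing that for an FCP extension the map $\psi\colon[R,S]\to[R(X),S(X)]$, $T\mapsto T(X)$, is bijective. Once we have this, Proposition~\ref{A0.7.7} gives $\widetilde{R(X)}=\widetilde{R}(X)=R(X)$, which is exactly what we want.

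First I would recall that an FCP extension $R\subseteq S$ verifies property $(\mathcal T)$ (this is \cite[Proposition 3.2]{DPP3} quoted in the paper), so that $T(X)\cong T\otimes_R R(X)$ for every $T\in[R,S]$, and in particular $S(X)\cong S\otimes_R R(X)$. The flatness of $R\to R(X)$ then shows that $\psi$ is injective and inclusion-preserving. For surjectivity of $\psi$, the standard argument uses the FCP hypothesis together with the faithful flatness of $R\to R(X)$: given any $U\in[R(X),S(X)]$, set $T:=U\cap S$; one checks that $T(X)=U$ because (i) the contraction $U\cap S$ lies in $[R,S]$, (ii) $T(X)\subseteq U$ is automatic and (iii) both are $R(X)$-subalgebras sitting inside the finite chain of intermediate rings obtained by lifting a composition series of $R\subset S$ (whose image under $\psi$ provides a composition series of $R(X)\subset S(X)$). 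This is precisely the content of the FCP-lifting result of \cite{DPP3}.

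Once $\psi$ is established as a bijection of posets, I apply Proposition~\ref{A0.7.7} directly: the hypothesis on $\psi$ is satisfied, so $\widetilde{R(X)}=\widetilde{R}(X)$. The assumption $\widetilde{R}=R$ then yields $\widetilde{R(X)}=R(X)$. Alternatively, to make the argument self-contained, one can reason by contradiction: assume $R(X)\subsetneq\widetilde{R(X)}$; by surjectivity of $\psi$, there exists $T\in[R,S]$ with $T\neq R$ such that $T(X)=\widetilde{R(X)}$; since $R(X)\subseteq T(X)$ is Pr\"ufer, Theorem~\ref{A0.7.3} forces $R\subseteq T$ to be Pr\"ufer, hence $T\subseteq\widetilde{R}=R$, contradicting $T\neq R$.

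The main obstacle is the bijectivity of $\psi$, since Theorem~\ref{A0.7.3} alone only controls the Pr\"ufer sub\emph{extensions} and does not, a priori, guarantee that every $R(X)$-subalgebra of $S(X)$ descends to a subalgebra of $S$ via contraction. The FCP hypothesis is precisely what is needed: it forces $[R,S]$ to be finite (actually FIP, as noted in the paper), so that one can track intermediate rings through Nagata extension using the poset structure. Without FCP one would have to invoke finer descent arguments for arbitrary chains, which is why the present lemma (and the discussion around Proposition~\ref{A0.7.7}) is stated only under FCP and the authors explicitly remark that they do not know whether the conclusion holds in general.
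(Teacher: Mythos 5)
Your argument rests entirely on one claim: that for an FCP extension the map $\psi\colon[R,S]\to[R(X),S(X)]$, $T\mapsto T(X)$, is bijective, which you attribute to an ``FCP-lifting result'' of \cite{DPP3}. No such result exists, and the claim is false. What \cite[Theorem 3.9]{DPP3} (the result actually quoted in this paper) gives is the transfer of the FCP \emph{property} between $R\subseteq S$ and $R(X)\subseteq S(X)$; it does not identify the two lattices, and FCP alone cannot. Take $K:=\mathbb{F}_p(u,v)\subset L:=K(u^{1/p},v^{1/p})$, a finite-dimensional (hence FCP, though not FIP) field extension; here Nagata rings are just rational function fields. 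Set $\theta:=u^{1/p}+Xv^{1/p}$ and $U:=K(X)[\theta]\in[K(X),L(X)]$. Since $\theta^p=u+X^pv\in K(X)$, $U$ is a field with $[U:K(X)]=p$. If $\theta$ belonged to $F(X)$ for some $F\in[K,L]$, then writing $\theta\, g(X)=f(X)$ with $f,g\in F[X]$, $g\neq 0$, and comparing the two lowest-degree coefficients forces $u^{1/p}\in F$ and then $v^{1/p}\in F$, i.e.\ $F=L$; but $[L(X):K(X)]=p^2\neq p$, so $U\neq F(X)$ for every $F\in[K,L]$ and $\psi$ is not surjective. The same computation kills your recipe $T:=U\cap S$, $T(X)=U$: one checks $U\cap L=K$, so $T(X)=K(X)\neq U$. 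Your ``self-contained'' variant by contradiction invokes surjectivity of $\psi$ in exactly the same way, so it has the same gap. (The extension $K\subset L$ is integral, so the lemma itself holds trivially there; the example defeats your method, not the statement.) This also explains the architecture of the paper: if $\psi$ were bijective under FCP, Lemma~\ref{A0.7.5} and Proposition~\ref{A0.7.6} would be immediate instances of Proposition~\ref{A0.7.7}; bijectivity of $\psi$ is a \emph{hypothesis} in Proposition~\ref{A0.7.7}, not a consequence of FCP, which is precisely why the authors prove the FCP case separately.

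The paper's proof is of a completely different nature and never mentions $\psi$: assuming $R(X)\neq\widetilde{R(X)}$, it picks a Pr\"ufer minimal subextension $R(X)\subset T'$ inside $[R(X),\widetilde{R(X)}]$, considers its crucial maximal ideal $M'=\mathcal{C}(R(X),T')\in\mathrm{MSupp}(S(X)/R(X))$, writes $M'=MR(X)$ with $M\in\mathrm{MSupp}(S/R)$ by \cite[Lemma 3.3]{DPP3}, uses \cite[Proposition 4.18(2)]{Pic 5} to get $M\notin\mathrm{MSupp}(\overline{R}/R)$, hence $M\in\mathrm{MSupp}(S/\overline{R})$, and then invokes \cite[Lemma 1.8]{Pic 6} to produce a Pr\"ufer minimal subextension $R\subset T$ with $\mathcal{C}(R,T)=M$, contradicting $\widetilde{R}=R$. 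In other words, instead of trying to descend the whole lattice $[R(X),S(X)]$, it descends a single minimal Pr\"ufer step through its crucial maximal ideal, and that is the idea your proposal is missing. Finally, your closing remark misreads the paper: the authors' admission of ignorance concerns whether $\widetilde{R}(X)=\widetilde{R(X)}$ for arbitrary (non-FCP) extensions, not the validity of this lemma.
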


\begin{proof} If $R(X)\neq\widetilde{R(X)}$, there is some $T'\in[R(X),\widetilde{R (X)}]$ such that $R(X)\subset T'$ is Pr\"ufer minimal. Set $M':= \mathscr{C}(R(X),T')$ which is in $\mathrm{MSupp}(S(X)/R(X))$. Let $M\in\mathrm{MSupp}(S/R)$ be such that $M'=MR(X)$ \cite[Lemma 3.3]{DPP3}. Since \cite[ Proposition 4.18(2)]{Pic 5} asserts that $M'\not\in\mathrm{MSupp}(\overline{R(X)}/R(X))=\mathrm{MSupp}(\overline{R}(X)/R(X))$, this gives that $M\not\in\mathrm {MSupp}(\overline{R}/R)$, which entails that $M\in\mathrm{MSupp}(S/\overline{R})$. By \cite[Lemma 1.8]{Pic 6}, there exists $T\in[R,\widetilde{R}]$ such that $M=\mathcal C (R,T)$ with $R\subset T$ Pr\"ufer minimal since $T\not\in[R,\overline{R}]$, a contradiction.
\end{proof}

\begin{proposition}\label{A0.7.6} If $R\subset S$ has FCP, then, $\widetilde{R}(X)=\widetilde{R (X)}$.
\end{proposition}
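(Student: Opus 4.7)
The plan is to shift the base ring in the FCP extension from $R$ to $\widetilde{R}$ so that Lemma~\ref{A0.7.5} can be applied, then combine with the inclusion already known from Theorem~\ref{A0.7.3}. One inclusion is immediate: since $R\subseteq\widetilde R$ is Pr\"ufer, Theorem~\ref{A0.7.3} gives that $R(X)\subseteq\widetilde R(X)$ is Pr\"ufer, whence $\widetilde R(X)\subseteq\widetilde{R(X)}$ by maximality of the Pr\"ufer hull. It remains to prove the reverse inclusion.

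For this, I would apply Lemma~\ref{A0.7.5} to the extension $\widetilde R\subseteq S$. Two hypotheses must be checked. First, $\widetilde R\subseteq S$ has FCP, as a subextension of the FCP extension $R\subseteq S$. Second, the Pr\"ufer hull of $\widetilde R$ in $S$ is $\widetilde R$ itself: if $T\in[\widetilde R,S]$ with $\widetilde R\subseteq T$ Pr\"ufer, then composing with the Pr\"ufer extension $R\subseteq\widetilde R$ shows $R\subseteq T$ is Pr\"ufer (transitivity of the Pr\"ufer/normal-pair property, \cite[Theorem 5.6, p.51]{KZ}), so $T\subseteq\widetilde R$ by the maximality defining $\widetilde R$, and thus $T=\widetilde R$. (The degenerate case $\widetilde R=S$ is harmless since then both sides of the statement are $S(X)$.) Lemma~\ref{A0.7.5}, applied to $\widetilde R\subset S$, therefore yields that $\widetilde R(X)$ equals its own Pr\"ufer hull inside $S(X)$.

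To conclude, observe that $\widetilde R(X)\subseteq\widetilde{R(X)}$ is a subextension of the Pr\"ufer extension $R(X)\subseteq\widetilde{R(X)}$; by the normal-pair characterization of Pr\"ufer extensions, it is itself Pr\"ufer. Combined with the previous paragraph, this Pr\"ufer subextension of $\widetilde R(X)$ in $S(X)$ must collapse, giving $\widetilde{R(X)}\subseteq\widetilde R(X)$, whence equality.

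I do not expect any genuine obstacle: all the ingredients (FCP passes to subextensions, Pr\"ufer extensions are stable under composition and under passing to intermediate upper extensions, and Lemma~\ref{A0.7.5}) are in place. The only point requiring a brief verification is the reduction of the hypothesis of Lemma~\ref{A0.7.5} to the situation where the base is its own Pr\"ufer hull, and this is achieved precisely by replacing $R$ with $\widetilde R$.
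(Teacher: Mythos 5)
Your proposal is correct and follows essentially the same route as the paper: one inclusion from Theorem~\ref{A0.7.3}, then Lemma~\ref{A0.7.5} applied to $\widetilde R\subset S$ (after noting $\widetilde{\widetilde R}=\widetilde R$), and finally the fact that $\widetilde R(X)\subseteq\widetilde{R(X)}$ is Pr\"ufer as an upper subextension of the Pr\"ufer extension $R(X)\subseteq\widetilde{R(X)}$, forcing the collapse. The only difference is cosmetic — the paper phrases the last step as a contradiction, and you spell out the hypothesis checks (FCP passing to subextensions, transitivity of Pr\"ufer) that the paper leaves implicit.
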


\begin{proof}  
Because $R\subseteq\widetilde{R}$ is Pr\"ufer, $R(X)\subseteq\widetilde{R}(X)$ is Pr\"ufer by Proposition~\ref{A0.7.3}. Then, $\widetilde{R}(X)\subseteq\widetilde{R (X)}$. Assume that $\widetilde{R}(X)\neq\widetilde{R(X)}$ and set $T:=\widetilde{R}$, so that $T=\widetilde{T}$, giving $\widetilde{T(X)}=T(X)=\widetilde{R}(X)$ by Lemma~\ref{A0.7.5}. Hence $\widetilde{T(X)}\subset\widetilde{R(X)}$ is a Pr\"ufer extension,  contradicting  the definition of $\widetilde{T (X)}$. So, $\widetilde{R}(X)=\widetilde{R (X)}$.
\end{proof}
 
 \section{Quasi-Pr\"uferian rings}

We call a ring $R$ {\it quasi-Pr\"ufer} if $R\subseteq\mathrm{Tot}(R)$ is quasi-Pr\"ufer in order to have a coherent definition ($R$ is classically called Pr\"ufer if $R\subseteq\mathrm{Tot}(R)$ is Pr\"ufer). Trivially, a total quotient ring is quasi-Pr\"ufer.  
Proposition \ref {NAGTOT} gives another   example.

We recall that a ring $R$ is called {\it McCoy} (or satisfies the condition (A)) if each finitely generated ideal $I$ of $R$ contained in $\mathrm{Z}(R)$ is such that $0:I\neq 0$ 
\cite{LUPROPA}.

 It is easy to prove that if $R\to S$ is an injective flat epimorphism and $R$ is McCoy, then $S$ is McCoy. Indeed, a finitely generated ideal of $S$ is of the form $IS$ for some finitely generated ideal $I$ of $R$.
It is well known that $R[X]$ is McCoy for any ring $R$, whence so is $R(X)$ 
\cite[Corollary 1]{HUC}.

We will need the   following definitions and results:

Let $M$ be an $R$-module and $P\in\mathrm{Spec}(R)$. Then $P$ is called  an {\it attached prime ideal} of $M$ (a strong Krull prime ideal by \cite{IR}) if for each finitely generated ideal $I \subseteq P$ there is some $x\in M$, such that $I\subseteq 0:x \subseteq P$. 
 The set of all attached prime ideals of $M$ is denoted by  $\mathrm{Att}(M)$. 
 We recall that $\mathrm{Z}(R) = \cup [ P \mid P \in \mathrm{Att} (R) ] $.

(1) An element of $\mathrm{Att}(R[X])$ is of the form $P[X]$ for some $P \in \mathrm{Att}(R)$ \cite[Theorem 2.5]{IR}. Therefore, the set of zero-divisors of $R[X]$ is $ \mathrm{Z}(R[X]) = \cup [P[X] \mid  P \in \mathrm{Att}(R)]$. 

(2) The mcs $S:=\{p(x)\in R[X]\mid\mathrm{c}(p(X))=R\}$ is contained in $R[X]\setminus\mathrm{Z}(R[X])$ and the elements of $\mathrm{Att}(R[X])$ are of the form $P(X)$, where $P\in\mathrm{Att}(R)$. It follows that $R[X]\subseteq R(X)\subseteq\mathrm{Tot}(R[X])$ and then $\mathrm{Tot}(R(X))= \mathrm{Tot}(R[X])$.

Lucas proved that a ring $R$ is McCoy if and only if $ \mathrm{Tot}(R)(X)=\mathrm{Tot}(R[X])(= \mathrm{Tot}(R(X)))$ by  \cite[Proposition 4.1]{LUPROPA} and the above result (2).

\begin{proposition}\label{NAGTOT} Let $R$ be  a quasi-Pr\"ufer McCoy ring, then $\mathrm{Tot}(R(X))$

\noindent$\cong\mathrm{Tot}(R)(X))$ and $R(X)$ is quasi-Pr\"ufer.
\end{proposition}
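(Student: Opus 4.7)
The plan is to obtain the two statements in sequence by combining Lucas's McCoy characterization (recalled in the paragraph preceding the proposition) with the Nagata-stability of the quasi-Pr\"ufer property recorded earlier in Theorem~\ref{A0.7.3}.

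First I would handle the isomorphism $\mathrm{Tot}(R(X)) \cong \mathrm{Tot}(R)(X)$. This is immediate from the McCoy hypothesis on $R$: the cited result of Lucas \cite[Proposition 4.1]{LUPROPA}, stated just before Proposition~\ref{NAGTOT}, asserts that $R$ is McCoy if and only if $\mathrm{Tot}(R)(X) = \mathrm{Tot}(R[X]) = \mathrm{Tot}(R(X))$, so both rings in the claim coincide canonically.

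For the second assertion, I would unfold the definition used at the beginning of the section: $R(X)$ is quasi-Pr\"ufer means that the extension $R(X) \subseteq \mathrm{Tot}(R(X))$ is quasi-Pr\"ufer. By hypothesis, $R\subseteq\mathrm{Tot}(R)$ is a quasi-Pr\"ufer extension, so Theorem~\ref{A0.7.3} applied to this extension yields that the associated Nagata extension $R(X) \subseteq \mathrm{Tot}(R)(X)$ is quasi-Pr\"ufer. Substituting the identification $\mathrm{Tot}(R)(X) = \mathrm{Tot}(R(X))$ from the previous step gives precisely that $R(X) \subseteq \mathrm{Tot}(R(X))$ is quasi-Pr\"ufer, i.e.\ that $R(X)$ is quasi-Pr\"ufer.

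I do not anticipate any real obstacle: both ingredients are already in hand in the paper. The only point requiring care is that Theorem~\ref{A0.7.3} transports the quasi-Pr\"ufer property of $R\subseteq\mathrm{Tot}(R)$ to the Nagata extension over $\mathrm{Tot}(R)$ rather than directly to $\mathrm{Tot}(R(X))$; it is exactly the McCoy hypothesis, via Lucas's equality, that identifies these two target rings and lets the argument close.
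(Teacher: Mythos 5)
Your proposal is correct and follows essentially the same route as the paper's proof: first the identification $\mathrm{Tot}(R(X))=\mathrm{Tot}(R)(X)$ coming from the McCoy hypothesis via Lucas's result, then transport of the quasi-Pr\"ufer property of $R\subseteq\mathrm{Tot}(R)$ to the Nagata level. The only cosmetic difference is the citation used for the transport step: you invoke Theorem~\ref{A0.7.3} directly, whereas the paper phrases it as the base change of the flat epimorphism $R\to\mathrm{Tot}(R)$ along $R\to R(X)$ (resting on Proposition~\ref{TENS}), which is the same mechanism underlying the quasi-Pr\"ufer case of Theorem~\ref{A0.7.3}.
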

\begin{proof}
We have $\mathrm{Tot}(R(X))=(\mathrm{Tot}(R))(X)$, so that $R(X) \subseteq  \mathrm{Tot}(R(X))$ is quasi-Pr\"ufer, because deduced from the flat epimorphism $R\to \mathrm{Tot}(R)$ by the base change $R\to R(X)$.
\end{proof}

Quasi-Pr\"ufer rings defined in the book \cite{FHP} do not coincide with our quasi-Pr\"ufer rings. They are called elsewhere quasi-Pr\"uferian, a terminology we adopt.  

\begin{definition} \cite{ACE} A ring $R$ is called {\it quasi-Pr\"uferian } if for each prime (resp.; maximal) ideal $M$ of $R$, any prime ideal $Q$ of $R[X]$, such that $Q\subseteq M[X]$ is of the form $(Q\cap R)[X]$. It is easy to show that a ring $R$ is quasi-Pr\"uferian if and only if the faithfully flat ring morphism $R\to R(X)$ is an $i$-extension (because any maximal ideal of $R(X)$ is of the form $MR(X)$ for some $M\in\mathrm{Max}(R)$). Another characterization is $\mathrm{Spec}(R(X))=\{P(X)\mid P\in\mathrm{Spec}(R)\}$. We will use the following characterization: a ring $R$ is quasi-Pr\"uferian if and only if $R\to R(X)$ has the incomparability property (the INC-property).
\end{definition}

The class of quasi-Pr\"uferian rings is stable under the formation of factor rings  and the formation of Nagata rings. Moreover, a ring $R$ is quasi-Pr\"uferian if and only if $R_P$ is quasi-Pr\"uferian for each $P\in\mathrm{Spec}(R)$. Over an integral domain the two classes, quasi-Pr\"ufer and quasi-Pr\"uferian,  coincide \cite[Section 6.5]{FHP}. It follows that a ring which is locally a quasi-Pr\"ufer domain is  quasi-Pr\"uferian.

The stability of the class of quasi-Pr\"uferian rings under various operations does not seem to be valid for the class of quasi-Pr\"ufer rings. For example, the formation of total quotient rings does not commute with localizations. We will show at the end of the Section (Example \ref{UFD}) that these classes are different.

We will use and sometimes generalize  some results of \cite{ADF}  holding in the integral domain context.

We intend now to generalize \cite[Lemma 2.3 and Theorem 2.7]{ADF}.

\begin{proposition}\label{INTQP} Let $f: R \to S$ be a ring morphism.
\begin{enumerate}
\item If $R$ is quasi-Pr\"uferian and $f$ is either an integral morphism or a flat epimorphism, then $S$ is quasi-Pr\"uferian.

\item If $S$ is quasi-Pr\"uferian and $f$ is injective and integral, then $R$ is quasi-Pr\"uferian.

\item If $R$ is quasi-Pr\"uferian and $f$ is a quasi-Pr\"ufer extension, then $S$ is quasi-Pr\"uferian.
\end{enumerate}
\end{proposition}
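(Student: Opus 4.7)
The plan is to deduce (3) from (1), and to handle (1) by splitting into the two cases while deducing (2) by a reverse argument. Throughout, the definition used is: $A$ is quasi-Pr\"uferian iff $A \to A(X)$ has the INC property (equivalently, is an $i$-morphism, being already faithfully flat). The engine is Proposition~\ref{TENS}: both integral morphisms and flat epimorphisms satisfy $(\mathcal T)$, so in both cases $R(X)\otimes_R S \cong S(X)$ canonically. Consequently, if $f$ is integral (resp.\ a flat epimorphism), so is the base change $R(X)\to S(X)$.

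For (1), take $Q_1 \subseteq Q_2$ in $\mathrm{Spec}(S(X))$ lying over a common $Q\in\mathrm{Spec}(S)$, and contract to $P_i := Q_i \cap R(X)$ in $R(X)$. Setting $P := Q \cap R$ (via $R\to S\to S(X)$), both $P_1$ and $P_2$ contract to $P$ in $R$. The quasi-Pr\"uferian hypothesis on $R$ forces $P_1 = P_2$ via INC for $R\to R(X)$. Now $R(X)\to S(X)$ itself has INC: in the integral case by standard theory, and in the flat-epimorphism case because a flat epimorphism is an $i$-morphism (Lazard, \cite[Corollaire 2.2, p.111]{L}). Hence $Q_1 = Q_2$, which is the desired INC for $S\to S(X)$.

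For (2), suppose $f: R\hookrightarrow S$ is injective integral and $S$ is quasi-Pr\"uferian. Let $P_1\subseteq P_2$ be primes of $R(X)$ contracting to a common $P\in\mathrm{Spec}(R)$. The Nagata extension $R(X)\hookrightarrow S(X)$ is injective integral (by the preamble to the section and $(\mathcal T)$), so lying-over and going-up give primes $Q_1\subseteq Q_2$ in $S(X)$ with $Q_i \cap R(X) = P_i$. Their contractions $Q_i\cap S$ lie over $P$ in $R$ and are comparable, so INC for the integral extension $R\subseteq S$ yields $Q_1\cap S = Q_2\cap S$. INC for $S\to S(X)$ (quasi-Pr\"uferian hypothesis on $S$) then forces $Q_1 = Q_2$, hence $P_1 = P_2$.

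Finally, for (3), a quasi-Pr\"ufer extension $R\subseteq S$ factors as $R\subseteq\overline R\subseteq S$ with $R\subseteq\overline R$ integral and $\overline R\subseteq S$ Pr\"ufer, the latter being in particular a flat epimorphism. Applying (1) to $R\to\overline R$ yields that $\overline R$ is quasi-Pr\"uferian; applying (1) again to $\overline R\to S$ yields that $S$ is quasi-Pr\"uferian. The only delicate point in the whole argument is the clean transfer of ``integral'' and ``flat epimorphism'' through the Nagata construction, which is precisely guaranteed by Proposition~\ref{TENS}.
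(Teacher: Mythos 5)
Your proposal is correct and follows essentially the same route as the paper: part (1) rests on Proposition~\ref{TENS} to identify $S(X)\cong R(X)\otimes_RS$ so that $R(X)\to S(X)$ inherits INC (the paper phrases this as composing INC morphisms along $R\to R(X)\to S(X)$, which your prime-chasing simply unfolds), part (2) uses lying-over/going-up for the injective integral map $R(X)\to S(X)$ exactly as in the paper, and part (3) factors through $\overline R$ and applies (1) twice, which is precisely the paper's one-line argument made explicit.
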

\begin{proof}
(1) If $R$ is quasi-Pr\"uferian and $f$ is either integral or a flat epimorphism, then $S(X)\cong R(X)\otimes_RS$ by Proposition \ref{TENS}, so that $R(X) \to S(X)$ has the INC-property and so has $R\to R(X)$. It follows easily that $S \to S(X)$ has the INC-property.

(2) Suppose that $f$ is integral and injective, then so is $R(X) \to S(X)$. Therefore, each couple $P\subseteq P'$ of prime ideals of $R(X)$ can be lifted up to a couple $Q\subseteq Q'$ of prime ideals of $S(X)$, by the lying-over and going-up properties of an integral extension. Because
$S \to S(X)$ has the INC-property, as well as $R \to S$,  
we can assert that $R \to R(X)$ has the INC-property.

(3) We have a tower of extensions $R \subseteq \overline R \subseteq S$, where the last extension is Pr\"ufer.
\end{proof}

As usual if $P$ is a prime ideal of a ring $R$, a prime ideal $Q$ of $R[X]$ is called an {\it upper} of $P$ if $Q \neq P[X]$ and $P=Q\cap R$.

\begin{theorem}\label{GENQP} Each next statement  on a ring $R$ implies the following: 
\begin{enumerate}

\item $R$ is  quasi-Pr\"uferian,
\item If $P\in \mathrm{Spec}(R)$ and $M \in \mathrm{Max}(R)$, then no upper of $P$ is contained in $M[X]$,

\item If $M$ is a maximal ideal of $R$, then no upper of a minimal prime ideal of $R$ is contained in $M[X]$,

\item Any flat injective epimorphism  $R \subseteq S$ is quasi-Pr\"ufer,

\item $R\subseteq\mathrm{Tot}(R)$ is quasi-Pr\"ufer, {\it i.e.} $R$ is quasi-Pr\"ufer.
\end{enumerate}
\end{theorem}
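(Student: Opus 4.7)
The plan is to establish the four implications $(1)\Rightarrow(2)\Rightarrow(3)\Rightarrow(4)\Rightarrow(5)$ in sequence.

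For $(1)\Rightarrow(2)$, I argue by contradiction. Suppose $Q$ is an upper of $P\in\mathrm{Spec}(R)$ with $Q\subseteq M[X]$ for some $M\in\mathrm{Max}(R)$. By the definition of quasi-Pr\"uferian, every prime of $R[X]$ sitting inside $M[X]$ is extended from $R$; hence $Q=(Q\cap R)[X]=P[X]$, contradicting that $Q$ is a (proper) upper of $P$.

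For $(2)\Rightarrow(3)$, there is nothing to prove, since a minimal prime of $R$ is in particular a prime of $R$.

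For $(3)\Rightarrow(4)$, which I expect to be the main obstacle, let $R\subseteq S$ be a flat injective epimorphism. First observe that any flat epimorphism is integrally closed in its target: if $x\in S$ is integral over $R$, then $R\subseteq R[x]$ is both integral and a flat epimorphism, whence faithfully flat epimorphism, whence an isomorphism by \cite[Lemme 1.2, p.109]{L}. So $\overline{R}^S=R$, and to prove $R\subseteq S$ quasi-Pr\"ufer it is enough to prove it is Pr\"ufer. By Proposition~\ref{LOC} this is a local condition, so I may replace $R$ by $R_M$ for $M\in\mathrm{Max}(R)$ and assume $R$ is local with maximal ideal $M$. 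Now I apply Proposition~\ref{VALU}: the local flat epimorphism $R\subseteq S$ is Pr\"ufer iff there exists $P\in\mathrm{Spec}(R)$ with $S=R_P$, $PS=P$ and $R/P$ a valuation domain. To produce such a $P$, I pass to the polynomial ring: a prime of $R[X]$ contracting to some minimal prime $\mathfrak p$ of $R$ and lying inside $M[X]$ must, by hypothesis (3), equal $\mathfrak p[X]$. This rigidity forces the fiber $(R/\mathfrak p)[X]\to\kappa(\mathfrak p)[X]$ to have a very constrained prime structure, which via the Kronecker-type argument used in \cite{ADF} for the domain case produces a valuation-type chain in $R/\mathfrak p$ and hence realises the required prime $P$ of $R$. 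The main technical work lies in this translation of an upper-free condition in $R[X]$ into the valuation-theoretic structure of $R$ required by Proposition~\ref{VALU}; this is where I anticipate the proof spending most of its effort.

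For $(4)\Rightarrow(5)$, the canonical map $R\to\mathrm{Tot}(R)$ is a localization of $R$ at the mcs of non-zero-divisors of $R$, so it is an injective flat epimorphism. By (4), $R\subseteq\mathrm{Tot}(R)$ is quasi-Pr\"ufer, which is exactly the definition of $R$ being a quasi-Pr\"ufer ring.
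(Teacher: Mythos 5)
Your implications $(1)\Rightarrow(2)$, $(2)\Rightarrow(3)$ and $(4)\Rightarrow(5)$ are correct (the paper treats these as immediate and does not even write them out; the whole content of the theorem is $(3)\Rightarrow(4)$). The genuine gap is exactly there, in the step you yourself flag as the main obstacle. Your reductions are legitimate: a flat epimorphism is integrally closed in its target, so quasi-Pr\"ufer reduces to Pr\"ufer, and by Proposition~\ref{LOC} one may assume $R$ local, aiming at the prime $P$ of Proposition~\ref{VALU}. But the step that is supposed to produce $P$ is placeholder text: ``this rigidity forces \dots\ a very constrained prime structure, which via the Kronecker-type argument used in \cite{ADF} \dots\ produces a valuation-type chain \dots\ and hence realises the required prime $P$.'' Nothing is constructed: there is no candidate for $P$, no proof that $R/P$ is a valuation domain, and no proof that $S=R_P$ with $PS=P$. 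Worse, in this sketch the hypothesis that $R\subseteq S$ is a flat epimorphism never enters the key step: condition (3) is a statement about $R[X]$ alone and cannot by itself constrain $S$, so any correct argument must couple (3) with the structure of the extension. Note also that the Kronecker-type argument of \cite{ADF} is written for integral domains, whereas your local ring $R$ may have many minimal primes and zero-divisors, so it is far from clear that it adapts.

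For comparison, the paper proves $(3)\Rightarrow(4)$ by contraposition, and the mechanism is quite different from what you outline. If some injective flat epimorphism $R\subseteq S$ is not quasi-Pr\"ufer, then since quasi-Pr\"ufer extensions coincide with INC-pairs \cite[Theorem 2.3]{Pic 5}, there is $u\in S$ such that $R\subset R[u]$ fails INC, i.e.\ there are distinct primes $Q'_1\subset Q'_2$ of $R[u]$ lying over one maximal ideal $M$ of $R$ \cite[Theorem 2.3]{DLO}. Pulling back along $e:R[X]\to R[u]$, $X\mapsto u$, gives $Q_1\subset Q_2$ over $M$, whence $Q_1=M[X]$. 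Choosing a minimal prime $N'\subseteq Q'_1$ of $R[u]$, lying-over for minimal primes along the injective $R[u]\subseteq S$ together with going-down for the flat $R\subseteq S$ shows that $N:=e^{-1}(N')$ contracts to a minimal prime $\Pi$ of $R$. Finally --- and this is precisely where the flat-epimorphism hypothesis does its work --- if $N$ were not an upper of $\Pi$, then $N=\Pi[X]$ would yield an injective factorization $R/\Pi\to(R/\Pi)[X]\to S/W$ of a flat epimorphism, forcing $R/\Pi\to(R/\Pi)[X]$ to be a faithfully flat epimorphism, hence an isomorphism by \cite[Lemme 1.2, p.109]{L}, which is absurd. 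Thus $N$ is an upper of the minimal prime $\Pi$ contained in $M[X]$, i.e.\ (3) fails. If you want to rescue a direct proof, it is this interplay (Lazard's rigidity of flat epimorphisms against polynomial extensions) that has to replace the appeal to \cite{ADF}.
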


\begin{proof}  We will follow the scheme of the proof of \cite[Theorem 2.7]{ADF}. 
We first prove the contrapositive of (4) $\Rightarrow$ (3). Suppose that an injective flat epimorphism $R\subseteq S$ is not quasi-Pr\"ufer. Then there exists some $u\in S$ such that $R\subset R[u]$ does not satisfies INC. Hence by \cite[Theorem 2.3]{DLO}, there exist distinct prime ideals $Q'_1\subset Q'_2$ of $R[u]$ and a maximal ideal $M$ of $R$, such that $Q'_1\cap R = Q'_2\cap R=M$. Consider the ring morphism $e:R[X]\to R[u]$, defined by $X \mapsto u$ and set $Q_i:= e^{-1}(Q'_i)$. Let also be a minimal prime ideal $N' \subset Q'_1$ of $R[u]$ and set $N=e^{-1}(N')$. Now because $R[u]\subset S$ is injective there is a minimal prime ideal  $W$ of $S$ lying over $N$. Because $R\subset S$ is flat, whence has the going-down property, we get that $W\cap R=N\cap R$ is a minimal prime ideal $\Pi$ of $R$.
As $Q_1\subset Q_2$ are distinct prime ideals ($e$ is a surjective map) of $R[X]$ lying over $M$, it follows that $Q_1=M[X]$ 
 (it is enough to consider the field $R/M$).

Suppose that $N$ is not an upper of $\Pi$, then $N=\Pi[X]$. We then have a factorization $R/\Pi\to(R/\Pi)[X]=R[X]/\Pi[X]=R[X]/N\cong R[u)/N'\to S/W$ into injective morphisms. An appeal to \cite[Corollaire 3.2(ii), p. 114]{L} yields that $R/\Pi\to(R/\Pi)[X]$ is a faithfully flat epimorphism, whence an isomorphism, an absurdity. Since $Q_1= M[X]\supseteq N$, then (3) fails.
\end{proof}

\begin{lemma}\label{TREED} A ring $R$, with $R(X)$ treed, is treed and quasi-Pr\"uferian.
\end{lemma}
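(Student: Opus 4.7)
The plan is to localize, use the chain structure of $\mathrm{Spec}(R(X))$ to establish ``treed,'' and then derive ``quasi-Pr\"uferian'' by a contradiction that produces two incomparable uppers of $0$ in a Nagata extension.

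First I would observe that both properties are local on $\mathrm{Spec}(R)$: being treed reduces to each $R_M$ having chain spectrum, and the paper explicitly notes that quasi-Pr\"uferian is a local property. Because $R(X)_{MR(X)} = R_M(X)$ and a localization of a treed ring is treed (its spectrum is a subset of the original), the hypothesis descends to each $R_M(X)$; hence one may assume $R$ is local with maximal ideal $M$. Then $R(X)$ is local with maximal ideal $M(X)$, so ``$R(X)$ treed'' translates to ``$\mathrm{Spec}(R(X))$ is a chain.'' The order-preserving injection $\mathrm{Spec}(R) \hookrightarrow \mathrm{Spec}(R(X))$ defined by $P \mapsto P(X)$---injective and order-preserving by faithful flatness and the identity $P(X) \cap R = P$---presents $\mathrm{Spec}(R)$ as a sub-chain, so $R$ is treed.

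For quasi-Pr\"uferian, I would argue by contradiction: suppose there is $Q \in \mathrm{Spec}(R(X))$ with $Q \neq (Q \cap R)(X)$. Setting $P := Q \cap R$ yields $P(X) \subsetneq Q$. Passing to $R(X)/P(X) = (R/P)(X)$, which still has chain spectrum as a quotient by the prime $P(X)$, reduces the problem to: $D := R/P$ is a local integral domain with maximal ideal $\mathfrak m$, $\mathrm{Spec}(D(X))$ is a chain, and $D(X)$ admits a nonzero prime contracting to $0$ in $D$. Using $D(X) = D[X]_{\mathfrak m[X]}$, such primes correspond to uppers of $0$ in $D[X]$ contained in $\mathfrak m[X]$; these correspond through the PID $K[X]$ (with $K := \mathrm{Frac}(D)$) to nonzero prime ideals $(f)$ for $f$ irreducible. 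Two distinct such $(f), (g)$ are incomparable maximal ideals of $K[X]$, so their contractions $Q_f := fK[X] \cap D[X]$ and $Q_g$ are incomparable uppers of $0$ in $D[X]$, hence incomparable primes of $D(X)$---contradicting the chain.

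The main obstacle is therefore producing two distinct uppers of $0$ both contained in $\mathfrak m[X]$, given the existence of one. I would use that $D$ failing to be quasi-Pr\"ufer forces $\overline D$ (the integral closure of $D$ in $K$) to be non-Pr\"ufer, so some localization of $\overline D$ is not a valuation ring, yielding $x \in K$ with $x, 1/x \notin \overline D$. A Gauss-lemma argument---the Nagata substitution $a_n^{n-1} p(X/a_n)$ converts a primitive $p \in D[X]$ with $p(x) = 0$ into a monic polynomial whose root $a_n x$ must lie in $\overline D$, while the reciprocal polynomial $X^n p(1/X)$ gives the symmetric conclusion for $1/x$---shows that $Q_{X-x} \subseteq \mathfrak m[X]$ precisely when $x, 1/x \notin \overline D$. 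Multiplying such an $x$ by distinct units of $D$ (at least two exist since $\mathfrak m \neq 0$ yields $1 + m \in \mathrm U(D)$ for $m \in \mathfrak m \setminus \{0\}$) produces distinct $x_1, x_2 \in K$ with the same property, hence two distinct incomparable uppers $Q_{X - x_1}, Q_{X - x_2}$, completing the contradiction.
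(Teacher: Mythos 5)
Your overall architecture is sound: the localization reduction, the embedding $P\mapsto P(X)$ for treedness, the passage to a local domain $D=R/P$ with $D(X)=D[X]_{\mathfrak m[X]}$, the identification of the offending primes with uppers of $0$ contained in $\mathfrak m[X]$, and the idea of duplicating one such upper by multiplying by a unit (which is legitimate: if $q(ux)=0$ with $C(q)=D$ then $p(X):=q(uX)$ has $C(p)=C(q)=D$ and $p(x)=0$, so non-primitivity is preserved under $x\mapsto ux$). The genuine gap is the key criterion you rely on: the equivalence ``$Q_{X-x}\subseteq\mathfrak m[X]$ precisely when $x,1/x\notin\overline D$'' is false, and it fails exactly in the direction you need. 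Counterexample: let $D$ be the local ring at the origin of the nodal cubic $Z^2=Y^2(Y+1)$ over a field $k$ of characteristic $\neq 2$, so that $K=k(t)$ with $t=Z/Y$, $t^2=Y+1$, and $\overline D$ is the semilocalization of $k[t]$ at $(t-1)$ and $(t+1)$ --- a Pr\"ufer (semilocal Dedekind) domain which is not a valuation ring. For $x=(t-1)/(t+1)$ one has $x,1/x\notin\overline D$, yet $p(X)=YX^2-(2Y+4)X+Y\in D[X]$ vanishes at $x$ and has unit content (its middle coefficient $2Y+4$ is a unit of $D$), so $Q_{X-x}\not\subseteq\mathfrak m[X]$; indeed, since $\overline D$ is Pr\"ufer, $D\subseteq K$ is quasi-Pr\"ufer, hence a $\mathcal P$-extension, and \emph{every} element of $K$ is primitive. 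This example also shows why your Gauss-lemma sketch cannot close the gap: when the unique unit coefficient sits in the middle, the Nagata substitution only yields that $a_nx$ is integral and the reciprocal polynomial only yields that $a_0/x$ is integral, neither of which says anything about $x$ or $1/x$.

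The repair is small, because the element you actually construct satisfies a stronger condition than the one you state: a non-valuation localization gives $x,1/x\notin\overline D_N$ for a \emph{single} maximal ideal $N$ of $\overline D$, and that is the correct criterion. Since $\overline D_N$ is integrally closed and local, \cite[Corollary 2.5]{DH} (already used in the paper) shows such an $x$ is not primitive over $\overline D_N$, hence not over $\overline D$, hence not over $D$; for local $D$, non-primitivity of $x$ says exactly that $Q_{X-x}\subseteq\mathfrak m[X]$. Even more directly, you can skip valuations altogether: $\overline D$ not Pr\"ufer means $D\subseteq K$ is not quasi-Pr\"ufer, hence not an INC-pair, hence not a $\mathcal P$-extension (\cite[Theorem 2.3]{Pic 5}, \cite[Corollary 2.4]{DLO}), so some $x\in K$ is non-primitive over $D$; your unit-multiplication step (using $1+m$, $m\in\mathfrak m\setminus\{0\}$) then produces two distinct, hence incomparable, uppers of $0$ inside $\mathfrak m[X]$, contradicting treedness of $D(X)$. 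For comparison, the paper's own proof is a one-line remark that \cite[Proposition 2.2]{ADF} (the integral-domain case) remains valid for arbitrary rings; your localization-and-quotient reduction to a local domain is precisely the content of that remark, so once the criterion is corrected your argument is a legitimate unpacking of the citation rather than a different proof.
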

\begin{proof}  \cite[Proposition 2.2]{ADF} is valid for arbitrary rings.
\end{proof}

\begin{proposition}\label{GENQPC}   Let $R$ be a ring such that $\mathrm{Tot}(R)$ is zero-dimensional. Then $R$ is quasi-Pr\"uferian if and only if $R$ is quasi-Pr\"ufer. In this case,  $R(X)$ is treed and $\mathrm{Tot}(R(X)) \cong \mathrm{Tot}(R)\otimes_RR(X) \cong (\mathrm{Tot}(R))(X)$.
\end{proposition}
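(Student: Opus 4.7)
The implication ``$R$ is quasi-Pr\"uferian $\Rightarrow R$ is quasi-Pr\"ufer'' is the unconditional chain $(1)\Rightarrow(5)$ of Theorem~\ref{GENQP}. For the converse and the accompanying treedness in the ``in this case'' statement, assume $\mathrm{Tot}(R)$ is zero-dimensional and $R$ is quasi-Pr\"ufer; the plan is to establish that $R(X)$ is treed and then invoke Lemma~\ref{TREED}.

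First I would verify that $\mathrm{Tot}(R)(X)$ is zero-dimensional. For each $M\in\mathrm{Max}(\mathrm{Tot}(R))$, Proposition~\ref{LOCALI} identifies $\mathrm{Tot}(R)(X)_{M(X)}$ with $\mathrm{Tot}(R)_M(X)$; in the zero-dimensional local ring $\mathrm{Tot}(R)_M$ the maximal ideal equals the nilradical, which forces $\mathrm{Tot}(R)_M(X)$ to be zero-dimensional local. Since every maximal of $\mathrm{Tot}(R)(X)$ has the form $M(X)$, $\mathrm{Tot}(R)(X)$ is zero-dimensional. Theorem~\ref{A0.7.3} then gives $\overline{R(X)}=\overline R(X)$ and lifts the Pr\"ufer property from $\overline R\subseteq\mathrm{Tot}(R)$ to $\overline R(X)\subseteq\mathrm{Tot}(R)(X)$, providing a Pr\"ufer extension into a zero-dimensional target.

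The core step is to show $\overline R(X)$ is treed. Fix $\mathfrak M\in\mathrm{Max}(\overline R(X))$; the localization $\overline R(X)_{\mathfrak M}\subseteq\mathrm{Tot}(R)(X)_{\mathfrak M}$ is still Pr\"ufer with local source, so Proposition~\ref{VALU} furnishes a prime $P'$ of $\overline R(X)_{\mathfrak M}$ such that $\mathrm{Tot}(R)(X)_{\mathfrak M}$ is its localization at $P'$, $P'$ is divided, and $\overline R(X)_{\mathfrak M}/P'$ is a valuation domain. Zero-dimensionality of $\mathrm{Tot}(R)(X)_{\mathfrak M}$ translates (via the correspondence between primes of the localization and primes of the original ring contained in $P'$) to the assertion that the primes of $\overline R(X)_{\mathfrak M}$ contained in $P'$ form an antichain; since $P'$ itself lies in this set, it is the unique such prime. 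Divisibility of $P'$ then forces every other prime of $\overline R(X)_{\mathfrak M}$ to contain $P'$, so $P'$ is the unique minimal prime; together with the valuation-domain quotient, this makes $\mathrm{Spec}(\overline R(X)_{\mathfrak M})$ a chain. Treedness then descends along the integral base change $R(X)\subseteq\overline R(X)$: given two primes of $R(X)$ below a maximal $M(X)$, lift a maximal $\mathfrak M$ of $\overline R(X)$ above $M(X)$ and exploit the integrality to realize both given primes as contractions of primes below $\mathfrak M$; the chain structure of $\mathrm{Spec}(\overline R(X)_{\mathfrak M})$ forces comparability of the lifts, hence of the originals. Lemma~\ref{TREED} then concludes $R$ is quasi-Pr\"uferian.

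For the remaining isomorphism chain, $\mathrm{Tot}(R)\otimes_R R(X)\cong\mathrm{Tot}(R)(X)$ is Proposition~\ref{TENS} applied to the flat epimorphism $R\to\mathrm{Tot}(R)$; for $\mathrm{Tot}(R(X))\cong\mathrm{Tot}(R)(X)$, the nontrivial inclusion is $\mathrm{Tot}(R(X))\subseteq\mathrm{Tot}(R)(X)$, which follows because a regular element of $R(X)$ remains regular in the flat overring $\mathrm{Tot}(R)(X)$ and is a unit there by zero-dimensionality. The main obstacle is the core step: combining divisibility of $P'$ with zero-dimensionality of $\mathrm{Tot}(R)(X)_{\mathfrak M}$ to collapse $\mathrm{Spec}(\overline R(X)_{\mathfrak M})$ into a single chain, and then safely descending treedness across the integral extension $R(X)\subseteq\overline R(X)$ by lifting both primes inside one common maximal of $\overline R(X)$.
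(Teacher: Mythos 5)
Your forward implication (via Theorem~\ref{GENQP}) and your local computation of $\mathrm{Spec}(\overline R(X)_{\mathfrak M})$ using Proposition~\ref{VALU}, dividedness and zero-dimensionality are sound, but the proof collapses at the step where you ``descend treedness along the integral base change $R(X)\subseteq\overline R(X)$''. Realizing two given primes of $R(X)$ lying under $M(X)$ as contractions of primes lying under one \emph{common} maximal ideal $\mathfrak M$ of $\overline R(X)$ is exactly the going-down property, and integral extensions (even integral INC extensions) do not satisfy going-down in general; lying-over and going-up only lift chains upwards, so your two primes may be visible only under two distinct maximal ideals of $\overline R(X)$. The failure is real, not a missing citation: take $R=(k[x,y]/(xy))_{(x,y)}$. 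Then $\mathrm{Tot}(R)\cong k(x)\times k(y)$ is zero-dimensional, $\overline R\cong k[x]_{(x)}\times k[y]_{(y)}$, and $\overline R\subseteq\mathrm{Tot}(R)$ is Pr\"ufer (a product of valuation domains inside their quotient fields), so $R$ is quasi-Pr\"ufer and all hypotheses hold. Here $\overline R(X)\cong k[x]_{(x)}(X)\times k[y]_{(y)}(X)$ is a product of two valuation rings, hence treed, while $R(X)$ is local with the two incomparable minimal primes $\bar xR(X)$ and $\bar yR(X)$ under its unique maximal ideal, hence \emph{not} treed. So the intermediate claim you aim at --- treedness of $R(X)$ for the original $R$ --- is false in general, and no repair of the descent step can save the strategy; the treedness clause of the statement can only be meant for the ring obtained after the reductions below, i.e.\ when $R$ is integrally closed in $\mathrm{Tot}(R)$.

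The paper's route is different precisely at this point: what it transports across the integral extension is not treedness but the quasi-Pr\"uferian property itself. By Proposition~\ref{INTQP}(2), quasi-Pr\"uferian descends along injective integral morphisms (this uses only lying-over, going-up and INC --- the directions an integral map does support), so one may replace $R$ by $\overline R$ and assume $R\subseteq\mathrm{Tot}(R)$ is Pr\"ufer; since quasi-Pr\"uferian is a local property, one may then assume $R$ local, after checking that $\mathrm{Tot}(R_P)\cong\mathrm{Tot}(R)\otimes_RR_P$ is still zero-dimensional (the paper does this with Lazard's lemma that an injective flat epimorphism with zero-dimensional domain is an isomorphism, the same lemma it later uses for $\mathrm{Tot}(R(X))\cong\mathrm{Tot}(R)\otimes_RR(X)$, where your sketch is roughly parallel). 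Only for this reduced $R$ --- local and integrally closed in $\mathrm{Tot}(R)$ --- does one run the chain argument on $\mathrm{Spec}(R)$ and $\mathrm{Spec}(R(X))$ that you carried out for $\overline R(X)_{\mathfrak M}$, and then Lemma~\ref{TREED} applies; quasi-Pr\"uferian-ness of the original ring is recovered by undoing the two reductions. Your write-up omits these reductions and instead routes the equivalence through a statement that fails for non-integrally-closed $R$; that is the genuine gap.
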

\begin{proof} If $R$ is quasi-Pr\"uferian, then $R$ is quasi-Pr\"ufer by Theorem \ref{GENQP}. To prove the converse, and according to Lemma~\ref{INTQP}, we can suppose that $R$ is Pr\"ufer. Then we can reduce to the case where $R$ is local, because for any prime ideal $P$ of $R$, we have $(R(X))_P\cong R_P(X)$ and $\mathrm{Tot}(R)_P \cong  \mathrm{Tot}(R_P)$, so that $\mathrm{Tot}(R_P)$ is zero-dimensional. The proof of the last statement is as follows. There is an injective ring morphism $\mathrm{Tot}(R)\to\mathrm{Tot}(R_P)$ because $R_P$ is flat over $R$. This morphism induces another one $\mathrm{Tot}(R)\otimes_RR_P \to \mathrm{Tot}(R_P)$. There is a factorization $\mathrm{Tot}(R) \to\mathrm{Tot}(R)\otimes_R R_P\to\mathrm{Tot}(R_P)$. The first morphism is a flat epimorphism and the composite is injective.  
   We get that $\mathrm{Tot}(R)\otimes_R R_P\to\mathrm{Tot}(R_P)$ is injective by \cite[Lemme 3.4, p.114]{L}.
    To conclude, we use the following fact: an injective flat epimorphism, whose domain is zero-dimensional, is an isomorphism \cite[Lemme 1.2, p. 109]{L}, whence the injective flat epimorphism $\mathrm{Tot}(R)\otimes_R R_P\to \mathrm{Tot}(R_P)$ is an isomorphism.

We claim that $R$ is treed if $R\subset\mathrm{Tot}(R)$ is Pr\"ufer when $R$ is local. In this case, there exists some divided prime ideal $P$ of $R$ such that $\mathrm{Tot}(R)=R_P$ and $R/P$ is a valuation domain. As $T:=R_P$ is zero-dimensional, $P$ is a minimal prime ideal of $T$. In fact, $\mathrm{Spec}(T)=\{P\}$. Then $P$ is also a minimal prime ideal of $R$ because $T=R_P$. Moreover, $R\to T$ being injective, $P$ is the unique minimal prime ideal of $R$. At last, $\mathrm{Spec}(R)=\mathrm{V}(P)$ is a chain because $R/P$ is a valuation domain, so that $R$ is treed. 

The same proof works for $R(X)$,  which is also local and such that $R(X)\subset T(X)$ is Pr\"ufer. 
  To begin with, $T$ being zero-dimensional, so is $T(X)$, because the (minimal prime ideals) maximal ideals of $T(X)$ are of the form $MT(X)$ where ($M\in\mathrm{Min}(T)$) $M\in\mathrm{Max}(T)$. Then there is a factorization $R(X)\to\mathrm{Tot}(R)\otimes_R R(X)\to\mathrm{Tot}(R(X))$, where the first morphism is a flat epimorphism and the composite is an injective flat epimorphism. It follows that the last morphism, being a flat injective epimorphism, is an isomorphism because its domain is zero-dimensional. 
  Therefore, $T(X)\cong T\otimes_R R(X)\cong\mathrm{Tot}(R(X))$, leading to $R(X)\to\mathrm{Tot}(R(X))$ is Pr\"ufer, with $\mathrm{Tot}(R(X))$ zero-dimensional. Mimicking the proof we got for $R$, it follows that 
  $R(X)$ is treed and we can apply Lemma~\ref{TREED} 
  to get that $R$ is quasi-Pr\"uferian. 
\end{proof} 

\begin{remark} If we suppose that $\mathrm{dim}(\mathbb M(R))=0$, then $\mathbb M(R_P)\cong(\mathbb M(R))_P$. To see this it is enough to use \cite[Proposition 3.5, p.115]{L}. Suppose now in addition that $R\subset \mathbb M(R)$ is quasi-Pr\"ufer. Mimicking the proof of Proposition \ref{GENQPC}, we can also show that $R$ is quasi-Pr\"uferian. The reader may find in \cite{L} many contexts in which $\mathrm{Tot}(R)= \mathbb M( R)$. 
\end{remark}

\begin{example} \label{UFD}A total quotient ring (which is a (quasi-)Pr\"ufer ring) need not to be quasi-Pr\"uferian. To see this we consider the following example given by Lucas \cite[Example 2.11]{LU}. There is a total quotient ring $R$ which is not locally Pr\"ufer because there is a prime ideal $P$ of $R$ such that $R_P$ is not Pr\"ufer but is integrally closed (actually, an UFD). Suppose that quasi-Pr\"ufer rings are quasi-Pr\"uferian, then $R_P$ would be quasi-Pr\"uferian, since quasi-Pr\"ufer. As it is integrally closed, it would be Pr\"ufer, a contradiction.
\end{example}

\section{Pr\"ufer FCP extensions over a local ring}
   
Clearly, a minimal extension is a flat epimorphism if and only if it is Pr\"ufer. So we will call Pr\"ufer minimal such extensions. We note as a first result the following Proposition, which results from Proposition~\ref{AYACHE}. 

   \begin{proposition} A Pr\"ufer minimal extension with crucial ideal maximal $M$ is a QR-extension if and only if $M$ is equivalent  to a principal ideal. 
\end{proposition}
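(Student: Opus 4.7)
The plan is to apply Proposition \ref{AYACHE} directly. A Pr\"ufer minimal extension $R\subset S$ is in particular a Pr\"ufer extension (by Corollary \ref{PM}), and by the definition of the crucial maximal ideal, $\mathrm{Supp}(S/R)=\{M\}$. This is in particular finite, so the hypotheses of Proposition \ref{AYACHE} are satisfied.

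First I would observe that with $\mathrm{Supp}(S/R)=\{M\}$, the ideal $J:=\cap[P\mid P\in\mathrm{Supp}(S/R)]$ appearing in Proposition \ref{AYACHE} reduces to $J=M$. Then, the equivalence (1) $\Leftrightarrow$ (2) of Proposition \ref{AYACHE} says that $R\subset S$ is a QR-extension if and only if each element of $\mathrm{Supp}(S/R)$ is equivalent to a principal ideal. Since $\mathrm{Supp}(S/R)$ is the singleton $\{M\}$, this condition collapses to the statement that $M$ is equivalent to a principal ideal of $R$, which is precisely the condition in the proposition.

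There is essentially no obstacle here; the proof is a one-line application of Proposition \ref{AYACHE}. The only subtlety worth mentioning is verifying that a Pr\"ufer minimal extension really does have $\mathrm{Supp}(S/R)=\{M\}$ with $M$ maximal, but this is the content of \cite[Theorem 2.1]{DPP2} (cited just before the proposition) combined with the fact that a minimal flat epimorphism is Pr\"ufer minimal in the sense of Corollary \ref{PM}.
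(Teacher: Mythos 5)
Your proof is correct and is essentially the paper's own argument: the paper states that this proposition ``results from Proposition~\ref{AYACHE},'' which is exactly your route. Since a Pr\"ufer minimal extension is Pr\"ufer (Corollary~\ref{PM}) with $\mathrm{Supp}(S/R)=\{M\}$ by the definition of the crucial maximal ideal, the equivalence (1)$\Leftrightarrow$(2) of Proposition~\ref{AYACHE} collapses, as you say, to the condition that $M$ is equivalent to a principal ideal.
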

      
Proposition~\ref{VALU} take the following form, observing that a Pr\"ufer extension is integrally closed.
   
\begin{proposition}\label{VALBIS}\cite[Theorem 6.8 and Theorem 6.10]{DPP2} If $R$ is a local ring, an extension $R\subseteq S$ is Pr\"ufer FCP (resp.; minimal) if and only if there exists $P\in \mathrm{Spec}(R)$ such that $S=R_P$, $P=SP$ and $R/P$ is a finite dimensional (resp.; one-dimensional) valuation domain. Under these conditions, $S/P$ is the quotient field of $R/P$ and $P$ is a divided prime ideal of $R$. 
   \end{proposition}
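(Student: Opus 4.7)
\smallskip

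\textbf{Proof proposal.} The plan is to obtain the statement as a refinement of Proposition~\ref{VALU}, by combining the order isomorphism $[R,S]\to[R/P,S/P]$ furnished there with the classical description of the overrings of a valuation domain.

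First I would invoke Proposition~\ref{VALU}: if $R\subseteq S$ is Pr\"ufer, there exists $P\in\mathrm{Spec}(R)$ with $S=R_P$, $PS=P$, $R/P$ a valuation domain, $S/P$ its quotient field, $P$ divided, and the map $T\mapsto T/P$ is an order isomorphism $[R,S]\to[R/P,S/P]$. All the structural conclusions of the statement beyond the dimensional data are thus already in hand, so what remains is to translate the FCP (respectively minimal) hypothesis into a statement about $\dim(R/P)$.

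Next I would use the classical fact that, for a valuation domain $V$ with quotient field $K$, the assignment $Q\mapsto V_Q$ is an order-reversing bijection $\mathrm{Spec}(V)\to[V,K]$; in particular $[V,K]$ is a chain whose cardinality equals $1+\dim(V)$ when $V$ is finite-dimensional. Applying this to $V=R/P$, $K=S/P$, we get that $[R,S]$ is a chain of cardinality $1+\dim(R/P)$. Then: $R\subseteq S$ is FCP iff the chain $[R,S]$ is finite iff $\mathrm{Spec}(R/P)$ is finite iff $R/P$ is finite-dimensional; $R\subseteq S$ is minimal iff $|[R,S]|=2$ iff $\mathrm{Spec}(R/P)$ has exactly two elements iff $R/P$ is one-dimensional.

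The converse direction is immediate along the same path: the hypotheses on $P$ force $R\subseteq S$ to be Pr\"ufer by Proposition~\ref{VALU}, and the chain description then gives FCP (respectively minimality) from the dimension assumption on $R/P$. There is no real obstacle here; the only point requiring a citation is the overring/prime bijection for valuation domains, which is standard (and could be pointed to via \cite{KZ} or any valuation-theoretic reference). The argument is essentially a dictionary between the lattice $[R,S]$ and $\mathrm{Spec}(R/P)$.
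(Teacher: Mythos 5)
Your proof is correct, but it takes a genuinely different route from the paper's. In the paper this proposition carries no internal argument: it is quoted from \cite[Theorems 6.8 and 6.10]{DPP2}, the only bridge being the remark that a Pr\"ufer extension is integrally closed, so that the cited characterization of integrally closed FCP (resp. minimal) extensions over a local ring applies as stated (the converse direction implicitly also uses that an integrally closed FCP extension is Pr\"ufer, recorded later in the paper as Proposition~\ref{1.15}). You instead deduce the statement from Proposition~\ref{VALU}: composing the order isomorphism $[R,S]\to[R/P,S/P]$, $T\mapsto T/P$, with the classical order-reversing bijection $\mathrm{Spec}(V)\to[V,K]$, $Q\mapsto V_Q$, valid for a valuation domain $V$ with quotient field $K$, you identify the lattice $[R,S]$ with $\mathrm{Spec}(R/P)$ (order reversed). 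Since $[R,S]$ is a chain, FCP amounts to $[R,S]$ being finite, i.e.\ to $\mathrm{Spec}(R/P)$ being finite, i.e.\ to $R/P$ being finite dimensional; minimality amounts to $|[R,S]|=2$, i.e.\ to $\dim(R/P)=1$ (and in the converse a one-dimensional valuation domain is not a field, so $R\neq S$ does hold). Both directions of the equivalence then come for free. The trade-off: the paper's treatment is shorter and keeps the logical burden on \cite{DPP2}, whereas yours is self-contained modulo Proposition~\ref{VALU} plus one standard valuation-theoretic fact (every overring of a valuation domain is a localization at a prime ideal), which is the single ingredient that still needs an external reference; as a bonus, your dictionary yields the exact count $|[R,S]|=1+\dim(R/P)$, which the paper only obtains later (it underlies Proposition~\ref{1.18} and the length computations via \cite[Proposition 6.12]{DPP2}).
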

   
The conductor of  a ring extension $R\subset S$ is denoted  by $(R:S)$. The following Corollary recalls, for a Pr\"ufer minimal extension $R\subset S$, the link between the crucial ideal maximal $\mathcal{C}(R,S)$ and  $(R:S)$.

\begin{corollary}\label{VAL4} If  $R\subset S$ is a Pr\"ufer minimal extension with crucial ideal maximal $M$,  then, $P: =(R:S)$ is a prime ideal of $R$,   $P \subset M$ and there is no prime ideal of $R$ contained strictly between $P$ and $M$.
    \end{corollary}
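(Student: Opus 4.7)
The plan is to extract the structural description of the extension from Proposition~\ref{VALBIS} and then read off the three assertions from that description. Since Section 11 places us over a local ring and the crucial maximal ideal $M$ is the (unique) maximal ideal of $R$, Proposition~\ref{VALBIS} in the minimal case supplies a prime ideal $P' \in \mathrm{Spec}(R)$ with $S = R_{P'}$, $SP' = P'$, and $R/P'$ a one-dimensional valuation domain.

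The key step is to identify $(R:S)$ with this $P'$. The inclusion $P' \subseteq (R:S)$ is immediate: $P' \subseteq R$ and $P'S = SP' = P' \subseteq R$. For the reverse inclusion, suppose $x \in (R:S) \setminus P'$. Since $S = R_{P'}$, every element of $R \setminus P'$ is a unit in $S$, so $xS = S$; combined with $xS \subseteq R$ this yields $S \subseteq R$, contradicting $R \subsetneq S$. Hence $(R:S) = P'$, and in particular $P := (R:S)$ is prime.

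The strict containment $P \subset M$ follows because $R/P$ is a one-dimensional valuation domain, which is not a field, so its maximal ideal $M/P$ is nonzero, i.e.\ $P \subsetneq M$. Finally, the prime ideals of $R$ containing $P$ correspond bijectively to the prime ideals of $R/P$, and since $R/P$ is one-dimensional there are exactly two such primes, namely $0$ and $M/P$; therefore no prime of $R$ lies strictly between $P$ and $M$.

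There is no genuine obstacle here: the whole argument is a direct unpacking of Proposition~\ref{VALBIS}, the only non-automatic point being the identification $(R:S) = P'$, which is the short unit-versus-conductor argument above.
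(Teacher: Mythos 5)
There is a genuine gap, and it sits in your very first sentence: the corollary does not assume that $R$ is local. Despite the section title, the statement concerns an arbitrary Pr\"ufer minimal extension $R\subset S$, with $M$ its crucial maximal ideal, i.e.\ the unique maximal ideal with $\mathrm{Supp}(S/R)=\{M\}$; when the paper wants a local base ring it says so explicitly (compare Corollary~\ref{VALTER}), and the paper's own proof of this corollary localizes at $M$, which would be pointless if $R$ were already local with maximal ideal $M$. For non-local $R$ your argument has nothing to start from: Proposition~\ref{VALBIS} is only available over a local ring, and its conclusion genuinely fails globally. For instance $\mathbb Z\subset\mathbb Z[1/p]$ is Pr\"ufer minimal with crucial maximal ideal $p\mathbb Z$, yet $\mathbb Z[1/p]$ is not of the form $\mathbb Z_{P'}$ for any prime ideal $P'$, and the conductor is $0$. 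Consequently the identification $(R:S)=P'$, the unit argument, and the counting of primes in $R/P$ cannot even be formulated in the stated generality.

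The repair is exactly the reduction the paper performs. First localize: $R_M\subset S_M$ is again Pr\"ufer minimal over the local ring $R_M$, and there your argument applies verbatim (indeed, your direct identification of the conductor $(R_M:S_M)$ with the prime ideal furnished by Proposition~\ref{VALBIS}, via the unit argument, is a nice self-contained touch; the paper instead quotes \cite[Lemme 3.2]{FO} for primality and uses Proposition~\ref{VALBIS} only for the interval structure). But one must then descend to $R$, and these bridging steps are absent from your proposal: primality of $P=(R:S)$ itself is obtained from \cite[Lemme 3.2]{FO}, which holds for any minimal extension; one needs the equality $PR_M=(R_M:S_M)$; and one uses the bijection between primes of $R_M$ and primes of $R$ contained in $M$ to pull back both the strict inclusion $P\subset M$ and the nonexistence of primes strictly between $P$ and $M$. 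As written, your argument proves the corollary only under an extra hypothesis (locality of $R$) that the statement does not make.
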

\begin{proof} First, $P:=(R:S)$ is a prime ideal of $R$ by \cite[Lemme 3.2]{FO}.   Moreover, $P_M=PR_M=(R_M:S_M)$ with $R_M\neq S_M$ shows that $P\subset M$. At last, $R_M\subset S_M$ is also a Pr\"ufer minimal extension. Then, according to Proposition \ref{VALBIS}, $R_M/P_M$ is a one-dimensional valuation domain, so that there is no prime ideal of $R_M$ contained strictly between $P_M$ and $MR_M$ giving that there is no prime ideal of $R$ contained strictly between $P$ and $M$.
    \end{proof}

\begin{corollary}\label{VALTER} Let $R\subset S$ be a Pr\"ufer minimal extension over a local ring $(R,M)$. Then, with the notation of Proposition~\ref{VALBIS}, each element $t\in M\setminus P$ is a strong divisor of $R$, $S \cong R_t$, $P= \cap [Rt^n | n \in \mathbb N]$  and $M= \sqrt{Rt}$.
    \end{corollary}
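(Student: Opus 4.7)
The plan is to reduce every assertion to the existence of one explicit strong divisor (supplied by Proposition~\ref{LOCALPRUF}) and to the structure of $R/P$, which by Proposition~\ref{VALBIS} is a one-dimensional valuation domain with $P$ divided in $R$. The pivotal computation is $\sqrt{Rt} = M$; once this is established, the remaining claims reduce to Definition~\ref{1.11} and a direct application of Proposition~\ref{LPRUF}.

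First I would show that $P \subseteq Rt$: since $P$ is divided, $P$ and $Rt$ are comparable, and $t \in Rt \setminus P$ rules out $Rt \subseteq P$. Passing to $R/P$, the image $t + P$ is a nonzero element of the maximal ideal $M/P$ of the one-dimensional valuation domain $R/P$; in such a domain the only prime containing a nonzero element is the maximal one, hence $\sqrt{(Rt + P)/P} = M/P$. Lifting back via $Rt + P = Rt$ yields $\sqrt{Rt} = M$. Next, since $R \subset S$ is Pr\"ufer of finite type, Proposition~\ref{LOCALPRUF} supplies a strong divisor $s \in R$ with $S = R_s$; this $s$ lies in $M \setminus P$, because $s \notin \mathrm{U}(R)$ (else $R_s = R$) and $\mathrm{D}(s) = \mathcal{X}(R_s) = \mathcal{X}(R_P) = \{Q \in \mathrm{Spec}(R) \mid Q \subseteq P\}$ forces $s \notin P$. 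Applying the previous step to $s$ yields $\sqrt{Rs} = M = \sqrt{Rt}$, so $t \simeq s$ in the sense of Definition~\ref{1.11}. The remark following that definition then makes $t$ a strong divisor (since $\Delta(R)$ is saturated) and $R_t \cong R_s = S$; as $t$ is a unit of $S$, this isomorphism realizes $R_t = S$ inside $\mathrm{Tot}(R)$.

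Finally, $t$ is a non-unit strong divisor of the local ring $R$, so Proposition~\ref{LPRUF} applies directly and furnishes a prime ideal $P_t := \bigcap[Rt^n \mid n \in \mathbb{N}]$ satisfying $R_t = R_{P_t}$. The equality $R_{P_t} = R_t = S = R_P$ of local rings forces $P_t = P$ (the maximal ideal of the common localization contracts to a unique prime of $R$), and hence $P = \bigcap[Rt^n \mid n \in \mathbb{N}]$. I anticipate no serious obstacle; the only delicate ingredient is recognizing $P$ as divided so that the comparability $P \subseteq Rt$ is automatic, which is precisely where the minimal (hence Pr\"ufer-FCP) hypothesis enters via Proposition~\ref{VALBIS}.
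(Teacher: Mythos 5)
Your proof is correct, but it is organized quite differently from the paper's, which is considerably shorter. The paper gets $S=R_t$ immediately from minimality: since $t\notin P$ and $S=R_P$ (Proposition~\ref{VALBIS}), $t$ is a unit of $S$, giving a factorization $R\subseteq R_t\subseteq S$ with $R_t\neq R$ (as $t\in M$ is a regular non-unit), so minimality forces $R_t=S$; it then cites Proposition~\ref{LOCALPRUF} for the strong-divisor claim, gets $P=\cap[Rt^n\mid n\in\mathbb N]$ from Proposition~\ref{LPRUF}, and only at the very end deduces $M=\sqrt{Rt}$ from the one-dimensionality of $R/P$. You reverse this order: you first compute $\sqrt{Rt}=M$ from the dividedness of $P$ (which gives the comparability $P\subseteq Rt$) and the valuation structure of $R/P$, and only then obtain $S\cong R_t$ and the strong-divisor property of $t$ by transporting them from the element $s$ supplied by Proposition~\ref{LOCALPRUF}, via $t\simeq s$ and the saturation of $\Delta(R)$ (Definition~\ref{1.11}). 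What your longer route buys is precision at exactly the two points the paper leaves terse: (i) Proposition~\ref{LOCALPRUF} as stated only yields \emph{some} strong divisor $s$ with $S=R_s$, and concluding that the \emph{given} $t$ is a strong divisor requires precisely the $\simeq$/saturation argument you spell out (this is in effect the proof of Proposition~\ref{1.9.1.0}, which could have been cited instead); (ii) identifying the prime $\cap[Rt^n\mid n\in\mathbb N]$ of Proposition~\ref{LPRUF} with the given $P$ needs the contraction-of-the-maximal-ideal argument you make explicit. Conversely, what the paper's route buys is economy: minimality yields $S=R_t$ in one line, whereas your derivation of $S\cong R_t$ through $t\simeq s$ is the most roundabout part of your write-up and could be replaced by that one-line argument.
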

\begin{proof} Because $t \notin P$, we have $t\in \mathrm{U}(S)$ and then a factorization $R\subseteq R_t\subseteq S$, so that $S= R_t$ by minimality. By (Proposition~\ref{LOCALPRUF}), $t$ is a strong divisor. The third statement follows from Corollary~\ref{LPRUF}. Because $R/P$ is one-dimensional, $M$ is the only prime ideal containing $P$, so that $M=\sqrt{Rt}$.
    \end{proof}
  
\begin{proposition}\label{1.13} Let $R\subset S$ be a ring extension where $(R,M)$ is a local ring.  The following statements are equivalent:
 \begin{enumerate}
 
 \item $R\subset S$ is a Pr\"ufer minimal  extension;
 
\item there is a strong divisor $a\in R\setminus\mathrm{U}(R)$ such that $S=R_a$ and $\sqrt{Ra}\subseteq\sqrt{Rb}\Rightarrow\sqrt{Ra}=\sqrt{Rb}$ (or equivalently, $\mathrm{D}(a)\subseteq\mathrm{D}(b)\Rightarrow\mathrm{D}(a)= \mathrm{D}(b)$) for each $b\in R\setminus \mathrm{U}(R)$;

\item there is a strong divisor $a\in R\setminus\mathrm{U}(R)$ such that $S=R_a$  and $M = \sqrt{Ra}$;

\item there is a strong divisor $a\in R\setminus\mathrm{U}(R)$, such that $S=R_a$, and such that $\mathrm{D}(a)$ is an open affine subset, maximal in the set of  proper open affine subsets.
\end{enumerate}
 \end{proposition}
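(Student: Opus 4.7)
The plan is to establish the cycle $(1)\Rightarrow(3)\Rightarrow(4)\Rightarrow(1)$ together with the equivalence $(2)\Leftrightarrow(3)$, which gives the full result. For $(1)\Rightarrow(3)$, Corollary~\ref{VAL4} (or Proposition~\ref{VALBIS}) yields that the conductor $P:=(R:S)$ is a prime ideal strictly contained in $M$, so $M\setminus P\neq\emptyset$; then Corollary~\ref{VALTER} supplies any $a\in M\setminus P$ as a strong divisor with $S=R_a$ and $M=\sqrt{Ra}$.

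The equivalence $(2)\Leftrightarrow(3)$ will be handled by exploiting that $Ra$ is a \emph{strong divisor}, hence comparable with every principal ideal of $R$. The direction $(3)\Rightarrow(2)$ is immediate: any $b\in R\setminus\mathrm{U}(R)$ lies in $M=\sqrt{Ra}$, so $\sqrt{Rb}\subseteq\sqrt{Ra}$, and combined with the hypothesized inclusion $\sqrt{Ra}\subseteq\sqrt{Rb}$ we get equality. For $(2)\Rightarrow(3)$, let $b\in M$. Comparability gives either $Rb\subseteq Ra$ (so $b\in\sqrt{Ra}$ directly) or $Ra\subseteq Rb$ (so $\sqrt{Ra}\subseteq\sqrt{Rb}$, which the hypothesis of (2) upgrades to equality, again yielding $b\in\sqrt{Ra}$). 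Hence $M\subseteq\sqrt{Ra}$, and the reverse inclusion follows from $a\in M$.

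For $(3)\Rightarrow(4)$, the identity $M=\sqrt{Ra}$ forces $\mathrm{V}(a)=\mathrm{V}(M)=\{M\}$, so $\mathrm{D}(a)=\mathrm{Spec}(R)\setminus\{M\}$. Any open subset strictly containing $\mathrm{D}(a)$ must contain $M$ and hence equal $\mathrm{Spec}(R)$, which is not a proper open affine; thus $\mathrm{D}(a)$ is maximal. For $(4)\Rightarrow(1)$, Proposition~\ref{1.9.1.0} yields that $R\subset R_a=S$ is Pr\"ufer (so in particular $R\neq S$ since $a\notin\mathrm{U}(R)$). Given $T\in[R,S]$ with $T\neq R$, the Pr\"ufer property makes $R\subseteq T$ a flat epimorphism, and $T\neq R$ forces $\mathcal{X}(T)\subsetneq\mathrm{Spec}(R)$ by Proposition~\ref{LAZZEPI}; since $T\subseteq S$, we also have $\mathcal{X}(T)\supseteq\mathcal{X}(S)=\mathrm{D}(a)$. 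Maximality of $\mathrm{D}(a)$ among proper open affine subsets forces $\mathcal{X}(T)=\mathcal{X}(S)$, whence $T=S$ by Proposition~\ref{EPIPLAT}(2), proving minimality.

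The only genuinely delicate step is $(2)\Rightarrow(3)$, where one has to exploit the comparability coming from $a$ being a strong divisor rather than merely a regular element; all the other implications reduce to a direct invocation of Corollary~\ref{VALTER} or of the Lazard $\mathcal{X}$-correspondence. No single step should be a serious obstacle provided one remembers that, over a local ring, a strong divisor makes the principal ideal $Ra$ comparable with every ideal of $R$.
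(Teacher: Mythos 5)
Your overall architecture differs from the paper's (the paper proves $(1)\Leftrightarrow(2)$ directly via Proposition~\ref{1.8}, then gets $(2)\Leftrightarrow(3)$ from Corollary~\ref{VALTER}, and $(2)\Leftrightarrow(4)$ separately), and most of your steps are sound: $(1)\Rightarrow(3)$ via Corollary~\ref{VAL4} and Corollary~\ref{VALTER} is correct, $(3)\Rightarrow(4)$ is correct and clean, and your direct proof of $(2)\Leftrightarrow(3)$ using comparability of $Ra$ with every principal ideal is actually more self-contained than the paper's detour through $(1)$. The genuine gap is in $(4)\Rightarrow(1)$. There you take $T\in[R,S]$ with $T\neq R$, observe that $\mathrm{D}(a)=\mathcal{X}(S)\subseteq\mathcal{X}(T)\subsetneq\mathrm{Spec}(R)$, and then invoke the maximality in $(4)$ to force $\mathcal{X}(T)=\mathrm{D}(a)$. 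But the hypothesis in $(4)$ only quantifies over \emph{open} affine subsets, whereas Proposition~\ref{LAZZEPI} guarantees only that $\mathcal{X}(T)$ is an affine subset in Lazard's sense: compact and stable under generization, but not necessarily Zariski open (for $\mathbb{Z}\subset\mathbb{Q}$ one has $\mathcal{X}(\mathbb{Q})=\{(0)\}$, which is affine and not open). So maximality among proper open affine subsets says nothing, as stated, about the set $\mathcal{X}(T)$, and the step is unjustified.

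The gap is fixable, and in fact with a trick you already use. Option (i): first prove $(4)\Rightarrow(3)$ directly by the comparability argument from your $(2)\Rightarrow(3)$: if $b\in M\setminus\sqrt{Ra}$, then $Rb\subseteq Ra$ is impossible, so comparability gives $Ra\subseteq Rb$, hence $\mathrm{D}(a)\subseteq\mathrm{D}(b)\subsetneq\mathrm{Spec}(R)$ with $\mathrm{D}(b)$ a principal, hence open affine, subset; maximality yields $\mathrm{D}(a)=\mathrm{D}(b)$, i.e. $b\in\sqrt{Ra}$, a contradiction. Thus $M=\sqrt{Ra}$, so $\mathrm{V}(a)=\{M\}$ and $\mathrm{D}(a)=\mathrm{Spec}(R)\setminus\{M\}$; now \emph{every} subset strictly containing $\mathrm{D}(a)$ equals $\mathrm{Spec}(R)$, so your conclusion $\mathcal{X}(T)\in\{\mathrm{D}(a),\mathrm{Spec}(R)\}$ holds with no openness needed, and the rest of your Lazard argument (Proposition~\ref{EPIPLAT}(2)) goes through. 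Option (ii): follow the paper's route: $(4)\Rightarrow(2)$ is immediate (any $\mathrm{D}(b)\supseteq\mathrm{D}(a)$ with $b$ a non-unit is a proper open affine), and for $(2)\Rightarrow(1)$ use Proposition~\ref{1.7}(2) (or Proposition~\ref{1.8}) to write any $T\in[R,S]$ as $R_\Sigma$ with $\Sigma\subseteq\Delta(R)$ a mcs; then only the \emph{principal} open sets $\mathrm{D}(\sigma)\supseteq\mathrm{D}(a)$ enter the comparison, each non-unit $\sigma$ gives $\mathrm{D}(\sigma)=\mathrm{D}(a)$, and $T\in\{R,R_a\}$ follows.
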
 
\begin{proof} We have clearly (2) $\Leftrightarrow$ (3) by (Corollary~\ref{VALTER}), once (1) $\Leftrightarrow$ (2) is proved. 
 We then prove that (1) is equivalent to (2). Suppose that $R\subset S$ is a minimal extension, that is a flat epimorphism. Then $R\subset S$ is clearly a Pr\"ufer extension. By Proposition~\ref{1.8}, there is a mcs $\Sigma$ of $R$, whose elements are strong divisors and such that the extension identifies with $R\subset R_\Sigma$. Picking an arbitrary element $a \in \Sigma$, we get a factorization $R\to R_a \to R_\Sigma$. Its factors are injective because  the flat epimorphism $R\to R_a$ 
 verifies \cite[Lemme 3.4, p.114]{L}.
 
  As $a$ is not invertible, $R\neq R_a$ implies $S = R_a$, by minimality of $R\subset S$. In the same way a factorization $R\subset R_b \subseteq R_a$ implies $R_b = R_a$, or equivalently $\mathrm{D}(a) \subseteq \mathrm{D}(b) \Rightarrow \mathrm{D}(a)= \mathrm{D}(b)$, which means that $\sqrt {Ra} \subseteq \sqrt {Rb} \Rightarrow \sqrt {Ra} = \sqrt {Rb}$.
 
We prove the converse. Observe that for any mcs $\Sigma$ of $R$ and $a\in R$, such that there is a factorization $R \to R_\Sigma \to R_a$, we have $\mathrm{D}(a) \subseteq \cap [\mathrm{D}(\sigma) \mid \sigma \in \Sigma]$. Suppose that $R\subset S$ verifies the conditions of the proposition. Then $S=R_a$, where $a\in R\setminus \mathrm U(R)$ is a strong divisor and then $R\subset S$ is Pr\"ufer, so that any subextension $R \subset T \subseteq S$ is Pr\"ufer. By Proposition~\ref{1.8}, we get that $T=R_\Sigma$, for some mcs $\Sigma$ of $R$. The above observation shows that $\mathrm{D}(a) \subseteq \mathrm{D}(\sigma)$ for any $\sigma \in \Sigma$. The last condition entails that $\Sigma = <a>$, and then $T =R_a$. Therefore, $R\subset S$ is minimal and a flat epimorphism.
 
Clearly (4) implies (2). The converse is a consequence of the following facts. If $\mathrm{D}(a) \subseteq \mathrm{D}(I)$, where $\mathrm{D}(I)$ is an open affine subset different from $\mathrm{Spec}(R)$, we have a factorization $R \subset \Gamma (\mathrm{D}(I)) \subseteq R_a$ and $R \to \Gamma (\mathrm{D}(I))$ is an injective flat epimorphism whose spectral image is $\mathrm{D}(I)$.
\end{proof}
   
\begin{lemma}\label{1.22} Let $R$ be a ring and $a\in R$. Then, there exists some $M\in\mathrm{Max}(R)$ such that $M=\sqrt {Ra}$ if and only if $\mathrm{Supp}(R_a/R)=\{M\}$.
 \end{lemma}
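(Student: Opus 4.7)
The strategy is to compute $\mathrm{Supp}(R_a/R)$ directly and recognize it as the Zariski closed set $\mathrm{V}(Ra)$. First I would observe that for any $P\in\mathrm{Spec}(R)$, localization gives $(R_a)_P=R_P$ when $a\notin P$ (because $a/1$ is already a unit in $R_P$), and $(R_a)_P=(R_P)_{a/1}\neq R_P$ when $a\in P$ (since inverting a non-unit of the local ring $R_P$ strictly enlarges it). Consequently $(R_a/R)_P=0$ iff $a\notin P$, so
\[
  \mathrm{Supp}(R_a/R)=\{P\in\mathrm{Spec}(R)\mid a\in P\}=\mathrm{V}(Ra).
\]

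Given this identification, both directions are immediate. If $\sqrt{Ra}=M$ for some $M\in\mathrm{Max}(R)$, then every prime $P\supseteq Ra$ satisfies $P\supseteq\sqrt{Ra}=M$, and maximality of $M$ forces $P=M$; hence $\mathrm{Supp}(R_a/R)=\mathrm{V}(Ra)=\{M\}$. Conversely, if $\mathrm{Supp}(R_a/R)=\{M\}$ with $M\in\mathrm{Max}(R)$, then $\mathrm{V}(Ra)=\{M\}$, and since $\sqrt{Ra}$ is the intersection of the prime ideals containing $Ra$, we obtain $\sqrt{Ra}=M$.

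There is no real obstacle here; the only subtlety is confirming that $(R_P)_{a/1}\neq R_P$ whenever $a\in P$, which follows because any proper ideal of $R_P$ remains proper after inverting an element of the maximal ideal $PR_P$ (an inverse of $a/1$ in $R_P$ would make $a$ a unit of $R_P$, contradicting $a\in P$). The rest is a routine unfolding of the definitions of support and radical.
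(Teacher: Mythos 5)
Your strategy is the same as the paper's: both proofs identify $\mathrm{Supp}(R_a/R)$ with $\mathrm{V}(Ra)$ by checking, prime by prime, whether $a/1$ is a unit of $R_P$, and then translate $M=\sqrt{Ra}$ into $\mathrm{V}(Ra)=\{M\}$ using maximality of $M$ in one direction and the description of the radical as the intersection of the primes containing $Ra$ in the other; the paper simply writes all of this as one chain of equivalences.

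That said, the step you yourself flag as ``the only subtlety'' is not justified correctly, and it genuinely fails without an extra hypothesis. When $a\in P$, your parenthetical argument (an inverse of $a/1$ in $R_P$ would make $a/1$ a unit, contradicting $a\in P$) shows only that the canonical map $R_P\to (R_P)_{a/1}$ is not an \emph{isomorphism}; what $(R_a/R)_P\neq 0$ requires is that this map is not \emph{surjective}, and the two notions differ exactly when the map fails to be injective, i.e. when some element of $R_P$ is killed by a power of $a/1$. Your supporting claim that any proper ideal of $R_P$ stays proper after inverting an element of $PR_P$ is false in general: inverting a nilpotent element produces the zero ring. Indeed the lemma itself fails for zero-divisors: take $R=\mathbb{Z}/6\mathbb{Z}$ and $a=2$; then $R_a\cong\mathbb{Z}/3\mathbb{Z}$ and $R\to R_a$ is surjective, so $\mathrm{Supp}(R_a/R)=\emptyset$, yet $\sqrt{Ra}=2\mathbb{Z}/6\mathbb{Z}$ is a maximal ideal. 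The missing hypothesis is that $a$ be regular: then $R\to R_a$ is injective, hence so is every $R_P\to (R_a)_P$ by exactness of localization, and non-surjectivity coincides with non-bijectivity, which your unit argument does establish. In fairness, the paper's own proof has exactly the same blind spot (it passes from $R_M\neq (R_a)_M$ to $(R_a/R)_M\neq 0$ without comment), and the lemma is only ever applied to strong or locally strong divisors, which are regular by definition, so the applications are unaffected; but as a freestanding statement about an arbitrary $a\in R$, both your argument and the paper's need regularity of $a$ added.
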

\begin{proof} Let $M\in\mathrm{Max}(R)$. Then $M=\sqrt {Ra}\Leftrightarrow M$ is the only $P\in\mathrm{Spec}(R)$ containing $a\Leftrightarrow$ for any $P\in\mathrm{Spec}(R)\setminus\{M\},\ a\not\in P$ and $a\in M\Leftrightarrow$ for any $P\in\mathrm{Spec}(R)\setminus\{M\},\ a/1\in\mathrm{U}(R_P)$ and $a/1\not\in\mathrm{U}(R_M)\Leftrightarrow$ for any $P\in\mathrm{Spec}(R)\setminus\{M\},\ R_P=(R_a)_P$ and $R_M\neq(R_a)_M\Leftrightarrow \mathrm{Supp}(R_a/R)=\{M\}$. 
  \end{proof}
 
 \begin{proposition}\label{1.23} Let $R\subset S$ be a ring extension.  The following statements are equivalent:
 \begin{enumerate}
 
 \item $R\subset S$ is a  minimal  QR-extension;
 
\item there exists some $M\in\mathrm{Max}(R)$ such that $\mathrm{Supp}(S/R)=\{M\}$ and there is a lsd $a\in R\setminus\mathrm{U}(R)$ such that $S=R_a$;

\item  there is a lsd $a\in R\setminus\mathrm{U}(R)$ such that $S=R_a$ and there exists some $M\in\mathrm{Max}(R)$ such that $M=\sqrt {Ra}$;
\end{enumerate}
If these conditions are satisfied, then $M\not\subseteq\cup[P\in \mathrm{Max}(R)\mid P\neq M]$. 
 \end{proposition}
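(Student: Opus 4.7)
The plan is to first observe that $(2) \Leftrightarrow (3)$ is immediate from Lemma~\ref{1.22}: with $S = R_a$ fixed on both sides, the conditions $\mathrm{Supp}(S/R) = \{M\}$ and $M = \sqrt{Ra}$ are precisely the two sides of that lemma. I would then close the loop via $(1) \Rightarrow (3)$ and $(3) \Rightarrow (1)$.

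For $(1) \Rightarrow (3)$, a QR-extension is Pr\"ufer by definition, and a minimal extension is of finite type, so Theorem~\ref{1.12.1} supplies a lsd $a \in R$ with $S = R_a$; necessarily $a \notin \mathrm{U}(R)$ because $R \neq S$. A minimal extension has a crucial maximal ideal $M$ with $\mathrm{Supp}(S/R) = \{M\}$, and Lemma~\ref{1.22} then yields $M = \sqrt{Ra}$.

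For $(3) \Rightarrow (1)$, the hypothesis that $a$ is a lsd makes $R \subset S = R_a$ Pr\"ufer by definition of lsd, and the real work is establishing minimality. I would localize at $M$: the image $a/1$ is a strong divisor of the local ring $R_M$ (non-unit since $a \in M$), and $MR_M = \sqrt{R_M(a/1)}$ follows from $M = \sqrt{Ra}$, while $S_M = (R_M)_{a/1}$ by commutation of localizations. Proposition~\ref{1.13}, in the form $(3) \Rightarrow (1)$, then identifies $R_M \subset S_M$ as Pr\"ufer minimal. Outside of $M$, Lemma~\ref{1.22} already forces $R_P = S_P$. For any $T \in [R,S]$, both $R$-modules $T/R$ and $S/T$ have support inside $\{M\}$; at $M$, local minimality yields $T_M = R_M$ or $T_M = S_M$, so one of $T/R$, $S/T$ vanishes at every prime and hence globally. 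Thus $[R,S] = \{R, S\}$, and the extension is QR because both $R$ and $S = R_a$ are localizations of $R$.

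Finally, the concluding statement follows because $M = \sqrt{Ra}$ makes $M$ the unique prime of $R$ containing $a$, so $a \in M$ lies in no other maximal ideal, witnessing $M \not\subseteq \bigcup\{P \in \mathrm{Max}(R) \mid P \neq M\}$. The only delicate point in the argument is the local-global descent of minimality in $(3) \Rightarrow (1)$; it is resolved by the observation that the support of $T/R$ (respectively $S/T$) is concentrated in the singleton $\{M\}$, so vanishing at $M$ is vanishing everywhere.
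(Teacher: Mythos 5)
Your proof is correct and takes essentially the same route as the paper's: the equivalence $(2)\Leftrightarrow(3)$ via Lemma~\ref{1.22}, the forward direction via the crucial maximal ideal, Theorem~\ref{1.12.1} and Lemma~\ref{1.22}, and the converse by localizing at $M$, applying Proposition~\ref{1.13}, and noting $R_P=S_P$ elsewhere. Your support argument showing that minimality of $R_M\subset S_M$ together with $R_P=S_P$ for $P\neq M$ forces $[R,S]=\{R,S\}$ actually spells out a descent step that the paper's proof merely asserts.
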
 
\begin{proof} (1) $\Rightarrow$ (2) Assume that $R\subset S$ is a minimal  QR-extension. Then, there exists some $M\in\mathrm{Max}(R)$ such that $\mathrm{Supp}(S/R)=\{M\}$. Since $R\subset S$ is a minimal QR-extension, Theorem \ref{1.12.1} asserts that there is a lsd $a\in R\setminus\mathrm{U}(R)$ such that $S=R_a$, since $S$ is of finite type over $R$. It follows that $\mathrm{Supp}(R_a/R)=\{M\}$, which gives $M=\sqrt{Ra}$ by Lemma \ref{1.22}, so that $a\in M\setminus\cup[P\in\mathrm{Max}(R)\mid P\neq M]$, giving $M\not\subseteq\cup[P\in\mathrm{Max}(R)\mid P\neq M]$, proving the last assertion.

(2) $\Rightarrow$ (1) Assume that there exists some $M\in\mathrm{Max}(R)$ such that $\mathrm{Supp}(S/R)=\{M\}$ and there is a lsd $a\in R\setminus\mathrm{U}(R)$ such that $S=R_a$. These two conditions lead to $M=\sqrt{Ra}$ by Lemma \ref{1.22}. As $a/1$ is a strong divisor of $R_M$ and $S_M=(R_a)_M=(R_M)_{(a/1)}$, Proposition \ref{1.13} shows that $R_M\subset S_M$ is minimal Pr\"ufer. Moreover, $R_P=S_P$ for any $P\in\mathrm{Max}(R),\ P\neq M$ implies that $R\subset S$ is minimal Pr\"ufer. At last, Theorem \ref{1.12.1} shows that $R\subset S$ is a QR-extension, since minimal.

(2) $\Leftrightarrow$ (3) by Lemma \ref{1.22}.
 \end{proof}
 
 \begin{corollary} A minimal Pr\"ufer extension $R\subset S$ such that $S=R_s$ for some lsd $s$ of $R$ is a QR-extension.
 \end{corollary}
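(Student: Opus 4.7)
The plan is to deduce the corollary directly from the equivalence (1) $\Leftrightarrow$ (2) of Proposition~\ref{1.23}. Given the hypotheses, I need only verify that condition (2) of that proposition is satisfied by $R\subset S$.

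First I would observe that the element $s$ cannot be a unit of $R$: indeed, if $s\in\mathrm{U}(R)$, then $R_s=R$, contradicting the strict inclusion $R\subset S$. Hence $s\in R\setminus\mathrm{U}(R)$ is a lsd with $S=R_s$, which is the second half of condition~(2).

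Next, since $R\subset S$ is a minimal extension, the result recalled just before Proposition~\ref{VALBIS} (or equivalently \cite[Theorem 2.1]{DPP2}) provides a crucial maximal ideal $M=\mathcal{C}(R,S)\in\mathrm{Max}(R)$ with $\mathrm{Supp}(S/R)=\{M\}$. This furnishes the first half of condition~(2) of Proposition~\ref{1.23}.

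Both halves of (2) being met, the implication (2) $\Rightarrow$ (1) of Proposition~\ref{1.23} yields that $R\subset S$ is a minimal QR-extension, and in particular a QR-extension. There is no genuine obstacle here: the entire content of the corollary is already packaged in the equivalences of Proposition~\ref{1.23}, and the only thing to check by hand is the non-unit status of $s$, which is immediate from $R\neq S$.
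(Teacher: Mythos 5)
Your proposal is correct and follows essentially the same route as the paper: use minimality to obtain $\mathrm{Supp}(S/R)=\{M\}$ via the crucial maximal ideal, then invoke the implication (2) $\Rightarrow$ (1) of Proposition~\ref{1.23}. Your explicit verification that $s$ is a non-unit is a detail the paper leaves implicit, but it is the same argument.
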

 
\begin{proof} Since $R\subset S$ is minimal, there exists some $M\in\mathrm{Max}(R)$ such that $\mathrm{Supp}(S/R)=\{M\}$. Then Proposition \ref{1.23} shows that  $R\subset S$ is a    QR-extension.
   \end{proof}
 
 \begin{example} Set $R:=\mathbb Z,\ P:=p\mathbb Z$ where $p$ is a prime integer and $S:=\mathbb Z_p$. Obviously, $\mathrm{Supp}(S/R)=\{P\}$, so that $S=R_p$ and $P=\sqrt{Rp}$. Then, $p/1$ is a strong divisor of $R_P$ and $p/1\in\mathrm{U}(R_M)$ for any $M\in\mathrm{Spec}(R)\setminus\{P\}$, 
  showing that $p$ is a lsd of $R$.
  We recover the fact that $\mathbb Z\subset\mathbb Z_p$ is a minimal Pr\"ufer QR-extension. 
  \end{example} 
   
Next result shows that Pr\"ufer FCP extensions can be described in a special manner.
   
   \begin{proposition}\label{1.15} 
  \cite[Proposition 1.3]{Pic 5}
    Let $R\subset S$ be an FCP extension. 
Then $R\subset S$ is integrally closed $\Leftrightarrow$ $R\subset S$ is Pr\"ufer $\Leftrightarrow$ $R\subset S$ is a tower of Pr\"ufer  minimal extensions.
\end{proposition}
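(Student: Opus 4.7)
The plan is to close a cycle of implications, with FCP used only where needed. First, Pr\"ufer $\Rightarrow$ integrally closed requires no FCP hypothesis: by the definition at the start of Section~2, a Pr\"ufer extension is a normal pair, so in particular $R$ is integrally closed in $S$.

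For integrally closed $\Rightarrow$ Pr\"ufer (which is the main obstacle), I would invoke the result recalled in the introduction that every FCP extension is quasi-Pr\"ufer (\cite[Corollary 3.4]{Pic 5}). This supplies a factorization $R \subseteq \overline{R} \subseteq S$ in which $R \subseteq \overline{R}$ is integral and $\overline{R} \subseteq S$ is Pr\"ufer. The integrally-closed hypothesis forces $\overline{R} = R$, so $R \subseteq S$ coincides with its Pr\"ufer part and is itself Pr\"ufer. This step is the heart of the statement and rests on the non-trivial black box that FCP implies quasi-Pr\"ufer.

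For Pr\"ufer $\Rightarrow$ tower of Pr\"ufer minimal extensions, I would apply Corollary~\ref{PMFCP} directly: an FCP Pr\"ufer extension has FIP and decomposes as a finite tower of Pr\"ufer-Manis minimal extensions, which are \emph{a fortiori} Pr\"ufer minimal.

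Conversely, suppose $R = R_0 \subset R_1 \subset \cdots \subset R_n = S$ is a tower of Pr\"ufer minimal extensions. Each step $R_{i-1} \subset R_i$ is Pr\"ufer, hence integrally closed by the first paragraph (equivalently, by Corollary~\ref{PM}, each step is a flat epimorphism, which is integrally closed). Integrally closed ring extensions compose: if $x$ lies in some $R_i$ and is integral over $R_{i-2}$, the same monic relation makes $x$ integral over $R_{i-1}$, forcing $x \in R_{i-1}$, and then integrality over $R_{i-2}$ forces $x \in R_{i-2}$. An induction on $n$ shows that $R \subset S$ is integrally closed, which by the second paragraph yields Pr\"ufer and closes the cycle.
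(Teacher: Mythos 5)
Your proposal is correct, but note that the paper itself offers no proof at all: Proposition~\ref{1.15} is quoted verbatim from \cite[Proposition 1.3]{Pic 5}, so the ``paper's proof'' is a citation. What you have done is reassemble the result from ingredients the paper recalls earlier, and the assembly is sound: Pr\"ufer $\Rightarrow$ integrally closed is immediate from the normal-pair characterization; integrally closed $\Rightarrow$ Pr\"ufer follows from the quasi-Pr\"ufer factorization $R\subseteq\overline R\subseteq S$ of an FCP extension (\cite[Corollary 3.4]{Pic 5}) together with $\overline R=R$; and your transitivity argument for ``tower $\Rightarrow$ integrally closed'' (integral closedness composes along a finite tower, then reuse the previous implication) is a genuine, elementary proof of the only leg that neither the paper nor \cite{Pic 5} needs to restate. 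The one caveat worth flagging is that your ``Pr\"ufer $\Rightarrow$ tower'' step is not independent: Corollary~\ref{PMFCP} is itself an unproved restatement of \cite[Proposition 1.3]{Pic 5} (the very result being proved), as is the Section~2 recall it rests on. So your cycle is legitimate within the paper's logical order, but two of its four legs (FCP $\Rightarrow$ quasi-Pr\"ufer, and Pr\"ufer $\Rightarrow$ finite tower) remain black boxes outsourced to \cite{Pic 5}; what your write-up buys is an explicit reduction of the equivalences to exactly those two external facts, which is more than the paper records.
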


\begin{theorem}\label{1.16} An FCP QR-extension $R\subseteq S$ admits a tower of Pr\"ufer minimal extensions $R\subset R_1\subset\cdots\subset R_i\subset R_{i+1}\subset\cdots\subset R_n=S$, where $R_{i+1}=(R_i)_{a_i}=R_{a_i}$ for some lsd $a_i\in R$ and $S=R_{a_1\ldots a_n}=R_{a_n}$. The integer $n$ is independent of the sequence and is equal to $|\mathrm{Supp}(S/R)|$.
   \end{theorem}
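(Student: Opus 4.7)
The existence of a tower of Pr\"ufer minimal extensions $R=R_0\subset R_1\subset\cdots\subset R_n=S$ is Proposition~\ref{1.15}. Because the whole extension has FCP, each $R_i\in[R,S]$ is of finite type over $R$ (an FCP extension is a finite composite of minimal, hence finite type, extensions). The QR hypothesis together with Theorem~\ref{1.12.1} then furnishes a lsd $a_i\in R$ with $R_i=R_{a_i}$ for each $i\geq 1$. The identification $R_{i+1}=(R_i)_{a_{i+1}}$ is a formal sandwich: $a_{i+1}$ is a unit in $R_{i+1}=R_{a_{i+1}}\supseteq R_i$, so $R_{a_{i+1}}\subseteq(R_i)_{a_{i+1}}\subseteq R_{i+1}=R_{a_{i+1}}$, forcing equality throughout.

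For the compositum expression $S=R_{a_1\cdots a_n}$, the strict ascending chain $R_{a_1}\subsetneq\cdots\subsetneq R_{a_n}$ translates via the $(\mathcal X)$-rule into the strictly descending chain $\mathrm{D}(a_1)\supsetneq\cdots\supsetneq\mathrm{D}(a_n)$. The $(\mathcal{MCS})$-rule then gives $\mathcal X(R_{a_1\cdots a_n})=\bigcap_i\mathrm{D}(a_i)=\mathrm{D}(a_n)=\mathcal X(R_{a_n})$, and Proposition~\ref{EPIPLAT}(2) identifies the two flat epimorphisms as $R$-algebras. Independence of $n$ from the chosen tower is then immediate: the FCP poset $[R,S]$ is both Artinian and Noetherian, so every saturated chain between $R$ and $S$ has the same length by a Jordan--H\"older-type refinement.

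To obtain $n=|\mathrm{Supp}(S/R)|$, the plan is to show that each minimal step $R_{i-1}\subset R_i$ contributes exactly one fresh prime of $R$ to the support, and that these $n$ primes are distinct. The general theory of minimal extensions (recalled before Proposition~\ref{1.15}) gives $\mathrm{Supp}_{R_{i-1}}(R_i/R_{i-1})=\{M'_i\}$, where $M'_i=\mathcal{C}(R_{i-1},R_i)$ is maximal in $R_{i-1}$. Since $R\to R_{i-1}$ is a flat epimorphism whose spectral map is injective and whose local morphisms are isomorphisms, $M'_i$ contracts to a unique prime $M_i\in\mathrm{Spec}(R)$ satisfying $R_{M_i}=(R_{i-1})_{M'_i}$, so $M_i\notin\mathrm{Supp}_R(R_{i-1}/R)$, while the short exact sequence $0\to R_{i-1}/R\to R_i/R\to R_i/R_{i-1}\to 0$ forces $M_i\in\mathrm{Supp}_R(R_i/R)$. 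Iterating from $\mathrm{Supp}_R(R_0/R)=\emptyset$ yields $\mathrm{Supp}_R(S/R)=\{M_1,\ldots,M_n\}$; the $M_i$ are pairwise distinct because $M_i\in\mathrm{Supp}_R(R_i/R)\subseteq\mathrm{Supp}_R(R_{j-1}/R)$ whenever $i<j$, whereas $M_j\notin\mathrm{Supp}_R(R_{j-1}/R)$.

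The principal obstacle is this last paragraph: verifying that each minimal Pr\"ufer step introduces precisely one fresh prime to $\mathrm{Supp}_R(S/R)$ and that the introductions across different steps are pairwise disjoint. Everything else reduces to a direct application of Proposition~\ref{1.15}, Theorem~\ref{1.12.1}, and the $(\mathcal X)$/$(\mathcal{MCS})$-rules.
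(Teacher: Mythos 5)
Your first two paragraphs are correct and essentially reproduce the paper's argument: Proposition~\ref{1.15} gives the tower, FCP gives finite type, Theorem~\ref{1.12.1} gives the lsd's $a_i$, and your sandwich $R_{a_{i+1}}\subseteq (R_i)_{a_{i+1}}\subseteq R_{i+1}$ is a clean substitute for the paper's appeal to minimality; the $(\mathcal{MCS})$/$(\mathcal X)$-rule computation of $S=R_{a_1\cdots a_n}$ is also fine. The divergence, and the genuine gap, lies in the last sentence of the theorem, which the paper does not prove at all: it simply cites \cite[Proposition 6.12]{DPP2}. Your first attempt at it is wrong: an Artinian and Noetherian poset does \emph{not} satisfy a Jordan--H\"older property, and this already fails for FCP ring extensions in general --- if $k\subset L$ is a minimal field extension, then in $[k,L\times L]$ the chain $k\subset L^{\mathrm{diag}}\subset L\times L$ is saturated of length $2$ while $k\subset k\times k\subset L\times k\subset L\times L$ is saturated of length $3$. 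Equal length of maximal chains is a theorem about \emph{Pr\"ufer} FCP extensions; a lattice-theoretic route would have to invoke the fact, recorded in Section 7 of the paper, that $[R,S]$ is a distributive (hence modular) lattice when $R\subseteq S$ is Pr\"ufer, not mere finiteness of chains.

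Your second attempt (the support count) is also incomplete, as you acknowledge, and the failure point is precise: $\mathrm{Supp}_R(R_i/R_{i-1})$ is not the singleton $\{M_i\}$ but all of $\mathrm V(M_i)$, because for a flat epimorphic extension $\mathrm{Supp}(T/R)=\{P\mid PT=T\}=\mathrm{Spec}(R)\setminus\mathcal X(T)$ is stable under specialization. So your iteration only yields $\{M_1,\ldots,M_n\}\subseteq\mathrm{Supp}(S/R)\subseteq \mathrm V(M_1)\cup\cdots\cup\mathrm V(M_n)$; the distinctness argument is fine, but exhaustion is exactly what is missing. It can be closed as follows: if $P\in\mathrm{Supp}(S/R)$, i.e. $PS=S$, let $i$ be the largest index with $PR_i\neq R_i$ (it exists since $PR_0\neq R_0$); then $P\in\mathcal X(R_i)$, and the unique prime $Q$ of $R_i$ lying over $P$ satisfies $QR_{i+1}\supseteq PR_{i+1}=R_{i+1}$, so $Q\in\mathrm{Supp}_{R_i}(R_{i+1}/R_i)=\{\mathcal C(R_i,R_{i+1})\}$, whence $P=M_{i+1}$. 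This gives $\mathrm{Supp}(S/R)=\{M_1,\ldots,M_n\}$ and, a posteriori, the independence of $n$ from the chosen tower. As written, however, your proposal establishes only the first sentence of the theorem; the second is left open, whereas the paper outsources it to \cite[Proposition 6.12]{DPP2}.
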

 
\begin{proof} There is a tower of Pr\"ufer minimal extensions $R_i\subset R_{i+1}$ by Proposition \ref{1.15} because a QR-extension is Pr\"ufer. Therefore, each $T\in [R,S]$ is a localization $R_a$, for some lsd $a\in R$ by Theorem \ref{1.12.1} and $R_i\subset R_{i+1}$ identifies to $R_i\subset R_{a_i}$ for some lsd $a_i\in R\setminus\mathrm{U}(R_i)$.
 
Then by minimality, we get that $R_{i+1}=R_{a_i}=(R_i)_{a_i}$ and the result follows.
 The last result is \cite[Proposition 6.12]{DPP2}.
 \end{proof}
The above result applies when $R\subseteq S$ is an FCP extension $A(X) \subseteq B(X)$, or equivalently, $A\subseteq B$ has FCP \cite[Theorem 3.9]{DPP3}.
 
\begin{proposition}\label{1.18} Let $R\subset S$ be an FCP Pr\"ufer extension over a local ring $(R,M)$. 
  
 \begin{enumerate}
\item There is a sequence of Pr\"ufer minimal extensions between local rings $R\subset R_1\subset\cdots\subset R_i\subset R_{i+1}\subset \cdots\subset R_n = S$, where $R_{i+1}=(R_i)_{a_i}=R_{a_i}$ for some $a_i\in \Delta(R)$ and $S=R_{a_1\ldots a_n}=R_{a_n}$. In fact $[R,S]=\{R_i\}_{i=0}^n$. 
 Moreover, $R\subset S$ is a QR-extension.

\item There is  some subset $\{P_0,P_1,\cdots,P_n\}$ of $\mathrm{Spec}(R)$, with $P_0=M$ and $P_i\subset P_{i-1}$ for $i=1,\cdots,n$, such that $R_i =R_{P_i}$, $P_iR_i=P_i$ and $R/P_i$ is a valuation domain whose dimension is $i$.
\item For all $i=1,\cdots,n$ and $t\in P_{i-1}\setminus P_i$, we have $R_i=R_t$. Moreover, each element of $M\setminus P_n$ is a strong divisor.
\item Any finitely generated $S$-regular ideal is equivalent to a principal ideal.
\end{enumerate}
 \end{proposition}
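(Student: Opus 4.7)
The plan is to handle the four parts in order, each reducing to previously established structural results for FCP Pr\"ufer extensions over a local ring. For (1), Proposition~\ref{1.7} says that over a local ring every Pr\"ufer extension is a QR-extension, and Proposition~\ref{VALU} gives that $[R,S]$ is a chain. Combining Proposition~\ref{1.15} (FCP Pr\"ufer equals a tower of Pr\"ufer minimal extensions) with Theorem~\ref{1.16} (such a tower in the FCP QR situation has the form $R_{i+1} = R_{a_i}$ for some lsd $a_i\in R$), I obtain the required tower. Over a local ring, Proposition~\ref{1.9.1.0} combined with the definition of lsd shows that lsd and strong divisor coincide, so $a_i \in \Delta(R)$. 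Minimality of each step $R_i \subset R_{i+1}$ together with the chain property of $[R,S]$ forces $[R,S] = \{R_i\}_{i=0}^n$, and the final QR assertion is Proposition~\ref{1.7}(2).

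For (2), each subextension $R\subseteq R_i$ is FCP Pr\"ufer as an initial segment of the chain, so Proposition~\ref{VALBIS} supplies primes $P_i$ with $R_i = R_{P_i}$, $P_iR_i = P_i$, and $R/P_i$ a finite-dimensional valuation domain; the strict chain $R_0 \subsetneq R_1 \subsetneq\cdots$ translates into the strict descending chain $P_n\subsetneq\cdots\subsetneq P_0 = M$. To pin down $\dim(R/P_i) = i$, I would note that $P_i\subseteq R\subseteq T$ for every $T\in[R,R_i]$, which yields the order isomorphism $[R,R_i]\to[R/P_i,R_i/P_i]$ given by $T\mapsto T/P_i$. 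Since $R_i/P_i$ is the quotient field of $R/P_i$, overrings of $R/P_i$ correspond bijectively to $\mathrm{Spec}(R/P_i)$, so $|[R,R_i]| = \dim(R/P_i)+1$. Part~(1) gives $|[R,R_i]| = i+1$, whence $\dim(R/P_i) = i$.

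For (3), given $t\in P_{i-1}\setminus P_i$, spectral considerations yield $\mathcal X(R_i) = \{P\in\mathrm{Spec}(R)\mid P\subseteq P_i\}\subseteq\mathrm{D}(t) = \mathcal X(R_t)$, so the $(\mathcal X)$-rule (Proposition~\ref{EPIPLAT}) gives $R_t\subseteq R_i$. Conversely, $R\subset R_t$ is Pr\"ufer as a subextension of $R\subset R_i$, so $R_t\in\{R_0,\ldots,R_i\}$ by part~(1); since $t$ lies in the maximal ideal $P_{i-1}$ of $R_{i-1}$ (by~(2)), $t$ is a non-unit in $R_{i-1}$, excluding $R_t\subseteq R_{i-1}$ and forcing $R_t = R_i$. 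For the second claim, given $t\in M\setminus P_n$, take $i$ to be the smallest index in $\{1,\ldots,n\}$ with $t\notin P_i$; then $t\in P_{i-1}\setminus P_i$, so $R\subset R_t = R_i$ is Pr\"ufer, and Proposition~\ref{1.9.1.0} promotes $t$ to a strong divisor (regularity follows from $R\hookrightarrow R_t\subseteq S$).

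Part~(4) is an immediate application of Proposition~\ref{INVERT}: over a local ring with $R\subset S$ Pr\"ufer, every finitely generated $S$-regular ideal equals $R\rho$ for a strong divisor $\rho$, hence is in particular equivalent to a principal ideal. The main obstacle I anticipate is the dimension count in~(2): the critical observation is that $P_i$ is simultaneously a prime of $R$ and an ideal of $R_i$ (via $P_iR_i = P_i$), hence contained in every intermediate ring, which permits the order isomorphism $[R,R_i]\cong[R/P_i,R_i/P_i]$ and reduces the count to the classical bijection between overrings of a valuation domain and its prime spectrum. The remaining steps are essentially transcriptions of the propositions already available.
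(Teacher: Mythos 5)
Your proof is correct, and its overall skeleton matches the paper's: part (1) via the QR property of Pr\"ufer extensions over a local ring, Proposition~\ref{1.15}, Theorem~\ref{1.16}, and the identification of lsd's with strong divisors over a local ring (Proposition~\ref{1.9.1.0}); part (3) via the chain property of $[R,S]$ and unit/non-unit considerations (your use of the $(\mathcal X)$-rule for the inclusion $R_t\subseteq R_i$ is equivalent to the paper's observation that $t$ is a unit of $R_{P_i}$). Two places genuinely diverge, and both are worth noting. In (2), the paper obtains the chain $\{P_i\}$ and the count $\mathrm{dim}(R/P_i)=i$ by citing external FCP results (crucial ideals and \cite[Proposition 6.12]{DPP2}), whereas you give a self-contained argument: since $P_i=P_iR_i$ is an ideal common to every $T\in[R,R_i]$, the order isomorphism $[R,R_i]\cong[R/P_i,R_i/P_i]$ (the same device the paper uses in proving Proposition~\ref{VALU}) reduces the dimension count to the classical bijection between overrings of a valuation domain and its primes, and $|[R,R_i]|=i+1$ from (1) forces $\mathrm{dim}(R/P_i)=i$. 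This buys independence from the external reference at the cost of a slightly longer argument; note, however, that your passage from $R_{i-1}\subset R_i$ to $P_i\subset P_{i-1}$ silently uses that the $P_i$ are pairwise comparable (they are divided primes by Proposition~\ref{VALU}, or apply Proposition~\ref{VALBIS} stepwise to $R_{i-1}\subset R_i$), and this should be made explicit. In (4), the paper invokes the QR-characterization recalled at the head of Section 8, while you apply Proposition~\ref{INVERT}; your route yields the stronger conclusion that a finitely generated $S$-regular ideal is actually principal, not merely equivalent to a principal ideal.
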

 \begin{proof} 
(1) We know that $R\subset S$ is chained because $R$ is local \cite[Theorem 6.10]{DPP2}, and each $R_i$ is local by Proposition~\ref{VALBIS}. 
Moreover, this proposition  shows that for each $i\in\mathbb N_n$, there exists $P_i\in\mathrm{Spec}(R)$ such that $R_i=R_{P_i},\ P_iR_i=P_i$ and $ R/P_i$ is a $i$-dimensional valuation domain. Therefore, $R\subset S$ is a QR-extension. Then apply Theorem~\ref{1.16}. 
    
(2) Since $R\subset R_1$ is Pr\"ufer minimal, $P_0:=M=\mathcal{C}(R,R_1)$. As $S=R_n$, it follows that $R/P_n$ is a $n$-dimensional valuation domain and $\{P_0,P_1,\cdots,P_{n-1}\}:=\mathrm{Supp}(S/R)$ according to \cite[Proposition 6.12]{DPP2} with $P_i\subset P_{i-1}$ for each $i\in\mathbb N_n$.
  
(3) Let $t\in P_{i-1}\setminus P_i$. Then, $t$ is a unit of $R_{P_i}=R_i$ and $ R_t\subseteq R_i$. As $t\in P_{i-1}$, this implies that $t$ is a not a unit of $ R_{P_{i-1}}=R_{i-1}$, so that $R_t\not\subseteq R_{i-1}$. But $[R,S]$ is a chain, which leads to $R_t=R_i$. 
  
Let $x\in M\setminus P_n$. Since $\{P_0,P_1,\cdots,P_n\}$ is a chain, there exists some $i\in\mathbb N_n$ such that $x\in P_{i-1}\setminus P_i$. Then, $ R_x=R_i$ by the first part of (3). To end, Proposition \ref{LOCALPRUF} and Definition \ref{1.11} show that $x$ is a strong divisor.

(4) Since $R\subset S$ is a QR-extension, any finitely generated $S$-regular ideal is equivalent to a principal ideal according to the recall at the beginning of Section 8.
\end{proof}

 We end this section by a generalization of Proposition \ref{1.18}
to  $\mathcal B$-extensions. We recall that an extension $R\subset S$ is a {\em $\mathcal B$-extension} if the map $\beta : [R,S]\to\prod [[R_M,S_M]\mid M\in \mathrm{MSupp}(S/R)]$ defined by $T\mapsto(T_M)_{M\in\mathrm{MSupp}(S/R)}$ is bijective. Actually, an FCP extension $R\subseteq S$ is a $\mathcal B$-extension if and only if $R/P$ is a local ring for each $P\in\mathrm{Supp}(S/R)$ \cite[ Proposition 2.21]{Pic 10}. The following lemma gives a first special case of $\mathcal B$-extension.
For any extension $R\subseteq S$, the {\it length} $\ell[R,S]$ of $[R,S]$ is the supremum of the lengths of chains of $R$-subalgebras of $S$. Notice that if $R\subseteq S$ has FCP, then there {\it does} exist some maximal chain of $R$-subalgebras of $S$ with length $\ell[R,S]$ \cite[Theorem 4.11]{DPP3}.

\begin{lemma}\label{1.19} Let $R\subset S$ be an FCP Pr\"ufer extension such that $|\mathrm{MSupp}(S/R)|=1$. Then, $R\subset S$ is a $\mathcal B$-extension where $\mathrm{Supp}(S/R)$ and $[R,S]$ are chains with $n:= |\mathrm{Supp}(S/R)|=|[R,S]|-1$. There is a tower of Pr\"ufer minimal extensions $R\subset R_1\subset\cdots\subset R_i\subset R_{i+1}\subset\cdots\subset R_n=S$,  such that $[R,S]=\{R_i\}_{i=0}^n$. We define as follows a subset $\{P_0,P_1,\ldots,P_n\}$ of $\mathrm{Spec}(R)$ by $\mathrm{Supp}(S/R)=\{P_0,P_1,\cdots,P_{n-1}\}$ and $P_n:=R\cap(R_{n-1}:S)$, where $P_i\subset P_{i-1}$ for each $i\in\mathbb N_n$. In particular, $\mathrm{Supp}(R_i/R)=\{P_0,P_1,\cdots,P_{i-1}\}$. 
  \end{lemma}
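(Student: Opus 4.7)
The plan is to reduce to the local case via the unique maximal ideal $M$ in $\mathrm{MSupp}(S/R)$, apply Proposition~\ref{1.18} to $R_M \subset S_M$, and then transfer the resulting structure back to $R \subset S$. First, since $|\mathrm{MSupp}(S/R)| = 1$, say $\mathrm{MSupp}(S/R) = \{M\}$, for every other maximal $M' \neq M$ we have $R_{M'} = S_{M'}$, so any $T \in [R,S]$ satisfies $T_{M'} = R_{M'}$. I would then verify that the map $[R,S] \to [R_M, S_M]$, $T \mapsto T_M$, is an order-isomorphism: injectivity comes from the local-global principle for sub-$R$-modules of $S$, and for surjectivity, given $U \in [R_M, S_M]$, setting $T := U \cap S$ (intersection taken in $S_M$) yields a subring of $S$ containing $R$ with $T_M = U$, since any $y/s \in U$ with $y \in S$, $s \in R\setminus M$ satisfies $y = s(y/s) \in U \cap S = T$. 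Because each $P \in \mathrm{Supp}(S/R)$ is contained only in $M$ (otherwise a different maximal would be in $\mathrm{MSupp}(S/R)$ by specialization closure), the ring $R/P$ is local, so $R \subset S$ is a $\mathcal{B}$-extension by the criterion recalled in the introduction to the section.

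Next, I would apply Proposition~\ref{1.18} to the local FCP Pr\"ufer extension $R_M \subset S_M$, obtaining a chain $[R_M,S_M] = \{(R_i)_M\}_{i=0}^n$ realized as a tower of Pr\"ufer minimal extensions, together with a chain $Q_0 \supset Q_1 \supset \cdots \supset Q_n$ of primes of $R_M$ such that $(R_i)_M = (R_M)_{Q_i}$ and $\mathrm{Supp}(S_M/R_M) = \{Q_0,\ldots,Q_{n-1}\}$. Pulling back through the order-isomorphism yields the tower $R = R_0 \subset R_1 \subset \cdots \subset R_n = S$ with $[R,S] = \{R_i\}_{i=0}^n$; minimality of each step follows because the induced interval bijection collapses $[R_{i-1},R_i]$ to two elements, and the Pr\"ufer property of each step is inherited from $R \subset S$. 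Setting $P_i := Q_i \cap R$ gives primes of $R$ with $P_i \subsetneq P_{i-1}$, and localization shows $\mathrm{Supp}(R_i/R) = \{P_0, \ldots, P_{i-1}\}$; in particular $\mathrm{Supp}(S/R) = \{P_0,\ldots,P_{n-1}\}$, so $n = |\mathrm{Supp}(S/R)|$.

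Finally, for $P_n := R \cap (R_{n-1}:S)$, I would localize: $(R_{n-1}:S)_M = ((R_{n-1})_M : S_M)$, which in the minimal Pr\"ufer situation $(R_{n-1})_M \subset S_M = ((R_{n-1})_M)_{Q_n}$ with $Q_n S_M = Q_n$ (from Proposition~\ref{VALBIS}) is exactly $Q_n$; hence $(P_n)_M = Q_n$, giving $P_n = Q_n \cap R$, a prime of $R$ with $P_n \subsetneq P_{n-1}$. The main technical point is verifying that the localization map $[R,S] \to [R_M, S_M]$ is an order-isomorphism carrying Pr\"ufer minimal steps to Pr\"ufer minimal steps; once this is in hand, the remaining assertions are a direct translation of Proposition~\ref{1.18} back to $R$ via prime contraction.
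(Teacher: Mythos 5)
Your proposal is correct and rests on the same core reduction as the paper's proof: localize at the unique element $M$ of $\mathrm{MSupp}(S/R)$, apply Proposition~\ref{1.18} to $R_M\subset S_M$, and transfer the structure back through the bijection $T\mapsto T_M$. The difference is one of self-containedness. The paper gets bijectivity of $[R,S]\to[R_M,S_M]$ for free from the $\mathcal B$-extension property (citing \cite[Proposition 2.21]{Pic 10}), obtains the chain structure of $\mathrm{Supp}(S/R)$ from \cite[Theorem 3.10]{Pic 15}, the count $|\mathrm{Supp}(S/R)|=|[R,S]|-1$ from \cite[Proposition 6.12]{DPP2}, and identifies the $P_i$ as contractions of crucial ideals and conductors via \cite[Corollary 3.2]{DPP2} and Corollary~\ref{VAL4}. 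You instead prove the order-isomorphism by hand (injectivity by the local-global principle, surjectivity by pulling $U\in[R_M,S_M]$ back to $S$) and manufacture the supports and the primes $P_i$ by contracting the chain $Q_0\supset\cdots\supset Q_n$ supplied by Proposition~\ref{1.18}. This buys independence from \cite{Pic 15} and part of \cite{DPP2} at the cost of length; both routes are legitimate.

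Two points to tighten. First, $T:=U\cap S$ should be read as the preimage of $U$ under $S\to S_M$, since that map need not be injective; with this reading your computation $T_M=U$ is fine. Second, the asserted equality $(R_{n-1}:S)_M=((R_{n-1})_M:S_M)$ is not automatic: conductors commute with localization when $S$ is a finitely generated $R_{n-1}$-module, which is never the case here because $R_{n-1}\subset S$ is a nontrivial flat epimorphism. The equality does hold in your situation, but the inclusion $\supseteq$ needs an argument, for instance: if $x\in R$ has image in $Q_n=((R_{n-1})_M:S_M)$, then the $R$-module $(xS+R_{n-1})/R_{n-1}$ vanishes at every maximal ideal of $R$ (at $M$ because $xS_M\subseteq Q_nS_M=Q_n\subseteq(R_{n-1})_M$, and at $M'\neq M$ because $S_{M'}=(R_{n-1})_{M'}$, since $\mathrm{MSupp}_R(S/R_{n-1})\subseteq\{M\}$), hence $xS\subseteq R_{n-1}$. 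With that inserted, $P_n=Q_n\cap R$ follows and your proof is complete; note the paper's own treatment of this point (in Corollary~\ref{VAL4} and the last lines of its proof of the lemma) is similarly terse.
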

\begin{proof} From $|\mathrm{MSupp}(S/R)|=1$, we deduce that $R\subset S$ is a $\mathcal B$-extension by \cite[Proposition 2.21]{Pic 10}. If $\{M\}:=\mathrm{MSupp}(S/R)$, the map $\beta:[R,S]\to[R_M,S_M]$ defined by $\beta(T)=T_M$ for any $T\in[R,S]$ is bijective. But, $R_M\subset S_M$ is chained by Proposition \ref{1.18}, whence $R\subset S$ is chained. According to \cite[Theorem 3.10]{Pic 15}, $\mathrm{Supp}(S/R)$ has a least element $P$ and $\mathrm{Supp}(S/R)=\mathrm{V}(P)$ is chained. Moreover, $|\mathrm{Supp}(S/R)|=|[R,S]|-1=\ell[R,S]$ by \cite[Proposition 6.12]{DPP2}. If  $n:=|[R,S]|-1$, there is a sequence of Pr\"ufer minimal extensions $R\subset R_1\subset\cdots\subset R_i\subset R_{i+1}\subset\cdots\subset R_n=S$ such that $[R,S]=\{R_i\}_{i=0}^n$ since $R\subset S$ is chained. Moreover, there is some subset $\{P_0,P_1,\cdots,P_n\}$ of $\mathrm{Spec}(R)$ such that $\mathrm{Supp}(S/R)=\{P_0,P_1,\cdots,P_{n-1}\}$ where $P_i\subset P_{i-1}$ for each $i\in\mathbb N_{n-1}$. In particular, $\mathrm{Supp}(R_i/R)=\{P_0,P_1,\cdots,P_{i-1}\}$ for each $i\in\mathbb N_n$. In fact, we have $P_i=R\cap\mathcal{C}(R_i,R_{i+1})$ for each $i=0,\cdots,n-1$ by \cite[Corollary 3.2]{DPP2} and also $P_i=R\cap(R_{i-1}:R_i)$ for each $i\in\mathbb N_{n-1}$ by Corollary \ref{VAL4}. 
\end{proof}

 \begin{proposition}\label{1.21} Let $R\subset S$ be an FCP Pr\"ufer extension such that $|\mathrm{MSupp}(S/R)|=1$. Set $[R,S]=
\{R_i\}_{i=0}^n,\ P_n:=R\cap(R_{n-1}:S)$ and $\mathrm{Supp}(S/R)=\{P_0, P_1,\cdots,P_{n-1}\}$ as defined in Lemma \ref {1.19}. The following conditions are equivalent:
\begin{enumerate}
\item $R\subset S$ is a QR-extension;
\item for each $i\in\mathbb N_n$, there is some lsd $a_i\in R$ such that  $R_i=R_{a_i}$;
\item for each $i\in\mathbb N_n$, there is some $a_i\in R$ such that $P_{i-1}=\sqrt{Ra_i}$.
\end{enumerate}
If these conditions hold, then, for each $i\in\mathbb N_n$, we have $R_i= (R_{i-1})_{a_i}$ and $a_i\in P_{i-1}\setminus P_i$. Moreover, $a_i$ satisfies (2) if and only if it satisfies (3).
\end{proposition}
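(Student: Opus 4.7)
The plan is to establish the cycle $(1)\Rightarrow(2)\Rightarrow(3)\Rightarrow(1)$ and then deduce the two supplementary claims. The implication $(1)\Rightarrow(2)$ is a direct appeal to Theorem~\ref{1.12.1}: each $R_i$ is of finite type over $R$ (since $R\subseteq S$ is FCP), so the QR hypothesis yields a lsd $a_i\in R$ with $R_i=R_{a_i}$.

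For $(2)\Rightarrow(3)$, the key observation is that when $a_i$ is regular, $\mathrm{Supp}(R_{a_i}/R)=\mathrm{V}(Ra_i)$; indeed $(R_{a_i})_P=R_P$ iff $a_i\notin P$. Combined with Lemma~\ref{1.19}'s identification $\mathrm{Supp}(R_i/R)=\{P_0,\dots,P_{i-1}\}$, a chain with least element $P_{i-1}$, this gives $\sqrt{Ra_i}=\bigcap_{j<i}P_j=P_{i-1}$. For $(3)\Rightarrow(1)$, I would invoke Proposition~\ref{AYACHE}: here $\mathrm{Supp}(S/R)=\{P_0,\dots,P_{n-1}\}$ is finite and condition (3) says precisely that each $P_{i-1}$ is equivalent to the principal ideal $Ra_i$, which is the QR-criterion given there.

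For the consequences under the equivalent conditions: $R_i=(R_{i-1})_{a_i}$ follows by sandwiching, since $R\subseteq R_{i-1}\subseteq R_i=R[1/a_i]\subseteq R_{i-1}[1/a_i]$ and $a_i$ is already a unit in $R_i$. The membership $a_i\in P_{i-1}$ is immediate from $P_{i-1}=\sqrt{Ra_i}$, while $a_i\notin P_i$ follows by contrapositive: $a_i\in P_i$ would force $P_{i-1}=\sqrt{Ra_i}\subseteq P_i$, contradicting the strict containment $P_i\subsetneq P_{i-1}$ recorded in Lemma~\ref{1.19}.

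The subtlest part is the final assertion that each of (2) and (3) implies the other when applied to a single candidate $a_i$. The forward direction is the calculation above. For the converse, given $a_i$ satisfying only (3), I would use the already proved $(1)\Rightarrow(2)$ to obtain a lsd $b_i$ with $R_i=R_{b_i}$ and $\sqrt{Rb_i}=P_{i-1}=\sqrt{Ra_i}$, i.e., $a_i\simeq b_i$ in the sense of Definition~\ref{1.11}. Then the universal property forces $R_{a_i}=R_{b_i}=R_i$; moreover $a_i$ is regular (as a radical factor of the regular element $b_i$), and $R\subseteq R_{a_i}$ is Pr\"ufer, so $a_i$ is itself a lsd. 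I expect this last packaging step to be the main obstacle, since one must carefully transfer the lsd property along $\simeq$ (using the saturatedness of $\Lambda\Delta$) rather than merely the equality of localizations.
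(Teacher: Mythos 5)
Your proof is correct, and its skeleton coincides with the paper's where it matters most: the equivalence $(1)\Leftrightarrow(2)$ is obtained from Theorem~\ref{1.12.1} and $(3)\Leftrightarrow(1)$ from Proposition~\ref{AYACHE}, exactly as in the paper (you merely arrange these as a cycle $(1)\Rightarrow(2)\Rightarrow(3)\Rightarrow(1)$ rather than two stand-alone equivalences). The genuine difference lies in the element-wise implication ``$a_i$ satisfies (2) $\Rightarrow$ $a_i$ satisfies (3)'', which is also where the supplementary claims live. The paper first gets $a_i\notin P_i$ from the inclusion $\mathrm V(Ra_i)\subseteq\mathrm{Supp}(R_i/R)$, and then proves $a_i\in P_{i-1}$ by localizing the whole tower at the unique maximal ideal $M$ of $\mathrm{MSupp}(S/R)$ and invoking the local structure theory of Proposition~\ref{1.18} (each $R'_j=R'_{P'_j}$ with $R'_{i-1}$ local of maximal ideal $P'_{i-1}$), finally concluding $P_{i-1}=\sqrt{Ra_i}$. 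You instead prove the exact identity $\mathrm{Supp}(R_{a_i}/R)=\mathrm V(Ra_i)$ for a regular element $a_i$ (your verification is sound: if $a_i\in P$ and $R_P\to (R_P)_{a_i}$ were surjective, then $a_i^n(a_iy-1)=0$ with $a_iy-1$ a unit of $R_P$ forces $a_i/1$ nilpotent in $R_P$, contradicting regularity), and then Lemma~\ref{1.19} gives $\mathrm V(Ra_i)=\{P_0,\dots,P_{i-1}\}$, hence $\sqrt{Ra_i}=P_{i-1}$ and $a_i\in P_{i-1}\setminus P_i$ in one stroke. This is more elementary and self-contained: it bypasses Proposition~\ref{1.18} and the localization bookkeeping entirely. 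Conversely, the paper's route stays inside its already-developed local machinery. For the final ``moreover'' converse, both arguments are the same in substance, but you supply details the paper compresses into a citation of Definition~\ref{1.11}: the regularity of $a_i$ (as a factor of a power of the regular element $b_i$), the identification $R_{a_i}=R_{b_i}=R_i$ inside $S$, and the fact that $R\subseteq R_{a_i}$ Pr\"ufer then makes $a_i$ an lsd by definition; this is a worthwhile clarification, since condition (2) requires $a_i$ to be an lsd and not merely to satisfy $R_i=R_{a_i}$.
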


\begin{proof} (1) $\Leftrightarrow$ (2) by Theorem \ref{1.12.1} and (3) $\Leftrightarrow$ (1) by Proposition \ref{AYACHE}.

Assume that (2) holds with $a_i$ an lsd such that $R_i=R_{a_i}$. Obviously, $R_i=(R_{i-1})_{a_i}\ (*)$.
 
Let $P\in\mathrm{Spec}(R)$. Then, $a_i\in P$ implies that $PR_i=R_i$, so that $P\in\mathrm{Supp}(R_i/R)=\{P_0,P_1,\cdots,P_{i-1}\}$. Then, there is some $j<i$ such that $P=P_j$ and $a_i\not\in P_i$. To prove that $a_i\in P_{i-1}$, we localize the extension $R\subset S$ at $M$. Set $R'_j:=(R_j)_M,\ P_j':= P_jR_M$ for each $j=0,\cdots,n$ and $a'_i:=a_i/1$. Then, $R'\subset S'$ is an FCP Pr\"ufer extension over the local ring $(R',M')$ with $n=\ell[R',S']$. Using Proposition \ref{1.18}, we get that $\{P'_0,P'_1,\cdots,P'_n\}$ is the subset of $\mathrm{Spec}(R')$ such that $R'_i=R'_{P'_i}$, $P'_iR'_i=P'_i$. It follows that $(*)$ gives $R'_i=R'_{a'_i}=(R'_{i-1})_{a'_i}$, with $R'_{i-1}$ a local ring with maximal ideal $P'_{i-1}$. Then $a'_i\in P'_{i-1}$ since $R'_i= (R'_{i-1})_{a'_i}\neq R'_{i-1}$ which gives $a_i\in P_{i-1}\setminus P_i$. To conclude, $P_{i-1}=\sqrt{Ra_i}$ as the least prime ideal of $R$ containing $a_i$, and also the least element of $\mathrm{Supp}(R_i/R)$. It follows that $a_i$ satisfies (3).
 
Conversely, if there is some $b_i$ such that $P_{i-1}=\sqrt{Rb_i}$, then $\sqrt{Ra_i}=\sqrt{Rb_i}$ implies $R_i=R_{a_i}=R_{b_i}$ by Definition \ref{1.11}.
\end{proof}

In order to generalize Lemma \ref{1.19} to an arbitrary FCP Pr\"ufer $\mathcal B$-extension, we need the following definition introduced in \cite{Pic 15}. Let $R\subseteq S$ be an FCP $\mathcal B$-extension and $M\in\mathrm {MSupp}(S/R)$. The elementary splitter $\sigma(M):=T$, associated to $M$, is defined by $\mathrm{MSupp}(T/R)=\{M\}$ and $\mathrm{MSupp}(S/T)=\mathrm{MSupp}(S/R)\setminus\{M\}$. Such a $T$ always exists (see \cite[Theorem 4.6 and the paragraph after Corollary 5.5]{Pic 15}).

\begin{proposition}\label{1.20} Let $R\subset S$ be an FCP Pr\"ufer $\mathcal B$-extension and QR-extension. Let $T\in]R,S]$ and set $\mathrm{MSupp}(T/R)=:\{M_1,\ldots,M_n\}$. For each $i\in\mathbb N_n$, let $P_i$ be the least element of $\mathrm{Supp}(T/R)$ contained in $M_i$. Then, there exists some lsd $t\in R$ such that $T=R_t$ and $\sqrt{Rt}=\cap[P_i\mid i\in\mathbb N_n]$.
   \end{proposition}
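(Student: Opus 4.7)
The plan is to invoke Theorem~\ref{1.12.1} to produce the desired locally strong divisor and then identify $\sqrt{Rt}$ via the support of $R_t/R$, exploiting the $\mathcal{B}$-structure to restrict the intersection to the finitely many $P_i$. First, because $R\subset S$ has FCP, so does $R\subseteq T$, and since the Pr\"ufer property descends to subextensions, Corollary~\ref{PMFCP} expresses $R\subseteq T$ as a tower of finitely many Pr\"ufer-Manis minimal extensions, whence $T$ is of finite type over $R$. As $R\subset S$ is a QR-extension, Theorem~\ref{1.12.1} then yields a locally strong divisor $t\in R$ with $T=R_t$; this settles the existence part and reduces the statement to identifying $\sqrt{Rt}$.

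Next I compute $\mathrm{Supp}(T/R)$. For any $P\in\mathrm{Spec}(R)$, the stalk $(R_t/R)_P$ is nonzero if and only if $t/1\notin\mathrm U(R_P)$, equivalently $t\in P$; so $\mathrm{Supp}(T/R)=\mathrm V(Rt)$. The standard description of the radical then gives
$$\sqrt{Rt}=\bigcap_{P\in\mathrm V(Rt)}P=\bigcap_{P\in\mathrm{Supp}(T/R)}P.$$
It remains to prove the latter intersection equals $\bigcap_{i=1}^nP_i$.

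Here the $\mathcal{B}$-hypothesis is crucial. By the characterization recalled just before the statement (\cite[Proposition 2.21]{Pic 10}), $R/P$ is local for every $P\in\mathrm{Supp}(T/R)$; thus each such $P$ lies under a unique maximal ideal, giving a partition of $\mathrm{Supp}(T/R)$ indexed by $\{M_1,\ldots,M_n\}$. Fix $i$ and localize at $M_i$: the extension $R_{M_i}\subset T_{M_i}$ is an FCP Pr\"ufer extension over the local ring $R_{M_i}$, so Proposition~\ref{1.18} shows that its support is a descending chain of prime ideals, hence has a minimum, which under the canonical bijection $\mathrm{Spec}(R_{M_i})\cong M_i^\downarrow$ corresponds precisely to $P_i$. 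Consequently, every $Q\in\mathrm{Supp}(T/R)$ contains some $P_j$, yielding $\bigcap_{i=1}^nP_i\subseteq\bigcap_{Q\in\mathrm{Supp}(T/R)}Q$; the reverse inclusion is immediate because each $P_i$ itself belongs to $\mathrm{Supp}(T/R)$. The only delicate point is ensuring the $\mathcal{B}$-partition of the support behaves cleanly, but this is a direct consequence of the local-ness of the quotients $R/P$ combined with Proposition~\ref{1.18}, so no deeper obstacle arises.
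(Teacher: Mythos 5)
Your proof is correct, but it follows a genuinely different route from the paper's. The paper never invokes Theorem~\ref{1.12.1} directly on $T$: instead it first transfers all four hypotheses to $R\subseteq T$, then decomposes $T$ using the elementary splitters $T_i:=\sigma(M_i)$ of \cite{Pic 15}, shows $\mathrm{Supp}(T_i/R)=\mathrm{V}(P_i)$, applies Proposition~\ref{1.21} to each $T_i$ to get $T_i=R_{a_i}$ with $P_i=\sqrt{Ra_i}$, and finally uses the product formula $T=\prod[T_i\mid i\in\mathbb N_n]$ from \cite{Pic 15} to set $t:=a_1\cdots a_n$ and compute $\sqrt{Rt}=\cap\sqrt{Ra_i}$. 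You instead obtain $T=R_t$ in one step from Theorem~\ref{1.12.1}, after noting that $T$ is of finite type over $R$ (your "whence" deserves the one-line justification that any minimal extension $A\subset B$ satisfies $B=A[b]$ for each $b\in B\setminus A$, so a finite tower of minimal extensions is of finite type -- but this is immediate); you then identify the radical via the observation $\mathrm{Supp}(R_t/R)=\mathrm{V}(Rt)$ (which is exactly the computation in Lemma~\ref{1.22}) and show $\cap[Q\mid Q\in\mathrm{Supp}(T/R)]=\cap[P_i\mid i\in\mathbb N_n]$ using the $\mathcal B$-characterization ($R/P$ local for $P\in\mathrm{Supp}(S/R)$) together with localization at each $M_i$ and the chain structure of the support from Proposition~\ref{1.18}. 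The trade-off: the paper's argument yields finer structural output -- an explicit factorization $t=a_1\cdots a_n$ with $\sqrt{Ra_i}=P_i$, exhibiting $T$ as the compositum of the splitters -- at the price of leaning on the splitting machinery of \cite{Pic 15} (its Propositions 3.5 and 5.11 and Theorem 3.10); your argument is more self-contained within the present paper, avoids the splitters entirely, and makes transparent that the radical identity is forced for \emph{any} lsd $t$ with $T=R_t$, since $\sqrt{Rt}$ depends only on $\mathrm{Supp}(T/R)$ and not on the choice of $t$.
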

\begin{proof}  Since $R\subset S$ is an FCP Pr\"ufer $\mathcal B$- and QR-extension, so is $R\subset T$ by \cite[Proposition 3.5]{Pic 15} for the $\mathcal B$-extension property. Set $\mathrm{MSupp}(T/R)=:\{M_1,\ldots,M_n\}$. For each $i\in\mathbb N_n$, according to \cite[Theorem 3.10]{Pic 15}, there is a least element $P_i$ of $\mathrm{Supp}(T/R)$ contained in $M_i$. The same reference gives that $\mathrm{V}(P_i)$ is a chain, whose greatest element is $M_i$ and  least element is $P_i$ .

For each $i\in\mathbb N_n$, set $T_i:=\sigma(M_i)$, so that $|\mathrm {MSupp}(T_i/R)|=1$. Moreover, for any $P\in\mathrm{V}(P_i)$, we have $P\in\mathrm{Supp}(T_i/R)$.
 Then, $\mathrm{Supp}(T_i/R)=\mathrm{V}(P_i)$ because $\mathrm{MSupp}(T_i/R)=\{M_i\}$, and we can apply Proposition  \ref{1.21}.
   For each $i\in\mathbb N_n$, we have $T_i=R_{a_i}$ for some lsd $a_i\in R$ and $P_i=\sqrt{Ra_i}$ for each $i\in\mathbb N_n$. Set $t:=a_1\cdots a_n$, which is still an lsd. Now, \cite[Proposition 5.11]{Pic 15} asserts that $T=\prod [T_i\mid i\in\mathbb N_n]=\prod[R_{a_i}\mid i\in\mathbb N_n]=R_t$. Moreover, $\sqrt{Rt}=\sqrt{\prod[R_{a_i}\mid i\in\mathbb N_n]}=\cap[\sqrt{R_{a_i}}\mid i\in\mathbb N_n]=\cap[P_i\mid i\in\mathbb N_n]$.
\end{proof}
 
 \section{The set of all primitive elements}
Let $R\subseteq S$ be an extension. An element $s\in S$ is called {\it primitive} (over $R$) if there exists a polynomial $p(X)\in R[X]$ whose content is $R$ and such that $p(s)=0$.  
An extension $R\subseteq S$ is called a $\mathcal P$-extension if all the elements of $S$ are primitive over $R$. Important examples of $\mathcal P$-extensions are given by the Pr\"ufer extensions of \cite{KZ} (equivalently normal pairs \cite[Theorem 5.2, p.47]{KZ}). We recall that an element $s$ of $S$ is primitive if and only if $R\subseteq R[s]$ has the INC property \cite[Theorem 2.3]{DLO} and that an extension $R\subseteq S$ is a $\mathcal P$-extension if and only if $R\subseteq S$ is a INC pair \cite[Corollary  2.4]{DLO}. We proved that an INC pair is nothing but a quasi-Pr\"ufer extension \cite[Theorem 2.3]{Pic 5}.  

It follows easily that an extension is quasi-Pr\"ufer if and only if it is a $\mathcal P$-extension. Therefore an FCP extension is a $\mathcal P$-extension \cite[Corollary 3.4]{Pic 5}. 

For an extension $R\subseteq S$, we denote by $\mathcal P_S(R)$ the set of all elements of $S$ that are primitive over $R$,  
 a set studied by Dobbs-Houston in \cite{DH}.

 We defined in \cite[Theorem 3.18]{Pic 5} the quasi-Pr\"ufer closure (or hull) $\overset{\Longrightarrow}R$ of an extension $R\subseteq S$. This closure is the greatest quasi-Pr\"ufer subextension $R\subseteq T$ of $R\subseteq S$ and is equal to $\widetilde{\overline R}$. It follows that $\overset{\Longrightarrow} R\subseteq\mathcal P_S(R)$. Obviously, $\overline R\subseteq \mathcal P_S(R)$.

 \begin{proposition} Let $R\subseteq S$ be an extension. Then $\mathcal P_S(R)$ is a ring if and only if $\mathcal P_S(R)=\overset{\Longrightarrow} R $.
 \end{proposition}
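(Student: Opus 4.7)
The plan is to observe that this equivalence is essentially a tautology once the identifications stated just before the proposition are invoked, namely that \emph{quasi-Pr\"ufer} coincides with \emph{$\mathcal P$-extension} and that $\overset{\Longrightarrow}{R}$ is the greatest quasi-Pr\"ufer subextension of $R\subseteq S$.

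First I would dispose of the easy direction: if $\mathcal P_S(R)=\overset{\Longrightarrow}{R}$, then $\mathcal P_S(R)$ is a ring because $\overset{\Longrightarrow}{R}=\widetilde{\overline R}$ is a ring (it is an $R$-subalgebra of $S$ by construction).

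For the nontrivial direction, assume that $\mathcal P_S(R)$ is a ring, so that $R\subseteq\mathcal P_S(R)$ is a genuine ring extension (containing $\overline R$, as noted). By the very definition of $\mathcal P_S(R)$, every element of $\mathcal P_S(R)$ is primitive over $R$, hence $R\subseteq\mathcal P_S(R)$ is a $\mathcal P$-extension. Invoking the equivalence recalled in the excerpt (a $\mathcal P$-extension is the same as a quasi-Pr\"ufer extension, via \cite[Theorem 2.3]{Pic 5} and \cite[Corollary 2.4]{DLO}), $R\subseteq\mathcal P_S(R)$ is quasi-Pr\"ufer. Since $\overset{\Longrightarrow}{R}$ is the \emph{greatest} quasi-Pr\"ufer subextension of $R\subseteq S$, this forces $\mathcal P_S(R)\subseteq\overset{\Longrightarrow}{R}$. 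Combined with the reverse inclusion $\overset{\Longrightarrow}{R}\subseteq\mathcal P_S(R)$ already recorded in the paragraph preceding the statement, we obtain the desired equality.

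There is essentially no obstacle; the only delicate point is the justification that every element of $\mathcal P_S(R)$ is primitive over $R$ and not merely over some intermediate ring, but this is automatic from the definition of the set $\mathcal P_S(R)$. Thus the proposition reduces to the maximality property characterizing $\overset{\Longrightarrow}{R}$ among quasi-Pr\"ufer subextensions.
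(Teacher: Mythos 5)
Your proof is correct, but it follows a genuinely different route from the paper's. You treat the hard direction formally: since primitivity over $R$ is intrinsic to an element (it does not depend on the ambient ring in which the element sits), the assumption that $\mathcal P_S(R)$ is a ring makes $R\subseteq\mathcal P_S(R)$ a $\mathcal P$-extension by the very definition of $\mathcal P_S(R)$, hence quasi-Pr\"ufer by the dictionary recalled just before the statement (\cite[Corollary 2.4]{DLO} together with \cite[Theorem 2.3]{Pic 5}), and then the maximality of $\overset{\Longrightarrow}{R}$ among quasi-Pr\"ufer subextensions, combined with the recorded inclusion $\overset{\Longrightarrow}{R}\subseteq\mathcal P_S(R)$, gives equality. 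The paper argues differently: it reduces to showing that $\overline R\subseteq\mathcal P_S(R)$ is Pr\"ufer, first assuming $R\subset S$ integrally closed (an element primitive over $R$ is primitive over $\overline R$), then assuming $R$ local by \cite[Proposition 2.6]{DH}; in that situation \cite[Corollary 2.5]{DH} forces each $s\in\mathcal P_S(R)$ to satisfy either $s\in R$ or $s^{-1}\in R$, whence $R[s]=R[s^2]$ in both cases, and \cite[Chapter 1, Theorem 5.2]{KZ} then yields the Pr\"ufer property. Your argument buys brevity and transparency: it runs entirely on the INC-pair/quasi-Pr\"ufer equivalence that the section has already set up (the same fact that immediately gives the closing corollary that $\mathcal P_S(R)=S$ if and only if $R\subseteq S$ is quasi-Pr\"ufer), with no reduction steps and no appeal to the Dobbs--Houston structure theory. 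The paper's argument is heavier but more informative: it verifies the Knebusch--Zhang criterion $R[s]=R[s^2]$ concretely, exhibiting why $\overline R\subseteq\mathcal P_S(R)$ is Pr\"ufer rather than deducing containment in the hull from a black-box equivalence.
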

\begin{proof} It is enough to show that if $\mathcal P_S(R)$  is  a ring, it is contained in $\overset{\Longrightarrow}R $, that is $\overline R\subseteq \mathcal P_S(R)$ is Pr\"ufer. But we may assume that $R\subset S$ is integrally closed 
 because an element of $S$ primitive over $R$ is primitive over $\overline R$.

 By \cite[Proposition 2.6]{DH}, we can  also assume that $R$ is local. Let $s\in\mathcal P_S(R)$. Then, either $s\in R$ or $s$ is a unit of $S$ such that $s^{-1}\in R$ 
  according to \cite[Corollary 2.5]{DH}.
  If $s\in R$, then $R[s]=R[s^2]$. If $s$ is a unit of $S$ such that $s^{-1}\in R$, then $s^{-1}\in R[s^2]$ implies $s\in R[s^2]$, and we still have $R[s]=R[s^2]$.
Therefore, for each $s\in\mathcal P_S(R)$, we have $R[s^2]=R[s]$. We deduce from \cite[Chapter 1, Theorem 5.2]{KZ}, that the ring $\mathcal P_S(R)$ is Pr\"ufer over $R$ and then $\mathcal P_S(R)=\overset{\Longrightarrow} R $.
\end{proof}

It may happen that an extension $R\subseteq S$ is such that $\mathcal P_S(R)=S$. For example, if we denote by $\mathrm{Tot}(R)$ the total ring of quotients of a ring $R$, then $\mathcal P_{\mathrm{Tot}(R)}R$ is a ring if and only if $\mathcal P_{\mathrm{Tot}(R)}R=\mathrm{Tot}(R)$  \cite[Corollary 2.9]{DH}. If $R\subseteq S$ defines a lying-over pair, then $\mathcal P_S(R)=\overset{\Longrightarrow}R$ \cite[Proposition 3.11]{DH}. The next result generalizes some results of \cite{DH}. 

\begin{corollary} \label{PRI}An extension $R\subseteq S$ is  such that $\mathcal P_S(R) = S$ if and only if $R\subseteq S$ is quasi-Pr\"ufer.
\end{corollary}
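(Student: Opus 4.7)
The plan is to observe that this corollary is essentially a rewording of the chain of equivalences already recalled in the paragraph preceding it, so the proof will be one or two lines referring back to those citations.

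First I would unwind the definitions: the condition $\mathcal P_S(R)=S$ says that every element of $S$ is primitive over $R$, which is precisely the definition of $R\subseteq S$ being a $\mathcal P$-extension. Next I would invoke the characterization \cite[Corollary 2.4]{DLO} recalled just before the corollary, namely that an extension is a $\mathcal P$-extension if and only if it is an INC pair. Finally I would invoke \cite[Theorem 2.3]{Pic 5}, also recalled above, which states that an INC pair is exactly a quasi-Pr\"ufer extension. Composing these three equivalences gives the result.

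There is no real obstacle here: the corollary is a reformulation in terms of the set $\mathcal P_S(R)$ (the Dobbs--Houston set of primitive elements) of the equivalence ``$\mathcal P$-extension $\Leftrightarrow$ INC pair $\Leftrightarrow$ quasi-Pr\"ufer'' that the authors have already set up. In particular, no use is made of the preceding Proposition about when $\mathcal P_S(R)$ is a ring, since the hypothesis $\mathcal P_S(R)=S$ already forces ringhood trivially. Consequently the write-up should simply chain the three equivalences and cite the two results; one could alternatively just write ``Use \cite[Corollary 2.4]{DLO} and \cite[Theorem 2.3]{Pic 5}''.
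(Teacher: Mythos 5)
Your proof is correct and is essentially the paper's own argument: the paper states this corollary without a separate proof precisely because it is a direct restatement of the recalled equivalences, namely that $\mathcal P_S(R)=S$ means $R\subseteq S$ is a $\mathcal P$-extension, which by \cite[Corollary 2.4]{DLO} and \cite[Theorem 2.3]{Pic 5} is equivalent to being quasi-Pr\"ufer. Your observation that the preceding Proposition on ringhood of $\mathcal P_S(R)$ is not needed is also accurate.
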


\end{document}